\documentclass[10pt,a4paper]{article}
\textwidth 17.cm 
\usepackage{mathrsfs, geometry}
\usepackage[english]{babel}
\usepackage{enumitem}
\usepackage{graphicx}
\usepackage{epstopdf}
\usepackage{epsfig,psfrag}
\usepackage{amsmath, amsfonts, amsthm,amssymb}
\usepackage{accents}
\usepackage{hyperref}
\usepackage{aurical}
\usepackage[T1]{fontenc}
\usepackage{xcolor}
\usepackage{yfonts}

\usepackage{pifont}
\DeclareFontFamily{U}{astrosym}{}
\DeclareFontShape{U}{astrosym}{m}{n}{<-> astrosym}{}
\DeclareFontFamily{U}{dancers}{}
\DeclareFontShape{U}{dancers}{m}{n}{<-> dancers}{}
\newtheorem{theorem}{Theorem}[section]
\newtheorem{definition}[theorem]{Definition}
\newtheorem{lemma}[theorem]{Lemma}

\newtheorem{corollary}[theorem]{Corollary}
\newtheorem{remark}[theorem]{Remark}

\newtheorem{proposition}[theorem]{Proposition}

\numberwithin{equation}{section}



\newcommand{\yy}{{\mathcal Y}}
\newcommand{\s}{{\sigma}}
\newcommand{\id}{{\rm Id}}
\newcommand{\jj}{{j}}

\renewcommand{\Im}{{\mbox{Im}}}
\renewcommand{\Re}{{\mbox{Re}}}

\newcommand{\wtS}{\widetilde{S}}

\newcommand{\ad}{{\rm ad}}
\newcommand{\tT}{{\mathtt{T}}}

\DeclareMathAccent{\wtilde}{\mathord}{largesymbols}{"65}






\newcommand{\C}{{\mathbb C}}

\newcommand{\N}{{\mathbb N}}

\newcommand{\R}{{\mathbb R}}

\newcommand{\T}{{\mathbb T}}
\newcommand{\Z}{{\mathbb Z}}
\newcommand{\VV}{{{\mathscr V}}}
\newcommand{\PP}{{{\mathscr P}}}
\newcommand{\cA}{{\mathcal A}}

\newcommand{\cC}{{\mathcal C}}

\newcommand{\cG}{{\mathcal G}}

\newcommand{\cK}{{\mathcal K}}
\newcommand{\cL}{{\mathcal L}}

\newcommand{\cN}{{\mathcal N}}
\newcommand{\cO}{{\mathcal O}}

\newcommand{\cR}{{\mathcal R}}
\newcommand{\cS}{{\mathcal S}}
\newcommand{\cT}{{\mathcal T}}

\newcommand{\cV}{{\mathcal V}}

\newcommand{\cY}{{\mathcal Y}}


\newcommand{\fA}{{\mathfrak{A}}}
\newcommand{\fa}{{\mathfrak{a}}}

\newcommand{\fH}{\mathfrak{H}}

\newcommand{\fL}{{\mathfrak{L}}}
\newcommand{\fM}{{\mathfrak{M}}}

\newcommand{\fS}{{\mathfrak{S}}}
\newcommand{\fT}{{\mathfrak{T}}}


\newcommand{\ta}{{\mathtt{a}}}

\newcommand{\tc}{{\mathtt{c}}}
\newcommand{\td}{{\mathtt{d}}}

\newcommand{\tp}{{\mathtt{p}}}

\newcommand{\tj}{{\mathtt{j}}}
\newcommand{\tk}{{\mathtt{d}}}

\newcommand{\tR}{{\mathtt{R}}}

\newcommand{\tJ}{{\mathtt{J}}}

\newcommand{\tL}{{\mathtt{L}}}


\newcommand{\bb}{{\bf b}}

\newcommand{\bk}{{\bf k}}

\newcommand{\bv}{{\bf v}}

\newcommand{\bN}{{\bf N}}
\newcommand{\bK}{{\bf K}}


\newcommand{\al}{{\alpha}}
\newcommand{\bt}{{\beta}}

\newcommand{\g}{\gamma}
\newcommand{\f}{\varphi}


\newcommand{\norm}[1]{\| #1 \|}

\newcommand{\abs}[1]{| #1 |}
\newcommand{\0}{{(0)}}

\newcommand{\la}{\left\langle}
\newcommand{\ra}{\right\rangle}

\newcommand{\im}{{\rm i}}
\newcommand{\jap}[1]{\langle #1 \rangle}
\newcommand{\und}[1]{\underline{#1}}

\newcommand{\e}{{\varepsilon}}

\newcommand{\meas}{{\rm meas}}

\newcommand{\diag}{{\rm diag}}

\definecolor{darkgr}{rgb}{0.0, 0.62, 0.42}

\newcommand{\bnorm}[1]{{|\mkern-6mu |\mkern-6mu | \,  #1 \,  |\mkern-6mu 
		|\mkern-6mu |}  }

\newcommand{\sega}[3]{\abs{#1}^{\mathtt {qT}}_{#2,#3}}
\newcommand{\nor}[3]{\abs{#1}_{#2;#3}}
\newcommand{\segm}[3]{\abs{#1}^{\mathtt {qT},#2}_{#3,-m}}
\newcommand{\seuno}[3]{\abs{#1}^{\mathtt{qT},#2}_{#3,-1}}
\newcommand{\sedue}[3]{\abs{#1}^{\mathtt {qT},#2}_{#3,-2}}
\newcommand{\op}[2]{\cL^{\mathtt {qT},#1}_{#2,-m}}
\newcommand{\opuno}[2]{\cL^{\mathtt {qT},#1}_{#2,-1}}
\newcommand{\opdue}[2]{\cL^{\mathtt {qT},#1}_{#2,-2}}
\definecolor{cobalt}{rgb}{0.0, 0.28, 0.67}

\title{Reducibility and nonlinear stability for a quasi-periodically forced NLS}

\author{E. Haus\footnote{Dipartimento di
	Matematica e Fisica,
	Universit\`a di Roma Tre, Largo S.L. Murialdo, 00100 Roma, Italy,
	{\tt ehaus@mat.uniroma3.it}}  \, 
B. Langella\footnote{International School for Advanced Studies (SISSA), Via Bonomea 265, 34136, Trieste, Italy
	{\tt beatrice.langella@sissa.it}}	, \,
A. Maspero\footnote{International School for Advanced Studies (SISSA), Via Bonomea 265, 34136, Trieste, Italy
	{\tt alberto.maspero@sissa.it}}, \,
 M. Procesi\footnote{Dipartimento di Matematica e Fisica,
	Universit\`a di Roma Tre, Largo S.L. Murialdo, 00100 Roma, Italy,
	{\tt procesi@mat.uniroma3.it}}
}

\begin{document}
	\maketitle
	
	\begin{abstract}
	Motivated by the problem of long time stability vs. instability of KAM tori of the Nonlinear cubic Schr\"odinger equation (NLS) on the two dimensional torus $\T^2:= (\R/2\pi \Z)^2$, we consider a quasi-periodically forced NLS equation  on $\T^2$ arising from the linearization of the NLS at a KAM torus. 
	We prove a reducibility result as well as  long time stability of the origin.
	The main novelty is to obtain the  precise  asymptotic expansion of the frequencies which allows us to impose Melnikov conditions at arbitrary order.
	\end{abstract}
	
	\section{Introduction and main results}
	The nonlinear cubic Schr\"odinger equation on the two dimensional torus $\T^2:= (\R/2\pi \Z)^2$
	\begin{equation}\label{NLS0}
	\im \partial_t v = - \Delta v + |v|^2 v , \quad x \in \T^2 \ 
	\end{equation}
	is one of the fundamental equation in mathematical physics.
	The pioneering work by Bourgain
	\cite{Bourgain93} ensures that the equation is  locally well-posed for any data in $H^s(\T^2)$ for all $s >0$, where 
	$$
\|u\|_{H^s} :=\left(\sum_{n \in \Z^2} (1+|n|)^{2s}|\widehat u_n|^2\right)^{\frac12}.
$$
	Much less is known on the long time behaviour of the solutions, and in the last ten years  a rich and diverse dynamics  has been discovered for \eqref{NLS0}.
Such dynamics comprise both ``regular'' dynamics, such as periodic \cite{GP3}, quasiperiodic \cite{EK10,GYX,PP13}, almost-periodic \cite{Bou,BMP:almost} trajectories contained in invariant tori, and more ``irregular'' one, such as 
weakly turbulent trajectories undergoing energy cascade phenomenons \cite{CKSTT,GuardiaK12,GuardiaHP16,Hani12,GHHMP22}.\\
A particularly interesting long term goal is to understand the interplay of these two behaviours, in particular whether there are weakly turbulent trajectories connecting distinct invariant tori. \\
An important intermediate step is to understand the local dynamics close to these invariant sets, namely if one takes an initial datum close to an invariant torus, for how long does its trajectory stay close to the torus? The next step is to show that, beyond the stability times, unstable phenomena occur, and one may explicitly construct orbits which start $\delta$-close to an invariant torus and then slowly drift away by a finite but arbitrarily large factor.  A convenient way of ensuring that trajectories do not stay close to a given torus is to show that they exhibit a Sobolev norm explosion, namely that  after some (very long) time their  Sobolev norm, in some fixed $H^s(\T^2)$ with $s\ne 0,1$,  becomes larger than an arbitrary number  $K$ (recall that on a given invariant torus the Sobolev norms are essentially constant).

The simplest invariant object is clearly the fixed point $v = 0$. 
In this case stability results were proved for example in 
\cite{BG,Faou-Grebert:2013,BMP:2019, BerGre,BerGre2}, while instability results in  \cite{Bourgain96,Kuksin96, Kuksin97, Kuksin97b}, the landmark result in \cite{CKSTT} and subsequent generalizations \cite{GuardiaK12,HP,GuardiaHP16}.
For these problems  a fundamental role is played by  the linearized frequencies at $ v = 0$ of \eqref{NLS0}, namely
\begin{equation}\label{freq:0}
\Omega_j = |j|^2 , \quad j \in \Z^2 , 
\end{equation}
and the interest is either to avoid resonances between them  to prove stability results or to  exploit them to create instability phenomenon.

A much more challenging problem is to study stability/instability of more complicated invariant objects.
%
The simplest nontrivial invariant objects are the invariant tori of dimension 1 filled with  plane wave solutions of the form  $u(t,x) = \rho e^{\im j x - \omega t}$, $\omega = |j|^2 + \rho^2$. Their long time stability was proved in
\cite{Faou-etc}, whereas their instability in $H^s$, $s \in (0,1)$, in \cite{Hani12}.
Again a fundamental role is played by the resonance properties of the linearized frequencies at plane waves solutions, which are easily computed to be 
\begin{equation}
\label{freq:PW}
\Omega_j = \sqrt{|j|^4 + 2 |j|^2 \rho^2} = |j|^2 + \frac{c_j(\rho)}{|j|^2} , \quad j \in \Z\setminus \{0\} \ . 
\end{equation}
 Here it is important to notice how the correction to the unperturbed frequencies \eqref{freq:0} decays in $|j|$, namely whatever is the direction of the vector $j \in \Z^2$.
 
The next studied invariant objects have been  the so-called finite gap solutions, which are solutions for the integrable 1d cubic NLS. 
In particular, they are solutions of  \eqref{NLS0} depending only on 1 variable, say $x_1$, but they are quasi-periodic in time and fill invariant tori of finite dimension $\td \in \N$.
Their long time stability has been studied in \cite{MP} whereas their instability, again in $H^s$ with $s \in (0,1)$ in \cite{GHHMP22}.
As the reader might guess, again a fundamental role is played by the linearized frequencies at finite gap solutions. 
In this case it is quite involved to  compute their expression, and it  requires partial Birkhoff normal form, a sort of ``1d pseudodifferential normal form'' and KAM reducibility techniques. 
The final result is that they expand as
\begin{equation}
	\label{asinto}
\Omega_j = |j|^2 + \mu(j) + \frac{\fa(m)}{\jap{m}^2} + \frac{R_j}{\jap{j}^2} , \qquad j = (m,n) \in \Z^2 \setminus \{0\} \ , 
\end{equation}
where $\mu(j)$ has a finite number of possible values while $\fa,R$ are of size $\e$.
Note the first difference with \eqref{freq:0} and \eqref{freq:PW}: 
the correction to the unperturbed frequencies depends on the direction of the vector $j = (m,n) \in \Z^2$.

\vspace{.5em}
\noindent{\bf Towards stability/instability of KAM tori.} 
The next natural step is to  study the long time stability vs instability of more general (truly bidimensional) quasiperiodic tori of 
Procesi-Procesi \cite{PP13}. 
There are a number of very serious difficulties to overcome in order to deal with this more general case.
Again the main challenge is to obtain the correct asymptotic expansion of the  linearized frequencies at KAM tori, namely the analogous of formulas \eqref{freq:PW},  \eqref{asinto}.
This is a very hard problem which requires new ideas with respect to the finite gap case.

The goal of this paper is to settle the machinery to obtain such asymptotics and to apply them to  study a simplified NLS-like equation  which,
despite not having a particular physical interest, contains the main difficulties to be overcome in order to tackle the full problem of stability/instability of KAM tori.

In order to justify the model and the simplifications that we will introduce, let us first give a brief description of how the KAM-solutions appear.
The first step is to choose an appropriate approximate solution,  typically a solution of the linearized equation at $v=0$, which is the starting point of a quadratic algorithm which converges to a true solution. The idea in \cite{GYX,PP,W11} is to fix a finite set $\cS=\{\bk^{(1)},\dots,\bk^{(d)}\}\subset \Z^2$ (of maximal rank),  look for solutions of  \eqref{NLS0} close to 
\[
v_{\rm lin}(t,x)= \sum_{j\in \cS} \sqrt{\xi_j} \, e^{\im j\cdot x + \im |j|^2 t}\,,
\]
and show that for {\em most} choices of $\cS$ and {\em many} choices of $\xi\in \R_+^d$ (in some small ball of radius $\e$) there exists a true solution  $q^{\rm qp}$ of \eqref{NLS0} essentially supported on the Fourier modes in $\cS$ which is quasi-periodic in time with frequency 
$$
\omega = (\omega_i)_{i=1}^d , \qquad \omega_i  = |\bk^{(i)}|^2 + \cO(\e)\,.
$$
Since the NLS equation is Gauge and translation invariant it turns out that the solutions in \cite{PP13} have the form of {\em covariant quasi-periodic traveling waves}, according to the following definition:

\begin{definition}[Quasi-periodic traveling waves] 
\label{def:tw}
Let 
 $\bK\in$ Mat$_{d\times 2}(\Z)$ (of maximal rank) with 
\begin{equation}
	\label{Kappa}
\bK^T= \begin{pmatrix}
	\bk^{(1)}_1 & \dots & \bk^{(d)}_1 \\\bk^{(1)}_2 & \dots & \bk^{(d)}_2 
\end{pmatrix} \ . 
\end{equation}
 A function $q(\f, x)$ is called a {\em quasi-periodic traveling wave} if it has the form 
$$
q(\f, x):= Q(\f + \bK x) 
$$
where $Q\colon \T^d \to \C$ is a real analytic function.\\
We say that a traveling wave is Gauge covariant  if $Q$ is a function with Fourier coefficients supported in the set $ \{ \ell \colon \ell \cdot \vec{1} = 1\}$, i.e.
\begin{equation}
Q(\f + t \vec{1})  = e^{\im t } Q(\f) , \qquad \vec 1=(1,\dots,1)\in \Z^d \ . 
\end{equation}
 We say it is Gauge invariant if   supported in the set $ \{ \ell \colon \ell \cdot \vec{1} = 0\}$.
Finally we set
	\begin{equation}
	\label{normap}
	\|Q\|_{\ta, \tp}^2:= {\sum_{ \ell } e^{2\ta|\ell|}|Q(\ell)|^2 \jap{\ell}^{2\tp}}  < \infty\,, \quad\mbox{for}\quad \ta>0,\;\tp>\frac d2 \ .
\end{equation}
\end{definition}
Note that  Gauge covariant, quasi-periodic traveling waves satisfy 
\begin{align}
\label{cov00}
q(\f + \bK \zeta, \cdot) = \tau_\zeta q(\f, \cdot) \ , \quad \forall \zeta \in \R^2 \\
\label{cov01}
q(\f + t \vec{1}, \cdot)  = e^{\im t } q(\f, \cdot)  \ , 
\quad \forall t \in \R \,,
\end{align}
In this sense, the solutions in \cite{PP13} are covariant quasiperioric traveling waves of the form
\begin{equation}
	\label{QP}
	q^{\rm qp}(\omega t,x) = q^{\rm qp}(\f,x)\vert_{\f = \omega t} = Q(\omega t + \bK x)
\end{equation}
where  $\omega\in \R^d$ belongs to a Cantor-like set of positive
measure and $\bK$ as in \eqref{Kappa}.

Linearizing \eqref{NLS0} at  $ q^{\rm qp}$, one gets the equation
\begin{equation}
	\label{NLS1}
	\im \partial_t u = - \Delta u + 
	A(\omega t) u + {\cal N}(\omega t, u)
\end{equation}
with 
\begin{equation}\label{AN}
\begin{aligned}
	A(\f) u:=    2|q^{\rm qp}(\f,\cdot)|^2 u + q^{\rm qp}(\f, \cdot )^2 \bar u \ , \\
	{\cal N}(\f, u):=  2 q^{\rm qp}(\f,\cdot) |u|^2 + \bar q^{\rm qp} (\f,\cdot) u^2 + |u|^2 u \ .
\end{aligned}
\end{equation}
As the original equation \eqref{NLS0} is gauge and translation {co}variant and $ q^{\rm qp}(\f, x)$ is a gauge covariant  quasi-periodic traveling wave, the linear operator $A(\f)$ and the nonlinearity $\cN(\f, u)$ fulfill the following covariant properties, as we show in Appendix \ref{section:cov}:  for any
$(\f, \zeta, t)   \in \T^d \times \R^2 \times \R $ 
\begin{align}
&	A(\f + t \vec{1}) \circ e^{\im t } = e^{\im t } \circ A(\f) , \qquad \qquad 
	A(\f +  \bK \zeta) \circ \tau_\zeta = \tau_\zeta \circ A(\f) , \label{covA}
\\
&	\cN(\f + t \vec{1},  e^{\im t} u)  = e^{\im t } \circ \cN(\f,u) , \qquad
	\cN(\f +  \bK \zeta, \tau_\zeta u ) = \tau_\zeta \circ \cN(\f,u)  \, ,  \label{covN}
\end{align}
where $\tau_\zeta$ is the translation operator $(\tau_\zeta u)(x):= u(x + \zeta)$.

 The fact that one may reduce to constant coefficients the operator $A(\f)$ was already discussed in \cite{PP,PP13} (and is actually simpler to prove than in the case of finite gap solutions \cite{MP}).
  On the other hand the methods of \cite{PP,PP13} do not give any information on the asymptotics of the frequencies, and actually one can prove that an expansion as the one in \eqref{asinto} cannot hold. 
Then one is not able to prove that the non-resonance conditions hold and one may not prove neither the stability result nor the successive instability one.

\smallskip

\paragraph{The results.}
As we already mentioned, the goal is this paper is to initiate the analysis of stability/instability of the general case by studying the nonlinear stability problem for a simplified {\em non-resonant} model, which contains only the difficulties related to the Melnikov conditions and avoids the further algebraic complications (which  one expects to deal with exactly as in \cite{PP} and \cite{MP}).
To this purpose we study the following simplified model:
\begin{equation}\label{NLS3} 
	\im u_t	- \Delta u + \VV(\omega t,x) u +\partial_{\bar u} \PP(t,x, u,\bar u)=0\,,\qquad x\in \T^2 \ . 
\end{equation}
We make the following assumptions on $\VV$ and $\PP$:
\begin{itemize}
\item[(H$\VV$)]   $   \VV(\omega t, x)$ is the multiplication operator by an 
 analytic  gauge invariant traveling wave (see  Definition \ref{def:tw}), namely
 \begin{equation}
	\label{travel}
{	\mathscr V(\omega t,x)}= V(\omega t + \bK x)  \,,\quad V(\f)= \sum_{ \ell \in \Z^d} V(\ell) e^{\im\ell\cdot \f}\,,
\end{equation}
with the constraint that
\begin{equation}
\label{condV}
V(\ell)=0\quad  \forall \ell: \; \pi(\ell)=0\,,\quad \mbox{or}\quad \sum_{i=1}^d \ell_i \ne 0\,. 
\end{equation}
\item[(H$\PP$)] The non linear term 
 $\PP(t,x,y_1,y_2) = P(\omega t +\bK x, y_1,y_2)$ with $P$  an  analytic function on $\T^d\times B_{\mathtt r_0}(0)$,
\[
P(\f,y_1,y_2)= \sum_{\ell, d_1,d_2} P_{\ell,d_1,d_2} e^{\im \ell\cdot \f} y_1^{d_1}y_2^{d_2}\,,\quad  \| P\|_{\ta,\mathtt r_0}=  \sum_{\ell, d_1,d_2} |P_{\ell,d_1,d_2}| e^{\ta|\ell|} \mathtt r_0^{d_1+d_2}<\infty \ , 
\]
with a zero of order at least two 
 and satisfying the invariance property
\[
P(\f+t \vec 1, e^{\im t} y_1, e^{-\im t} y_2 ) =P(\f,y_1,y_2)\,.
\]
\end{itemize} 
\begin{remark}
We model $\VV$ on the first term of the operator $A(\f)$ in \eqref{AN}, namely  the multiplication operator by $|q^{\rm qp}(\omega t, x)|^2$, which is a gauge invariant traveling wave with 
$ \int_\T |q^{\rm qp}(\omega t, x)|^2 dx $ constant in time. So we  assume that the same applies to $\VV$.
\end{remark}

\begin{remark}
The operator  $\VV(\f,\cdot)$ and the nonlinearity $\PP(\f,\cdot)$ fulfill \eqref{covA}, \eqref{covN}.
\end{remark}

For any $p \geq 0$ we introduce the space $\mathfrak H^p = \fH^p(\Z^2):= \ell^1(\Z^2)\cap h^p(\Z^2)$ endowed with the norm
$$
\| u \|_p := \| u \|_{\ell^1(\Z^2)} + \| u \|_{h^p(\Z^2)}.
$$
Note that, by Young inequality, the space $\mathfrak H^p$ is an  algebra with respect to the convolution product, i.e.
\begin{equation}\label{algebra}
\| u * v \|_p \leq 2^{2p+1} \|u \|_p \, \|v \|_p
\end{equation}
We are now ready to state our stability result.
	\begin{theorem}\label{cubic}
		Fix $p>0$ and consider the equation \eqref{NLS3}. Assume (H$\VV$) and  (H$\PP$). Let $\tR := [-1,1]^d$.
		There exist $\gamma_*, \epsilon_*>0$ and for any $\gamma \in (0, \gamma_*)$, and any potential $V$ satisfying
		\begin{equation}\label{pic colo0}
		\epsilon:=	\gamma^{-1}\,  \norm{V}_{\ta, \tp} < \epsilon_*  \ , 
		\end{equation}
		there exist a Cantor-like set
		$\cC$ with measure 
		$\meas (\tR \setminus \cC) \leq C \gamma\,,$ 
		and 
		$r_*,  T_* >0 $ such that { for any $\omega \in \cC$,}  $r \in (0, r_*)$ the following holds true. 
		For any $u \in \fH^p$, with $\norm{u(0)}_{p} \leq r$, the solution $u(t) \in \fH^p$ of \eqref{NLS3} fulfills
		\begin{equation}
			\label{}
			\norm{u(t)}_p \leq 2 r , \qquad \forall |t| \leq \frac{T_*}{r^2}
			\ . 
		\end{equation}
	\end{theorem}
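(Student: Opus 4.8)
\emph{Strategy.} The plan is to prove Theorem~\ref{cubic} in three moves: (i) a KAM reducibility scheme conjugating the linear part of \eqref{NLS3} to a time-independent Fourier multiplier $D:=\diag_{j\in\Z^2}(\Omega_j)$, whose crucial by-product is the precise asymptotic expansion of the frequencies $\Omega_j$; (ii) one step of Birkhoff normal form cancelling the genuinely quadratic part of the nonlinearity; (iii) a bootstrap argument in the interaction picture. All the changes of variables will be built so as to respect the covariance \eqref{covA}--\eqref{covN} (so as to map traveling waves into traveling waves) and, being $O(\e)$- or $O(r)$-close to the identity, to act as bi-Lipschitz maps of $\fH^p$ with constants as close to $1$ as desired; hence it is enough to prove the bound in the final coordinates and then undo the conjugations, at the harmless price of replacing $2r$ by e.g.\ $\tfrac32 r$ at intermediate stages.

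\emph{Step 1: reducibility.} Collect into one operator $\cL(\omega t):=-\Delta+\VV(\omega t,\cdot)+a(\omega t,\cdot)+b(\omega t,\cdot)\,\overline{(\,\cdot\,)}$ the Laplacian, the potential, and the $u$- and $\bar u$-linear contributions produced by $\d_{\bar u}\PP$; by (H$\VV$)--(H$\PP$) this is an analytic, quasi-periodically forced operator, self-adjoint in the real Fourier representation and covariant in the sense of \eqref{covA}. As in the finite-gap analysis recalled around \eqref{asinto}, one first performs a pseudodifferential reduction bringing $\cL$ to the form ``constant-coefficient multiplier $+$ small smoothing remainder'', and then a quadratic KAM iteration conjugating it to $D=\diag_j(\Omega_j)$. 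The covariance forces each $\Omega_j$ to depend on $j$ only through the quantities preserved by the translations $\tau_\zeta$, and the $\e$-small corrections to $|j|^2$ to decay in $\jap{j}$; expanding those corrections to arbitrary order in $\jap{j}^{-1}$ is exactly what makes the Melnikov conditions verifiable. The scheme converges for $\omega$ in the set where the first and second Melnikov conditions $|\omega\cdot\ell+\Omega_j|\ge\gamma\jap{\ell}^{-\tau}$, $|\omega\cdot\ell\pm(\Omega_j-\Omega_k)|\ge\gamma\jap{\ell}^{-\tau}$ and $|\omega\cdot\ell+\Omega_j+\Omega_k|\ge\gamma\jap{\ell}^{-\tau}$ hold for the admissible indices; the momentum structure links $\ell$ to $j-k$ (resp.\ $j+k$) through the projection $\pi$ of \eqref{condV}, and, combined with the asymptotic expansion, this is what confines the complement of these sets to measure $O(\gamma)$. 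Taking $\cC$ to be the intersection of all such non-resonance sets gives $\meas(\tR\setminus\cC)\le C\gamma$, and after the conjugation \eqref{NLS3} reads $\im u_t=Du+\cN(\omega t,u)$ with $\cN$ analytic, covariant and, by the algebra property \eqref{algebra}, satisfying $\|\cN(\omega t,u)\|_p\le C_1\|u\|_p^2$ on $\|u\|_p\le\rho_0$.

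\emph{Steps 2--3: normal form and energy estimate.} Gauge invariance of $\PP$ leaves in $\cN$ a generically nonzero quadratic part (the monomials $u^2,u\bar u,\bar u^2$ produced by $\d_{\bar u}$ of the cubic part of $\PP$), which I would remove by a quasi-periodic, quadratic, covariant change of variables $u\mapsto u+\Psi(\omega t,u)$; its generator solves a homological equation whose divisors are the three-wave combinations $\omega\cdot\ell\pm\Omega_{j_1}\pm\Omega_{j_2}\pm\Omega_{j_3}$ (with $\pi(\ell)$ tied to $j_1\pm j_2\pm j_3$), again bounded below by $\gamma\jap{\ell}^{-\tau}$ after possibly removing from $\cC$ a further set of measure $O(\gamma)$. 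One is then left with $\im u_t=Du+\cN_{\ge3}(\omega t,u)$, $\|\cN_{\ge3}(\omega t,u)\|_p\le C_0\|u\|_p^3$ for $\|u\|_p\le\rho_0$. Since the $\Omega_j$ are real, $e^{-\im tD}$ is an isometry of $\ell^1(\Z^2)$ and of $h^p(\Z^2)$, hence of $\fH^p$; in the interaction picture $w(t):=e^{\im tD}u(t)$ one has $\|w\|_p=\|u\|_p$ and $\dot w=-\im\,e^{\im tD}\cN_{\ge3}(\omega t,e^{-\im tD}w)$, so $\|u(t)\|_p\le r+C_0\int_0^{|t|}\|u(s)\|_p^3\,ds$ as long as $\|u\|_p\le\rho_0$. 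A standard continuity argument then yields: if $\|u(0)\|_p\le r$ with $2r\le\rho_0$, and $T_*$ is fixed so that $8\,C_0\,T_*\le1$, then $\|u(t)\|_p\le r+8C_0r^3|t|\le2r$ for all $|t|\le T_*/r^2$. Choosing $\gamma_*,\epsilon_*$ small enough that Steps 1--2 apply and all conjugations distort $\|\cdot\|_p$ by at most a factor $\tfrac43$, and $r_*$ small enough that the bootstrap runs in the original coordinates as well, proves Theorem~\ref{cubic}; the symmetry $t\to-t$ is inherited from the (up to complex conjugation) reversibility of \eqref{NLS3}.

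\emph{Main obstacle.} The whole weight of the argument lies in Step 1, and within it in obtaining the asymptotic expansion of $\Omega_j$ with enough precision \emph{and} good enough dependence on $\omega$ to run the measure estimate for the second Melnikov conditions: unlike in the plane-wave case \eqref{freq:PW} or the finite-gap case \eqref{asinto}, here the corrections to $|j|^2$ genuinely depend on the direction of $j\in\Z^2$, so isolating the finitely-valued part from the direction-dependent decaying tail and controlling both as $\omega$ varies is precisely where the combined pseudodifferential $+$ KAM normal form of the paper is indispensable. Steps 2 and 3 are routine once the diagonal reduced form, together with the frequency asymptotics, is in hand.
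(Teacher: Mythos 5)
Your proposal follows the same three-step scheme the paper uses: first apply the reducibility/asymptotics machinery (Theorems~\ref{riduco}, \ref{riduco.circa}), then one Birkhoff normal-form step killing the scaling-degree-one part of the Hamiltonian using the third-order Melnikov conditions, then a Gronwall/bootstrap argument (the paper invokes Corollary~5.1 of \cite{BMP:2019}, you give the equivalent interaction-picture estimate). The only substantive deviation is that you absorb $u$- and $\bar u$-linear contributions of $\partial_{\bar u}\PP$ into the operator to be reduced; this would fall outside the paper's reducibility framework (which treats $\fL=-\Delta+P$ with $P$ acting on $u$ alone, no $\bar u$-conjugation block), but it is moot here because in the nonlinear-stability proof the paper treats $P$ as a Hamiltonian of scaling $\geq 1$, i.e.\ at least cubic in $(u,\bar u)$, so $\partial_{\bar u}\PP$ has no genuinely linear part and $\VV$ is the only contribution to the quasi-periodic linear operator.
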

	{
		As already mentioned, the proof of Theorem \ref{cubic} follows the same general scheme of \cite{MP}  based on the three steps: 1.  Reducibility, 2. Non-resonance conditions, 3. Order three  Birkhoff Normal Form.
	Since the last step is standard (it is briefly discussed in Section \ref{3birk}) we concentrate on the first two.
\\
	We start by studying the  reducibility for the Schr\"odinger operator 
	\begin{equation}\label{Lo}
			A= \im \fL\,,\quad 	\fL = -\Delta + \VV\,,
			\end{equation}
	 in the space $\mathfrak H^p := \ell^1(\Z^2)\cap h^p(\Z^2)$, where $p>0$ is given, which amounts to prove that the following property holds:
	\begin{definition}[Reducibility]\label{def.red}
		We say that a time dependent Schr\"odinger operator $\fL$  is reducible on  $\fH^p$ if  there exists a  quasi-periodic in time bounded and invertible linear operator
		$G$  such that
		\begin{equation}\label{G.star.L}
		G_* \fL:= -\im G^{-1} \dot G + G^{-1}\fL G  = \diag(\Omega_j)\,,\quad \Omega_j\in \R\,.
		\end{equation}
		\end{definition}
	In \cite{EK1}, Eliasson and Kuksin have proven in the analytic setting the existence of an invertible and time quasi-periodic linear operator $G$ such that $G_* \fL$ as in \eqref{G.star.L} is reduced to a time independent, block diagonal operator $Q$, under smallness assumptions on $\epsilon$ as in \eqref{pic colo0} and for a Borel set of frequencies $\omega$ of asymptotically full measure in $\epsilon$, without the constraint that $\VV$ is a quasi-periodic traveling wave. Furthermore, it is well known (see for instance \cite{PX}) that if $\fL$ is covariant in the sense of \eqref{covA}, then $Q$ is diagonal. However, the reducibility result of \cite{EK1} does not provide an asymptotic expansion of the frequencies $\{\Omega_j\}_{j \in \Z^2}$. Thus here
	we proceed to prove a more refined result, from which we obtain an asymptotic expansion for $\{\Omega_j\}_{j \in \Z^2}$ that generalizes the one found in \cite{GHHMP22} in the finite gap case (see \eqref{asinto}).
	 To state our result we first need two definitions.
	\begin{definition}[Generators]\label{def.generators}
		A vector $v=(v_1,v_2) \in \Z^2\setminus\{0\}$ shall be called a \emph{generator} if
		\begin{equation}\label{def:generators}
		{\rm gcd}(v_1,v_2)=1 \mbox{ and }  v_1>0  \quad \mbox{ or } \quad v = (0,1) \ . 
		\end{equation}
		We shall denote by $\cV\subset\Z^2\setminus\{0\}$ the set of generators. 
	\end{definition}
	From now on, we fix 
	\begin{equation}
	\label{mu}
	0<\delta\ll 1 , \quad \mbox{ and } \quad  \mu := 1-2\delta   \ .
	\end{equation}
	\begin{definition}\label{def:VjBj}
		Given $j\in \Z^2\setminus\{0\}$, we define $v(j)\in \cV$ as the first vector w.r.t.\ the lexicographic ordering that attains the minimum in
		\begin{equation}
		\label{vmin}
		\min_{v\in \cV: |v|\le  |j|^\delta}|v\cdot j|
		\end{equation}
		and set
		$b(j):= j\cdot v(j)$. 	This means that $\,|b(j)|= \min \,\{|v\cdot j|\ |\ v\in \cV: |v|\le  |j|^\delta\}$.
	\end{definition}
	With these notations, we prove the following:
	\begin{theorem}\label{riduco.circa}
	There exist $\gamma_*, \epsilon_*>0$ and for any $\gamma \in (0, \gamma_*)$,  any potential $V$ satisfying \eqref{pic colo0},
	there exist a Cantor-like set
	$\cC$ with measure 
	$\meas (\tR \setminus \cC) \leq C \gamma \,, $
	a sequence of Lipschitz functions $\mathtt R\ni \omega \mapsto \Omega_j(\omega)$ and a bounded linear operator $G$, close to the identity and with quasi-periodic dependence on time, such that {for any $\omega \in \cC$}
	\begin{equation}
	\label{cerca.circa}
	G_* \fL= \diag(\Omega_j)\,,
	\end{equation}
	with
{\begin{equation}\label{nice.expansion}
	\begin{gathered}
\Omega_j(\omega) :=	\Omega_j = |j|^2 + \frac{\varpi(v(j),b(j) , \omega)}{\jap{b(j)}^2}{e^{- \kappa|v(j)|}} + \frac{\Theta^{(1)}(j, \omega)}{\jap{b(j)}\jap{j}^{\mu}} + \frac{\Theta^{(2)}(j, \omega)}{\jap{j}^{2\mu}} \quad \forall j\,,\\
\sup_{\omega \in \tR} \, 	\sup_{j \in \Z^2} \left(|\varpi(b(j), v(j), \omega)| + |\Theta^{(1)}(j,\omega )| + |\Theta^{(2)}(j,\omega )|\right) < 2 \|V\|_{\ta,\tp}\,,
	\end{gathered}
	\end{equation}}
	with $\kappa>0$ depending on $\ta$ and on the matrix $\bK$ only. Moreover, for any integer $N \geq 3$, there exist $\tau_N>0, c^*_{N}>0$ and a Cantor set $\cC_N \subseteq \cC$, such that
	$\meas(\tR \backslash \cC_N) \leq c^*_N \gamma\,,$
	and $\forall \omega \in \cC_N$, $\forall (\ell, L) \in \cG_N$, 
	where
	\begin{multline}\label{G.N}
	\cG_N := \Big\{(\ell,L)\in \Z^d\times \Z^{\Z^2} :\; \mbox{if} \;  \ell=0 \,,\quad \mbox{then}\quad \sum_{j\in \Z^2}|j|^2 L_j\ne 0\\
	|L|:= \sum_{j\in \Z^2}|L_j|\le N\,, \quad \; \sum_{i=1}^d \ell_i+  \sum_{j\in \Z^2}L_j=0\,,\quad
	\sum_{i=1}^d\ell_i\bk^{(i)} +  \sum_{j\in \Z^2}j L_j= 0 \Big\}\,,
	\end{multline}
	one has
	\begin{equation}\label{N.melnikov}
	|\omega\cdot\ell + \Omega\cdot L |\ge \g \jap{\ell}^{-\tau_N}\,.
	\end{equation}
	\end{theorem}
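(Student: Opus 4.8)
The plan is to obtain \eqref{cerca.circa}--\eqref{N.melnikov} through a KAM reducibility iteration that, besides the usual quantitative control of the perturbation, propagates at every step a precise \emph{structural} description of the diagonal. In the Fourier basis $\{e^{\im j\cdot x}\}_{j\in\Z^2}$ one has $-\Delta=\diag(|j|^2)$, and by (H$\VV$) the operator $\VV(\omega t,\cdot)$ has matrix elements $\VV_j^{j'}(\omega t)=\sum_{\ell:\,\bK^T\ell=j-j'}V(\ell)\,e^{\im\ell\cdot\omega t}$. Thus $\VV$ is a covariant quasi-periodic traveling wave: it couples only momenta $j,j'$ with $j-j'\in\bK^T\Z^d$ and $\sum_i\ell_i=0$, it is exponentially off-diagonal (since $|V(\ell)|\lesssim\norm{V}_{\ta,\tp}\,e^{-\ta|\ell|}$ while $|j-j'|\le\|\bK^T\|\,|\ell|$), and it obeys \eqref{covA}. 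This class is preserved by the scheme, and by the argument of \cite{PX} the resonant (small-divisor) part generated at each step is \emph{diagonal}; hence the iteration yields $\diag(\Omega_j)$ directly, with $G$ the composition of the conjugations --- convergent, bounded and close to the identity on $\fH^p$ by the algebra property \eqref{algebra}, provided $\eps=\g^{-1}\norm{V}_{\ta,\tp}<\eps_*$. The content of the proof is the tracking of $\Omega_j$.

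\paragraph{The iteration.} At the $\nu$-th stage $\fL_\nu=\diag(|j|^2+d_j^{(\nu)}(\omega))+R_\nu$, with $R_\nu$ a covariant quasi-periodic traveling wave, exponentially off-diagonal, of size $\eps_\nu\to0$ super-exponentially. One solves the homological equation setting $\widehat{(\Phi_\nu)}_j^{j'}(\ell)=\widehat{(R_\nu)}_j^{j'}(\ell)\,/\,(\omega\cdot\ell+|j|^2-|j'|^2+d_j^{(\nu)}-d_{j'}^{(\nu)})$ on the non-resonant set and $0$ elsewhere. By covariance every off-diagonal component has $j-j'=\bK^T\ell\ne0$, hence $\ell\ne0$; so the divisor is monotone in $\omega$ along the direction $\ell$ up to an $O(\eps)$ error, and the second Melnikov condition $|\omega\cdot\ell+|j|^2-|j'|^2+d_j^{(\nu)}-d_{j'}^{(\nu)}|\ge\g\jap{\ell}^{-\tau}$ holds off a set of measure $\lesssim\g$, \emph{uniformly in the position of $j,j'$} --- the loss of derivatives is harmless since $\jap{\ell}\gtrsim|j-j'|$ on the support of $R_\nu$. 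The resonant part is diagonal and is absorbed, $d_j^{(\nu+1)}=d_j^{(\nu)}+\widehat{(R_\nu)}_j^j(0)$ (null at the first step, $\VV$ having zero diagonal by \eqref{condV}), while $R_{\nu+1}$ is quadratic in $R_\nu$; one sets $\Omega_j:=|j|^2+\lim_\nu d_j^{(\nu)}$, which inherits Lipschitz dependence on $\omega$ with semi-norm $O(\norm{V}_{\ta,\tp})$.

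\paragraph{The asymptotic expansion (the main obstacle).} The genuinely new and hardest point is to prove that $\Omega_j-|j|^2$ has precisely the shape \eqref{nice.expansion} with the uniform bound $<2\norm{V}_{\ta,\tp}$. Expanding $\lim_\nu d_j^{(\nu)}$ as a sum over closed paths $j=j_0\to\cdots\to j_k=j$ with increments $j_i-j_{i+1}=\bK^T\ell_i$, $\sum_i\ell_i=0$, weighted by $\prod_iV(\ell_i)$ divided by the product of the accumulated divisors $\omega\cdot(\ell_0+\cdots+\ell_i)+|j_{i+1}|^2-|j|^2$, one observes that a divisor is small only when the corresponding partial momentum is almost orthogonal to $\bK^T(\ell_0+\cdots+\ell_i)$; writing that lattice vector as $c\,v$ with $v\in\cV$ a generator forces $v\cdot j$ to have bounded size, i.e. $v=v(j)$ and the divisor $\asymp\jap{b(j)}$. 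Since $\bK^T\ell_i\parallel v$ and $v$ is primitive, $|\ell_i|\gtrsim|v|/\|\bK^T\|$, whence $\prod_i|V(\ell_i)|\lesssim\norm{V}_{\ta,\tp}\,e^{-\kappa|v|}$ with $\kappa=\kappa(\ta,\bK)$; moreover the pairing $\ell\leftrightarrow-\ell$ (using $V(-\ell)=\overline{V(\ell)}$) cancels the naive $O(\jap{b(j)}^{-1})$ contributions and leaves the leading term $\varpi(v(j),b(j),\omega)\jap{b(j)}^{-2}e^{-\kappa|v(j)|}$ (arising at second order in $V$, comfortably below $2\norm{V}_{\ta,\tp}$). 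Paths that do not resonate with a generator of size $\le|j|^\delta$, or only weakly, carry an extra factor $\jap{j}^{-\mu}$ --- here the choice $\mu=1-2\delta$ and the cutoff $|v|\le|j|^\delta$ in Definition \ref{def:VjBj} are dictated exactly by the worst-case accumulation of such divisors --- giving the remainders $\Theta^{(1)},\Theta^{(2)}$. The delicate issue is \emph{stability of this decomposition under the iteration}: one checks inductively that the sum over all steps remains of the same shape and within the bound, using that $d^{(\nu)}$ enters the divisors at the next step only through the difference $d_j^{(\nu)}-d_{j'}^{(\nu)}$, which is subdominant.

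\paragraph{The $N$-th Melnikov conditions.} With \eqref{nice.expansion} available, one estimates $\meas\{\omega:\,|\omega\cdot\ell+\Omega\cdot L|<\g\jap{\ell}^{-\tau_N}\}$ for $(\ell,L)\in\cG_N$. If $\ell\ne0$: the Lipschitz map $\omega\mapsto\omega\cdot\ell+\sum_j\Omega_j(\omega)L_j$ is monotone along $\ell$ up to an error $O(\norm{V}_{\ta,\tp}|L|)=O(\norm{V}_{\ta,\tp}N)$, hence genuinely monotone once $\norm{V}_{\ta,\tp}$ is small (the factor $N$ is harmless since only $|L|\le N$ occur), so the bad set has measure $\lesssim_N\g\jap{\ell}^{-\tau_N}$; taking $\tau_N$ large enough --- it must grow linearly in $N$, to beat the $\lesssim\jap{\ell}^{cN}$ cardinality of $\cG_N$ at fixed $\jap{\ell}$ --- the sum over $\cG_N$ converges and removes measure $\le c^*_N\g$. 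If $\ell=0$: $\Omega\cdot L=\sum_j|j|^2L_j+\sum_j(\Omega_j-|j|^2)L_j$, where $\sum_j|j|^2L_j\in\Z\setminus\{0\}$ by the constraint defining $\cG_N$; one shows the second, small, sum cannot lower $|\Omega\cdot L|$ below $\g$, using that the leading term of \eqref{nice.expansion} depends on $j$ only through $(v(j),b(j))$ --- so it is locally constant on resonance strips and largely telescopes against $\sum_jL_j=0$ --- and that the remainders carry negative powers of $\jap{j}$, together with a direct measure estimate for the residual configurations, in which the non-resonance built into the model via (H$\VV$) (that $\omega\mapsto\Omega\cdot L$ is not identically zero) is used. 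The bottleneck throughout is the structural analysis of the third step.
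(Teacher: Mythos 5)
Your general scheme (KAM reducibility with a structural description of the diagonal, propagated through the iteration, followed by a measure estimate exploiting the asymptotics) matches the paper's. The Melnikov part is essentially right, and for $\ell=0$ you are overcomplicating: since $\sum_j|j|^2 L_j\in\Z\setminus\{0\}$ and the eigenvalue corrections are of size $O(\epsilon\g)$ with $|L|\le N$, one trivially has $|\Omega\cdot L|\ge 1-2N\epsilon\g>\g$; no telescoping is needed.

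The genuine gap is in the third paragraph. You propose to obtain \eqref{nice.expansion} by expanding $\lim_\nu d_j^{(\nu)}$ as a sum over closed paths weighted by $\prod_iV(\ell_i)$ divided by accumulated small divisors. That is a Lindstedt-type series, but the iteration you set up is a quadratically convergent Newton scheme: each $R_{\nu+1}$ is built from an exponential conjugation whose generator itself contains small divisors from all previous steps, so there is no canonical path expansion and you give no convergence mechanism for one. More to the point, your closing sentence --- ``one checks inductively that the sum over all steps remains of the same shape'' --- is precisely the content of the theorem, not a step of its proof. What is needed, and what the paper supplies, is a \emph{class of operators closed under the KAM operations} (solution of the homological equation, commutators, exponentials) whose membership directly encodes the eigenvalue decomposition: this is the quasi-T\"oplitz class $\cL^{\mathtt{qT}}_{a,-2}$, i.e.\ $M=M^{\tT}+M^{(1)}+M^{(2)}$ with $M^{\tT}$ a function of $(\ell,v(j),b(j))$ only and $M^{(i)}$ of order $-(i,2-i)$. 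The key geometric input making this class coherent is Lemma~\ref{traviata}: if $|b(j)|$ is small and $|j-h|\lesssim|j|^\delta$ then $v(j)=v(h)$, which is what lets the T\"oplitz symbol of a product be defined consistently (Proposition~\ref{prop.chiavetta}) and the homological equation be solved inside the class (Proposition~\ref{homolog}, via the decomposition into the regions $A_0,\dots,A_3$). Your ``paths that don't resonate carry an extra $\jap{j}^{-\mu}$'' heuristic corresponds to this region decomposition, but without the closure-under-iteration statement it does not produce the bound $<2\norm{V}_{\ta,\tp}$, nor the precise $e^{-\kappa|v(j)|}\jap{b(j)}^{-2}$ form (which in the paper is just the definition of the norm $|\cdot|^{\tT}_{a,-2}$ applied to the final diagonal via Lemma~\ref{diago}). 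You should replace the path expansion with the design of such a class and the verification that the KAM step preserves it.
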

	The major step in proving non-resonance conditions \eqref{N.melnikov} is to show that the final frequencies $\{\Omega_j\}_{j \in \Z^2}$ satisfy the asymptotic expansions \eqref{nice.expansion}. Actually the problem  of exhibiting asymptotic expansions for the eigenvalues of a linear Schr\"odinger operator $$
	\widetilde{\fL} = -\Delta + W
	$$ on $\T^n$, $n \geq 1,$ has been widely investigated already in the case of a time independent $W$ (see for instance \cite{KFTst, KFTunst, Veliev, Kar96, Kar97, PS10, PS12, BLMspec20, BLMspec22}). Roughly speaking, in such works one partitions the spectrum of $\fL$ into two subsets: the stable spectrum, which is composed by the eigenvalues such that
	\begin{equation}\label{stable}
	\Omega_j = |j|^2 + O\left(\frac{1}{\jap{j}^{2\mu}}\right)\,,
	\end{equation}
	and the unstable one, namely the set of eigenvalues for which \eqref{stable} does not hold. We point out that with our notations, stable eigenvalues correspond to those such that 
	\begin{equation}\label{big.b}
	\jap{b(j)} \geq \jap{j}^{\mu}\,.
		\end{equation}
	Refined asymptotic expansions were available in the time independent case also for eigenvalues in the unstable spectrum, and in \cite{BLMspec22} it was proven that all the eigenvalues of $\widetilde{\fL}$ on $\T^2$ are of the form
	\begin{equation}\label{csi.M}
	\begin{gathered}
	\Omega_j = |j|^2 + \sum_{n =1}^N \frac{{\varpi}^{\ast}_n(j)}{\jap{b(j)}^{2n}} + \sum_{n =1}^N \frac{{\Theta}^{\ast}_n(j)}{\jap{j}^{2n\mu}} + O\left(\frac{1}{\jap{b(j)}^{2N}}\right)\,,
	\end{gathered}
	\end{equation}
	using techniques based on pseudo-differential calculus and a geometric decomposition of the space of indexes $j$ \`{a} la Nekhoroshev (see \cite{Nekhoroshev77, Giorgilli:2003}). However, an expansion of the form \eqref{csi.M} would not suffice in order to show non-resonance conditions \eqref{N.melnikov} at any order $N$: this is the reason why we use a slightly different approach, based on a structure which we refer to as quasi-T\"oplitz (see Definition \ref{def.top} below), and prove expansions \eqref{nice.expansion}. Remark that the major difference with respect to the expansions  \eqref{csi.M} is represented by the functions $\varpi(b(j), v(j))$, namely by the fact that the terms which decay with $\jap{b(j)}$ depend on the vector $j$ through $v(j)$ and $b(j)$ only and decay exponentially in $v(j)$. This is the key to proving the non-resonance conditions \eqref{N.melnikov}.

Indeed, rewriting  
	\[
	\omega \cdot \ell + \Omega\cdot L = \omega \cdot \ell  \pm  \Omega_{j_1} \pm \Omega_{j_2}\pm \dots \pm  \Omega_{j_k} 
	\]
	the asymptotic expansion guarantees that if all the $|j_i| \gg |\ell|^{a_1}$,  and either all the $|b(j_i)| \gg |\ell|^{a_2}$ or $|v(j_i)| \gg |\ell|,$  then \eqref{N.melnikov} follows by imposing  Diophantine conditions on $\omega$ of the form
	$$
	|\omega \cdot \ell + K| \gtrsim |\ell|^{-\tau} \quad \forall \ell \neq 0\,, \quad \forall K \in \Z\,.
	$$
	(and it is well known that such conditions hold for a positive measure set of $\omega$ provided that $\tau$ is small enough).
	\\
	Following this line of reasoning in \eqref{N.melnikov} we can ignore all  the terms $\frac{\Theta^{(1)}(j_i)}{\jap{b(j_i)}\jap{j_i}^\mu}$, $\frac{\Theta^{(2)}(j_i)}{\jap{j_i}^{2\mu}}$ where the $|j_i|$ are sufficiently large. Similarly we can ignore all the terms $\frac{{\varpi}(v(j_i),b(j_i)}{\jap{b(j_i)}^2} e^{-\kappa |v(j_i)|}$ where either $v(j_i)$ or $b(j_i)$ is large.  This means that, for $|L|\le N$, the expression $\omega \cdot \ell + \Omega\cdot L$ assumes (up to a negligible error)  only a finite number of distinct values (of cardinality bounded only by $\ell$) which are small.
	\\
	In turn this means that one can surely impose conditions of the type \eqref{N.melnikov} provided that $\tau_N$ is sufficiently large.

	\section{Functional setting}

\normalsize

\subsection{Time-dependent momentum preserving operators.}
We shall consider quasi-periodic time dependent operators $t\mapsto M(\omega t)\in \cL(\fH^p)$ with frequency $\omega\in \R^d$, where 
\begin{equation}
\label{M(t)}
 M(\f)= \sum_{\ell\in \Z^d} M(\ell) e^{\im \f \cdot \ell   }\,,\quad M(\ell)\in \cL(\fH^p),
\end{equation}
that are analytic and momentum preserving  according to the following definition:
\begin{definition}[Momentum preserving operators]
	\label{def:momentum}
	Here and in the following we shall define the linear map $\pi:\Z^d\to \Z^2$
	\begin{equation}\label{ccc}
	\pi(\ell) :=  \sum_{i=1}^d\ell_i\bk^{(i)} \,,\quad 	\tc^{-1}:= \sup_{\ell\ne 0} \frac{|\pi(\ell)|}{|\ell|}\,.
	\end{equation}
	We say that a  time dependent linear operator $M(\omega t)$ is {\em momentum preserving} if
	\begin{equation}
	\label{memento}
	j - j'\neq \pi(\ell)   \ \ \Rightarrow \ \ 
	M_j^{j'}(\ell):= \langle M(\ell) e^{\im j'\cdot x},  e^{\im j\cdot x} \rangle =0 \ . 
	\end{equation}
	For any $a  > 0$ we define the norm
	\[
	|M|_{a}:= \sup_{\|u\|_p \leq 1 } \|\underline{M}_{a} u\|_p \,,\quad (\underline{M}_{a})_{j}^{j'}:= \sum_{ \ell: \pi(\ell)= j-j'} e^{a|\ell|} |M_{j}^{j'}(\ell)| . 
	\]
	We denote the space of time dependent momentum preserving operators with finite norm as $\cL_a(\mathfrak H^p)$. 
	\end{definition}
	\begin{definition}[Gauge covariant operators]
	We say that $M$ as above is {\em Gauge covariant} if $\sum_{i=1}^d \ell_i\ne 0$ implies $M_j^{j'}(\ell)=0\,.$
	\end{definition}
	
A convienent way of envisioning such operators is as {\em normally analytic} maps from a thickened torus
	\begin{equation}
			\label{toro}
		\T^d_a:=\{\f\in \C^d:\quad \Re(\f)\in \R^d /  (2\pi \Z)^d\,,\quad |\Im(\f)|\le a\}\quad \to \cL(\fH^p)\,.
	\end{equation}
\begin{remark}
	If $M \in \cL_a(\mathfrak H^p)$, then  $M(\f) \in \cL(\mathfrak H^p)$ for all $\f\in \T^d_a$ and the operator norm satisfies ${\| M(\f) \|_{\cL(\mathfrak H^p)} \leq |M|_a.}$
\end{remark}
{One may easily verify (see Lemma \ref{lemma.algebra}) that $\cL_a(\fH^p)$ is in fact an algebra with respect to composition. Then it is standard to define the commutator and the adjoint action as follows:
	\begin{definition}
		For $M,S\in \cL_a(\fH^p)$, we set 
		$
		\ad S[M](\f):= [M(\f), S(\f)]:= M(\f) S(\f)- S(\f) M(\f)$, which in matrix form reads
		\[
		([M, S])_j^{j-\pi(\ell)}(\ell)= \sum_{\ell_1+\ell_2=\ell} M_j^{j-\pi(\ell_1)}(\ell_1) S_{j-\pi(\ell_1)}^{j-\pi(\ell)}(\ell_2)- S_j^{j-\pi(\ell_1)}(\ell_1) M_{j-\pi(\ell_1)}^{j-\pi(\ell)}(\ell_2)\, . 
		\]
	\end{definition}
}
{
	\begin{definition}\label{def.sad-symp}
		We say that $M\in \cL_a(\fH^p)$ is self-adjoint if it satisfies 
		\begin{equation}
		\label{AA}
		M_{j}^{j-\pi(\ell)}(\ell) = \overline{ M_{j-\pi(\ell)}^{j}(-\ell)}
		\end{equation}
		so that   $M(\f)$ is self-adjoint   for all  $\f\in \T^d$.
		As is standard, we say that a bounded  operator $G\in \cL_a(\fH^p)$  is symplectic if 
	$G=e^{\im  A}$ with $A\in \cL_a(\fH^p)$ self-adjoint.
	\end{definition}
	\begin{remark}\label{gruppo}
		If $A,B$ are  self-adjoint, then so is $\im [A,B]$.  
		Consequently, anti self-adjoint operators  form a Lie algebra. Moreover, if $M$ is self-adjoint and $G= e^{S}$ is symplectic  (i.e., $\im S$ is self-adjoint)  then 
		$
		G^{-1} M G = \exp(\ad(S)) M ${is self-adjoint}.
		Similarly { Gauge  covariant operators form a Lie algebra, so if $S,M$ are  Gauge covariant, so is $G$ and $G^{-1} M G$. }
\end{remark}}
	\paragraph{Time-dependent changes of variables}
Given the linear PDE $u_t = \im \fL u$,
where $\fL$ is a time dependent Schr\"odinger operator as in \eqref{Lo} with $P\in \cL_a(\fH^p)$, let $G\in \cL_a(\fH^p)$ be a quasi-periodic in time  bounded invertible change of variables of the form $G= e^S$ with $S\in \cL_a(\fH^p)$. Then, defining $u =: G v$, one has
$v_t = \im (G_*\fL ) v\,,$
with $G_* \fL$ as in \eqref{G.star.L}.
The Lie exponentiation formula gives a nice representation of $G_* \fL$ in terms of the adjoint actions:
\begin{equation}
\label{coniugio}
\fL_1 := G_*\fL =  e^{\ad S } \fL - \im \sum_{k= 1}^\infty \frac{(\ad S)^{k-1}}{k!} \dot S\,,\quad e^{\ad S } \fL:=  \sum_{k= 0}^\infty \frac{(\ad S)^{k}}{k!} \fL\,.
\end{equation}
{\begin{remark}
		Note that, by Remark \ref{gruppo}, if $\fL$ is self-adjoint and $G$ is symplectic (or equivalently $\im S$ is self-adjoint) then $\fL_1$ is self-adjoint.
		{ The same holds for the Gauge covariance.}
		\end{remark}}
{\paragraph{Symplectic structure}
As we have already explained in the introduction, time dependent Schr\"odinger operators have a natural Hamiltonian structure on the phase space $\T^d \times\R^d\times  \fH^p$, equipped with the symplectic form
$d\cY\wedge d\f + \im \sum_j d u_j\wedge d \bar u_j\,.$
To this purpose 
we associate  to $\fL$ as in \eqref{Lo}, with $P$ self-adjoint,  the Hamiltonian
		\begin{equation}
			\label{NLSHam}
			H_{\fL}:=  \omega\cdot \cY + \sum_j |j|^2 |u_j|^2 +  \sum_{j,\ell} P_j^{j-\pi(\ell)}(\ell) e^{\im \ell\cdot \f} u_{j-\pi(\ell)}\bar u_j\,,
		\end{equation}
	with Hamilton equations
 $
 \dot\f=\omega, \ \dot u =\im \fL u\,, \dot \cY= - (u, P_\f u)_{\ell_2(\C)}
 $.
Accordingly, to a symplectic map  $G= e^{\im A}$ we associate the generating function
\[
\cA(\f,u):=   \sum_{j,\ell} A_j^{j-\pi(\ell)}(\ell) e^{\im \ell\cdot \f} u_{j-\pi(\ell)}\bar u_j\,,
\] 
whose time one flow $\Phi_{\cA}^1$ gives the symplectic change of variables $(u,\f,\cY)= \cG(v,\f',\cY')=\Phi_{\cA}^1(v,\f',\cY')$
\begin{equation}
	\label{simplettica}
\f= \f' \,,\quad \cY  
=  \cY'+ \im (G v, G_\f v)_{\ell_2(\C)} \,, \quad u =G(\f) v\,.
	\end{equation}
Note  that (by the Lie exponentiation formula)  $\Phi_{\cA}^1$ conjugates
$
(\Phi_{\cA}^1)_* H_\fL = e^{\{\cA,\cdot\}} H_\fL = H_{\fL_1},
$
with $\fL_1$ defined in \eqref{coniugio}. }
{Finally if $P, A$ are Gauge covariant  then so is $\fL_1$.}
\paragraph{Basic properties of $\cL_a(\fH^p)$.}
We now list some useful properties of the space $\cL_a(\fH^p)$.
\begin{remark}\label{rem:maj.norm}
	We shall systematically use the fact that the norm $| \cdot |_a$ of Definition \ref{def:momentum} is ordered. Indeed, if $ M(\f), N(\f)$ are such that $(\underline{M}_a)^{j'}_j \leq  (\underline{N}_{a'})^{j'}_j $ for any $j,j'$, then $|M|_a \leq |N|_{a'}$.
\end{remark}

\begin{lemma}\label{gigetto}
	Let $M$ be a momentum preserving operator according to Definition \ref{def:momentum}.  If there exist $\theta, A>0$ such that
	\[
	(\underline{M}_{a})_{j}^{j'}\leq A \,  e^{-\theta|j-j'|} \,,\quad \forall j,j'\in \Z^2,
	\]
	then there exists $C(p) >0$ such that 
	\[
	|M|_{a} \leq C(p) A \,   \theta^{-p-2}\,.
	\]
\end{lemma}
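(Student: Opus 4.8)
The idea is to estimate the operator norm of $\underline{M}_a$ on $\fH^p = \ell^1(\Z^2)\cap h^p(\Z^2)$ by comparing $\underline{M}_a$ with the convolution operator associated to the kernel $j\mapsto A\, e^{-\theta|j|}$. Indeed, by hypothesis $(\underline{M}_a)_j^{j'}\le A\, e^{-\theta|j-j'|} =: K_{j-j'}$ with $K\in \ell^1(\Z^2)$, so by Remark \ref{rem:maj.norm} it suffices to bound the norm of the convolution operator $u\mapsto K*u$ on $\fH^p$. First I would record that $K$ itself lies in $\fH^p$: the $\ell^1$ norm is $\sum_{n\in\Z^2} A\, e^{-\theta|n|}\le C A\,\theta^{-2}$ (the standard estimate on the number of lattice points in an annulus), and likewise the $h^p$ seminorm is $\big(\sum_n \jap{n}^{2p} A^2 e^{-2\theta|n|}\big)^{1/2}\le C(p) A\,\theta^{-p-1}$, after substituting $r=\theta|n|$ and bounding the resulting integral; hence $\|K\|_p\le C(p) A\,(\theta^{-2}+\theta^{-p-1})$.

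Next I would invoke the algebra/Young-type inequality for $\fH^p$: the bound \eqref{algebra} gives $\|K*u\|_p\le 2^{2p+1}\|K\|_p\,\|u\|_p$ for every $u$, so $|M|_a=\sup_{\|u\|_p\le 1}\|\underline{M}_a u\|_p\le \|K*u\|_p$-type estimate yields $|M|_a\le 2^{2p+1}\|K\|_p\le C(p) A\,\theta^{-p-2}$, where in the last step I absorb $\theta^{-2}+\theta^{-p-1}\le 2\theta^{-p-2}$ for $\theta$ small (and for $\theta$ bounded below the whole quantity is trivially bounded by $C A\theta^{-p-2}$ anyway, up to enlarging $C$). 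A small care point: the positivity of the entries $(\underline{M}_a)_j^{j'}\ge 0$ together with the domination by $K_{j-j'}$ must be used correctly, since $\fH^p$-norms are monotone under domination of entry-wise nonnegative kernels — this is exactly the content of Remark \ref{rem:maj.norm}, applied with $N$ the convolution operator by $K$.

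The only genuinely quantitative step is getting the $\theta$-dependence sharp, i.e. verifying that $\|K\|_p\lesssim A\,\theta^{-p-2}$; this is the "main obstacle", though it is really just the elementary estimate $\sum_{n\in\Z^2}\jap{n}^{2p}e^{-2\theta|n|}\le \int_0^\infty (1+r/\theta\cdot\theta)^{2p} e^{-2\theta r}\cdot \tfrac{C r}{\theta^0}\,dr$ — more cleanly, comparing the sum to the integral $\int_{\R^2}\jap{x}^{2p}e^{-2\theta|x|}\,dx$ and rescaling $x=y/\theta$ to pull out $\theta^{-2}$ from the measure and a further $\theta^{-2p}$ from $\jap{x}^{2p}\le \theta^{-2p}\jap{y}^{2p}$ (for $\theta\le 1$), so that the square root produces $\theta^{-p-1}$. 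Combined with the convolution constant $2^{2p+1}$ this gives the claimed $C(p)A\,\theta^{-p-2}$, and I would not belabor the constant-tracking beyond this.
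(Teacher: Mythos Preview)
Your proof is correct and follows exactly the paper's one-line argument: dominate $\underline{M}_a$ entrywise by the convolution kernel $K_n = A e^{-\theta|n|}$, invoke the algebra inequality \eqref{algebra}, and bound $\|K\|_p \lesssim_p A\,\theta^{-p-2}$. One quibble: your parenthetical that ``for $\theta$ bounded below the whole quantity is trivially bounded by $C A\theta^{-p-2}$'' is not right (take $M$ diagonal with all entries equal to $A$; the hypothesis holds for every $\theta>0$ but $|M|_a=A$), so the stated bound really needs $\theta\le 1$ --- the paper is equally silent on this and only ever applies the lemma with small $\theta$.
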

\begin{proof}
	We remark that $\| \underline{M}_a u\|_p \leq \| f * u\|_p$ where $f = (f_j)_{j \in \Z^2}$ has components $f_j := e^{-\theta |j|}$. Then one uses \eqref{algebra} and the bound follows. 
\end{proof}

\begin{lemma}
	Let $\VV$ be a traveling wave as in Definition \ref{def:tw} with $\|V\|_{\ta, \tp}<\infty$.
	Then the multiplication operator  $M_V: u \mapsto \VV u$ belongs to  $\cL_a(\mathfrak H^p)$ for any $ a \in [0,\ta)$ and 
	\[
	|M_V|_{a} \leq  C(p,\tp) \, (\ta - a)^{-(p-2)} \, \|V\|_{\ta, \tp}\,.
	\]
\end{lemma}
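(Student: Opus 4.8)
The plan is to write down the matrix elements of $M_V$ explicitly, observe that they are concentrated near the diagonal at a rate governed by the analyticity radius $\ta$, and then invoke Lemma \ref{gigetto}. First I would expand $V$ as in \eqref{travel}: since $\ell\cdot\bK x=\pi(\ell)\cdot x$ with $\pi$ the linear map of \eqref{ccc}, one has $\VV(\f,x)=\sum_{\ell\in\Z^d}V(\ell)\,e^{\im\ell\cdot\f}\,e^{\im\pi(\ell)\cdot x}$, so that multiplication by $\VV$ sends $e^{\im j'\cdot x}$ to $\sum_\ell V(\ell)e^{\im\ell\cdot\f}e^{\im(j'+\pi(\ell))\cdot x}$; reading off coefficients, $(M_V)_j^{j'}(\ell)=V(\ell)$ when $j-j'=\pi(\ell)$ and $(M_V)_j^{j'}(\ell)=0$ otherwise. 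In particular $M_V$ is momentum preserving in the sense of Definition \ref{def:momentum}, and the associated majorant operator has entries $(\underline{M_V}_{a})_j^{j'}=\sum_{\ell\,:\,\pi(\ell)=j-j'}e^{a|\ell|}\,|V(\ell)|$.

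Next I would trade the $e^{a|\ell|}$ weight hidden in the norm $|\cdot|_a$ for genuine off-diagonal decay. The momentum constraint $\pi(\ell)=j-j'$ forces $|j-j'|=|\pi(\ell)|\le\tc^{-1}|\ell|$ (by the definition of $\tc$ in \eqref{ccc}), i.e.\ $|\ell|\ge\tc\,|j-j'|$. Writing $e^{a|\ell|}=e^{-(\ta-a)|\ell|}e^{\ta|\ell|}$ and bounding $e^{-(\ta-a)|\ell|}\le e^{-\frac12(\ta-a)\tc|j-j'|}\,e^{-\frac12(\ta-a)|\ell|}$, one then gets, with $\theta:=\tfrac12\tc(\ta-a)$,
\[
(\underline{M_V}_{a})_j^{j'}\;\le\;e^{-\theta|j-j'|}\sum_{\ell\in\Z^d}e^{\ta|\ell|}|V(\ell)|\;\le\;C(\tp)\,\|V\|_{\ta,\tp}\;e^{-\theta|j-j'|}\,,\qquad\forall\,j,j'\in\Z^2,
\]
the last step being Cauchy--Schwarz together with $\tp>d/2$ (so that $\sum_\ell\jap{\ell}^{-2\tp}<\infty$), recalling the norm \eqref{normap}. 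Lemma \ref{gigetto} then yields $M_V\in\cL_a(\fH^p)$ with $|M_V|_a\le C(p)\,\theta^{-p-2}\,C(\tp)\|V\|_{\ta,\tp}$, i.e.\ the asserted estimate with a loss $(\ta-a)^{-(p+2)}$ and a constant depending in addition only on $\bK$ (through $\tc$).

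I do not expect any real obstacle here: this is essentially a bookkeeping lemma. The only points requiring care are (i) splitting the analyticity gap $\ta-a$ so that part of it produces the spatial localization $e^{-\theta|j-j'|}$ via $|\pi(\ell)|\le\tc^{-1}|\ell|$, while the resulting power of $(\ta-a)$ is tracked correctly, and (ii) the passage from an $\ell^1$-type bound on the (scalar) convolution kernel to a bound on the $\cL(\fH^p)$ operator norm -- which is exactly what Lemma \ref{gigetto}, and ultimately the algebra property \eqref{algebra}, is there to provide. As a variant one can bypass Lemma \ref{gigetto} altogether: $\underline{M_V}_{a}$ acts as convolution against the nonnegative sequence $g_k:=\sum_{\pi(\ell)=k}e^{a|\ell|}|V(\ell)|$, and estimating $\|g\|_p$ directly by the same weight-splitting and the elementary inclusion $\ell^1(\Z^2)\hookrightarrow h^p(\Z^2)$ gives a comparable bound (with a possibly different but still fixed negative power of $\ta-a$).
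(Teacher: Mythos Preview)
Your argument is essentially identical to the paper's: compute the matrix elements, use $|\pi(\ell)|\le\tc^{-1}|\ell|$ to extract exponential off-diagonal decay from the analyticity gap, bound the remaining $\ell$-sum by Cauchy--Schwarz against $\|V\|_{\ta,\tp}$, and conclude via Lemma~\ref{gigetto}. The only cosmetic difference is that you split the gap in half (getting $\theta=\tfrac12\tc(\ta-a)$ rather than $\theta=\tc(\ta-a)$), which is harmless; note also that the exponent should indeed be $-(p+2)$ as you obtain (the $-(p-2)$ in the statement is a typo), and that the inclusion $\ell^1(\Z^2)\hookrightarrow h^p(\Z^2)$ mentioned in your variant remark is false for $p>0$.
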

\begin{proof}
	The multiplication operator  is represented by the matrix 
\begin{equation}
		\label{def:Mv}
	(M_V)_j^{j'}(\ell) = V_{\ell, j-j'} = 
	\begin{cases} 
	V(\ell) &  \mbox{ if }  j-j' = \pi(\ell) \\
	0  & \mbox{otherwise}
	\end{cases}  \ , 
\end{equation}
	where we used the fact that $V$ is a traveling wave, see \eqref{travel}.
	Hence $M_V$ is momentum preserving.
	To compute its norm, remark that $|j-j'|=|\pi(\ell)| \le \tc^{-1} |\ell|$ hence, setting $\theta =\tc(\ta-a)$,
	\begin{align*}
	\label{cestoaprova}
	( \und{M_V}_a)_j^{j'} &=\sum_{ \ell : j-j'= \pi(\ell)} e^{a|\ell|}|V(\ell)| \le e^{-\theta|j-j'|} \sum_{\ell} e^{\ta|\ell|}|V(\ell)|\\
	&\le e^{-\theta|j-j'|} \sqrt{ \sum_{ \ell } \frac{1}{\jap{\ell}^{2\tp}}}\sqrt{\sum_{ \ell } e^{2\ta|\ell|}|V(\ell)|^2 \jap{\ell}^{2\tp}} \le  A(\tp) e^{-\theta|j-j'|}\|V\|_{\ta,\tp}  \ .
	\end{align*}
	Then use Lemma \ref{gigetto}.
\end{proof}

{ We now want to define the "order" of a momentum preserving operator (see Definition \ref{def.due.pesi} below). On the one hand, an operator of order $-m$ is standardly defined as a linear map $\fH^p \rightarrow \fH^{p+m}$ $\forall p$; on the other hand, if $\fH^p$ represents a space of functions of two variables, $\fH^p(\Z^2) = \fH^p(\Z) \otimes \fH^p(\Z),$ one can define a "vectorial order" such that an operator of order $-(n_1, n_2)$ maps
$$
\fH^p(\Z) \otimes \fH^p(\Z) \rightarrow \fH^{p + n_1}(\Z) \otimes \fH^{p +n_2}(\Z)\,
$$
(see for instance \cite{MP}). Here actually the main novelty is that we use a "non standard" set of coordinates, which we define in the following.
} 	
\subsection{A non-linear coordinate set on $\Z^2$}
{In this section we prove some properties of the quantities $b(j)$, $v(j)$ defined for all $j \in \Z^2$ in Definitions \ref{def.generators}, \ref{def:VjBj}.}

\begin{lemma}\label{lemma:cramer}
	Let $v,w\in\cV$ be distinct generators with $\max(|v|,|w|) < R$. If $x\in\Z^2$ satisfies
	$\max(|x\cdot v|,|x\cdot w|)< A$, then $|x| < 2 A R$.
\end{lemma}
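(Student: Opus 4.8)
The plan is to solve for $x$ explicitly using Cramer's rule. Since $v=(v_1,v_2)$ and $w=(w_1,w_2)$ are distinct generators, the matrix $\begin{pmatrix} v_1 & v_2 \\ w_1 & w_2 \end{pmatrix}$ has nonzero integer determinant $D := v_1 w_2 - v_2 w_1$, hence $|D|\ge 1$. Writing $a := x\cdot v$ and $b := x\cdot w$, the linear system $v_1 x_1 + v_2 x_2 = a$, $w_1 x_1 + w_2 x_2 = b$ has the unique solution
\[
x_1 = \frac{a w_2 - b v_2}{D}\,, \qquad x_2 = \frac{b v_1 - a w_1}{D}\,.
\]

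Next I would bound each coordinate. Since $|D|\ge 1$, we get $|x_1| \le |a||w_2| + |b||v_2| \le |a||w| + |b||v|$ and similarly $|x_2| \le |b||v| + |a||w|$, using $|w_2|\le|w|$, $|v_2|\le|v|$, etc. By hypothesis $|a|,|b| < A$ and $|v|,|w| < R$, so each of $|x_1|,|x_2|$ is strictly less than $2AR$. Hence $|x| = \max(|x_1|,|x_2|) < 2AR$ (or, if one prefers the Euclidean norm, $|x| \le \sqrt{|x_1|^2+|x_2|^2} < 2\sqrt{2}\,AR$, but with the sup-norm convention the stated bound $2AR$ holds directly).

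There is no real obstacle here: the only point requiring a word of care is that the determinant of a pair of distinct generators is genuinely nonzero, i.e.\ that two distinct elements of $\cV$ are linearly independent over $\Q$. This follows from the definition of generator: if $v,w\in\cV$ were parallel, then since both are primitive (gcd of components equal to $1$) we would have $w = \pm v$, and the sign normalization in \eqref{def:generators} (first nonzero component positive, or the vector $(0,1)$) forces $w = v$, contradicting distinctness. Once this is noted, $|D|\ge 1$ because $D\in\Z\setminus\{0\}$, and the rest is the elementary estimate above.
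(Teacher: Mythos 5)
Your proof takes the same route as the paper's: solve the $2\times 2$ linear system by Cramer's rule, note that the determinant $D$ of two distinct generators is a nonzero integer (hence $|D|\ge 1$), and then estimate. Your careful check that two distinct elements of $\cV$ are linearly independent is a good addition, since this is exactly where the definition of $\cV$ (primitive plus sign normalization) is used.

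One small quantitative point: the paper uses the Euclidean norm on $\Z^2$ (as in $\Omega_j=|j|^2$), and your coordinate-wise triangle-inequality bound $|x_1|,|x_2|<2AR$ only yields $|x|<2\sqrt{2}\,AR$ in that norm, as you note. To recover the stated constant $2AR$ in Euclidean norm, apply Cauchy--Schwarz to each Cramer numerator instead: $|x_1|\le\sqrt{a^2+b^2}\,\sqrt{v_2^2+w_2^2}$ and $|x_2|\le\sqrt{a^2+b^2}\,\sqrt{v_1^2+w_1^2}$, whence $|x|^2\le(a^2+b^2)(|v|^2+|w|^2)<4A^2R^2$. This is a cosmetic improvement; the extra $\sqrt{2}$ would be harmless in the places where the lemma is invoked.
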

\begin{proof}
Denoting by $a:=(x \cdot v,  x \cdot w)^T$, $x$ is the solution of the linear system $M x = a$ where $M$ is the matrix with rows $v^T$ and $w^T$. Then one estimates $M^{-1}$ by Cramer's rule.
\end{proof}

We shall denote by  $ B_K(0)$ the ball of radius $K$ and center $0$ in $\Z^2$.
\begin{lemma}\label{traviata}
	There exists $\tJ_\delta>0$ with the following property. For any $j\notin  B_{\tJ_\delta}(0)$  such that
	$|b(j)|< 2 |j|^{\mu}$ (see \eqref{mu}), the following holds true:
	\begin{enumerate}
		\item[(i)] For all $w\in\cV$ with $|w|\le  |j|^\delta$ and $w\ne v(j)$
		\[
		|w\cdot j|\ge 2 \jap{j}^\mu > |v(j)\cdot j|\,,
		\]
		namely $v(j)$ is the unique vector attaining the minimum in \eqref{vmin}.
		\item[(ii)]  If there exists $h\in \Z^2$ with $|j-h|< 2 \max(|j|,|h|)^\delta$ and  $v(h)\ne v(j)$, then
		\[
		|v(j)|>\frac12 |j|^\delta.
		\]
	\end{enumerate}
\end{lemma}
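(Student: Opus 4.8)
The plan is to reduce both statements to the elementary fact recorded in Lemma~\ref{lemma:cramer}: two distinct generators of length $<R$ that both have scalar product of modulus $<A$ with an integer vector $x$ force $|x|<2AR$. The whole role of the hypothesis $|b(j)|<2|j|^\mu$ and of the choice $\mu=1-2\delta$ in \eqref{mu} is that, in the configurations we want to exclude, Lemma~\ref{lemma:cramer} produces an upper bound for $|j|$ (or $|h|$) of the form $C\,|j|^{\mu+\delta}=C\,|j|^{1-\delta}$, which collapses to $|j|^{\delta}<C$; hence everything works by taking $\tJ_\delta$ larger than the resulting threshold (concretely $\tJ_\delta:=2^{6/\delta}$ is more than enough, and in particular $1\le|j|$ so that $|j|\le\jap j\le\sqrt2\,|j|$ throughout).

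For item (i) I would argue by contradiction. The inequality $2\jap j^\mu>|v(j)\cdot j|=|b(j)|$ is immediate since $|b(j)|<2|j|^\mu<2\jap j^\mu$. For the other inequality, suppose some generator $w\ne v(j)$ with $|w|\le|j|^\delta$ satisfies $|w\cdot j|<2\jap j^\mu$. Then $v(j)$ and $w$ are distinct generators of length $\le|j|^\delta$ with $\max(|v(j)\cdot j|,|w\cdot j|)<2\jap j^\mu$, so Lemma~\ref{lemma:cramer} gives $|j|<2\cdot(2\jap j^\mu)\cdot(2|j|^\delta)=8\jap j^\mu|j|^\delta\le 8\sqrt2\,|j|^{1-\delta}$, i.e.\ $|j|^\delta<8\sqrt2$, contradicting $|j|\ge\tJ_\delta$.

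For item (ii) I would again argue by contradiction, assuming $|v(j)|\le\tfrac12|j|^\delta$ and deriving $v(h)=v(j)$, which is against the hypothesis. First, $|j-h|<2\max(|j|,|h|)^\delta\le\tfrac12\max(|j|,|h|)$ (for $|j|$ large) forces $\tfrac12|j|\le|h|\le2|j|$. Then $|v(j)|\le\tfrac12|j|^\delta<2^{-\delta}|j|^\delta\le|h|^\delta$, so $v(j)$ is admissible for $h$ and $|b(h)|=|v(h)\cdot h|\le|v(j)\cdot h|$. By the triangle inequality $|v(j)\cdot h|\le|b(j)|+|v(j)|\,|h-j|<2|j|^\mu+2^\delta|j|^{2\delta}\le3|j|^\mu$, where the last step uses $\mu=1-2\delta>2\delta$ (valid since $\delta\ll1$) to absorb the second term for $|j|$ large. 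Now $v(j)$ and $v(h)$ are distinct generators of length $\le2|j|^\delta$ with scalar product of modulus $<3|j|^\mu$ against $h$, so Lemma~\ref{lemma:cramer} gives $|h|<2\cdot(3|j|^\mu)\cdot(2|j|^\delta)=12|j|^{1-\delta}$; combined with $|h|\ge\tfrac12|j|$ this yields $|j|^\delta<24$, contradicting $|j|\ge\tJ_\delta$. Hence $v(h)=v(j)$ is forced, which is impossible, so $|v(j)|>\tfrac12|j|^\delta$.

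I do not anticipate a real obstacle: the argument is purely two-dimensional linear algebra plus exponent bookkeeping. The only points requiring a little care are checking that the admissibility condition $|v(j)|\le|h|^\delta$ genuinely follows from $|v(j)|\le\tfrac12|j|^\delta$ (it uses $2^{-\delta}>\tfrac12$ together with $|h|\ge\tfrac12|j|$), organizing the case split $|h|\le|j|$ versus $|h|>|j|$ so that $|h|\asymp|j|$ cleanly, and collecting all the ``$|j|$ large enough'' conditions into one explicit $\tJ_\delta$ depending only on $\delta$.
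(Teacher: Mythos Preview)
Your proof is correct and follows essentially the same approach as the paper: both parts reduce to Lemma~\ref{lemma:cramer}, exploiting that two distinct short generators with small scalar product against the same vector force that vector to be short. The only organizational difference is that in part~(ii) you argue directly by contradiction from $|v(j)|\le\tfrac12|j|^\delta$ (which then forces $|v(j)|\le|h|^\delta$), whereas the paper first splits into the cases $|v(j)|>|h|^\delta$ (thesis immediate) and $|v(j)|\le|h|^\delta$ (Cramer contradiction); your route is just a repackaging of the same computation with slightly different constants.
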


\begin{proof}
$(i)$ Assume by contradiction that there exists another generator $w\in\mathcal V$ such that $|w\cdot j| = |v(j)\cdot j| = |b(j)| < 2|j|^{\mu}$.
	Then, we apply Lemma \ref{lemma:cramer} with $A=  2|j|^{\mu}, R= |j|^\delta$ and we deduce
	$$
	|j| < 4 |j|^{1-\delta} \quad \Rightarrow \quad |j| < 4^{\frac{1}{\delta}}
	$$
	and item $(i)$ follows provided $\tJ_\delta \geq 4^{\frac{1}{\delta}}$.
	
$(ii)$ We claim that, by taking  $\tJ_\delta$ large enough, one has
\begin{equation}
\label{h.est.1}
	\frac12 |j|<|h|< 2|j|.
\end{equation}
	Indeed if $|h| \geq 2|j|$, then by triangular inequality one deduces that $|h| \leq 4 |h|^\delta$, so in particular, being $\delta \ll 1$, one has $|h|\leq R_\delta$, same for $|j|$. Then just take $\tJ_\delta > R_\delta$ to get a contradiction. The other inequality is analogous.

	By the definition of $v(\cdot)$ we have
$	|v(j)|< |j|^\delta$, $|v(h)|< |h|^\delta$.
	We distinguish two cases.
	If $|v(j)|> |h|^\delta$ then the thesis follows by \eqref{h.est.1}.
	If $|v(j)|\le |h|^\delta$, then using the bounds on  $ b(j)$, $|j-h|$ and again \eqref{h.est.1} we get 
	\[
	|v(h)\cdot h|\le  |v(j)\cdot h | = |v(j)\cdot (j+ h-j)| \le 2 |j|^{\mu}+ 2 |j|^\delta \max(|j|,|h|)^\delta < 4 |h|^{\mu}
	\]
	provided $\tJ_\delta$ is sufficiently large and $\delta$ sufficiently small.

	Then we apply Lemma \ref{lemma:cramer} with $x = h$, $v= v(j)$, $w= v(h)$ $R= |h|^\delta$ and $A= 4|h|^{\mu}$. We deduce $|h|< 8 |h|^{1-\delta}$, which leads to a contradiction for $\tJ_\delta$ large.
\end{proof}

Next we define the order of a momentum preserving operator.
\begin{definition}[Order]\label{def.due.pesi}
	Given $\bN= (n, m )$ with $n,m \geq 0$ and an operator $M\in \cL_a(\fH^p)$, we define the following norm:
	\begin{equation}\label{due.pesi}
		|M|_{a; -\bN} := \sup_{\|u\|_p\leq 1} \|\underline{M}_{a; -\bN } u\|_p\,, \quad \left(\underline{M}_{a; -\bN}\right)_{j}^{j'} := \sum_{\ell \colon  j - j' = \pi(\ell)} e^{a|\ell|}\,  |M_{j}^{j'}(\ell)| \, \langle j \rangle^{\mu n} \, \langle b(j) \rangle^{m}\,,
	\end{equation}
	where $\forall j \in \Z^2$,  $b(j)$ is the quantity  in Definition \ref{def:VjBj} and $\mu$ in \eqref{mu}.
	We denote by $\cL_{a,-\bN}$ the subspace of $\cL_a$  with finite $|\cdot |_{a,-\bN}$ norm. 
\end{definition}

These operators form an algebra, as the next lemma shows. Let
\begin{equation}\label{def.C}
	C(p,\s):= c^{-p} \sup_{k\in \N} e^{-\s k} k^{p} \sim  \s^{-p} \, .
\end{equation}
\begin{lemma}[Algebra property]\label{lemma.algebra}
	Given two operators  $M, N$, with $|M|_{a; -\bN_1}, |N|_{a;-\bN_2}<\infty\,,$  
	we have
	\begin{equation}
		\label{stima1}
		|M N|_{a;-\bN_1} \le |M|_{a; -\bN_1} |N|_{a;\vec 0 }\,.
	\end{equation}
	Moreover, let $\bN_1 = (n_1, m_1)$ and $\bN_2 = (n_2, m_2)$; then we have for any $a'<a$
	\begin{equation}
		\label{stima2}
		| M N|_{a'; -(n_1 + n_2, 0)} \le  2^{n_2}|M|_{a';-\bN_1}|N|_{a'; -\bN_2}  +   C((n_1+n_2)/\delta, a-a') |M|_{a; -\bN_1}|N|_{a'; -\bN_2}\,.
	\end{equation}
\end{lemma}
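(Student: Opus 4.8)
The strategy is to work entirely at the level of the majorant matrices $\underline{M}_{a;-\bN}$ of Definition \ref{def.due.pesi}, exploiting Remark \ref{rem:maj.norm} (monotonicity of $|\cdot|_a$ under pointwise domination of majorants) so that all estimates reduce to pointwise inequalities between nonnegative matrices, after which the operator bound follows from the convolution algebra property \eqref{algebra} exactly as in Lemma \ref{gigetto}. For \eqref{stima1}: writing the matrix entries of the product with the convolution $\ell = \ell_1 + \ell_2$ and $j - j' = \pi(\ell)$, $j - k = \pi(\ell_1)$, $k - j' = \pi(\ell_2)$, one has $e^{a|\ell|}\le e^{a|\ell_1|}e^{a|\ell_2|}$ and the weight $\langle j\rangle^{\mu n_1}\langle b(j)\rangle^{m_1}$ attached to the outer index $j$ is carried entirely by the $M$-factor; hence $(\underline{MN}_{a;-\bN_1})^{j'}_j \le \sum_k (\underline{M}_{a;-\bN_1})^k_j\,(\underline{N}_{a;\vec 0})^{j'}_k$, and \eqref{stima1} follows since the right-hand side is a matrix product of two majorants, both with finite norm.

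The substantive content is \eqref{stima2}, where the full weight $\langle j\rangle^{\mu(n_1+n_2)}$ must be distributed between the two factors even though, in the product, only the outer weight of $M$ and the weight of $N$ (attached to the intermediate index $k$) are available a priori. The elementary inequality is $\langle j\rangle \le \langle k\rangle + \langle j-k\rangle \le 2\max(\langle k\rangle,\langle j-k\rangle)$, and since $j-k=\pi(\ell_1)$ we have $|j-k|\le \tc^{-1}|\ell_1|$, so $\langle j - k\rangle$ can be absorbed into a loss of analyticity $e^{-(a-a')|\ell_1|}$ up to the constant $C((n_1+n_2)/\delta, a-a')$ of \eqref{def.C} (the exponent $(n_1+n_2)/\delta$ rather than $(n_1+n_2)$ is the place where the precise scaling must be tracked — see below). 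This splits into two regimes: when $\langle k\rangle \ge \langle j-k\rangle$ we bound $\langle j\rangle^{\mu(n_1+n_2)}\le 2^{n_1+n_2}\langle k\rangle^{\mu(n_1+n_2)}$ and distribute $\langle k\rangle^{\mu n_1}$ to $M$'s intermediate weight and $\langle k\rangle^{\mu n_2}$ to $N$'s weight, giving the first term $2^{n_2}|M|_{a';-\bN_1}|N|_{a';-\bN_2}$ (a further use of monotonicity lets one replace $M$'s intermediate-index weight by its outer-index weight at the cost absorbed in the $2^{n_2}$ or by a trivial domination); when $\langle j-k\rangle \ge \langle k\rangle$ the entire power $\langle j\rangle^{\mu(n_1+n_2)}\lesssim \langle j-k\rangle^{\mu(n_1+n_2)}\le |\ell_1|^{\mu(n_1+n_2)}\tc^{-\mu(n_1+n_2)}$ is converted against $e^{-(a-a')|\ell_1|}$ using $\sup_k e^{-\sigma k}k^{p}\sim \sigma^{-p}$, producing the second term with constant $C((n_1+n_2)/\delta, a-a')$.

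The main obstacle — and the reason the $1/\delta$ appears in the constant — is the treatment of the $\langle b(j)\rangle$-weights under the product, because $b(j)$ is \emph{not} a subadditive function of $j$: the generator $v(j)$ realizing the minimum in \eqref{vmin} can jump discontinuously, so one cannot naively bound $\langle b(j)\rangle$ by $\langle b(k)\rangle\langle b(j-k)\rangle$ or similar. Here one invokes Lemma \ref{traviata}: in the only dangerous case, namely $|b(j)| < 2|j|^\mu$ with $v(k)\ne v(j)$ and $|j-k| = |\pi(\ell_1)|$ small compared to $\max(|j|,|k|)^\delta$, part (ii) forces $|v(j)| > \tfrac12|j|^\delta$; but then (since $v(j)$ is tested in the minimum \eqref{vmin} for $j$) we get $|j|^\delta \lesssim 1$ unless $\langle b(j)\rangle$ is already controlled, and in fact one extracts $\langle j\rangle^{\mu\cdot(\text{something})}$ from the smallness of $|j-k|$ only at the price $|\ell_1|^{1/\delta}$, whence $C((n_1+n_2)/\delta,\cdot)$. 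When instead $v(k)=v(j)$ one simply has $|b(j)|\le |b(k)| + |v(j)\cdot(j-k)| \le |b(k)| + |v(j)||\ell_1|\tc^{-1}$ and $|v(j)|\le |j|^\delta$, so the $\langle b(j)\rangle$-weight is dominated by $\langle b(k)\rangle$ up to, again, an analyticity loss — and since in \eqref{stima2} the target order has $m = 0$ on the $b$-component, one only needs the $\langle b(k)\rangle^{m_2}$ weight of $N$ to survive, which it does. Assembling the two index-regimes and the two generator-cases, majorizing, and applying Lemma \ref{gigetto} gives \eqref{stima2}; \eqref{stima1} requires only the first, trivial, estimate.
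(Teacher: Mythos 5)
Your overall plan is right: the first estimate is trivial once you carry the outer weight of $j$ entirely with the $M$-factor, and the second requires splitting the convolution sum into a regime where the weight can be transferred to $N$'s intermediate index and a regime where it is absorbed into the analyticity loss. However, your discussion of the $\langle b(j)\rangle$-weights and your invocation of Lemma \ref{traviata} are spurious and misdirected: the target norm on the left of \eqref{stima2} is $|\cdot|_{a';-(n_1+n_2,0)}$, whose $b$-component is $m=0$, so there is no $\langle b(j)\rangle$-weight to produce; the $\langle b(j)\rangle^{m_1},\langle b(k)\rangle^{m_2}\ge 1$ sitting inside $(\underline{M}_{a;-\bN_1})_j^k$ and $(\underline{N}_{a';-\bN_2})_k^{j'}$ can simply be inserted for free when majorizing. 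Lemma \ref{traviata} plays no role here (it enters later, in the proof of Proposition \ref{prop.chiavetta}, where the T\"oplitz symbol makes $v(j)$ and $b(j)$ genuinely matter).

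Your stated origin of the exponent $1/\delta$ is therefore also wrong, and your own computation in ``Case 2'' does not produce it: bounding $\langle j\rangle\lesssim\langle j-k\rangle\lesssim 1+|\ell_1|$ and converting against $e^{-(a-a')|\ell_1|}$ gives $C(\mu(n_1+n_2),a-a')$, which is sharper than (hence implies) $C((n_1+n_2)/\delta,a-a')$, but is not what you write. The paper's route is different and is where the $1/\delta$ really comes from: one splits $M=M^{(B)}+M^{(R)}$ with the Bony-type cut-off $|\ell_1|\lessgtr\tc\langle j\rangle^\delta$ (Lemma \ref{Bony}); on the support of $M^{(R)}$ one has $\langle j\rangle^\delta < \tc^{-1}|\ell_1|$, i.e. $\langle j\rangle^{\mu(n_1+n_2)}<(\tc^{-1}|\ell_1|)^{\mu(n_1+n_2)/\delta}$, whence $|M^{(R)}|_{a';-(n_1+n_2,0)}\lesssim C(\mu(n_1+n_2)/\delta,a-a')\,|M|_a$, and then \eqref{stima1} finishes. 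On the support of $M^{(B)}$, $|\pi(\ell_1)|<\langle j\rangle^\delta$ so $\langle j\rangle/\langle k\rangle\le 2$, which transfers $\langle j\rangle^{\mu n_2}$ to $\langle k\rangle^{\mu n_2}$ at cost $2^{\mu n_2}\le 2^{n_2}$, giving the first term. Finally, one clean-up on your ``Case 1'': the weight in $|M|_{a;-\bN_1}$ sits at the \emph{row} index $j$, not at the intermediate index $k$, so there is no ``$M$'s intermediate weight'' to distribute to. The correct bookkeeping is $\langle j\rangle^{\mu(n_1+n_2)}=\langle j\rangle^{\mu n_1}\cdot\langle j\rangle^{\mu n_2}$, with the first factor matching $M$'s weight at $j$ and the second bounded by $2^{\mu n_2}\langle k\rangle^{\mu n_2}$ to match $N$'s weight at $k$.
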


The proof, being quite technical, is postponed to Appendix \ref{sec:technical}.
{\begin{remark}
	We note that if $M\in \cL_a(\fH^p)$ is time independent, then (by formula \eqref{memento}, and recalling that $\pi(0) = 0$) it is necessarily diagonal, namely there exists a sequence $\{M_j\}_{j\in\Z^2}$, $M_j\in \C$, with $M= \diag(M_j)_{j\in \Z^2}$. Moreover one has
	$\ \sup_{j\in \Z^2}|M_j| = |M|_a \,,\quad \forall a\,.$
\end{remark}}

\subsection{Quasi-T\"oplitz norm}
{The purpose of this section is to define, for a fixed $m>0$, a decomposition of operators in  $\cL_a(\fH^p)$ into a sum of terms of order $-\bN=-(m-k, k)$ for all $k = 0, \dots, m$. We shall further require an invariance property on the term of order $-(0, m)$, which we will refer to as line-T\"oplitz.}
\begin{definition}\label{def.top}
	An operator    $M\in\cL_a(\fH^p)$ is said to be: 
	\begin{itemize}
		\item {\em Line-T\"oplitz} if there exists a map
		$\Z^d\times\cV\times \Z \to \C\,,$ $(\ell,v,b) \mapsto \mathfrak M(\ell,v,b)\,,$
		such that 
		\[
		M_j^{j'}(\ell) = \fM(\ell,v(j),b(j))  , \quad \forall \ell \in \Z^d, \ j, j' \in \Z^2 \ . 
		\]
		We denote by  $\cT_a:=\cT_a(\fH^p)$ the set of  {line-T\"oplitz} operators in $\cL_a(\fH^p)$.
		\item {\em Line-T\"oplitz of order $-m$}, $m \geq 0$, if 
	{
		\begin{equation}\label{norm.toplitz}
			| M|^{\mathtt{ T}}_{a,-m}:=  \sup_{ v\in \cV\,,  \  b\in\Z}\sum_{ \ell\in\Z^d } e^{a \tc |v|+a |\ell|}|\mathfrak M(\ell,v,b) | \jap{b}^{m} < + \infty 
		\end{equation}
	where $\tc$ is defined in \eqref{ccc}.	}
		We denote by $\cT_{a,-m}$, 	 $m\ge 0$, the set of  
		line-T\"oplitz operators of order $m$.
		
		\item {\em Quasi-T\"oplitz  of order $-m$}, $m \in \N$,  if there exist a line-T\"oplitz operator $M^{\tT} \in \cT_{a,-m}$ 
		and $m$ operators $M^{(i)}\in \cL_{\frac{a}{2}}(\fH^p)$, $i = 1, \ldots, m$ such that
		\begin{equation}
		\label{arco}
		M = M^{\tT }+ \sum_{i=1}^{m}M^{(i)}\,,  \quad M^{(i)} \in \cL_{\frac{a}{2}; -(i, m-i)}\ . 
		\end{equation}
		We denote by  $\cL^{\mathtt{qT}}_{a,-m}$ the set of quasi-T\"oplitz  operators of order $m$ of $\cL_a(\fH^p)$ , which we endow  with the norm
		\[
		\sega M {a}{-m}:= \inf \left\{|M^{\tT}|^{\mathtt {T}}_{a,-m} +   \sum_{i = 1}^{m}|M^{(i)}|_{\frac{a}{2}; -(i, m-i)}\quad  \left|  \quad \, M = M^{\tT} + \sum_{i=1}^{m} M^{(i)}\right.\right\}\,.
		\]
	\end{itemize}
\end{definition}

{
\begin{lemma}\label{vaccab}
	The line-T\"oplitz operators are bounded, more precisely for all $a'\in(0,a)$ one has
	\[
	|M|_{a';-(0,m)}\le C(\tc,p) (a-a')^{-p-2} |M|^{\mathtt{ T}}_{a,-m}
	\]	
\end{lemma}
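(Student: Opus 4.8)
The plan is to reduce the statement to the entrywise Gaussian–decay mechanism already used in Lemma~\ref{gigetto}. By Definition~\ref{def.due.pesi} with $\bN=(0,m)$, the quantity $|M|_{a';-(0,m)}$ is the $\cL(\fH^p)$–operator norm of the nonnegative matrix
\[
\bigl(\underline M_{a';-(0,m)}\bigr)_j^{j'}=\sum_{\ell:\,j-j'=\pi(\ell)}e^{a'|\ell|}\,\bigl|M_j^{j'}(\ell)\bigr|\,\jap{b(j)}^{m},
\]
and since $M$ is line–T\"oplitz we may substitute $M_j^{j'}(\ell)=\fM(\ell,v(j),b(j))$ everywhere. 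So it suffices to bound these entries by $A\,e^{-\theta|j-j'|}$ for suitable $A,\theta>0$, and then to invoke Lemma~\ref{gigetto}.

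First I would split the exponential weight. Whenever $j-j'=\pi(\ell)$, the definition of $\tc$ in~\eqref{ccc} gives $|\pi(\ell)|\le\tc^{-1}|\ell|$, i.e.\ $|\ell|\ge\tc\,|j-j'|$, hence $e^{a'|\ell|}\le e^{a|\ell|}\,e^{-\tc(a-a')|j-j'|}$. Pulling the decaying factor out of the sum and then enlarging the sum over $\ell$ to all of $\Z^d$ (the terms are nonnegative),
\[
\bigl(\underline M_{a';-(0,m)}\bigr)_j^{j'}\le e^{-\tc(a-a')|j-j'|}\sum_{\ell\in\Z^d}e^{a|\ell|}\,\bigl|\fM(\ell,v(j),b(j))\bigr|\,\jap{b(j)}^{m}\le e^{-\tc(a-a')|j-j'|}\,e^{-a\tc|v(j)|}\,|M|^{\mathtt{T}}_{a,-m},
\]
where the last step is exactly the definition~\eqref{norm.toplitz} of the line–T\"oplitz norm evaluated at $(v,b)=(v(j),b(j))\in\cV\times\Z$. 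Since $v(j)\in\cV\subset\Z^2\setminus\{0\}$ we have $|v(j)|\ge1$, so $e^{-a\tc|v(j)|}\le1$, and we obtain the entrywise bound with $A:=|M|^{\mathtt{T}}_{a,-m}$ and $\theta:=\tc(a-a')>0$.

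Finally, arguing exactly as in the proof of Lemma~\ref{gigetto} — dominate $\underline M_{a';-(0,m)}\,|u|$ entrywise by $A\,f*|u|$ with $f_j:=e^{-\theta|j|}$, apply the convolution algebra inequality~\eqref{algebra}, and use $\|f\|_p\le C(p)\,\theta^{-p-2}$ — one concludes
\[
|M|_{a';-(0,m)}\le C(p)\,A\,\theta^{-p-2}=C(p)\,\tc^{-p-2}\,(a-a')^{-p-2}\,|M|^{\mathtt{T}}_{a,-m},
\]
which is the claim with $C(\tc,p)=C(p)\,\tc^{-p-2}$. I do not expect any genuine obstacle: the argument is pure bookkeeping. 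The one mild point of care is the exponent $-p-2$ in the loss of analyticity width, which is dictated by the $\ell^1\cap h^p$ estimate for the Gaussian $f$ and is inherited verbatim from Lemma~\ref{gigetto}; a secondary sanity check is that the built-in weight $e^{a\tc|v|}$ in $|\cdot|^{\mathtt{T}}_{a,-m}$ is (at worst) harmless here, which is precisely what $|v(j)|\ge1$ guarantees.
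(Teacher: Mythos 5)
Your proof is correct and follows essentially the same route as the paper: the same entrywise bound $e^{a'|\ell|}\le e^{a|\ell|}e^{-\tc(a-a')|j-j'|}$, the same use of the line-T\"oplitz norm (discarding the harmless factor $e^{-a\tc|v(j)|}\le 1$), and the same convolution estimate with $g_j=e^{-\tc(a-a')|j|}$ — which the paper writes out inline while you package it as an application of Lemma~\ref{gigetto}, but the content is identical.
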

\begin{proof}
	We have\footnote{As is conventional we write $A\lesssim B$ if there exists a universal constant $C>0$ such that $A\le C B$. Similarly we write $A\lesssim_{\tc,p} B$ if there exists  $C>0$ depending only on $\tc,p$ such that $A\le C B$.}
	$$
	(\underline{M}_{a';-(0,m)})_j^{j'} = \sum_{\ell : j-j'=\pi(\ell)} e^{a|\ell|} \jap{b(j)}^m |\mathfrak M(\ell,v(j),b(j))| e^{-(a-a')|\ell|} \lesssim  |M|^{\mathtt{ T}}_{a,-m} e^{-\tc(a-a')|j-j'|}.
	$$
	Let $g_j:= e^{-\mathtt c(a-a')|j|}$ and $g:=\{g_j\}_{j\in\Z^2}\in\mathfrak H^p$. 
	We have
	\[
	|g|_p:= \sqrt{\sum_{j} e^{-2\tc(a-a')|j|} \jap{j}^{2p} } + \sum_{j} e^{-\tc(a-a')|j|}  \lesssim_{\tc,p} (a-a')^{-p-2}
	\]
	It follows from the algebra property of $\mathfrak H^p$ w.r.t.\ convolution that
	$$
	|M|_{a'; -(0, m)}\lesssim |g|_p |M|^{\mathtt{ T}}_{a,-m} \lesssim_{\tc,p}  (a-a')^{-p-2} |M|^{\mathtt{ T}}_{a,-m}\,.
	$$
\end{proof}
}

We shall need to keep track of Lipschitz dependence on the parameter $\omega \in \R^d$. To this purpose, given a compact set $\cO \subset \R^d$, we fix $\g>0$ and define the following norm on  the space of  Lipschitz maps  $f: \cO\to E$ ($E$ a Banach space) 
\[
| f|_E^\cO:= \sup_{\omega\in \cO}|f(\omega)|_E + \g  \sup_{\omega\ne \omega'\in \cO}|\Delta_{\omega,\omega'} f |_E\,,\qquad \Delta_{\omega,\omega'} f := \frac{f(\omega)-f(\omega')}{|\omega-\omega'|}\,.
\]
\begin{definition}
	Let $\cO\subset\R^d$ be compact. We denote by $\op{\cO}{a}$ the set of Lipschitz maps   $M:\cO\to \cL^{\mathtt {qT}}_{a; -m}$  with the  $\segm{\cdot}{\cO}{a}$ norm. 
	\[
	\segm{M}{\cO}{a}:= \sup_{\omega\in \cO}\sega{M(\omega)}{a}{-m} + \g  \sup_{\omega\ne \omega'\in \cO}\sega{\Delta_{\omega,\omega'} M}{a}{-m}
	\]
\end{definition}

Given a momentum preserving operator $M\in\cL_a(\mathfrak H^p)$ and $K>0$, we define the projections $\Pi_{|\ell|\le K} M$, $\Pi_{|\ell| > K} M$ by
\begin{equation}\label{proiezioni}
\Pi_{|\ell|\le K} M= \begin{cases}
	M_j^{j-\pi(\ell)}(\ell)
	\quad & \mbox{if}\; |\ell|\le K\\
	0 & \mbox{otherwise}
	\end{cases}, \qquad \Pi_{|\ell| > K} M := M - \Pi_{|\ell|\le K} M.
\end{equation}
These projections obviously map the space $\cL_a(\mathfrak H^p)$ to itself. Moreover, since the symbol $\fM(\ell,v(j),b(j))$ of a line-T\"oplitz operator has no conditions on $\ell$, they are also well-behaved with respect to the line-T\"oplitz structure.  
By direct inspection one then gets the following Lemmata.
\begin{lemma}\label{proiettotutto}
The projections $\Pi_{|\ell|\le K}$ (and $\Pi_{|\ell| > K}$) preserve the spaces $\cT_{a,m}^\cO$, $\op{\cO}{a}$, and they are continuous. In particular, if $M$ is quasi-T\"oplitz then so is its time average
$\jap{M}_{\T^d}:= \Pi_{\ell=0} M.$.\\ For all $a'< a$, one has the bounds:
\begin{gather*}
|\Pi_{|\ell|\le K} M|^{\mathtt{ T},\cO}_{a,-m} \le |M|^{\mathtt{ T},\cO}_{a,-m}, \quad
\segm{\Pi_{|\ell|\le K} M}{\cO}{a} \le \segm{M}{\cO}{a}\,,\\
|\Pi_{|\ell|> K} M|^{\mathtt{ T},\cO}_{a',-m} \le e^{-(a-a')K} |M|^{\mathtt{ T},\cO}_{a,-m}, \qquad
\segm{\Pi_{|\ell|> K} M}{\cO}{a'} \le e^{-\frac{a-a'}{2}K} \segm{M}{\cO}{a}.
\end{gather*}
%
\end{lemma}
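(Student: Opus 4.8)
The plan is to exploit that each projection is a pointwise multiplier on the Fourier label $\ell$: $\Pi_{|\ell|\le K}$ replaces the coefficient $M_j^{j'}(\ell)$ by $M_j^{j'}(\ell)\mathbf 1_{\{|\ell|\le K\}}$, and $\Pi_{|\ell|>K}=\mathrm{Id}-\Pi_{|\ell|\le K}$ by $M_j^{j'}(\ell)\mathbf 1_{\{|\ell|>K\}}$; in particular the momentum constraint $j-j'=\pi(\ell)$ is untouched. Since none of the three norms $|M|^{\mathtt{T}}_{a,-m}$, $|M|_{a;-\bN}$, $\sega{M}{a}{-m}$ imposes a relation between $\ell$ and the indices beyond that constraint, everything reduces to monotonicity of absolutely convergent $\ell$-sums. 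First I would check the structural claim: if $M\in\cT_a$ has symbol $\fM(\ell,v,b)$ then $\Pi_{|\ell|\le K}M$ has symbol $\fM(\ell,v,b)\mathbf 1_{\{|\ell|\le K\}}$, still a function of $(\ell,v,b)$, so $\Pi_{|\ell|\le K}M\in\cT_a$, and likewise for $\Pi_{|\ell|>K}$; hence the projections commute with the quasi-T\"oplitz decomposition \eqref{arco}.

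Next I would prove the line-T\"oplitz bounds directly from \eqref{norm.toplitz}. Restricting the $\ell$-sum to $|\ell|\le K$ discards only nonnegative summands, so $|\Pi_{|\ell|\le K}M|^{\mathtt{T}}_{a,-m}\le|M|^{\mathtt{T}}_{a,-m}$. For the complementary projection at radius $a'<a$, each surviving term obeys
\[
e^{a'\tc|v|+a'|\ell|}=e^{-(a-a')\tc|v|}\,e^{-(a-a')|\ell|}\,e^{a\tc|v|+a|\ell|}\le e^{-(a-a')K}\,e^{a\tc|v|+a|\ell|},
\]
using $|\ell|>K$ and $(a-a')\tc|v|\ge0$, whence $|\Pi_{|\ell|>K}M|^{\mathtt{T}}_{a',-m}\le e^{-(a-a')K}|M|^{\mathtt{T}}_{a,-m}$. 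The same two estimates hold verbatim for the order-$(-\bN)$ norm of Definition \ref{def.due.pesi}: one checks entrywise that $(\underline{\Pi_{|\ell|\le K}M}_{a;-\bN})_j^{j'}\le(\underline{M}_{a;-\bN})_j^{j'}$ and $(\underline{\Pi_{|\ell|>K}M}_{a';-\bN})_j^{j'}\le e^{-(a-a')K}(\underline{M}_{a;-\bN})_j^{j'}$, and concludes by the monotonicity of the norm (exactly as in Remark \ref{rem:maj.norm}).

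Finally I would assemble the quasi-T\"oplitz estimates. Let $M=M^{\mathtt{T}}+\sum_{i=1}^m M^{(i)}$ be any admissible decomposition as in \eqref{arco}. Applying $\Pi_{|\ell|\le K}$ termwise produces an admissible decomposition of $\Pi_{|\ell|\le K}M$, with $|\Pi_{|\ell|\le K}M^{\mathtt{T}}|^{\mathtt{T}}_{a,-m}\le|M^{\mathtt{T}}|^{\mathtt{T}}_{a,-m}$ and $|\Pi_{|\ell|\le K}M^{(i)}|_{a/2;-(i,m-i)}\le|M^{(i)}|_{a/2;-(i,m-i)}$ by the previous step; taking the infimum over decompositions gives $\sega{\Pi_{|\ell|\le K}M}{a}{-m}\le\sega{M}{a}{-m}$, and in particular boundedness (continuity). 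For $\Pi_{|\ell|>K}$ one only has to track analyticity radii: the line-T\"oplitz summand sits at radius $a$ and loses $e^{-(a-a')K}$, while each $M^{(i)}\in\cL_{a/2;-(i,m-i)}$ is pushed from radius $a/2$ to $a'/2$ and loses the weaker factor $e^{-\frac{a-a'}{2}K}$; since $e^{-(a-a')K}\le e^{-\frac{a-a'}{2}K}$, summing and taking the infimum yields $\sega{\Pi_{|\ell|>K}M}{a'}{-m}\le e^{-\frac{a-a'}{2}K}\sega{M}{a}{-m}$. The Lipschitz ($\cO$) versions require no extra work: the projections do not depend on $\omega$, so $\Delta_{\omega,\omega'}$ commutes with them and the pointwise bounds apply to both the $\sup_\omega$ and the difference-quotient parts of the $\segm{\cdot}{\cO}{a}$ and $|\cdot|^{\mathtt{T},\cO}_{a,-m}$ norms. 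The claim on the time average is the special case $\jap{M}_{\T^d}=\Pi_{\ell=0}M=\Pi_{|\ell|\le0}M$ (recall $\ell\in\Z^d$). I expect no genuine obstacle; the only point deserving attention is the bookkeeping of analyticity radii in the last step, which is precisely what downgrades the decay rate from $a-a'$ to $\tfrac{a-a'}{2}$ in the quasi-T\"oplitz bound.
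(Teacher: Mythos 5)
Your proof is correct, and it is exactly the ``direct inspection'' argument the paper alludes to: the paper itself provides no written proof, stating only that the lemma follows by direct inspection. You correctly observe that the projections are pointwise multipliers in $\ell$ that commute with the quasi-T\"oplitz decomposition, and your bookkeeping of analyticity radii — the line-T\"oplitz part loses $e^{-(a-a')K}$ while the corrector terms, living at half-radius, lose only $e^{-(a-a')K/2}$, which dominates — is precisely what produces the $\tfrac{a-a'}{2}$ in the quasi-T\"oplitz bound.
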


	\begin{lemma}\label{diago}
	Given a \emph{time independent} (and hence diagonal) operator $A= \diag{(A_j)}\in \op{\cO}a$,  
	there exists a decomposition of the eigenvalues 
	\[
	A_j = \mathfrak a (v(j),b(j)) + \sum_{k=1}^m r^{(k)}_j
	\]
	so that 
	\begin{equation}\label{stima diago}
		\sup_{v\in \cV,b\in \Z} |\fa(v,b)|^{\cO} e^{a{\tc}|v|} {\jap{b}^m}+ \sum_{k=1}^m\sup_j |r^{(k)}_j |^{\cO}\jap{j}^{k\mu} \jap{b(j)}^{m-k} \le 2 \segm{A}{\cO}{a}
	\end{equation}
	Moreover, if $A_j\in \R$, then also $\fa,r^{(k)}_j\in\R$ for $k = 1, \ldots m$.
	\end{lemma}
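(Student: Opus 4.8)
The plan is to unpack the quasi-T\"oplitz structure in this time-independent (hence diagonal) situation; the only real ingredient is that the projection $\Pi_{\ell=0}$ collapses every summand of a near-optimal quasi-T\"oplitz decomposition onto the diagonal.

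Fix $\e>0$ and choose, by definition of $\op{\cO}{a}$, a decomposition $A=A^{\tT}+\sum_{k=1}^{m}A^{(k)}$ with $A^{\tT}$ line-T\"oplitz of order $-m$ and $A^{(k)}\in\cL_{\frac{a}{2};-(k,m-k)}$, all depending Lipschitz-continuously on $\omega$, with total Lipschitz norm $\le\segm{A}{\cO}{a}+\e$. Since $A$ is time-independent, $A=\Pi_{\ell=0}A$; applying $\Pi_{\ell=0}$ to the decomposition and using that it commutes with the difference quotients $\Delta_{\omega,\omega'}$ (linearity) and, by Lemma \ref{proiettotutto}, Remark \ref{rem:maj.norm} and the positivity of the summands in \eqref{due.pesi}, does not increase any of the norms in play, I obtain
\[
A=\Pi_{\ell=0}A^{\tT}+\sum_{k=1}^{m}\Pi_{\ell=0}A^{(k)},
\]
with $\Pi_{\ell=0}A^{\tT}$ still line-T\"oplitz of order $-m$, each $\Pi_{\ell=0}A^{(k)}$ still in $\cL_{\frac{a}{2};-(k,m-k)}$, and total Lipschitz norm still $\le\segm{A}{\cO}{a}+\e$. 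Now every one of these operators is time-independent, hence \emph{diagonal} by the remark following Lemma \ref{lemma.algebra}. Being line-T\"oplitz, the diagonal entries of $\Pi_{\ell=0}A^{\tT}$ depend on $j$ only through $(v(j),b(j))$, so $\Pi_{\ell=0}A^{\tT}=\diag\!\big(\fa(v(j),b(j))\big)$ for a function $\fa\colon\cV\times\Z\to\C$ that is Lipschitz in $\omega$, and \eqref{norm.toplitz} gives $\sup_{v\in\cV,\,b\in\Z}|\fa(v,b)|^{\cO}e^{a\tc|v|}\jap{b}^{m}\le|\Pi_{\ell=0}A^{\tT}|^{\mathtt{T},\cO}_{a,-m}$. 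Writing $\Pi_{\ell=0}A^{(k)}=\diag(r^{(k)}_j)$ and using that the operator norm on $\fH^p$ of a diagonal matrix is the supremum of its entries (again the remark following Lemma \ref{lemma.algebra}), applied to the majorant operator defined in \eqref{due.pesi}, one gets $\sup_{j}|r^{(k)}_j|^{\cO}\jap{j}^{k\mu}\jap{b(j)}^{m-k}\le|\Pi_{\ell=0}A^{(k)}|^{\cO}_{\frac{a}{2};-(k,m-k)}$. Reading off the $(j,j)$-entry of the displayed identity gives $A_j=\fa(v(j),b(j))+\sum_{k=1}^{m}r^{(k)}_j$, and summing the estimates yields
\[
\sup_{v,b}|\fa(v,b)|^{\cO}e^{a\tc|v|}\jap{b}^{m}+\sum_{k=1}^{m}\sup_{j}|r^{(k)}_j|^{\cO}\jap{j}^{k\mu}\jap{b(j)}^{m-k}\le\segm{A}{\cO}{a}+\e\le 2\,\segm{A}{\cO}{a}
\]
for $\e$ small, which is \eqref{stima diago}.

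For the reality statement, if $A_j\in\R$ for all $j$ and all $\omega$ I replace $\fa$ by $\Re\fa$ and each $r^{(k)}_j$ by $\Re r^{(k)}_j$: since $\fa(v(j),b(j))+\sum_k r^{(k)}_j=A_j$ is real this does not alter the decomposition, $\Re$ is $\R$-linear and hence commutes with the difference quotients in $\omega$, and $|\Re z|\le|z|$, so no bound deteriorates and the new functions are real-valued. I do not expect any genuine obstacle here: once one notes that $\Pi_{\ell=0}$ turns a near-optimal quasi-T\"oplitz decomposition of a diagonal operator into one all of whose summands are diagonal, the rest is bookkeeping, namely tracking which of the weights $\jap{j}^{k\mu}$, $\jap{b(j)}^{m-k}$ sits on which piece. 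The only point deserving (routine) care is that the near-optimal decomposition can be chosen with Lipschitz dependence on $\omega$, so that $\fa$ and the $r^{(k)}_j$ really obey the Lipschitz bounds measured by $|\cdot|^{\cO}$; the harmless factor $2$ leaves room for the $\e$-slack and this bookkeeping.
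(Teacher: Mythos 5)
Your proof is correct and follows essentially the same route as the paper's: pick a near-optimal quasi-T\"oplitz decomposition, apply the time-average projection $\Pi_{\ell=0}$ to each summand, observe that the resulting operators are all diagonal with the line-T\"oplitz piece depending on $j$ only through $(v(j),b(j))$, and read off the entrywise bounds via the ordered-norm remark. The only cosmetic difference is that you pass through an $\e$-slack before absorbing it into the factor $2$, whereas the paper invokes the factor $2$ directly when selecting the decomposition; both also implicitly rely on the fact (which you correctly flag as the one point needing care) that the near-optimal decomposition can be taken Lipschitz in $\omega$ so that the $|\cdot|^{\cO}$ quantities make sense.
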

\begin{proof}
	Since $A\in \cL_{a,-m}^{\tt qT, \cO}$ there exists a decomposition
	\[
	A = A^\tT +  \sum_{k=1}^m A^{(k)}\,, 
	\quad \mbox{	such that}\quad
	|A^\tT|^{\mathtt {T},\cO}_{a,-m} +   \sum_{i = 1}^{m}|A^{(i)}|^\cO_{\frac{a}{2}; -(i, m-i)}\le 2 \segm{A}\cO{a}.
	\]
Recalling that $A$ is time-independent, we have
\[
A = \jap{ A}_{\T^d} = \jap{A^\tT}_{\T^d} +  \sum_{k=1}^m \jap{A^{(k)}}_{\T^d}.
\]
Now, the operators $ \jap{A^\tT}_{\T^d}, \jap{A^{(k)}}_{\T^d}$ are all time-independent and hence diagonal; moreover, $ \jap{A^\tT}_{\T^d}$ is still line T\"oplitz. The bounds follow from Remark \ref{rem:maj.norm}.  The reality condition follows by taking the real part in both sides of the equality above.
\end{proof}

{We now discuss the algebra properties of $\op{\cO}{a}$. Our purpose is to get a result analogous to the one in Lemma \ref{lemma.algebra}. With this in mind, we give two propositions on the product of operators in $\op{\cO}{a}$.}
\begin{proposition}\label{prop.chiavetta}
	Let $M_1\in \cL^{\mathtt{qT}, \cO}_{a, -m_1}$ and $M_2\in\cL^{\mathtt{qT}, \cO}_{a,-m_2}$, and set
	$m = \min\{m_1, m_2\}$.
	Then we have $M_1M_2\in\cL^{\mathtt{qT}, \cO}_{a,-m}$, with the bound
	\begin{equation}\label{stima.chiavetta}
	\segm{M_1 M_2}{\cO}{a} \leq C a^{-q_0}
	\abs{M_1}^{\mathtt{qT}, \cO}_{a,-m_1} \, 
	\abs{M_2}^{\mathtt{qT}, \cO}_{a,-m_2}\,.
	\end{equation}
	Moreover, setting $m_1 = m_2 = 1$, one also has $\forall 0<a'< a$ that $M_1 M_2 \in  \cL^{\mathtt{qT}, \cO}_{a',-2}$, with
	\begin{equation}\label{a.mano}
		\sedue{M_1 M_2}{\cO}{a'} \leq {C(a - a')^{-q_1}}
		\seuno{M_1}{\cO}{a} \, 
		\seuno{M_2}{\cO}{a}\,.
	\end{equation}
	Here $C, q_0, q_1$ are positive constants depending only on $m_1, m_2, \mu, \delta, \tc, p$.
\end{proposition}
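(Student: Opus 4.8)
\emph{Strategy and the T\"oplitz limit.} Fix $\eta>0$ and decompositions $M_s=M_s^{\tT}+\sum_{k=1}^{m_s}M_s^{(k)}$, $s=1,2$, with $M_s^{\tT}$ line-T\"oplitz of order $-m_s$ (symbol $\fM_s$) and $M_s^{(k)}\in\cL_{\frac a2;-(k,m_s-k)}$, realizing $\abs{M_s}^{\mathtt{qT},\cO}_{a,-m_s}$ up to $\eta$, uniformly in $\omega$ and including the Lipschitz quotients. Since $\pi$ is additive, $M_1M_2$ is momentum preserving. The candidate line-T\"oplitz part of $M_1M_2$ is the \emph{T\"oplitz limit} $N^{\tT}$ of $M_1^{\tT}M_2^{\tT}$, i.e.\ the line-T\"oplitz operator with symbol
\[
\fN(\ell,v,b):=\sum_{\ell_1+\ell_2=\ell}\fM_1(\ell_1,v,b)\,\fM_2\bigl(\ell_2,v,\,b-\pi(\ell_1)\cdot v\bigr),
\]
which is what one gets from $(M_1^{\tT}M_2^{\tT})_j^{j''}(\ell)=\sum_{\ell_1+\ell_2=\ell}\fM_1(\ell_1,v(j),b(j))\,\fM_2(\ell_2,v(j-\pi(\ell_1)),b(j-\pi(\ell_1)))$ by replacing $v(j-\pi(\ell_1)),b(j-\pi(\ell_1))$ with their ``aligned'' values $v(j),b(j)-\pi(\ell_1)\cdot v(j)$. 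Using $e^{a|\ell_1+\ell_2|}\le e^{a|\ell_1|}e^{a|\ell_2|}$, bounding the $\ell_2$-sum by $|M_2^{\tT}|^{\tT}_{a,-m_2}e^{-a\tc|v|}\jap{b-\pi(\ell_1)\cdot v}^{-m_2}\le|M_2^{\tT}|^{\tT}_{a,-m_2}e^{-a\tc|v|}$ and then the $\ell_1$-sum by $|M_1^{\tT}|^{\tT}_{a,-m_1}e^{-a\tc|v|}\jap{b}^{-m_1}$, and using $e^{-a\tc|v|}\le1$ together with $\jap{b}^{m-m_1}\le1$ (recall $m=\min\{m_1,m_2\}$), one gets $|N^{\tT}|^{\tT}_{a,-m}\le|M_1^{\tT}|^{\tT}_{a,-m_1}|M_2^{\tT}|^{\tT}_{a,-m_2}$ with no loss in $a$, so $N^{\tT}\in\cT_{a,-m}$. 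For the sharper bound \eqref{a.mano} ($m_1=m_2=1$, target order $-2$) one retains the factor $\jap{b}/\jap{b-\pi(\ell_1)\cdot v}\le1+\tc^{-1}|\ell_1||v|$ and absorbs it into $e^{a|\ell_1|}e^{a\tc|v|}$ at the price of lowering the analyticity width, whence $|N^{\tT}|^{\tT}_{a',-2}\le C(a-a')^{-q_1}|M_1^{\tT}|^{\tT}_{a,-1}|M_2^{\tT}|^{\tT}_{a,-1}$. Lipschitz dependence on $\omega$ is obtained by the same computation applied to $\Delta_{\omega,\omega'}\fN$, bilinear in $(\fM_1,\fM_2)$.

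\emph{The remainder $R:=M_1M_2-N^{\tT}$.} Here one re-runs a version of the matrix-entry argument behind Lemma \ref{lemma.algebra}, keeping track of the line-T\"oplitz structure. The cross-products $M_1^{(k)}M_2^{\tT}$ and $M_1^{(k)}M_2^{(k')}$ are dealt with directly: by \eqref{stima1} (and Lemma \ref{vaccab} to bound $M_2^{\tT}$ in operator norm) they belong to $\cL_{\frac a2;-(k,m_1-k)}$, which is contained in some admissible slot $\cL_{\frac a2;-(i,m-i)}$, $1\le i\le m$, since $m_1\ge m$ and $\jap j,\jap{b}\ge1$. The delicate contributions are the T\"oplitz error $E:=M_1^{\tT}M_2^{\tT}-N^{\tT}$ and the products $M_1^{\tT}M_2^{(k')}$. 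For these, expanding $(M_1^{\tT}M_2^{(k')})_j^{j''}(\ell)=\sum_{\ell_1+\ell_2=\ell}\fM_1(\ell_1,v(j),b(j))\,(M_2^{(k')})_{j-\pi(\ell_1)}^{j''}(\ell_2)$ (and similarly for $E$), one splits the row index $j$ into three regions. (i) If $|j|\le\tJ_\delta$ the corresponding piece of $R$ is finite rank; by Lemma \ref{vaccab} and \eqref{algebra} its $\cL_{\frac a2;-(i,m-i)}$-norm is $\lesssim_{\tJ_\delta,\mu,m} a^{-O(p)}\prod_s\abs{M_s}^{\mathtt{qT},\cO}_{a,-m_s}$; put it in the slot $i=m$. (ii) If $|j|>\tJ_\delta$ and $|b(j)|\ge2|j|^\mu$, then $\jap{b(j)}\gtrsim\jap{j}^\mu$, so the $\jap{b(j)}$-decay carried by the left factor — $\jap{b(j)}^{-m_1}$ for $\fM_1$, resp.\ $\jap{b(j)}^{-(m_1-k)}$ for $M_1^{(k)}$ — upgrades to $\jap{j}^{-\mu m_1}\le\jap{j}^{-\mu m}$, so this piece (together with $-N^{\tT}$ on those rows, which has order $-m$) lies in $\cL_{\frac a2;-(m,0)}$, $M_2$ entering only through boundedness. (iii) If $|j|>\tJ_\delta$ and $|b(j)|<2|j|^\mu$, Lemma \ref{traviata} applies. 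When $v(j-\pi(\ell_1))=v(j)$ one has $b(j-\pi(\ell_1))=b(j)-\pi(\ell_1)\cdot v(j)$ and $\jap{j-\pi(\ell_1)}\sim\jap j$ up to a factor $(1+\tc^{-1}|\ell_1|)^{O(m)}$, absorbed by the spare exponential weight $e^{-\frac a2|\ell_1|}$ of the line-T\"oplitz factor (this is why the output stays at width $a/2$): the T\"oplitz$\times$T\"oplitz part of this sub-sum reproduces exactly $\fN(\ell,v(j),b(j))$, cancelling $N^{\tT}$, while each mixed sub-sum has its two orders \emph{add} and lands in $\cL_{\frac a2;-(i,m-i)}$ for a suitable $i\in\{1,\dots,m\}$. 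When $v(j-\pi(\ell_1))\ne v(j)$, Lemma \ref{traviata}(ii) forces $|\ell_1|>\tfrac12\tc|j|^\delta$ (hence $e^{-(a-a')|\ell|}\lesssim e^{-c|j|^\delta}$) or $|v(j)|>\tfrac12|j|^\delta$ (hence $e^{-a\tc|v(j)|}\lesssim e^{-c|j|^\delta}$), so the entry decays faster than every power of $\jap j$ and goes into the slot $i=m$. Collecting the three regions yields $R=\sum_{i=1}^m R^{(i)}$ with $R^{(i)}\in\cL_{\frac a2;-(i,m-i)}$, all losses being products of factors $(a/2)^{-p-2}$ (Lemma \ref{vaccab}), $\sum_{\ell_1}e^{-\frac a2|\ell_1|}\lesssim a^{-d}$ and $C(O(m/\delta),a/2)\sim a^{-O(m/\delta)}$ from \eqref{def.C} used to absorb the polynomial factors; this gives \eqref{stima.chiavetta}. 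The proof of \eqref{a.mano} is identical except that one carries the ``orders add'' bookkeeping throughout, obtaining final order $-2$ and a loss $(a-a')^{-q_1}$. Lipschitz estimates follow from $\Delta_{\omega,\omega'}(M_1M_2)=(\Delta_{\omega,\omega'}M_1)M_2(\omega)+M_1(\omega')(\Delta_{\omega,\omega'}M_2)$ and re-running the above (the weights $\jap j^{\mu k}$, $\jap{b(j)}^k$, $e^{a\tc|v|}$ are $\omega$-independent and all bounds are bilinear). Letting $\eta\to0$ concludes.

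\emph{Main obstacle.} The crux is region (iii): showing that each mixed sub-sum — especially $M_1^{\tT}M_2^{(k')}$ with $k'<m$, for which the ready-made inequalities \eqref{stima1}--\eqref{stima2} only give the weaker order $-(k',0)$ — really decays with the combined order $-(k',m-k')$ (or better). This is precisely where the arithmetic geometry of the coordinates $v(\cdot),b(\cdot)$ codified in Lemma \ref{traviata} must be used, and where one must be careful that the resulting decomposition of $R$ has exactly the form required by Definition \ref{def.top}.
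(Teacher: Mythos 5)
Your proof follows essentially the same route as the paper's: you define the same line-T\"oplitz limit $N^{\tT}$ (same symbol $\fN$), bound its T\"oplitz norm the same way, decompose $R=M_1M_2-N^\tT$ into the same four types of terms, and for the T\"oplitz-times-T\"oplitz error you invoke the same geometric input (Lemma \ref{traviata}) via essentially the same case analysis on $j,\ell_1,b(j),v(j)$ — your regions (i)--(iii) with the sub-split on whether $v(j-\pi(\ell_1))=v(j)$ recombine into the paper's cases and Cases 1--2 / (A)--(D). The one organizational difference worth flagging: the paper isolates a separate lemma (Lemma~\ref{tigre.contro.tigre}, bound \eqref{easy.two}) to dispose of $M_1^{\tT}M_2^{(k')}$ at a stroke — since the T\"oplitz factor is on the left, its $\jap{b(j)}$-decay is already indexed by the output row $j$, so $|M_1^\tT M_2^{(k')}|_{\frac a2;-(k',m_1)}\lesssim a^{-\mu k'}|M_2^{(k')}|_{\frac a2;-(k',0)}|M_1^\tT|^{\tT}_{a,-m_1}$ with no need to compare $b(j)$ and $b(j-\pi(\ell_1))$ or to invoke Lemma~\ref{traviata} — whereas you fold this product into the region-by-region argument and assert (but do not really justify) that ``the two orders add''; that stronger claim is not needed for \eqref{stima.chiavetta} and is cleaner to avoid. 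For \eqref{a.mano} your one-line gloss undersells the work: there the paper genuinely does need the orders to add (target order $-2$ from two factors of order $-1$), and it runs a full five-case analysis for $M_1^{(1)}M_2^\tT$ on top of the T\"oplitz-error analysis, so ``identical except for bookkeeping'' hides the bulk of that proof.
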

The proof, being quite technical, is postponed to Appendix \ref{sec:chiavetta}.

\begin{corollary}[Exponentials]\label{esponenziale}
	Given $a,m>0$ and  $A\in  \cL^{\mathtt{qT}, \cO}_{a,-m}$ such that (the constants $C,\gamma$ are defined in Proposition \ref{prop.chiavetta})
	\begin{equation}\label{piccolezza}
	\segm{A}{\cO}{a}  \le \delta:= \frac{a^{q_0}}{4 C}
	\end{equation}
	one has for any sequence $c_k\in \ell_\infty$, and for all $M\in  \cL^{\mathtt{qT}, \cO}_{a,-m}$
	\begin{equation}\label{stimaexp}
	\segm{\sum_{k=d}^\infty c_k \ad(A)^k M}{\cO}{a} \le 2 |c|_\infty  \Big({\frac{\segm{A}{\cO}{a}}{2\delta}}\Big)^d 
	\end{equation}
\end{corollary}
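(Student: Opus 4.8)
The statement is a standard "convergence of Lie series" estimate in a Banach-algebra-like structure, the only subtlety being that the space $\cL^{\mathtt{qT},\cO}_{a,-m}$ is not quite an algebra with respect to the composition of operators at fixed analyticity radius — by Proposition \ref{prop.chiavetta} the product of two operators of order $-m$ is again of order $-m$ but with a loss $a^{-q_0}$ in the constant. The plan is therefore to first reduce the claim to an iterated application of \eqref{stima.chiavetta}, controlling the adjoint action $\ad(A)[\cdot]=[A,\cdot]$, and then sum the resulting geometric series.

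\medskip

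\textbf{Step 1: control of a single adjoint action.} First I would observe that $\ad(A)M = AM - MA$, and apply Proposition \ref{prop.chiavetta} (the bound \eqref{stima.chiavetta}) to each of the two products $AM$ and $MA$, noting that since both $A$ and $M$ lie in $\cL^{\mathtt{qT},\cO}_{a,-m}$, so does each product, and
\[
\segm{\ad(A)M}{\cO}{a}\le 2 C a^{-q_0}\,\segm{A}{\cO}{a}\,\segm{M}{\cO}{a}
= \frac{\segm{A}{\cO}{a}}{2\delta}\cdot 4 C a^{-q_0}\delta\,\segm{M}{\cO}{a} ,
\]
where in the last step I used the definition $\delta = a^{q_0}/(4C)$ from \eqref{piccolezza}, so that $4Ca^{-q_0}\delta = 1$ and hence
\[
\segm{\ad(A)M}{\cO}{a}\le \frac{\segm{A}{\cO}{a}}{2\delta}\,\segm{M}{\cO}{a}.
\]
Crucially, this estimate loses nothing in the analyticity radius $a$: the output is still in $\cL^{\mathtt{qT},\cO}_{a,-m}$ at the same radius $a$. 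This is the point where the hypothesis \eqref{piccolezza} is designed precisely to absorb the constant $4Ca^{-q_0}$ and leave only the small factor $\segm{A}{\cO}{a}/(2\delta)<\tfrac12$.

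\medskip

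\textbf{Step 2: iteration and summation.} Iterating Step 1 $k$ times gives, for every $k\ge 0$,
\[
\segm{\ad(A)^k M}{\cO}{a}\le \Big(\frac{\segm{A}{\cO}{a}}{2\delta}\Big)^k \segm{M}{\cO}{a}.
\]
Then, since $\abs{c_k}\le \abs{c}_\infty$ for all $k$ and since Proposition \ref{prop.chiavetta} guarantees $\cL^{\mathtt{qT},\cO}_{a,-m}$ is closed under the partial sums, I would estimate the tail
\[
\segm{\sum_{k=d}^\infty c_k\,\ad(A)^k M}{\cO}{a}
\le \abs{c}_\infty\,\segm{M}{\cO}{a}\sum_{k=d}^\infty \Big(\frac{\segm{A}{\cO}{a}}{2\delta}\Big)^k
= \abs{c}_\infty\,\segm{M}{\cO}{a}\,\frac{\big(\segm{A}{\cO}{a}/(2\delta)\big)^d}{1-\segm{A}{\cO}{a}/(2\delta)}.
\]
By \eqref{piccolezza} the ratio $\segm{A}{\cO}{a}/(2\delta)\le \tfrac12$, so $1-\segm{A}{\cO}{a}/(2\delta)\ge \tfrac12$ and the geometric factor is bounded by $2$; absorbing $\segm{M}{\cO}{a}$ (which one may assume normalized, or simply carry along — the statement as written omits it, presumably under the implicit normalization $\segm{M}{\cO}{a}\le 1$), one arrives at
\[
\segm{\sum_{k=d}^\infty c_k\,\ad(A)^k M}{\cO}{a}\le 2\abs{c}_\infty\Big(\frac{\segm{A}{\cO}{a}}{2\delta}\Big)^d ,
\]
which is exactly \eqref{stimaexp}. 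One also needs the routine remark that the tail converges in the complete normed space $\cL^{\mathtt{qT},\cO}_{a,-m}$, which follows because the partial sums form a Cauchy sequence by the same geometric bound.

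\medskip

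\textbf{Main obstacle.} There is no genuine obstacle here — the lemma is deliberately packaged so that the heavy lifting is done in Proposition \ref{prop.chiavetta}. The only point requiring care is \emph{bookkeeping of the analyticity radius}: a priori one might fear that each commutator forces a shrinking $a\to a'$ (as in the second, radius-losing bound \eqref{a.mano}), which would make the infinite iteration impossible. The resolution is to use only the \emph{radius-preserving} bound \eqref{stima.chiavetta} of Proposition \ref{prop.chiavetta} — at the cost of the constant $Ca^{-q_0}$, which is then absorbed once and for all by the smallness hypothesis \eqref{piccolezza}. A secondary minor point is tracking whether the order index $m$ is preserved along the iteration: since $\ad(A)$ maps $\cL^{\mathtt{qT},\cO}_{a,-m}$ to itself whenever $A\in\cL^{\mathtt{qT},\cO}_{a,-m}$ (the $m=\min\{m_1,m_2\}=\min\{m,m\}=m$ case of the Proposition), this is automatic.
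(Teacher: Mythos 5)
Your proof is correct and is exactly what the paper means by its one-line proof ``It follows directly by iterating the bounds in Proposition~\ref{prop.chiavetta}'': you use only the radius-preserving bound \eqref{stima.chiavetta}, absorb the constant $2Ca^{-q_0}=1/(2\delta)$ into the smallness condition \eqref{piccolezza}, and sum the geometric series with ratio $\le 1/2$. You are also right to flag that the stated inequality \eqref{stimaexp} is missing a factor $\segm{M}{\cO}{a}$ on the right-hand side; the subsequent uses of the corollary (e.g.\ in the KAM step, Lemma~\ref{stepKam}) confirm this is a misprint rather than an implicit normalization.
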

\begin{proof}
	It follows directly by iterating the bounds in  Proposition \ref{prop.chiavetta}.
\end{proof}


{
\begin{remark}
 Estimate \eqref{stima.chiavetta} of Proposition \ref{prop.chiavetta} and Corollary \ref{esponenziale} ensure that the well known fact that smoothing operators generate bounded changes of variables holds also in the quasi-T\"oplitz context. On the other hand, estimate \eqref{a.mano} shows that the product of two operators of order $-1$ is actually of order $-2$, up to a small loss of analyticity.
\end{remark}}
\section{The homological equation}
\label{sec:hom.eq}
We consider a  diagonal operator 
\begin{equation}
	\label{diagonale}
	D =   \diag(\Omega_j)_{j \in \Z^2} , \quad 
	\Omega_j = |j|^2 + \widetilde \Omega_j , \quad  \widetilde \Omega:= \diag ( \widetilde \Omega_j)  \in \opdue{\tR}{a}  \ . 
\end{equation}
\vskip-5pt
Recalling Lemma \ref{diago}, we have {$\widetilde\Omega_j=  \mathfrak a(v(j),b(j))+ r^{(1)}_j +  r^{(2)}_j$}, with $\mathfrak a(v,b)$, $r^{(1)}$, $r^{(2)}$ satisfying \eqref{stima diago} for $m=2$.
		For all  $0<|\ell|\le K$,  all $j\in \Z^2$  and all $v\in \cV$ such that $\pi(\ell)\parallel v$ 
		we define  \begin{align}\label{lavello}
			\td(\ell,j)&:= \omega \cdot \ell +\Omega_j -\Omega_{j-\pi(\ell)}
			\\
			\label{d.decomp}
			\mathfrak d(\ell,v,b)&:= \omega \cdot \ell +2 \frac{|\pi(\ell)|}{|v|} b -|\pi(\ell)|^2+ \fa(v,b) - \fa(v,  b- v\cdot \pi(\ell)) 
		\end{align}
		\vskip-5pt
		We define  the (possibly empty) sets  for $\gamma \geq 0$, $K \in \N$
		\begin{gather}
			\label{CD1}
			\cC^{(1)}\equiv \cC^{(1)}_{D,K}(\gamma):= \left\{ \omega\in \cO:\ \  
			| \td(\ell, j) | \ge \g |\ell|^{-\tau}\,,\;\forall\, 0<|\ell|\le K, \ \ \ j\in \Z^2 \right\}
		\\
			\label{CD2}
			\cC^{(2)}\equiv \cC^{(2)}_{D,K}(\gamma):= 
			\left\{ \omega\in \cO:\quad|\mathfrak d(\ell,v,b)|\ge 2\g |\ell|^{-\tau} \,,\;\forall\, 0<|\ell|\le K, \ \ v\in \cV \mbox{ s.t.} \; \pi(\ell)\parallel v\right\},\\
			\label{OO}
			\cO_{D,K}(\g):= \cC^{(1)}_{D,K}(\gamma)\cap \cC^{(2)}_{D,K}(\gamma).
		\end{gather}
		\begin{proposition}\label{homolog}
			Fix $\gamma,a > 0$, $K \in \N$ and a compact set $\cO\subseteq \cO_0\subset\tR$.
			Consider a diagonal operator  $D$ as in \eqref{diagonale} with 
			$16 \sedue{\widetilde\Omega}{\tR}{a} \le \g\,$.
			Consider the sets $\cC^{(1)}$, $\cC^{(2)}$ defined in \eqref{CD1}, \eqref{CD2} and set
			\begin{equation}
				\label{Opiu}
				\cO_{+}= \cO_{D,K}(\g) \ . 
			\end{equation}
			For all $P \in \opdue{\cO}{a}$, 
			there exists  $S \in \opdue{\cO_+}{a}$   solving the  homological equation 
			\begin{equation}
				\label{hom.eq}
				-\im  \dot  S +	[D,S] = \Pi_{0<|\ell|\le K} P,
			\end{equation}
			fulfilling the estimate
			\begin{equation}
				\label{stimahomol}
				\sedue{S}{\cO_+}{a}\le C(p, \mu, \delta, \tc)\, a^{-2/\delta} \, \g^{-1} \,  K^{3\tau+1}\, \sedue{P}{\cO}{a}.
			\end{equation}
			If $P$ is self-adjoint, then so is $\im S$; {if $P$ is Gauge covariant, then so is $S$}.
		\end{proposition}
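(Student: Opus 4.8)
The plan is to solve the homological equation \eqref{hom.eq} componentwise in the Fourier basis, then to repack the resulting operator and verify it is quasi-T\"oplitz of order $-2$ with the stated estimate. Writing $S_j^{j-\pi(\ell)}(\ell)$ for the matrix entries, equation \eqref{hom.eq} is diagonal in the pair $(\ell, j)$ and reads
\begin{equation*}
\td(\ell,j)\, S_j^{j-\pi(\ell)}(\ell) = P_j^{j-\pi(\ell)}(\ell)\,,\qquad 0<|\ell|\le K\,,
\end{equation*}
with $\td(\ell,j) = \omega\cdot\ell + \Omega_j - \Omega_{j-\pi(\ell)}$ as in \eqref{lavello}, and $S_j^{j-\pi(\ell)}(\ell):=0$ for $\ell = 0$ or $|\ell|>K$. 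So formally $S_j^{j-\pi(\ell)}(\ell) = P_j^{j-\pi(\ell)}(\ell)/\td(\ell,j)$, and everything rests on the lower bound for $\td(\ell,j)$ on the Cantor set $\cO_+$ and on controlling the quasi-T\"oplitz norm of the quotient.

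**The divisor bound.** For $\omega\in\cC^{(1)}$ one has directly $|\td(\ell,j)|\ge \g|\ell|^{-\tau}$ for all $0<|\ell|\le K$ and all $j$, so on $\cC^{(1)}$ the naive division produces $S$ with $\sedue{S}{\cO_+}{a}\lesssim \g^{-1}K^\tau\sedue{P}{\cO}{a}$ together with the Lipschitz estimate, which costs another factor of $K^\tau$ (and a third one from differentiating $\td$ in $\omega$, hence $K^{3\tau}$; the extra $K^{+1}$ absorbs the loss in $|\ell|$). This handles the part $S^{(i)}$, $i=1,2$, of the quasi-T\"oplitz decomposition of $S$ coming from the corresponding parts of $P$: by Lemma \ref{diago} applied to $\wt\Omega$ the divisor $\td(\ell,j)$ differs from $\omega\cdot\ell + |j|^2-|j-\pi(\ell)|^2$ by a quantity of order $\sedue{\wt\Omega}{\tR}{a}\le \g/16$, so the $\cC^{(1)}$ bound is not spoiled; then the multiplier $\jap{j}^{\mu n}\jap{b(j)}^{m-n}$ in \eqref{due.pesi} is carried along unchanged because $S$ and $P$ have the same support in $(\ell, j)$.

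**The line-T\"oplitz part — the main obstacle.** The delicate point is the term of order $-(0,2)$: the line-T\"oplitz piece $P^{\tT}$ of $P$ has symbol $\fP(\ell,v(j),b(j))$, and for $S$ to be line-T\"oplitz we need the quotient $\fP(\ell,v(j),b(j))/\td(\ell,j)$ to depend on $j$ only through $v(j),b(j)$. But $\td(\ell,j)$ itself does \emph{not} have this form for general $j$. The resolution, and the reason for introducing $\mathfrak d(\ell,v,b)$ in \eqref{d.decomp} and the set $\cC^{(2)}$ in \eqref{CD2}, is: when $\pi(\ell)\parallel v(j)$ one has $b(j-\pi(\ell)) = b(j) - v(j)\cdot\pi(\ell)$ and $v(j-\pi(\ell))=v(j)$ (up to finitely many exceptional $j$, controlled via Lemma \ref{traviata}), and for such $j$ one computes
$\td(\ell,j) = \omega\cdot\ell + |j|^2 - |j-\pi(\ell)|^2 + \fa(v(j),b(j)) - \fa(v(j),b(j)-v(j)\cdot\pi(\ell)) = \mathfrak d(\ell,v(j),b(j))$,
using $|j|^2-|j-\pi(\ell)|^2 = 2\,\frac{|\pi(\ell)|}{|v|}\,b - |\pi(\ell)|^2$ when $j$ lies on the line through direction $v$ with $j\cdot v = b$. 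Thus the would-be line-T\"oplitz part of $S$ has symbol $\fS(\ell,v,b):=\fP(\ell,v,b)/\mathfrak d(\ell,v,b)$, which is a genuine function of $(\ell,v,b)$ and, by the $\cC^{(2)}$ bound $|\mathfrak d|\ge 2\g|\ell|^{-\tau}$, satisfies the $|\cdot|^{\tT}_{a,-2}$ estimate with the usual $\g^{-1}K^{3\tau+1}$ loss. When $\pi(\ell)\not\parallel v(j)$, or for the finitely many exceptional $j$, the corresponding entries of $S$ cannot be absorbed into the line-T\"oplitz symbol; one checks that those $j$ necessarily have $\jap{b(j)}$ comparable to $\jap{j}^\mu$ (Lemma \ref{traviata}, item $(i)$, fails only there, so $|b(j)|\ge 2|j|^\mu$ and the multiplier $\jap{j}^{2\mu}$ is controlled by $\jap{b(j)}^2$) or $|v(j)|>\tfrac12|j|^\delta$ (item $(ii)$), so these contributions are reabsorbed into the $S^{(i)}$ of order $-(i,2-i)$, using the already-established $\cC^{(1)}$ bound. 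Summing the pieces and taking the infimum over decompositions gives \eqref{stimahomol}, with the power $a^{-2/\delta}$ coming from Lemma \ref{vaccab} / the loss of analyticity $a\rightsquigarrow a/2$ in \eqref{arco} when redistributing the T\"oplitz and non-T\"oplitz parts (and the exponent $2/\delta$ tracks the two derivatives $m=2$ weighted by the $\langle\cdot\rangle^{\mu}$'s through $C(\cdot/\delta,\cdot)$ in Lemma \ref{lemma.algebra}).

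**Closing remarks.** Self-adjointness: if $P$ satisfies \eqref{AA} then, since $\td(-\ell,j-\pi(\ell)) = \overline{\td(\ell,j)}$ (real, in fact) and the division respects the symmetry $j\leftrightarrow j-\pi(\ell)$, $\ell\leftrightarrow-\ell$, the operator $\im S$ is self-adjoint; Gauge covariance is preserved because division by $\td$ does not change the support in $\ell$, so if $P_j^{j'}(\ell)=0$ whenever $\sum_i\ell_i\ne0$ the same holds for $S$. Finally one must check $\cO_+ = \cO_{D,K}(\g)\subseteq\cO$ so that the Lipschitz norms on $\cO_+$ are bounded by those on $\cO$ — immediate from \eqref{OO}. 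The single genuinely technical ingredient is the line-analysis showing $v(j-\pi(\ell))=v(j)$ and $b(j-\pi(\ell))=b(j)-v(j)\cdot\pi(\ell)$ for all but the exceptional $j$ with $\jap{b(j)}\gtrsim\jap{j}^\mu$ or $|v(j)|\gtrsim|j|^\delta$, which is exactly what Lemma \ref{traviata} is designed to deliver; everything else is bookkeeping with the norms of Section 2.
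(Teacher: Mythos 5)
Your proposal follows the paper's overall roadmap — componentwise division, introduction of the symbol $\mathfrak d(\ell,v,b)$ governed by $\cC^{(2)}$, use of Lemma \ref{traviata} to control the exceptional indices, and repackaging into the quasi-T\"oplitz decomposition — but there is a genuine gap at the center of the argument, exactly where the technical weight of the paper's proof lies.

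You assert that when $\pi(\ell)\parallel v(j)$ and $v(j-\pi(\ell))=v(j)$, $b(j-\pi(\ell))=b(j)-v(j)\cdot\pi(\ell)$ hold, one has
\[
\td(\ell,j) \;=\; \omega\cdot\ell + |j|^2 - |j-\pi(\ell)|^2 + \fa(v(j),b(j)) - \fa(v(j),b(j)-v(j)\cdot\pi(\ell)) \;=\; \mathfrak d(\ell,v(j),b(j)).
\]
This is false. By Lemma \ref{diago} the correction $\widetilde\Omega_j$ is \emph{not} purely line-T\"oplitz: it decomposes as $\widetilde\Omega_j = \fa(v(j),b(j)) + r^{(1)}_j + r^{(2)}_j$, and the remainders $r^{(1)},r^{(2)}$ are nonzero in general. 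Consequently, even on the ``good'' set $A_1$ one only has
\[
\td(\ell,j) \;=\; \mathfrak d(\ell,v(j),b(j)) + \bigl(r^{(1)}_j - r^{(1)}_{j-\pi(\ell)}\bigr) + \bigl(r^{(2)}_j - r^{(2)}_{j-\pi(\ell)}\bigr),
\]
so $S_j^{j-\pi(\ell)}(\ell) = P^{\tT}_j^{\,j-\pi(\ell)}(\ell)/\td(\ell,j)$ is \emph{not} equal to $\fP(\ell,v(j),b(j))/\mathfrak d(\ell,v(j),b(j))$. The operator $P^{\tT}\bigl(\td^{-1}-\mathfrak d^{-1}\bigr)\,\delta_{A_1}$ is therefore a nontrivial contribution that your proposal never estimates (since you believed it vanishes). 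In the paper this is precisely ``Case~2'' of the second line of $\widehat S^{(1)}$ in Lemma \ref{lem.hom2}: one writes $\td^{-1}-\mathfrak d^{-1} = -(\td-\mathfrak d)/(\td\,\mathfrak d)$, uses the pointwise smallness $|r^{(k)}_j|\lesssim \sedue{\widetilde\Omega}{}{a}\jap{j}^{-k\mu}\jap{b(j)}^{k-2}$ to gain the factor $\jap{j}^{-\mu}$, and pays the usual two $\g^{-1}K^{\tau}$ losses for the two divisors plus one for the Lipschitz variation, landing in $\cL_{a/2;\,-(1,1)}^{\cO_+}$ with the $\g^{-1}K^{3\tau+1}$ constant. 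Without this step the decomposition $S=S^{\tT}+S^{(1)}+S^{(2)}$ does not close, because the exceptional indices alone (large $b(j)$, large $v(j)$, or $\pi(\ell)\not\parallel v(j)$) do not exhaust the discrepancy between $\td$ and $\mathfrak d$.

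Two smaller remarks: (1) you omit the $\pi(\ell)=0$ part of the T\"oplitz symbol, where the divisor is simply $\omega\cdot\ell$ and only the Diophantine condition $\cO_0$ is needed; the paper treats it as part of $S^{\tT}$ via the set $A_0$. (2) Your statement that the $S^{(i)}$ ``come from the corresponding parts of $P$'' is only half the story: in the paper, $S^{(1)}$ and $S^{(2)}$ receive contributions from $P^{\tT}$ as well (from the difference $\td^{-1}-\mathfrak d^{-1}$ on $A_1$, from the divisor gain on $A_2$, and from the Bony-type cut on $A_3$), not just from $P^{(1)},P^{(2)}$. You do acknowledge this mixing later, but the first formulation should be corrected.
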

		{The solution $S$ of the homological equation \eqref{hom.eq} is given, in components, by 
			\begin{equation}\label{hom.eq.coord}
				S_j^{j-\pi(\ell)}(\ell)= 
				\dfrac{P_j^{j-\pi(\ell)}(\ell)}{ \omega \cdot \ell +\Omega_j -\Omega_{j-\pi(\ell)}} 
				\quad  \mbox{if}\; 0<|\ell|\le K, \qquad 
				S_j^{j-\pi(\ell)}(\ell)= 0 \quad \mbox{otherwise}.
			\end{equation}
			Note that $S$ is well defined for $\omega\in \cC^{(1)}_D$ and it is well known that $S\in \cL_a(\fH^p)$. 
			Moreover if $P$ is self-adjoint, one verifies that $\im S$ is so (just use the characterization  \eqref{AA}), {same for the Gauge covariance.}
			\\
			Next we  show that
			$S\in \opdue{\cO_+}a$, namely it has  a decomposition   (cfr.  \eqref{arco}) 
			\[
			S= S^\tT + S^{(1)}+ S^{(2)} \mbox{ with }S^\tT \in \cT^{\cO_+}_{a,-2}, \ \ S^{(1)} \in \cL^{\cO_+}_{\frac{a}{2}; - (1,1)}, \ \ 
			S^{(2)} \in \cL^{\cO_+}_{\frac{a}{2}; - (2,0)} \ . 
			\]
			By assumption also  $P$ is admissible of order $-2$, hence it has the same decomposition. 
			In particular we shall denote the line-T\"oplitz part of $P$ by 
			\begin{equation}
				\label{P.lT}
				[P^\tT]_{j}^{j-\pi(\ell)} = \mathfrak P(\ell, v(j), b(j)) \ . 
			\end{equation} 
			We then decompose $S$ as follows
			\begin{align}
				\label{S.decomp}
				S_j^{j-\pi(\ell)}(\ell) = 
				\frac{[P^\tT]_j^{j-\pi(\ell)}(\ell)}{ \omega \cdot \ell +\Omega_j -\Omega_{j-\pi(\ell)}} 
				+
				\underbrace{\frac{[P^{(1)}]_j^{j-\pi(\ell)}(\ell)}{ \omega \cdot \ell +\Omega_j -\Omega_{j-\pi(\ell)}}}_{=: [\widetilde S^{(1)}]_{j}^{j-\pi(\ell)}}
				+
				\underbrace{\frac{[P^{(2)}]_j^{j-\pi(\ell)}(\ell)}{ \omega \cdot \ell +\Omega_j -\Omega_{j-\pi(\ell)}}}_{=: [\widetilde S^{(2)}]_{j}^{j-\pi(\ell)}} \ . 
			\end{align} 
			We further need to decompose the first term in the r.h.s. above, which is 
			the most delicate one because of the divisor.
			We introduce the following sets
			\begin{equation}
				\label{}
				\begin{aligned}
					& A_0 := \{ (\ell, j) \in \Z^d \setminus\{0\} \times \Z^2 \colon \ \ \pi(\ell) = 0 , \quad |\ell| < K \} \\
					& A_1:= \{(\ell, j) \in \Z^d \setminus\{0\} \times \Z^2 \colon \ \ \pi(\ell) \neq  0 , \quad |\ell| < K  ,  \ \  v(j) \parallel \pi(\ell)   \}\\
					& A_2:= \{(\ell, j) \in \Z^d \setminus\{0\} \times \Z^2 \colon \ \ \pi(\ell) \neq  0 , \quad |\ell| < K  ,  \ \  v(j) \not\parallel \pi(\ell), \ \  |\ell| < \tc \la j \ra^\delta \}  \\
					& A_3:= \{(\ell, j) \in \Z^d \setminus\{0\} \times \Z^2 \colon \ \ \pi(\ell) \neq  0 , \ \  v(j) \not\parallel \pi(\ell), \quad  \tc \la j \ra^\delta \leq |\ell| < K  \} 
				\end{aligned}
			\end{equation} 
			and note that $\{ (\ell, j) \colon 0 < |\ell| < K\} = \cup_{i=0}^3 A_i$. 
			Further denote by $\delta_A:=\delta_A(\ell,j)=1$ if $(\ell,j)\in A$, $\delta_A(\ell,j)=0$ otherwise.
			Starting from \eqref{S.decomp} we further decompose 
			\begin{align}
				\label{S.dec1}
				S_j^{j-\pi(\ell)}(\ell) 
				& = 
				\frac{[P^\tT]_j^{j-\pi(\ell)}(\ell)}{\td(\ell,j)}  \delta_{A_0}
				+ 
				\frac{[P^\tT]_j^{j-\pi(\ell)}(\ell)}{ \mathfrak d(\ell,v(j),b(j)) }  \delta_{A_1}
				\\
				\label{S.dec2}
				& 
				+ 
				[P^\tT]_j^{j-\pi(\ell)}(\ell)\left( \frac{1}{\td(\ell,j)} - \frac{1}{ \mathfrak d(\ell,v(j),b(j)) } \right)   \delta_{A_1}
				+
				\frac{[P^\tT]_j^{j-\pi(\ell)}(\ell)}{\td(\ell,j)}  \delta_{A_2}
				+
				[\widetilde S^{(1)}]_{j}^{j-\pi(\ell)}
				\\[-7pt]
				\label{S.dec3}
				& +
				\frac{[P^{\tT}]_j^{j-\pi(\ell)}(\ell)}{ \td(\ell,j)} \delta_{A_3}+
				[\widetilde S^{(2)}]_{j}^{j-\pi(\ell)}.
			\end{align}
			In the next lemmata we  shall  prove the line  \eqref{S.dec1} gives a line-T\"oplitz operator, 
			line \eqref{S.dec2} gives an operator in $\cL^{\cO_+}_{\frac{a}{2}; - (1,1)}$ and line 
			\eqref{S.dec3} one in $\cL^{\cO_+}_{\frac{a}{2}; - (2,0)}$.
			\begin{lemma}\label{lem.hom1}
				The operator in the r.h.s.\ of line \eqref{S.dec1}, which we denote by $S^\tT$,   is line-T\"oplitz and 
				\begin{equation}
					\label{Stt.est}
					|S^\tT|^{\tT,\cO_+}_{a,-2}\le  \frac{1}{2 \gamma}K^{{2\tau+1}}|P^\tT|^{\tT,\cO}_{a,-2}.
				\end{equation}
			\end{lemma}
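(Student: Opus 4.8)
The plan is to read off the symbol of $S^{\tT}$ directly from line \eqref{S.dec1}, check it has the line-T\"oplitz form of Definition \ref{def.top}, and then bound its norm by exploiting the small–divisor lower bounds encoded in $\cC^{(1)}$ and $\cC^{(2)}$. First I would observe that on $A_0$ one has $\pi(\ell)=0$, hence $\Omega_j-\Omega_{j-\pi(\ell)}=0$ and $\td(\ell,j)=\omega\cdot\ell$, so by \eqref{P.lT} the first summand in \eqref{S.dec1} equals $\mathfrak P(\ell,v(j),b(j))/(\omega\cdot\ell)$; on $A_1$ the second summand equals $\mathfrak P(\ell,v(j),b(j))/\mathfrak d(\ell,v(j),b(j))$. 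In both cases the defining conditions ($\pi(\ell)=0$, resp.\ $\pi(\ell)\neq0$ and $v(j)\parallel\pi(\ell)$) and the coefficients depend on $j$ only through $v(j)$ and $b(j)$, so $S^{\tT}$ admits the symbol
\[
\mathfrak S(\ell,v,b):=
\begin{cases}
\dfrac{\mathfrak P(\ell,v,b)}{\omega\cdot\ell}, & 0<|\ell|<K,\ \pi(\ell)=0,\\[2mm]
\dfrac{\mathfrak P(\ell,v,b)}{\mathfrak d(\ell,v,b)}, & 0<|\ell|<K,\ \pi(\ell)\neq0,\ v\parallel\pi(\ell),\\[2mm]
0, & \text{otherwise},
\end{cases}
\]
whence $S^{\tT}\in\cT_a(\fH^p)$.

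Next I would record the divisor bounds on $\cO_+=\cC^{(1)}\cap\cC^{(2)}$. When $\pi(\ell)=0$ we have $\td(\ell,j)=\omega\cdot\ell$, so \eqref{CD1} gives $|\omega\cdot\ell|\ge\gamma|\ell|^{-\tau}$; when $\pi(\ell)\neq0$ and $v\parallel\pi(\ell)$, \eqref{CD2} gives $|\mathfrak d(\ell,v,b)|\ge 2\gamma|\ell|^{-\tau}$. Hence on $\cO_+$, for $0<|\ell|<K$, one has $|\mathfrak S(\ell,v,b)|\le\gamma^{-1}|\ell|^{\tau}|\mathfrak P(\ell,v,b)|\le\gamma^{-1}K^{\tau}|\mathfrak P(\ell,v,b)|$; multiplying by $e^{a\tc|v|+a|\ell|}\langle b\rangle^{2}$, summing over $\ell$ and taking $\sup_{v\in\cV,b\in\Z}$, the definition \eqref{norm.toplitz} yields $\sup_\omega|S^{\tT}(\omega)|^{\tT}_{a,-2}\le\gamma^{-1}K^{\tau}\sup_\omega|P^{\tT}(\omega)|^{\tT}_{a,-2}$.

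For the Lipschitz part I would differentiate the quotients, using $|\Delta_{\omega,\omega'}(\mathfrak P/d)|\le\delta_0^{-1}|\Delta_{\omega,\omega'}\mathfrak P|+\delta_0^{-2}|\mathfrak P|\,|\Delta_{\omega,\omega'}d|$ whenever $|d|\ge\delta_0$ on $\cO_+$. When $d=\omega\cdot\ell$ one has $|\Delta_{\omega,\omega'}d|\le|\ell|$; when $d=\mathfrak d(\ell,v,b)$, by \eqref{d.decomp} the only $\omega$-dependence is through $\omega\cdot\ell$ and through $\fa(v,\cdot)$, so $|\Delta_{\omega,\omega'}\mathfrak d|\le|\ell|+2\sup_{v,b}|\Delta_{\omega,\omega'}\fa(v,b)|$, and by Lemma \ref{diago} applied to $\widetilde\Omega$ together with the standing hypothesis $16\,\sedue{\widetilde\Omega}{\tR}{a}\le\gamma$ of Proposition \ref{homolog} one gets $\gamma\sup_{v,b}|\Delta_{\omega,\omega'}\fa(v,b)|\le 2\,\sedue{\widetilde\Omega}{\tR}{a}\le\gamma/8$, whence $|\Delta_{\omega,\omega'}\mathfrak d|\le 2|\ell|$. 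With $\delta_0\ge\gamma|\ell|^{-\tau}$ this gives, for $0<|\ell|<K$, $|\Delta_{\omega,\omega'}\mathfrak S(\ell,v,b)|\le\gamma^{-1}K^{\tau}|\Delta_{\omega,\omega'}\mathfrak P(\ell,v,b)|+\gamma^{-2}K^{2\tau+1}|\mathfrak P(\ell,v,b)|$. Multiplying by $\gamma\,e^{a\tc|v|+a|\ell|}\langle b\rangle^{2}$, summing, taking $\sup_{v,b}$ and then $\sup_{\omega\neq\omega'}$, and using $\gamma\sup_{\omega\neq\omega'}|\Delta_{\omega,\omega'}P^{\tT}|^{\tT}_{a,-2}\le|P^{\tT}|^{\tT,\cO}_{a,-2}$ and $\sup_\omega|P^{\tT}(\omega)|^{\tT}_{a,-2}\le|P^{\tT}|^{\tT,\cO}_{a,-2}$, the Lipschitz contribution to $|S^{\tT}|^{\tT,\cO_+}_{a,-2}$ is $\lesssim\gamma^{-1}K^{2\tau+1}|P^{\tT}|^{\tT,\cO}_{a,-2}$; adding the previous bound yields \eqref{Stt.est} after collecting numerical constants (the factor $2$ in \eqref{CD2} being convenient here).

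The hard part will be the Lipschitz estimate for the $A_1$ term: controlling $\Delta_{\omega,\omega'}\mathfrak d$ is exactly where the quantitative smallness hypothesis $16\,\sedue{\widetilde\Omega}{\tR}{a}\le\gamma$ and the decomposition $\widetilde\Omega_j=\fa(v(j),b(j))+r^{(1)}_j+r^{(2)}_j$ of Lemma \ref{diago} enter, and one must be careful that all bounds are taken on $\cO_+$ and not merely on $\cC^{(1)}$, respectively on all of $\tR$. Everything else is routine manipulation of the ordered majorant norm $|\cdot|^{\tT}_{a,-2}$, in the spirit of Remark \ref{rem:maj.norm}.
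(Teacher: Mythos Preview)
Your proof is correct and follows essentially the same approach as the paper: you identify the same symbol $\mathfrak S(\ell,v,b)$, use the small-divisor bounds from $\cC^{(1)}$ and $\cC^{(2)}$ for the sup part, and for the Lipschitz part apply the quotient rule together with the bound $|\Delta_{\omega,\omega'}\mathfrak d|\le 2|\ell|$ (the paper writes the slightly coarser $\le 2K$, obtained the same way via the smallness hypothesis on $\widetilde\Omega$). Your treatment of the Lipschitz variation of $\fa$ through Lemma~\ref{diago} is in fact a bit more explicit than the paper's, which simply records $|\Delta_{\omega,\omega'}\mathfrak d|\le |\ell|+2\,\sedue{\widetilde\Omega}{\cO}{a}\le 2K$.
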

			\begin{proof}
				Recalling \eqref{P.lT}, \eqref{d.decomp}, one checks that  the elements of $S^\tT$ are actually given by 
				$(S^\tT)_j^{j-\pi(\ell)}(\ell):= \fS(\ell,v(j),b(j))$ with 
				\vspace{-10pt}
				\[
				\fS(\ell,v,b):= \begin{cases}
					\dfrac{ \mathfrak P(\ell,v,b)}{ \omega\cdot \ell } \quad &{\rm if} \quad \pi(\ell)= 0 \,,\quad \ell\ne 0\,,\;|\ell|<K\\ 
					\dfrac{ \mathfrak P(\ell,v,b)}{\mathfrak d(\ell,v,b)}
					\qquad &{\rm if} \quad \pi(\ell)\ne 0\,,\; |\ell| < K\,,\;  v\parallel \pi(\ell)\\ 
					0 \qquad & {\rm otherwise}
				\end{cases}
				\]
				so $S^\tT$ is line-T\"oplitz. 
				To study the Lipschitz variation we remark that,
				since
				\[
				\abs{\Delta_{\omega,\omega'}\mathfrak d(\ell,v,b) } \le |\ell| +2 \sedue{\widetilde\Omega}{\cO}a \le 2K\qquad\forall\omega\ne\omega'\in \cO_+,
				\]
				\\[-20pt]
				one has
				\[
				\left|\Delta_{\omega,\omega'} \frac{1}{\mathfrak d(\ell,v,b)}\right|= \left|\frac{\Delta_{\omega,\omega'} \mathfrak d(\ell,v,b)}{\mathfrak d(\ell,v,b)(\omega)\, \mathfrak d(\ell,v,b)(\omega')}\right|\le 2 \g^{-2}K^{2\tau+1}\,.
				\]
				It follows that
				$
				|\Delta_{\omega,\omega'} S^\tT|^\tT_{a,-2}\le \g^{-1} K^{\tau}|\Delta_{\omega,\omega'} P^\tT|^\tT_{a,-2}
				+
				2 \g^{-2}K^{2\tau+1}|P^\tT|^\tT_{a,-2}.
				\,
				$
				Estimate \eqref{Stt.est} follows using the small divisor estimates in \eqref{CD1}, \eqref{CD2}.
			\end{proof}
			\begin{lemma}\label{lem.hom2}
				The operator in line \eqref{S.dec2}, which we denote by $S^{(1)}$, belongs to $\cL^{\cO_+}_{\frac{a}{2}; -(1,1)}$ and there exists $C = C(p, \mu, \delta, \tc)>0$ such that 
				\begin{equation}
					\label{stimaS} 
					|S^{(1)}|_{a/2;-(1,1)}^{\cO_+} \le  C \,  \g^{-1} K^{3\tau+1} 
			\, 	 \sedue{P}{\cO}{a} \,.
				\end{equation}
			\end{lemma}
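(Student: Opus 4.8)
The plan is to treat separately the three summands constituting the operator $S^{(1)}$ of line \eqref{S.dec2}: the ``difference of divisors'' term $[P^\tT]_j^{j'}(\ell)\big(\td(\ell,j)^{-1}-\mathfrak d(\ell,v(j),b(j))^{-1}\big)\delta_{A_1}$ (throughout I write $j':=j-\pi(\ell)$), the $A_2$--term $[P^\tT]_j^{j'}(\ell)\,\td(\ell,j)^{-1}\delta_{A_2}$, and $\widetilde S^{(1)}=[P^{(1)}]_j^{j'}(\ell)\,\td(\ell,j)^{-1}$, and to show that each of them lies in $\cL^{\cO_+}_{\frac a2;-(1,1)}$ with a bound of the form $C\gamma^{-1}K^{3\tau+1}\sedue{P}{\cO}{a}$, whose sum yields \eqref{stimaS}. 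The ingredients are: the decomposition $\widetilde\Omega_j=\fa(v(j),b(j))+r^{(1)}_j+r^{(2)}_j$ of Lemma \ref{diago} with the weighted bounds \eqref{stima diago} for $m=2$; the pointwise T\"oplitz estimate $|\mathfrak P(\ell,v,b)|\le\langle b\rangle^{-2}e^{-a\tc|v|-a|\ell|}|P^\tT|^{\tT,\cO}_{a,-2}$ read off from \eqref{norm.toplitz}, \eqref{P.lT}; the small divisor lower bounds $|\td(\ell,j)|\ge\gamma K^{-\tau}$ on $\cC^{(1)}$ (see \eqref{CD1}) and $|\mathfrak d(\ell,v(j),b(j))|\ge2\gamma K^{-\tau}$ on $\cC^{(2)}$ for $\pi(\ell)\parallel v(j)$ (see \eqref{CD2}); and Lemma \ref{gigetto}, which turns a pointwise bound $(\underline{M}_{\frac a2;-(1,1)})^{j'}_j\lesssim A\,e^{-\theta|j-j'|}$ into $|M|_{\frac a2;-(1,1)}\lesssim A\,\theta^{-p-2}$. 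The loss of analyticity $a\to a/2$ is used precisely to convert the gain $e^{-a|\ell|}$ (resp.\ $e^{-\frac a2|\ell|}$ for the $P^{(1)}$-part) into off-diagonal decay $e^{-\theta|j-j'|}$, via $|\pi(\ell)|\le\tc^{-1}|\ell|$, leaving room for the $\ell$--summation.

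The term $\widetilde S^{(1)}$ is immediate. Since $P^{(1)}$ is one of the pieces in the quasi-T\"oplitz decomposition of $P$ (cf.\ \eqref{arco}), it lies in $\cL^{\cO}_{\frac a2;-(1,1)}$ with $|P^{(1)}|^{\cO}_{\frac a2;-(1,1)}\le\sedue{P}{\cO}{a}$; dividing its entries by $\td(\ell,j)$ preserves the order $-(1,1)$ at the cost of a factor $\gamma^{-1}K^\tau$ on the sup part, and for the Lipschitz part one uses $|\Delta_{\omega,\omega'}\td(\ell,j)|\le|\ell|+2\gamma^{-1}\sedue{\widetilde\Omega}{\tR}{a}\lesssim K$ (thanks to $16\sedue{\widetilde\Omega}{\tR}{a}\le\gamma$), hence $|\Delta_{\omega,\omega'}\td(\ell,j)^{-1}|\lesssim\gamma^{-2}K^{2\tau+1}$; accounting for the extra $\gamma$ in the Lipschitz norm this gives the bound $\lesssim\gamma^{-1}K^{2\tau+1}\sedue{P}{\cO}{a}$.

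For the $A_2$--term the point is a large divisor dichotomy. If $|b(j)|\ge2|j|^\mu$ then $\langle b(j)\rangle\gtrsim\langle j\rangle^\mu$, so the weight $\langle j\rangle^\mu\langle b(j)\rangle\lesssim\langle b(j)\rangle^2$ is absorbed by the $\langle b(j)\rangle^{-2}$ in the T\"oplitz bound, while $|\td|^{-1}\le\gamma^{-1}K^\tau$. If instead $|b(j)|<2|j|^\mu$ and $|j|\ge\tJ_\delta$, the primitive generator $w$ of $\pi(\ell)$ satisfies $|w|\le|\pi(\ell)|\le\tc^{-1}|\ell|<\langle j\rangle^\delta\le|j|^\delta$ and $w\ne v(j)$ (because $\pi(\ell)\not\parallel v(j)$ on $A_2$), so Lemma \ref{traviata}(i) gives $|w\cdot j|\ge2\langle j\rangle^\mu$, whence $\bigl||j|^2-|j'|^2\bigr|=\bigl|2\,j\cdot\pi(\ell)-|\pi(\ell)|^2\bigr|\ge4\langle j\rangle^\mu-\langle j\rangle^{2\delta}\ge2\langle j\rangle^\mu$ (since $\mu>2\delta$ by \eqref{mu}); therefore $|\td(\ell,j)|\ge2\langle j\rangle^\mu-\sqrt d\,|\ell|-2\sedue{\widetilde\Omega}{\tR}{a}\gtrsim\langle j\rangle^\mu$, using $|\ell|<\tc\langle j\rangle^\delta\ll\langle j\rangle^\mu$, so $|\td|^{-1}\langle j\rangle^\mu\lesssim1$ and $\langle b(j)\rangle$ is again absorbed. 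The finitely many $j$ with $|j|<\tJ_\delta$ give a smoothing operator with exponentially decaying tails. In every case the matrix entries are $\lesssim\gamma^{-1}K^\tau\,e^{-\theta|j-j'|}|P^\tT|^{\tT,\cO}_{a,-2}$ and Lemma \ref{gigetto} closes the estimate; the Lipschitz bound is analogous, using $|\Delta_{\omega,\omega'}\td|\lesssim K$ again.

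The heart of the lemma is the $A_1$--term $\mathfrak P(\ell,v(j),b(j))\bigl(\td(\ell,j)^{-1}-\mathfrak d(\ell,v(j),b(j))^{-1}\bigr)$. On $A_1$ one has $|\ell|<K$ and $\pi(\ell)\parallel v(j)$, so by \eqref{CD1}, \eqref{CD2} the product $|\td(\ell,j)|\,|\mathfrak d(\ell,v(j),b(j))|\gtrsim\gamma^2 K^{-2\tau}$ --- both Diophantine conditions are genuinely needed here. For the numerator, using $\pi(\ell)\parallel v(j)$ one checks (recalling the sign convention implicit in \eqref{d.decomp}) that the polynomial parts of $\td(\ell,j)=\omega\cdot\ell+2\,j\cdot\pi(\ell)-|\pi(\ell)|^2+\widetilde\Omega_j-\widetilde\Omega_{j'}$ and of $\mathfrak d(\ell,v(j),b(j))$ cancel, leaving $\td-\mathfrak d=\bigl(\widetilde\Omega_j-\fa(v(j),b(j))\bigr)-\widetilde\Omega_{j'}+\fa\bigl(v(j),b(j)-v(j)\cdot\pi(\ell)\bigr)$. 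If $v(j')=v(j)$, then $b(j')=b(j)-v(j)\cdot\pi(\ell)$ and Lemma \ref{diago} reduces this to $\td-\mathfrak d=(r^{(1)}_j+r^{(2)}_j)-(r^{(1)}_{j'}+r^{(2)}_{j'})$, bounded by \eqref{stima diago} by $\bigl(\langle j\rangle^{-\mu}\langle b(j)\rangle^{-1}+\langle j\rangle^{-2\mu}+\langle j'\rangle^{-\mu}\langle b(j')\rangle^{-1}+\langle j'\rangle^{-2\mu}\bigr)\sedue{\widetilde\Omega}{\tR}{a}$; multiplying by $|\mathfrak P|$, by $\gamma^{-2}K^{2\tau}$ and by the target weight $\langle j\rangle^\mu\langle b(j)\rangle$, each of the four polynomial factors is $\le1$ (immediately for the $r_j$--terms; for the $r_{j'}$--terms one splits $|j'|\ge\frac12|j|$, where $\langle j\rangle^\mu\lesssim\langle j'\rangle^\mu$, from $|j'|<\frac12|j|$, where $|\ell|\gtrsim|j-j'|\gtrsim|j|$ and the leftover exponential in $|\ell|$ beats $\langle j\rangle^\mu$). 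If instead $v(j')\ne v(j)$, the additional term $\fa(v(j),b(j)-v(j)\cdot\pi(\ell))-\fa(v(j'),b(j'))$ is only $O(\sedue{\widetilde\Omega}{\tR}{a})$ with no $\langle j\rangle$--gain, so the gain must come from $e^{-a\tc|v(j)|}$: if $|\pi(\ell)|<2\max(|j|,|j'|)^\delta$ and $|j|\ge\tJ_\delta$, Lemma \ref{traviata}(ii) forces $|v(j)|>\frac12|j|^\delta$, hence $e^{-a\tc|v(j)|}\le e^{-\frac{a\tc}2|j|^\delta}$ dominates every polynomial in $\langle j\rangle$; if $|\pi(\ell)|\ge2\max(|j|,|j'|)^\delta\ge2|j|^\delta$, then $|\ell|\ge2\tc|j|^\delta$ and the leftover $e^{-\frac a4|\ell|}$ plays the same role; the $j$ with $|j|<\tJ_\delta$ are finitely many and harmless. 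In all cases the matrix entries of the $A_1$--term are $\lesssim\gamma^{-2}K^{2\tau}\sedue{\widetilde\Omega}{\tR}{a}|P^\tT|^{\tT,\cO}_{a,-2}e^{-\theta|j-j'|}\lesssim\gamma^{-1}K^{2\tau}|P^\tT|^{\tT,\cO}_{a,-2}e^{-\theta|j-j'|}$ (using $16\sedue{\widetilde\Omega}{\tR}{a}\le\gamma$), and Lemma \ref{gigetto} yields the order $-(1,1)$ bound. For the Lipschitz variation one expands $\td=\mathfrak d-(\mathfrak d-\td)$ so that $\Delta_{\omega,\omega'}\bigl(\td^{-1}-\mathfrak d^{-1}\bigr)$ is again controlled by $\mathfrak d-\td$ and $\Delta_{\omega,\omega'}(\mathfrak d-\td)$ (which keep the order $-(1,1)$ decay, and whose $\omega\cdot\ell$ parts cancel, so $|\Delta_{\omega,\omega'}(\mathfrak d-\td)|\lesssim\gamma^{-1}\sedue{\widetilde\Omega}{\tR}{a}\cdot(\text{decay})$) times at most $\gamma^{-3}K^{3\tau+1}$ coming from the divisors (using $|\Delta_{\omega,\omega'}\mathfrak d|\lesssim K$); together with the extra $\gamma$ of the Lipschitz norm and $16\sedue{\widetilde\Omega}{\tR}{a}\le\gamma$ this is the source of the power $K^{3\tau+1}$ in \eqref{stimaS}. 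The main obstacle is exactly this $A_1$--term: recognizing the cancellation of the polynomial parts of $\td$ and $\mathfrak d$ built into \eqref{d.decomp}, handling the case $v(j')\ne v(j)$ by extracting the exponential factor $e^{-a\tc|v(j)|}$ through the geometric Lemma \ref{traviata}(ii), and tracking the Lipschitz losses of the two coupled small divisors.
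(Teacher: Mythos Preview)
Your proof is correct and follows essentially the same strategy as the paper: the three pieces of $S^{(1)}$ are handled by the small-divisor bound for $\widetilde S^{(1)}$, by the large-divisor argument via Lemma~\ref{traviata}(i) for the $A_2$-term, and by the cancellation $\td-\mathfrak d=(\text{$r$-terms})$ together with Lemma~\ref{traviata}(ii) for the $A_1$-term, with the $K^{3\tau+1}$ arising from the Lipschitz variation of the product $\td\,\mathfrak d$.

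The only organizational difference concerns the $A_1$-term. The paper first isolates a ``Case~1'' ($|b(j)|>2\langle j\rangle^\mu$ or $\langle v(j)\rangle>\tfrac12\langle j\rangle^\delta$ or $|\ell|>\tc\langle j\rangle^\delta$) in which $\td^{-1}$ and $\mathfrak d^{-1}$ are bounded separately and the weight $\langle j\rangle^\mu\langle b(j)\rangle$ is absorbed directly by the T\"oplitz decay of $\mathfrak P$ in $b(j)$, $v(j)$ or $\ell$; in the complementary ``Case~2'' the identity $v(j')=v(j)$ is then automatic by Lemma~\ref{traviata}(ii), and moreover $\langle j\rangle\sim\langle j'\rangle$ (since $|\pi(\ell)|\le\langle j\rangle^\delta$), which makes the $r_{j'}$-terms immediate without your extra split on $|j'|\gtrless\tfrac12|j|$. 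Your split on $v(j')=v(j)$ versus $v(j')\ne v(j)$ is equivalent but slightly less economical. One small omission: your appeal to Lemma~\ref{traviata}(ii) in the subcase $v(j')\ne v(j)$ tacitly uses the hypothesis $|b(j)|<2|j|^\mu$ of that lemma, which you do not check. When $|b(j)|\ge2|j|^\mu$ the lemma does not apply, but the estimate is even simpler since then $\langle j\rangle^\mu\langle b(j)\rangle\lesssim\langle b(j)\rangle^2$ is already absorbed by the $\langle b(j)\rangle^{-2}$ in the T\"oplitz bound for $\mathfrak P$; you should add a line covering this.
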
 
			\begin{proof}
				We bound first the operator $\widetilde S^{(1)}$, defined in \eqref{S.decomp}.
				Using  $\omega\in \cC^{(1)}_D$, we know that 
				$|(\wtS^{(1)})_j^{j-\pi(\ell)}(\ell)|\le \g^{-1}K^\tau |(P^{(1)})_j^{j-\pi(\ell)}(\ell)|$. 
				Thus, by Remark \ref{rem:maj.norm}, we get
				$
				\nor{\wtS^{(1)}}{a/2}{-(1,1)} \le \g^{-1} K^{\tau}\nor{P^{(1)}}{a/2}{-(1,1)}
				$.
				To study the Lipschitz norm we  proceed as in the previous Lemma 
				and  we conclude that
				\begin{equation}
					\label{S1.11}
					|\wtS^{(1)}|_{a/2;-(1,1)}^{\cO_+} \le  2 \g^{-1} K^{2\tau+1}|P^{(1)}|_{a/2;-(1,1)}^{\cO}\,.
				\end{equation}
				Regarding the remaining operator in line \eqref{S.dec2}, which we denote by $\widehat S^{(1)}$, it has matrix elements given explicitly by
				\vspace{-10pt}
				\[
				(\widehat S^{(1)})_j^{j-\pi(\ell)}(\ell):= 
				\begin{cases}
					\dfrac{(P^\tT)_j^{j-\pi(\ell)}(\ell)}{\td(\ell, j)}\,,\qquad \qquad\qquad \mbox{if} \quad\pi(\ell)\ne 0\,,\; &|\ell|< \min(K, \tc \jap{j}^\delta)\,,\;  v(j)\not\parallel \pi(\ell)\\
					(P^\tT)_j^{j-\pi(\ell)}(\ell)(\dfrac{1}{\td(\ell,j)} - \dfrac{1}{\mathfrak d(\ell,v(j),b(j))})\,,\quad & \mbox{if}\quad \pi(\ell)\ne 0\,,\; |\ell|<K \,,\;  v(j)\parallel \pi(\ell)\\
					0 \qquad & {\rm otherwise.}
				\end{cases}
				\]
				{\em First line of $\widehat S^{(1)}$:} 
				We claim  that there exists $C=C(\delta,\mu,\g)$ such that
				\begin{equation}
					\label{S1.est.1}
					\sup_{\substack{j\in \Z^2, \ell\in \Z^d:\\ |\ell|<\tc \jap{j}^\delta\,, \pi(\ell)\not\parallel v(j)}}
					\left|{\frac{\jap{j}^\mu}{\omega \cdot \ell +\Omega_j -\Omega_{j-\pi(\ell)}} }\right|^{\cO_+}\le C \g^{-1} K^{2\tau+1} \,.
				\end{equation}
				Indeed  if $\jap{j}\le \tJ_\delta$ (with $\tJ_\delta$ the constant of Lemma \ref{traviata}), then the bound follows trivially using also that $\omega \in \cC^{(1)}$. \\
				Consider now the case $\jap{j}> \tJ_\delta$. 
				Then by \eqref{d.decomp}, in order for the denominator to gain the term $\la j \ra^\mu$, we need $\pi(\ell)\cdot j$ to be large.
				This is what we show to happen. In particular 
				we claim that 
				\begin{equation}
					\label{S1.claim}
					\jap{j}\ge \tJ_\delta , \ \ |\ell| < \tc \jap{j}^\delta, 
					\ \ 
					\pi(\ell)\not\parallel v(j)
					\quad \Rightarrow \quad  |j\cdot \pi(\ell)|> 2 \jap{j}^{\mu}
				\end{equation}
				and then \eqref{S1.est.1} still holds.
				To prove \eqref{S1.claim} we distinguish 2 cases.\\
				\underline{Case 1: $|b(j)|> 2\jap{j}^{\mu}$.}
				Let $w\in \cV$  be the  direction parallel to $\pi(\ell)$, then (being $w$ a generator), 
				$|w| \leq |\pi(\ell)| \leq \tc^{-1}|\ell| \leq \la j \ra^\delta$.
				As  $v(j)$ is the vector realizing  $\min \{| v\cdot j| \colon v \in \cV, \ |v| \leq |j|^\delta\}$, we have
				\[
				|\pi(\ell)\cdot j|\ge |w\cdot j|\ge |v(j)\cdot j| = |b(j)| > 2\jap{j}^{\mu}
				\]
				and in this case \eqref{S1.claim} follows.\\
				\underline{Case 2: $|b(j)|\leq 2 \jap{j}^{\mu} $.} 
				Then as $|w| \leq \la j \ra^\delta$ and $w \neq v(j)$, Lemma \ref{traviata} $(i)$ gives again $|w \cdot j | \geq 2 \la j \ra^\mu$,
				proving \eqref{S1.claim} also in this case. \\ 
				Now, recalling that 
				\begin{align}
					\td(\ell,j)
					= \omega \cdot \ell  +2\pi(\ell)\cdot j -|\pi(\ell)|^2  + \widetilde\Omega_j -\widetilde\Omega_{j-\pi(\ell)}\,,
				\end{align}
				and using \eqref{S1.claim} we obtain that 
				$
				|\td(\ell,j) |\ge 2\jap{j}^{\mu} - (|\omega| \tc +1 ) \jap{j}^{2\delta}-1 \geq C \la j \ra^\mu
				$.
				Hence  \eqref{S1.est.1} is proved in case of the sup norm.  The Lipschitz norm is estimated by the same case analysis.
				\\
				We deduce that
				\begin{equation}
					\label{S1.12}
					\nor{\widehat S^{(1)} \delta_{A_2}}{a/2}{-(1,1)}^{\cO_+} \le  C \g^{-1}K^{2\tau+1}  \nor{P^\tT}{a/2}{-(0,2)}^{\cO}
\le		\left(\frac{a}{2}\right)^{-p-2}C \g^{-1}K^{2\tau+1}  |P^\tT|^{\tT,\cO}_{a,-2}
					\, , 
				\end{equation}
				where in the last inequality we used Lemma \ref{vaccab}.\\
				{\em Second line of $\widehat S^{(1)}$:} Again we consider two cases.		\\
		\underline{Case 1: $|b(j)|> 2\jap{j}^{\mu}$ or $\langle v(j)\rangle > \frac12 \langle j \rangle^\delta$ or $|\ell|> \tc \langle j \rangle^\delta$.} 		In this case we exploit 
		the decay of $(P^\tT)_j^{j-\pi(\ell)}$ and get that for $\omega \in \cO_{D,K}(\gamma)$
		\begin{align}
		\notag
	|b(j)|  \, 	\langle j \rangle^{\mu} \, & \abs{	(P^\tT)_j^{j-\pi(\ell)}(\ell)(\dfrac{1}{\td(\ell,j)} - \dfrac{1}{\mathfrak d(\ell,v(j),b(j))})}\\
	\notag
		&\leq 
		|b(j)| \,	\langle j \rangle^{\mu} \, 
			\abs{	(P^\tT)_j^{j-\pi(\ell)}(\ell)} \,  \left( \abs{\dfrac{1}{\td(\ell,j)}} + \abs{ \dfrac{1}{\mathfrak d(\ell,v(j),b(j))})}	\right)\\
			\label{case1.est}
			& \leq C \, a^{-\frac{\mu}{\delta}}
		\gamma^{-1} K^{\tau} \  e^{- \frac{a \tc }{2}|v(j)| - \frac{3a }{4}  |\ell|} \  |P^\tT|^{\tT,\cO}_{a,-2} \ . 
		\end{align}
	\underline{Case 2: $|b(j)|\leq  2\jap{j}^{\mu}$ and  $\langle v(j)\rangle \leq \frac12 \langle j \rangle^\delta$ and  $|\ell|\leq \tc \langle j \rangle^\delta$.} 		
	In this case we exploit the decay of $\td(\ell,j)^{-1} - \mathfrak d(\ell,v(j),b(j))^{-1}$. 
	Precisely we note that 
\begin{align*}
\td(\ell,j) & := \omega \cdot \ell + |j|^2 -|j-\pi(\ell)|^2 + \widetilde\Omega_j -\widetilde\Omega_{j-\pi(\ell)}
				= \omega \cdot \ell  +2\pi(\ell)\cdot j -|\pi(\ell)|^2  + \widetilde\Omega_j -\widetilde\Omega_{j-\pi(\ell)} \ , 
			\end{align*} 
			so decomposing $\widetilde \Omega$ as in Lemma \ref{diago}, 
			and using  that $v(j)   \parallel \pi(\ell) $
			  we arrive at
			\begin{align*}
				\td(\ell,j) 
				=&  \omega \cdot \ell  +2\frac{|\pi(\ell)|}{|v(j)|}b(j) -|\pi(\ell)|^2
				+ \ta(v(j), b(j)) - \ta(v(j-\pi(\ell)), b(j-\pi(\ell))) \\
&				+ r_j^{(1)}
				-r_{j-\pi(\ell)}^{(1)} + 
				r_j^{(2)}	
				- r_{j-\pi(\ell)}^{(2)} \ .
			\end{align*}
Since $|b(j)|\leq  2\jap{j}^{\mu}$,  $\langle v(j)\rangle \leq \frac12 \langle j \rangle^\delta$ and also  $|\pi(\ell)| \leq \langle j \rangle^\delta$, Lemma 
	\ref{traviata} $(ii)$ implies that $v(j) = v(j-\pi(\ell)) $ and then 
	one has also $b(j-\pi(\ell)) = b(j) - v(j)\cdot\pi(\ell)$.
			We deduce  that
			$$
			\td(\ell,j)  = \mathfrak d(\ell,v(j),b(j)) + r_j^{(1)}
				-r_{j-\pi(\ell)}^{(1)} + 
				r_j^{(2)}	
				- r_{j-\pi(\ell)}^{(2)} \ 
			$$
			and thus we estimate 
\begin{align*}
			\abs{\jap{j}^\mu(\dfrac{1}{\td(\ell,j)} - \dfrac{1}{\mathfrak d(\ell,v(j),b(j))})}& = \abs{\dfrac{\jap{j}^\mu ( r_j^{(1)}	-r_{j-\pi(\ell)}^{(1)} + 	r_j^{(2)}	 - r_{j-\pi(\ell)}^{(2)})}{\td(\ell,j) \mathfrak d(\ell,v(j),b(j))}} \\
			 &				\le \sedue{\tilde\Omega}{\cO_0}a \g^{-2} K^{{2\tau}} {\le}  \g^{-1} K^{2\tau}
\end{align*}
				We reason in the same way for the Lipschitz variation and obtain the bound
				\begin{equation}
					\label{S1.est.2}
					\sup_{\substack{j\in \Z^2, \ell\in \Z^d:\\ |\ell| < K, \ \  \pi(\ell)\parallel v(j)}}
					\left|	\jap{j}^\mu \left(\dfrac{1}{\td(\ell,j)} - \dfrac{1}{\mathfrak d(\ell,v(j),b(j))}\right) \right|^{\cO_+}\le C \g^{-1} K^{3\tau+1} \,.
				\end{equation}
				Thus we obtain the same kind of estimate as in \eqref{case1.est}, with $\tau \leadsto 3\tau+1$.\\
	In both cases, using Lemma  \ref{gigetto}, we finally 	get
				\begin{equation}
					\label{S1.13}
					\nor{\widehat S^{(1)} \delta_{A_1}}{a/2}{-(1,1)}^{\cO_+} \le  C \g^{-1}K^{3\tau+1}   |P^\tT|^{\tT,\cO}_{a,-2} \,.
				\end{equation}
				Then estimate \eqref{stimaS} follows from \eqref{S1.11}, \eqref{S1.12}, \eqref{S1.13}.
			\end{proof}
			\begin{lemma}\label{lem.hom3}
				The operator in line \eqref{S.dec3}, which we denote by $S^{(2)}$, belongs to $\cL_{{\frac{a}{2}; -(2,0)}}^{\cO_+}$ and 
				there exists $C = C(p,\mu,\delta,\tc) >0$ such that 
				\begin{equation}
					\label{stimaS2} 
					|S^{(2)}|_{a/2;-(2,0)}^{\cO_+} \le C \g^{-1} K^{2\tau+1}\left( |P^{(2)}|_{a/2;-(2,0)}^{\cO}  + a^{-2{\mu}/\delta}  |P^\tT|^{\cO}_{a}
					\right)
				\end{equation}
			\end{lemma}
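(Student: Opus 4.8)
The plan is to follow the template of Lemmas~\ref{lem.hom1} and~\ref{lem.hom2}: write the operator of line~\eqref{S.dec3} as $S^{(2)} = \widehat S^{(2)} + \widetilde S^{(2)}$, where $\widetilde S^{(2)}$ is the operator introduced in~\eqref{S.decomp} (matrix elements $[P^{(2)}]_j^{j-\pi(\ell)}(\ell)/\td(\ell,j)$ for $0<|\ell|\le K$) and $\widehat S^{(2)}$ is the line‑T\"oplitz contribution $[P^\tT]_j^{j-\pi(\ell)}(\ell)\,\td(\ell,j)^{-1}\,\delta_{A_3}$, and estimate the two pieces separately in $\cL^{\cO_+}_{a/2;-(2,0)}$.

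The term $\widetilde S^{(2)}$ is treated exactly as $\widetilde S^{(1)}$ at the beginning of the proof of Lemma~\ref{lem.hom2}: since $\omega\in\cC^{(1)}_D$ one has $|\td(\ell,j)|\ge\g|\ell|^{-\tau}\ge\g K^{-\tau}$, hence $|(\widetilde S^{(2)})_j^{j-\pi(\ell)}(\ell)|\le\g^{-1}K^{\tau}|(P^{(2)})_j^{j-\pi(\ell)}(\ell)|$ pointwise, and the monotonicity of the $-(2,0)$ norm (Remark~\ref{rem:maj.norm}) gives $\nor{\widetilde S^{(2)}}{a/2}{-(2,0)}\le\g^{-1}K^{\tau}\nor{P^{(2)}}{a/2}{-(2,0)}$. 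For the Lipschitz seminorm one uses, as in Lemma~\ref{lem.hom1}, the bound $|\Delta_{\omega,\omega'}\td(\ell,j)|\le 2K$ (which follows from $16\sedue{\widetilde\Omega}{\tR}{a}\le\g$), so that $|\Delta_{\omega,\omega'}\td(\ell,j)^{-1}|\le 2\g^{-2}K^{2\tau+1}$, and one concludes $|\widetilde S^{(2)}|^{\cO_+}_{a/2;-(2,0)}\le C\,\g^{-1}K^{2\tau+1}\,|P^{(2)}|^{\cO}_{a/2;-(2,0)}$.

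The only genuinely new point is $\widehat S^{(2)}$, and the key observation is that on $A_3$ one can ignore entirely the line‑T\"oplitz structure of $P^\tT$ and use it just as a bounded operator: by the very definition of $A_3$ one has $|\ell|\ge\tc\jap{j}^\delta$, hence $\jap{j}^{2\mu}\le(\tc^{-1}|\ell|)^{2\mu/\delta}$, so the weight $\jap{j}^{2\mu}$ required by the order $-(2,0)$ is merely a power of $|\ell|$. Using in addition $|\td(\ell,j)|^{-1}\le\g^{-1}K^{\tau}$ on $\cC^{(1)}_D$ together with the elementary estimate $e^{\frac a2|\ell|}(\tc^{-1}|\ell|)^{2\mu/\delta}\le C(p,\mu,\delta,\tc)\,a^{-2\mu/\delta}\,e^{\frac{3a}{4}|\ell|}$ (cf.~\eqref{def.C}), one obtains for all $j,j'$
\[
\left(\underline{\widehat S^{(2)}}_{a/2;-(2,0)}\right)_j^{j'} \le C\,a^{-2\mu/\delta}\,\g^{-1}K^{\tau}\!\!\sum_{\ell\,:\, j-j'=\pi(\ell)}\!\! e^{\frac{3a}{4}|\ell|}\,\bigl|[P^\tT]_j^{j'}(\ell)\bigr| = C\,a^{-2\mu/\delta}\,\g^{-1}K^{\tau}\,\bigl(\underline{P^\tT}_{3a/4}\bigr)_j^{j'},
\]
whence, by Remark~\ref{rem:maj.norm}, $|\widehat S^{(2)}|_{a/2;-(2,0)}\le C\,a^{-2\mu/\delta}\,\g^{-1}K^{\tau}\,|P^\tT|_{3a/4}$; the Lipschitz seminorm is controlled by the same computation ($P^\tT$ is itself Lipschitz in $\omega$ and the divisor contributes the extra factor $\g^{-1}K^{\tau}$, exactly as in the previous paragraph), giving $|\widehat S^{(2)}|^{\cO_+}_{a/2;-(2,0)}\le C\,a^{-2\mu/\delta}\,\g^{-1}K^{2\tau+1}\,|P^\tT|^{\cO}_a$ (here $|P^\tT|^{\cO}_a$ is measured at a slightly reduced radius, or equivalently $\lesssim|P^\tT|^{\tT,\cO}_{a,-2}$ by Lemma~\ref{vaccab}). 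Adding this to the bound for $\widetilde S^{(2)}$ yields~\eqref{stimaS2}.

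I do not expect a serious obstacle in this sublemma: contrary to Lemmas~\ref{lem.hom1}--\ref{lem.hom2} one never invokes the delicate small‑divisor set $\cC^{(2)}_D$, nor Lemma~\ref{traviata} — the geometric restriction $|\ell|\ge\tc\jap{j}^\delta$ built into $A_3$ is precisely what trades the $\jap{j}^{2\mu}$ weight of the target order for a power of $|\ell|$. The only thing requiring care is the bookkeeping of the analyticity loss, which is the reason the estimate is carried out at radius $a/2$: of the decay $e^{a|\ell|}$ available from $P^\tT$, the part $e^{\frac a2|\ell|}$ is kept to form the target weight, $e^{\frac a4|\ell|}$ is spent to absorb the polynomial factor $|\ell|^{2\mu/\delta}$ via~\eqref{def.C}, and $e^{\frac a4|\ell|}$ is retained so that $P^\tT$ is measured at radius $3a/4<a$; the resulting $a$‑exponent is $a^{-2\mu/\delta}$ up to lower‑order powers of $1/a$, which are absorbed into the overall loss as in~\eqref{stimahomol}.
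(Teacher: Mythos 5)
Your proposal is correct and follows essentially the same route as the paper: the same decomposition $S^{(2)}=\widehat S^{(2)}+\widetilde S^{(2)}$, the same treatment of $\widetilde S^{(2)}$ via $\cC^{(1)}_D$ and Remark~\ref{rem:maj.norm}, and the same observation that on $A_3$ the constraint $|\ell|\ge\tc\jap{j}^\delta$ converts the $\jap{j}^{2\mu}$ weight into a power of $|\ell|$. The paper simply packages this last step as an application of Lemma~\ref{Bony} (which is precisely the smoothing-by-analyticity-loss computation you carry out inline), together with the small-divisor bound from $\cC^{(1)}$; your version is just more explicit about the distribution of the exponential decay in $\ell$.
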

			\begin{proof}
				We need to estimate two contributions. The operator $\widetilde S^{(2)}$, given in \eqref{S.decomp}, is estimated similarly as $\widetilde S^{(1)}$ in the previous lemma, obtaining the estimate
				\begin{equation}
					\label{}
					\nor{\wtS^{(2)}}{a/2}{-(2,0)}^{\cO_+} \le \g^{-1} K^{2\tau+1}\nor{P^{(2)}}{a/2}{-(2,0)}^{\cO}\,.
				\end{equation}
				Regarding the first operator in \eqref{S.dec3}, which we denote by $\widehat S^{(2)}$, it has elements only supported in  the region $|\ell| \geq \tc \la j \ra^\delta$. 
				Then  the small divisor estimate in \eqref{CD1} and Lemma \ref{Bony} give the bound
				\[
				|\widehat S^{(2)}|^{\cO_+}_{a/2, -(2,0)}  \le  C(p,\mu,\delta,\tc) \, a^{-2{\mu}/\delta}  |\widehat S^{(2)}|^{\cO_+}_{a}\le \g^{-1}C(p,\delta) \, a^{-2{\mu}/\delta} K^{2\tau+1}
				|P^\tT|^{\cO}_{a}\,.
				\qedhere
				\]
		\end{proof}}
		\begin{proof}[Proof of Proposition \ref{homolog}]	Estimate \eqref{stimahomol} follows from Lemmata \ref{lem.hom1}, \ref{lem.hom2} and \ref{lem.hom3}.
		\end{proof}
		\section{The KAM reduction scheme}
		In this section we shall study the linear Schr\"odinger operator in \eqref{Lo} and prove that, provided that $V$ in \eqref{travel} is sufficiently small, then $\fL$ is reducible with sufficient knowledge on the asymptotics of the eigenvalues. As explained in the introduction, the reducibility of Schr\"odinger operators is by now well-known, the novelties in this result are the fact that the reducing change of variables is quasi-T\"oplitz of order $-1$ and the asymptotics of the eigenvalues.
		
		\begin{theorem}[Reducibility] \label{riduco}
			Consider a Schr\"odinger operator as in \eqref{Lo} with a travelling wave potential $\VV$ as in \eqref{travel}.
			There exist $\epsilon_*,\gamma_*>0$ such that the following holds. For all $\gamma \in (0,\gamma_*)$ and for any potential satisfying  the smallness condition \eqref{pic colo0}
			there exist: 
			\\
			 a sequence of Lipschitz functions $\mathtt R:= [-1,1]^d\ni \omega \mapsto \Omega_j(\omega)$, 
			a positive measure set $\cO_{\infty}\subset\tR$ and a sympletic change of variables $G$ such that 
			\begin{equation}
				\label{cerca}
				G_* \fL= \diag(\Omega_j)\,,\quad 
				G-\id \in\opuno{\cO_{\infty}}{\ta'} \,,\quad \mbox{with}\quad {\ta'= \ta/8}\,.
			\end{equation}
			Moreover 
			\begin{equation}
				\label{Omegaj}
				\Omega_j= |j|^2 + \fa(v(j),b(j)) + r^{(1)}_j + r^{(2)}_j 
			\end{equation}
			with the bounds
			\[
			\sup_{v\in \cV,b\in \Z} |\fa(v,b)|^{\tR} e^{\ta' {\tc}|v|} {\jap{b}^2}+\sup_j |r^{(1)}_j |^{\tR}\jap{j}^{\mu} \jap{b(j)} 
			+ \sup_j |r^{(2)}_j |^{\tR}\jap{j}^{2\mu} \le 2 \|V\|_{\ta,\tp}\,.
			\]
			{Finally  if $V$ is Gauge invariant then $G$ is Gauge covariant.}
		\end{theorem}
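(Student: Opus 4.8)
The plan is to prove Theorem~\ref{riduco} by a KAM iteration in which one alternates solving the homological equation of Proposition~\ref{homolog} with a symplectic conjugation. Concretely, I would construct inductively self-adjoint, momentum preserving operators $\fL_n = D_n + P_n$, where $D_n = \diag\!\big(|j|^2 + \widetilde\Omega^{(n)}_j\big)$ is diagonal with $\widetilde\Omega^{(n)}\in\opdue{\tR}{a_n}$, and $P_n$ is quasi-T\"oplitz with $\sedue{P_n}{\cO_n}{a_n}\le\e_n$; the auxiliary data are analyticity radii $a_0 > a_1 > \cdots\downarrow a_\infty := \ta/8$ with summable decrements, truncation orders $K_n\uparrow\infty$ (geometric growth suffices), hyper-exponentially small sizes $\e_n = \e_0^{(3/2)^n}$ with $\e_0$ of order $\|V\|_{\ta,\tp}$, and nested compact sets $\tR = \cO_0\supset\cO_1\supset\cdots$, $\cO_\infty := \bigcap_n\cO_n$. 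The very first step is special: the initial perturbation $\VV$ is a bounded multiplication operator, hence of order $0$ and \emph{not} quasi-T\"oplitz of negative order. Dividing it by the Schr\"odinger divisor $\omega\cdot\ell + 2\pi(\ell)\cdot j - |\pi(\ell)|^2$ -- which thanks to the momentum constraint \eqref{condV} on $V$ never vanishes identically, which is $\gtrsim\langle j\rangle^\mu$ away from the directions $\pi(\ell)\parallel v(j)$ exactly as in the proof of Lemma~\ref{lem.hom2}, and whose affine dependence on $b(j)$ along those directions turns the residual part into a line-T\"oplitz operator of order $-1$ -- produces a generator $S_0$ which is quasi-T\"oplitz of order $-1$ (and only of order $-1$). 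This single loss of one order, carried along the scheme, is precisely why the limiting $G$ is of order $-1$; from the second step onwards the perturbations are genuinely quasi-T\"oplitz of order $-2$ and Proposition~\ref{homolog} applies as stated.

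At the $n$-th step, assuming $16\sedue{\widetilde\Omega^{(n)}}{\tR}{a_n}\le\gamma$ (which holds inductively since the $\e_n$ sum up to less than $\e_0$), I apply Proposition~\ref{homolog} with $D = D_n$, $K = K_n$, $a = a_n$, $\cO = \cO_n$: on $\cO_{n+1} := \cO_n\cap\cO_{D_n,K_n}(\gamma)$ we obtain $S_n\in\opdue{\cO_{n+1}}{a_n}$ solving $-\im\dot S_n + [D_n,S_n] = \Pi_{0<|\ell|\le K_n}P_n$ with $\sedue{S_n}{\cO_{n+1}}{a_n}\le C\,a_n^{-2/\delta}\gamma^{-1}K_n^{3\tau+1}\e_n$, and such that $\im S_n$ is self-adjoint (and $S_n$ is Gauge covariant if $V$ is). Setting $G_n := e^{S_n}$ (symplectic) and using the Lie series \eqref{coniugio} together with the homological equation, the conjugate operator splits as
\[
\fL_{n+1} := (G_n)_*\fL_n = D_{n+1} + P_{n+1}, \qquad D_{n+1} := D_n + \Pi_{\ell=0}P_n,
\]
where $\Pi_{\ell=0}P_n$ is time independent, hence diagonal, so $D_{n+1}$ stays diagonal and $\widetilde\Omega^{(n+1)} = \widetilde\Omega^{(n)} + \Pi_{\ell=0}P_n\in\opdue{\tR}{a_{n+1}}$ with $\sedue{\widetilde\Omega^{(n+1)} - \widetilde\Omega^{(n)}}{\tR}{a_{n+1}}\le\e_n$, while the new perturbation $P_{n+1}$ collects the high-frequency tail $\Pi_{|\ell|>K_n}P_n$ together with a quadratic remainder of the form $\sum_{k\ge1}c_k\ad(S_n)^k[\,\cdot\,]$ applied to operators of size $\e_n$. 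The tail is bounded by $e^{-(a_n-a_{n+1})K_n}\e_n$ (Lemma~\ref{proiettotutto}); the quadratic remainder is controlled by Corollary~\ref{esponenziale} (whose smallness assumption $\sedue{S_n}{\cO_{n+1}}{a_n}\le a_n^{q_0}/4C$ holds once $\e_0$ is small, with constants uniform since $a_n\ge\ta/8$) together with Proposition~\ref{prop.chiavetta}, yielding $\sedue{P_{n+1}}{\cO_{n+1}}{a_{n+1}}\lesssim e^{-(a_n-a_{n+1})K_n}\e_n + a_n^{-q}\sedue{S_n}{\cO_{n+1}}{a_n}\e_n$, and -- crucially -- keeping $P_{n+1}$ quasi-T\"oplitz of order $-2$; here the product estimate \eqref{a.mano}, which trades two order $-1$ factors for one order $-2$ factor at the cost of a small loss of analyticity, is used, and the total loss over all steps is made less than $a_0 - \ta/8$. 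Choosing $K_n$ so that $e^{-(a_n-a_{n+1})K_n}\le\e_n$ and $\e_0$ small enough forces $\e_{n+1}\le\e_n^{3/2}$, which closes the induction.

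It then remains to pass to the limit. Since $\e_n\downarrow0$ hyper-exponentially, the composition $G := \lim_{n\to\infty} G_0\circ G_1\circ\cdots\circ G_n$ converges in $\opuno{\cO_\infty}{\ta/8}$; the order is $-1$ and not $-2$ because $G_0 - \id$ is only of order $-1$, and composing it with the subsequent order $-2$ corrections produces, by Proposition~\ref{prop.chiavetta} with $\min\{1,2\}=1$, operators of order $-1$. Passing to the limit in the conjugation gives $G_*\fL = D_\infty = \diag(\Omega_j)$ with $\Omega_j = |j|^2 + \widetilde\Omega_j$ and $\widetilde\Omega = \sum_{n\ge0}\Pi_{\ell=0}P_n\in\opdue{\tR}{\ta/8}$, $\sedue{\widetilde\Omega}{\tR}{\ta/8}\le\sum_{n\ge0}\e_n\le 2\|V\|_{\ta,\tp}$ for $\e_0$ small. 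Applying Lemma~\ref{diago} with $m = 2$ to this time-independent (hence diagonal) operator gives the decomposition $\widetilde\Omega_j = \fa(v(j),b(j)) + r_j^{(1)} + r_j^{(2)}$ with exactly the weighted bounds claimed, upon renaming $\ta' = \ta/8$; reality of the $\Omega_j$ follows from the self-adjointness of all the $P_n$, and Gauge covariance of $G$ (when $V$ is Gauge invariant) from that of each $S_n$ and Remark~\ref{gruppo}. Finally, the measure estimate is standard: the sets \eqref{CD1}--\eqref{CD2} are cut out by countably many conditions affine in $\omega$ with non-degenerate leading coefficient (where $16\sedue{\widetilde\Omega^{(n)}}{\tR}{a_n}\le\gamma$ makes the $\omega$-gradient of the divisor of size $\ge 1-\tfrac18$), and since $\e_{n-1}\ll\gamma K_{n-1}^{-\tau}$ and $\|D_n - D_{n-1}\|\le\e_{n-1}$ the genuinely new conditions at step $n$ involve only $|\ell| > K_{n-1}$, so $\meas(\cO_n\setminus\cO_{n+1})\lesssim\gamma\sum_{|\ell|>K_{n-1}}|\ell|^{-\tau}$, which is summable and totals $\le C\gamma$ provided $\tau$ is fixed $> d$.

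The part I expect to be the main obstacle is propagating the quasi-T\"oplitz structure across the nonlinear step while keeping the loss of analyticity under control: one must verify that $P_{n+1}$, built out of nested commutators of quasi-T\"oplitz operators, is again quasi-T\"oplitz of order $-2$ with a norm compatible with the super-exponential convergence, and that the cumulative analyticity loss produced by the order $-1\times-1\to-2$ mechanism stays within the budget $a_0 - \ta/8$; this is exactly what Proposition~\ref{prop.chiavetta} and Corollary~\ref{esponenziale} are designed for. A secondary delicate point is the first step, where the bounded multiplication operator $\VV$ must be shown to produce, after a single division by the Schr\"odinger divisor, a quasi-T\"oplitz generator of order exactly $-1$ -- which is what ultimately fixes the order of the reducing change of variables $G$.
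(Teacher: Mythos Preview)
Your proposal is correct and follows essentially the same architecture as the paper: a distinguished first step (Lemma~\ref{passozero}) that absorbs the order-$0$ multiplication operator into a quasi-T\"oplitz generator $S_0$ of order $-1$ and a remainder $P_0$ of order $-2$, followed by a standard KAM iteration (Lemma~\ref{stepKam}, Proposition~\ref{ittero}) driven by Proposition~\ref{homolog}, with the limit taken in Corollary~\ref{chiudo} and the eigenvalue expansion read off via Lemma~\ref{diago}. You have also correctly identified why $G-\id$ ends up of order $-1$ rather than $-2$.

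One small divergence worth flagging: for the measure estimate you sketch the classical step-by-step argument (bounding $\meas(\cO_n\setminus\cO_{n+1})$ and telescoping, using that only $|\ell|>K_{n-1}$ gives new conditions). The paper instead proves Proposition~\ref{prop:Ofin}, showing that the Cantor set cut out by the \emph{final} frequencies $\Omega_j$ is contained in $\cO_\infty$, and then estimates that final set directly (Proposition~\ref{misura1} via Lemma~\ref{lem.N.melnikov} with $N=2$). Your direct route works too, but note that for the $\cC^{(2)}$-conditions you must argue that for each $\ell$ only finitely many $b$ contribute (the linear term $2\tfrac{|\pi(\ell)|}{|v|}b$ dominates for $|b|\gtrsim|\ell|^2$); without this the sum over $b\in\Z$ diverges. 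Also, the diagonal increments $\Pi_{\ell=0}P_n$ are defined only on $\cO_n$, and the paper extends them to $\tR$ by Kirszbraun; you implicitly need the same device to get $\widetilde\Omega^{(n)}\in\opdue{\tR}{a_n}$.
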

		We can be more explicit regarding the set $\cO_{\infty}$.
		For $\ell\in \Z^d$, $j\in \Z^2$  and all $v\in \cV$ such that $\pi(\ell)\parallel v$ we  consider $\td(\ell,j),\mathfrak d(\ell,v,b)$ defined as in \eqref{lavello} with $\Omega_j$ defined in \eqref{Omegaj}.
		
		{\begin{proposition}\label{prop:Ofin} Under the Hypotheses of Theorem \ref{riduco}, there exists $\tau>0$ such that 	the set $\cO_{\infty}$ contains the set 
				$
				\cO_{\rm fin}:=\cO_0(\g)\cap 	\cC^{(1)}(\g,\tau)\cap \cC^{(2)}(\g,\tau)
				$
				where
				\begin{gather}
					\label{diofantino}
					\cO_0=\cO_0(\g):= \{\omega\in \tR:\quad |\omega\cdot\ell + k| \ge  2\g|\ell|^{{-(d+1)}}\quad \forall (\ell,k )\in \Z^{d+1}:\; \ell\ne 0 \}\,,
			\\
					\label{CD1fin}
					\cC^{(1)}= \cC^{(1)}(\g,\tau):= \left\{ \omega\in \tR:\ \  
					| \td(\ell, j) | \ge 2\g |\ell|^{-\tau}\,,\;\forall\, \ell\ne 0, \ \ \ j\in \Z^2 \right\}\,,
			\\
					\label{CD2fin}
					\cC^{(2)}:= \cC^{(2)}(\g,\tau) := 
					\left\{ \omega\in \tR:\quad|\mathfrak d(\ell,v,b)|\ge 4\g |\ell|^{-\tau} \,,\;\forall\, \ell\ne 0, \ \ v\in \cV \mbox{ s.t.} \; \pi(\ell)\parallel v\right\} \,.
				\end{gather}
		\end{proposition}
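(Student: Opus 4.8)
The plan is to go through the construction of $\cO_\infty$ in the proof of the reducibility Theorem \ref{riduco} and to verify that every small-divisor condition used along the KAM iteration follows from the ``final'' conditions defining $\cO_{\rm fin}$. Recall the structure of that iteration: it produces diagonal operators $D_\nu=\diag(\Omega^{(\nu)}_j)$ with $\Omega^{(\nu)}_j=|j|^2+\widetilde\Omega^{(\nu)}_j$ and $\widetilde\Omega^{(\nu)}\in\opdue{\tR}{a_\nu}$ decomposed via Lemma \ref{diago} as $\widetilde\Omega^{(\nu)}_j=\fa^{(\nu)}(v(j),b(j))+r^{(1),\nu}_j+r^{(2),\nu}_j$; symplectic maps $G_\nu=e^{S_\nu}$, where $S_\nu$ solves the homological equation of Proposition \ref{homolog} applied with $D=D_\nu$, cutoff $K=K_\nu$ and output set $\cO_{D_\nu,K_\nu}(\g)$ of \eqref{OO}; and sequences $a_\nu\downarrow\ta/8$, $K_\nu=K_0^{\chi^\nu}\uparrow\infty$ and $\e_\nu=\e_0^{\chi^\nu}\downarrow0$, with $\chi=3/2$ and $\e_0$ a fixed multiple of $\e=\g^{-1}\|V\|_{\ta,\tp}$ as in \eqref{pic colo0}. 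Since at the $\nu$-th step Proposition \ref{homolog} only requires $\omega\in\cO_{D_\nu,K_\nu}(\g)$, and the iteration is started from $\cO_0(\g)$ of \eqref{diofantino}, one has $\cO_\infty=\cO_0(\g)\cap\bigcap_{\nu\ge0}\cO_{D_\nu,K_\nu}(\g)$. Here $\tau$ is the (fixed) exponent of the scheme, which we take $\ge d+1$.

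First I would record two quantitative outputs of the iteration. Summing the telescoping increments $D_{\mu+1}-D_\mu$ for $\mu\ge\nu$, each bounded in the relevant quasi-T\"oplitz norm by $C\e_\mu$, and using $\sum_{\mu\ge\nu}\e_\mu\le2\e_\nu$, one gets the convergence bounds
\[
\sup_{\omega\in\tR}\,\sup_{j\in\Z^2}\big|\widetilde\Omega^{(\nu)}_j(\omega)-\widetilde\Omega_j(\omega)\big|\le C\e_\nu\,,\qquad \sup_{\omega\in\tR}\,\sup_{v\in\cV,\,b\in\Z}\big|\fa^{(\nu)}(v,b)(\omega)-\fa(v,b)(\omega)\big|\le C\e_\nu\,,
\]
where $\widetilde\Omega_j,\fa$ are the limiting quantities of \eqref{Omegaj}. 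Second, since $\e_\nu K_\nu^\tau=(\e_0K_0^\tau)^{\chi^\nu}$ is decreasing as soon as $\e_0K_0^\tau<1$, after shrinking $\e_*$ in \eqref{pic colo0} if necessary one obtains $2C\,\e_\nu\,K_\nu^\tau\le\g$ for every $\nu\ge0$. This last inequality is the only genuinely quantitative point, and it is exactly here that the super-exponential convergence $\e_\nu=\e_0^{\chi^\nu}$ of the reducibility scheme must beat the fixed polynomial loss $K^{3\tau+1}$ coming from the homological estimate \eqref{stimahomol}; I expect this bookkeeping, rather than anything conceptual, to be the main (minor) obstacle.

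Then I would fix $\omega\in\cO_{\rm fin}=\cO_0(\g)\cap\cC^{(1)}(\g,\tau)\cap\cC^{(2)}(\g,\tau)$ and prove that $\omega\in\cO_{D_\nu,K_\nu}(\g)$ for every $\nu\ge0$. From \eqref{lavello} and \eqref{Omegaj}, $\td^{(\nu)}(\ell,j)=\omega\cdot\ell+|j|^2-|j-\pi(\ell)|^2+\widetilde\Omega^{(\nu)}_j-\widetilde\Omega^{(\nu)}_{j-\pi(\ell)}$, and the first three terms do not depend on $\nu$; hence the convergence bound yields $|\td^{(\nu)}(\ell,j)(\omega)-\td(\ell,j)(\omega)|\le2C\e_\nu$ for all $\ell,j$. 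Thus, for $0<|\ell|\le K_\nu$ and any $j$, using $\omega\in\cC^{(1)}(\g,\tau)$ of \eqref{CD1fin} (so $|\td(\ell,j)(\omega)|\ge2\g|\ell|^{-\tau}$) together with $2C\e_\nu\le\g K_\nu^{-\tau}\le\g|\ell|^{-\tau}$, one gets $|\td^{(\nu)}(\ell,j)(\omega)|\ge\g|\ell|^{-\tau}$, i.e. $\omega\in\cC^{(1)}_{D_\nu,K_\nu}(\g)$ of \eqref{CD1}. Similarly, by \eqref{d.decomp} the only $\nu$-dependent part of $\mathfrak d^{(\nu)}(\ell,v,b)$ is $\fa^{(\nu)}(v,b)-\fa^{(\nu)}(v,b-v\cdot\pi(\ell))$, so the second convergence bound gives $|\mathfrak d^{(\nu)}(\ell,v,b)(\omega)-\mathfrak d(\ell,v,b)(\omega)|\le2C\e_\nu$; combining with $\omega\in\cC^{(2)}(\g,\tau)$ of \eqref{CD2fin} (so $|\mathfrak d(\ell,v,b)(\omega)|\ge4\g|\ell|^{-\tau}$) and $2C\e_\nu\le\g|\ell|^{-\tau}$ for $|\ell|\le K_\nu$, one obtains $|\mathfrak d^{(\nu)}(\ell,v,b)(\omega)|\ge2\g|\ell|^{-\tau}$, i.e. $\omega\in\cC^{(2)}_{D_\nu,K_\nu}(\g)$ of \eqref{CD2}. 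Therefore $\omega\in\cO_{D_\nu,K_\nu}(\g)$ for all $\nu$, and since $\omega\in\cO_0(\g)$ by hypothesis, $\omega\in\cO_\infty$. The degenerate cases $\pi(\ell)=0$, where $\td(\ell,j)=\mathfrak d(\ell,v,b)=\omega\cdot\ell$, and the integer shifts entering \eqref{d.decomp}, are harmlessly absorbed into $\cO_0(\g)$ using $\tau\ge d+1$. This proves $\cO_{\rm fin}\subseteq\cO_\infty$, as claimed.
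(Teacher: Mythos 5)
Your proposal is correct and follows essentially the same route as the paper: fix $\omega\in\cO_{\rm fin}$, compare the step-$n$ quantities $\fa^{(n)}(v,b)$, $\td^{(n)}(\ell,j)$, $\mathfrak d^{(n)}(\ell,v,b)$ with their limits via the telescoping bound \eqref{stima Om} and Lemma \ref{diago}, and then absorb the error using $|\ell|\le K_n$ together with the super-exponential decay of $\e_n$ against the power of $K_n$. One minor remark: the paper's iteration uses $K_n=K_0 4^n$ and $\e_n=\e_0 e^{-(3/2)^n+1}$ (Proposition \ref{ittero}), not the doubly-exponential $K_\nu=K_0^{\chi^\nu}$, $\e_\nu=\e_0^{\chi^\nu}$ you wrote; but since $e^{-(3/2)^n}$ still dominates $4^{n\tau}$ and the supremum over $n$ of $\e_n K_n^\tau$ is bounded by a constant times $\e_0$, your key inequality $2C\e_n K_n^\tau\le\g$ remains valid after shrinking $\e_*$, so the conclusion is unaffected.
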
}
		We defer the proof of this proposition to the end of the section.
		
		\paragraph{The first step.}
		In the first step in the reduction scheme we exploit the structure of the Schr\"odinger operator in \eqref{Lo} and perform a change of variables $G_0= e^{S_0}$ which conjugates $\fL$ to $-\Delta +P_0$,
		with $P_0$ quasi-T\"oplitz of order $-2$. 
		\begin{lemma}\label{passozero}
			Fix {$a_0=\ta/4$} ,  $\g>0$ and consider a  Schr\"odinger operator as in \eqref{Lo} satisfying \eqref{pic colo0}. Then there exist positive constants $C, q_2, q_4$, depending on $\delta, \mu, \tau, \tc, p$ only, and operators $S_0\in \opuno{\cO_0}{\frac\ta3}$ and $P_0\in \opdue{\cO_0}{a_0}$ (with $\cO_0$ defined in \eqref{diofantino})
			satisfying the bounds
			\[
			\seuno{S_0}{\cO_0}{\frac\ta3}\le  C\g^{-1} \ta^{-q_2} \|V\|_{\ta,\tp}\,,\quad 	\sedue{P_0}{\cO_0}{a_0}\le  C \g^{-1}  \ta^{-{q_4}} \|V\|_{\ta,\tp}^2\,,
			\]
			such that
			$
			\fL_0:=	(G_0)_* \fL = -\Delta + P_0
			$.
		\end{lemma}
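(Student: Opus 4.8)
The plan is to remove $\VV$ with a single Lie transform $G_0=e^{S_0}$ whose generator solves the homological equation relative to the unperturbed diagonal part $-\Delta$, and then to read $P_0$ off the Lie series. By (H$\VV$) one has $V(\ell)=0$ whenever $\pi(\ell)=0$, in particular $V(0)=0$, so there is no time-average obstruction and we choose $S_0$ to solve $[-\Delta,S_0]-\im\dot S_0=-\VV$, i.e.
\[
(S_0)_j^{j-\pi(\ell)}(\ell)=-\frac{V(\ell)}{\td(\ell,j)}\,,\qquad \td(\ell,j)=\omega\cdot\ell+|j|^2-|j-\pi(\ell)|^2
\]
(this is \eqref{lavello} with $D=-\Delta$), and $(S_0)_j^{j'}(\ell)=0$ if $j-j'\neq\pi(\ell)$. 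Since $|j|^2-|j-\pi(\ell)|^2=2\pi(\ell)\cdot j-|\pi(\ell)|^2\in\Z$, on the Diophantine set $\cO_0$ of \eqref{diofantino} one has $|\td(\ell,j)|\ge 2\g|\ell|^{-(d+1)}$ for all $\ell\neq0$ and all $j$, so $S_0$ is well defined and Lipschitz on $\cO_0$; self-adjointness of $\VV$ gives $\im S_0$ self-adjoint by \eqref{AA}, and gauge invariance of $\VV$ forces $(S_0)_j^{j'}(\ell)\propto V(\ell)$, supported on $\sum_i\ell_i=0$, hence $S_0$ gauge covariant. With $G_0:=e^{S_0}$ one then has $(G_0)_*\fL=-\Delta+P_0$.

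\textbf{Quasi-T\"oplitz structure of $S_0$.} The claim $S_0\in\opuno{\cO_0}{\frac\ta3}$ is the analogue, with $D=-\Delta$ (so $\widetilde\Omega=0$) and the order-$0$ source $\VV$, of Lemmata \ref{lem.hom1}--\ref{lem.hom3}, and I would adapt their proofs. Split $\{(\ell,j):\ell\neq0\}$ into the sets $A_1,A_2,A_3$ of Section \ref{sec:hom.eq} (the set $A_0$, $\pi(\ell)=0$, is empty here by \eqref{condV}). On $A_1$, $\pi(\ell)\parallel v(j)$, the divisor becomes $\mathfrak d(\ell,v(j),b(j))$ of \eqref{d.decomp} (with $\fa\equiv0$), a function of $v(j),b(j)$ only, so this piece is line-T\"oplitz with symbol $\fS(\ell,v,b)=-V(\ell)/\mathfrak d(\ell,v,b)$; its order-$(-1)$ norm \eqref{norm.toplitz} is finite because, writing $\pi(\ell)=k\,v(j)$, one has $|v(j)|\le|\pi(\ell)|\le\tc^{-1}|\ell|$ — so the weight $e^{\frac\ta3\tc|v(j)|}$ is absorbed by $e^{\frac\ta3|\ell|}$ — while $\mathfrak d(\ell,v,b)$ is affine in $b$ with slope $\ge2$ and $\ge2\g|\ell|^{-(d+1)}$ in modulus, whence $\jap b/|\mathfrak d(\ell,v,b)|\lesssim\g^{-1}|\ell|^{O(1)}$. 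On $A_2$, $\pi(\ell)\not\parallel v(j)$ and $|\ell|<\tc\jap j^\delta$, the claim \eqref{S1.claim} (from Lemma \ref{traviata}$(i)$) gives $|\pi(\ell)\cdot j|>2\jap j^\mu$, hence $|\td(\ell,j)|\gtrsim\jap j^\mu$, and this piece lies in $\cL_{\frac\ta6;-(1,0)}$. On $A_3$, $|\ell|\ge\tc\jap j^\delta$, one has $\jap j^\mu\le(|\ell|/\tc)^{\mu/\delta}$, which together with the $\g^{-1}|\ell|^{O(1)}$ from the divisor is absorbed by the loss of analyticity $\ta\to\frac\ta3$ (Lemmata \ref{gigetto}, \ref{Bony}), again landing in $\cL_{\frac\ta6;-(1,0)}$. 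Collecting, $\seuno{S_0}{\cO_0}{\frac\ta3}\le C\g^{-1}\ta^{-q_2}\norm V_{\ta,\tp}$.

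\textbf{Identification and estimate of $P_0$.} Inserting the homological equation into the Lie formula \eqref{coniugio}, the terms linear in $S_0$ cancel $\VV$ and the two occurrences of $\im\dot S_0$ cancel one another, leaving $P_0=\widetilde\Psi(\ad S_0)[\VV]$ with $\widetilde\Psi(z)=\tfrac z2+O(z^2)$ entire; its leading term is $\tfrac12[\VV,S_0]$, quadratic in $V$, while the remaining terms $(\ad S_0)^k[\VV]$, $k\ge2$, are of higher order in $V$. To prove $[\VV,S_0]\in\opdue{\cO_0}{a_0}$ (with a small loss of analyticity down to $a_0=\frac\ta4$) I would symmetrise the convolution sum: since $V(\ell_1)$ does not depend on $\ell_2$,
\[
([\VV,S_0])_j^{j-\pi(\ell)}(\ell)=\sum_{\ell_1+\ell_2=\ell}V(\ell_1)V(\ell_2)\Big(\frac1{\td(\ell_1,j)}-\frac1{\td(\ell_2,j-\pi(\ell_1))}\Big),
\]
and averaging over $\ell_1\leftrightarrow\ell_2$ each bracket rewrites as $-\pi(\ell_1)\cdot\pi(\ell_2)$ divided by a product of two divisors of type $\td$, so the numerator no longer depends on $j$; re-running the decomposition $A_1,A_2,A_3$ on these two divisors yields the order-$(-2)$ quasi-T\"oplitz structure. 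Moreover the two divisors $\td(\ell_1,j)$ and $\td(\ell_1,j-\pi(\ell_2))$ differ by the integer $-2\pi(\ell_1)\cdot\pi(\ell_2)$, so they are \emph{both} Diophantine-small only when $\pi(\ell_1)\cdot\pi(\ell_2)=0$, i.e. only when the numerator vanishes — hence the commutator pays at most one small divisor, giving $\sedue{[\VV,S_0]}{\cO_0}{a_0}\le C\g^{-1}\ta^{-q}\norm V_{\ta,\tp}^2$ (a single power of $\g^{-1}$). The higher terms converge and remain of order $-2$ by the quasi-T\"oplitz algebra and exponential bounds of Proposition \ref{prop.chiavetta} and Corollary \ref{esponenziale}, the smallness $\seuno{S_0}{\cO_0}{\frac\ta3}\lesssim\g^{-1}\ta^{-q_2}\norm V_{\ta,\tp}\ll1$ being guaranteed by \eqref{pic colo0}; thus they are dominated by the leading term and $\sedue{P_0}{\cO_0}{a_0}\le C\g^{-1}\ta^{-q_4}\norm V_{\ta,\tp}^2$.

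\textbf{Main difficulty.} The Lie series and the Lipschitz-in-$\omega$ bookkeeping are routine. The substance is in the two structural claims. First, that $S_0$ is quasi-T\"oplitz of order $-1$ rather than merely bounded: off the line $\pi(\ell)\parallel v(j)$ one must produce a divisor of size $\gtrsim\jap j^\mu$, which is exactly where the arithmetic dichotomy of Lemma \ref{traviata} enters. Second, that the commutator $[\VV,S_0]$ gains a \emph{second} order: this fails for $\VV S_0$ and $S_0\VV$ separately — each is only of order $-1$ — and it appears only through the cancellation exposed by the symmetrisation, so the entire case analysis of Section \ref{sec:hom.eq} must be re-run for the commutator, tracking separately the sub-cases in which one of the divisors $\td(\ell_i,\cdot)$ is Diophantine-small (handled by the line-T\"oplitz symbol) and those in which $\jap j$ is polynomially controlled by $|\ell_1|,|\ell_2|$ (handled by the loss of analyticity).
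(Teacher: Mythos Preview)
Your approach is correct and essentially identical to the paper's, which packages the quasi-T\"oplitz bounds on $S_0$ and on $[S_0,M_V]$ as Lemma \ref{commuta.bene} (proved in Appendix \ref{app:tech.lemma} by exactly the $A_1/A_2/A_3$ case analysis you sketch) and then sums the Lie series via Proposition \ref{prop.chiavetta} and \eqref{a.mano}. The only cosmetic difference is that for the commutator the paper relabels $\ell_1\leftrightarrow\ell_2$ in one of the two terms rather than symmetrising, landing directly on the single expression $-2(\pi(\ell_1)\cdot\pi(\ell_2))\big/[\td(\ell_2,j)\,\td(\ell_2,j-\pi(\ell_1))]$ and hence a single case split on $\pi(\ell_2)$ versus $v(j)$.
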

		The proof is performed in several steps.
		We start by defining $S_0$ as the solution of the homological equation
		(see \eqref{def:Mv})
		\vspace{-10pt}
		\begin{equation}\label{hom.sweet.hom}
			\im \,\dot S_0 +	[\Delta,S_0] =  M_V\;\iff \; (S_0)_j^{j-\pi(\ell)}(\ell)= \begin{cases}
				- \dfrac{V(\ell)}{ \omega\cdot\ell +|j|^2 -|j-\pi(\ell)|^2} 
				\quad & \mbox{if}\; \pi(\ell)\ne 0\\
				0 & \mbox{otherwise}
			\end{cases} 
		\end{equation}	
		(recall that, by \eqref{condV}, $V(\ell)=0$ if $\pi(\ell)=0$). 
		\\
		Note that $S_0$ is well defined for $\omega\in \cO_0$ and
		\begin{align}
			\label{Gstar0}
			(G_0)_* (-\Delta+M_V)&= e^{\ad S_0 } (-\Delta+M_V)-\im \sum_{h= 1}^\infty \frac{(\ad S_0)^{h-1}}{h!}\dot S_0  
			\\&=-\Delta+M_V + \sum_{h= 1}^\infty \frac{(\ad S_0)^{h-1}}{h!}([-\Delta+M_V,S_0] -\im \dot S_0) \\
			&  =-\Delta + [M_V,S_0] + \sum_{h= 2}^\infty \frac{(\ad S_0)^{h-1}}{h!}([M_V,S_0] - M_V),\nonumber
		\end{align}
		so for $\omega\in \cO_0$
		\[
		P_0 : = [M_V,S_0] + \sum_{h= 2}^\infty \frac{(\ad S_0)^{h-1}}{h!}([M_V,S_0] - M_V)= \sum_{h= 1}^\infty \frac{h (\ad S_0)^{h} }{(h+1)!}M_V \,.
		\]
		In  Appendix
		\ref{app:tech.lemma}  we prove that there exist positive constants $C, q_2, q_3$, depending on $\delta, \mu, \tau, \tc, p$ only, such that the following holds.
		\begin{lemma}\label{commuta.bene}
			Let $S_0$  be the solution of the homological equation \eqref{hom.sweet.hom}. Then for {$2a'< \ta$} we have
			\begin{equation}\label{sol.stima}
				\seuno{S_0}{\cO_0}{a'} \leq \frac{C}{{\g (\ta-2a')^{q_2}}} \|V\|_{\ta,\tp}\,.
			\end{equation}	
			Moreover	
			the commutator $[S_0, M_V]$ is quasi-T\"oplitz  of order $-2$ with the bounds
			\begin{equation}\label{comm.hom}
				\sedue{[S_0, M_V]}{\cO_0}{a'} \leq C{\g^{-1}(\ta - 2 a')^{-q_3}} \|V\|^2_{\ta,\tp} 
			\end{equation}
		\end{lemma}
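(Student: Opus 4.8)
The plan is to prove the two assertions separately, working directly with matrix elements and reusing the small-divisor and combinatorial tools already developed for Proposition~\ref{homolog}.

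\emph{The bound \eqref{sol.stima} on $S_0$.} By \eqref{hom.sweet.hom} the symbol of $S_0$ is $-V(\ell)/\td(\ell,j)$, with $\td(\ell,j)=\omega\cdot\ell+|j|^2-|j-\pi(\ell)|^2$; since $|j|^2-|j-\pi(\ell)|^2\in\Z$, on $\cO_0$ one has $|\td(\ell,j)|\ge 2\gamma|\ell|^{-(d+1)}$. I would split $S_0=S_0^{\tT}+S_0^{(1)}$ according to whether $\pi(\ell)\parallel v(j)$ or not. In the parallel case $\td(\ell,j)=\omega\cdot\ell+2\frac{|\pi(\ell)|}{|v(j)|}b(j)-|\pi(\ell)|^2$ depends on $j$ only through $(v(j),b(j))$, so this piece is line-T\"oplitz; its order $-1$ norm is controlled by noting that $\langle b(j)\rangle/|\td(\ell,j)|\lesssim |\ell|^{d+3}/\gamma$ — either because $|\td(\ell,j)|\gtrsim|b(j)|$ when $|b(j)|\gtrsim|\ell|^2$ (using $\tfrac{|\pi(\ell)|}{|v(j)|}\ge 1$), or by the Diophantine lower bound otherwise — and the polynomial loss $|\ell|^{d+3}$ is absorbed into $e^{(\ta-2a')|\ell|}$ since $2a'<\ta$, after trading the weight $e^{a'\tc|v(j)|}\le e^{a'|\ell|}$ (because $\tc|v(j)|\le|\pi(\ell)|\le|\ell|$). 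In the non-parallel case the term goes into $S_0^{(1)}$, which I claim lies in $\cL^{\cO_0}_{\frac{a'}{2};-(1,0)}$: when $|\ell|<\tc\langle j\rangle^\delta$ the claim \eqref{S1.claim} (equivalently Lemma~\ref{traviata}(i)) gives $|\td(\ell,j)|\gtrsim\langle j\rangle^{\mu}$, while for $|\ell|\ge\tc\langle j\rangle^\delta$ the factor $\langle j\rangle^{\mu}\lesssim|\ell|^{\mu/\delta}$ is polynomial in $\ell$ and is again absorbed by the exponential decay of $V$; Lemma~\ref{gigetto} then converts the off-diagonal decay into the operator bound. Lipschitz estimates are identical since $\Delta_{\omega,\omega'}\td=(\omega-\omega')\cdot\ell$ has size $\le|\ell|$.

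\emph{The bound \eqref{comm.hom} on $[S_0,M_V]$.} The crucial input is an algebraic cancellation. Writing the commutator in matrix form and using $(M_V)_j^{j'}(\ell)=V(\ell)$ together with $\td(\ell_1,j-\pi(\ell_2))-\td(\ell_1,j)=-2\,\pi(\ell_1)\cdot\pi(\ell_2)$, which is \emph{independent of $j$}, one finds
\[
([S_0,M_V])_j^{j-\pi(\ell)}(\ell)=\sum_{\ell_1+\ell_2=\ell}V(\ell_1)\,V(\ell_2)\,\frac{2\,\pi(\ell_1)\cdot\pi(\ell_2)}{\td(\ell_1,j)\,\td(\ell_1,j-\pi(\ell_2))}\,.
\]
Thus every summand carries the smoothing of \emph{two} divisors against a numerator polynomial in $(\ell_1,\ell_2)$; moreover $|\pi(\ell_1)\cdot\pi(\ell_2)|$ is a nonzero integer whenever the term does not vanish, and since $\td(\ell_1,j)$ and $\td(\ell_1,j-\pi(\ell_2))$ differ by $2\pi(\ell_1)\cdot\pi(\ell_2)$ we get $\max(|\td(\ell_1,j)|,|\td(\ell_1,j-\pi(\ell_2))|)\ge|\pi(\ell_1)\cdot\pi(\ell_2)|$, so that only a single power of $\gamma^{-1}$ is ever lost (this is why \eqref{comm.hom} carries $\gamma^{-1}$ and not $\gamma^{-2}$). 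I then produce the quasi-T\"oplitz decomposition of order $-2$ exactly as in Lemmas~\ref{lem.hom1}–\ref{lem.hom3}: the contribution with $\pi(\ell_1),\pi(\ell_2)\parallel v(j)$ and $v(j-\pi(\ell_2))=v(j)$ gives a line-T\"oplitz operator of order $-2$ (the weight $e^{a'\tc|v|}$ being absorbed via $\tc|v|\le|\pi(\ell_1)|\le|\ell_1|$ into $e^{-\ta|\ell_1|}$); in the remaining regimes I apply Lemma~\ref{traviata}, using part~(i) to gain a factor $\langle j\rangle^{\mu}$ from each divisor unless $|\ell_i|\ge\tc\langle j\rangle^{\delta}$ (polynomial loss) and part~(ii) to control the shift $j\mapsto j-\pi(\ell_2)$ in $v(\cdot),b(\cdot)$, so that the terms land in $\cL^{\cO_0}_{\frac{a'}{2};-(2,0)}$ or $\cL^{\cO_0}_{\frac{a'}{2};-(1,1)}$; summation over $\ell_1+\ell_2=\ell$ uses the convolution algebra property \eqref{algebra} of $\fH^p$ (together with Lemma~\ref{gigetto}), with all polynomial factors $|\ell_i|^{q}$ absorbed into $e^{(\ta-2a')|\ell_i|}$. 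This yields \eqref{comm.hom}, with $\|V\|_{\ta,\tp}^2$ coming from the two copies of $V$ and $q_3=q_3(\delta,\mu,\tau,\tc,p)$.

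\emph{Main obstacle.} The delicate point is the quasi-T\"oplitz bookkeeping of the double sum in Step~2: deciding, for each $(\ell_1,\ell_2)$ and each $j$, whether \emph{both} divisors $\td(\ell_1,j)$ and $\td(\ell_1,j-\pi(\ell_2))$ genuinely depend on $j$ only through $(v(j),b(j))$ — so as to feed the line-T\"oplitz part — or whether the term must instead be relegated to the lower-order, non-T\"oplitz pieces. This is exactly where one needs the uniqueness of the minimizing generator and its stability under small shifts, i.e.\ Lemma~\ref{traviata}; checking that the exceptional sets of $j$ are precisely those absorbed by the order $-(1,1)$ and $-(2,0)$ norms, uniformly in $(\ell_1,\ell_2)$, is the technical heart of the argument and the reason the proof is deferred to Appendix~\ref{app:tech.lemma}.
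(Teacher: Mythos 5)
Your proposal follows essentially the same route as the paper's Appendix B proof: for \eqref{sol.stima}, split $S_0$ into the line-T\"oplitz part ($\pi(\ell)\parallel v(j)$) plus a remainder controlled by the inequality $|j\cdot\pi(\ell)|>\jap{j}^\mu$ (a consequence of Lemma \ref{lemma:cramer}) together with a Bony-type cutoff in $|\ell|$; for \eqref{comm.hom}, write out the matrix elements of $[S_0,M_V]$ to expose the numerator $-2\,\pi(\ell_1)\cdot\pi(\ell_2)$ over the product of two shifted divisors, and split on $\pi(\ell_2)\parallel v(j)$.

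The one point where you go beyond what the paper actually writes --- and it is genuinely needed --- is the observation that the two divisors in the commutator formula differ by the nonzero even integer $2\pi(\ell_1)\cdot\pi(\ell_2)$ on the support of the sum, so that at least one of them has absolute value $\ge 1$ and only one can ever trigger the Diophantine lower bound from \eqref{diofantino}. As written, the paper's combination of \eqref{small.den.1}--\eqref{small.den.2} distributes $\jap{b}^2=\jap{b}\cdot\jap{b}$ between the two factors and bounds each by its worst case; taken literally this produces $\gamma^{-2}$, not the stated $\gamma^{-1}$ (the $\gamma$-dependence is silently dropped in the final display). Your observation restores $\gamma^{-1}$, and this matters quantitatively: the iteration in Lemma \ref{passozero} and Proposition \ref{ittero} requires the quadratic gain to be in the small quantity $\gamma^{-1}\|V\|_{\ta,\tp}$, i.e.\ $\sedue{[S_0,M_V]}{\cO_0}{a'}\lesssim \gamma\,(\gamma^{-1}\|V\|_{\ta,\tp})^2$, and a $\gamma^{-2}\|V\|_{\ta,\tp}^2$ bound would not close as $\gamma\to 0$.

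Two minor simplifications relative to your sketch. For the line-T\"oplitz piece you do not need Lemma \ref{traviata}(ii) to control shifts in $v(\cdot),b(\cdot)$: the symbol in \eqref{top.part} is defined from the outset as a function of $(\ell,v,b)$, since for $\pi(\ell_2)\parallel v$ both divisors depend on $j$ only through $j\cdot\pi(\ell_2)$, which is determined by $b$ and $v$. And the remainder with $\pi(\ell_2)\not\parallel v(j)$ already lands entirely in $\cL_{a'/2;-(2,0)}$: once $|j\cdot\pi(\ell_2)|>\jap{j}^\mu$, the shifted inner product $|(j-\pi(\ell_1))\cdot\pi(\ell_2)|$ exceeds $\jap{j}^\mu - O(\jap{j}^{2\delta})$ as well, so both divisors gain $\jap{j}^{-\mu}$ and the extra $\cL_{a'/2;-(1,1)}$ piece and the additional split on $\pi(\ell_1)\parallel v(j)$ in your plan are unnecessary.
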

		{	\begin{proof}[Proof of Lemma \ref{passozero}]
				We need to bound $\sedue{(\ad S_0)^{h} M_V}{\cO_0}{a_0}$ for all $h\ge 1$. If $h=1$, this is achieved through \eqref{comm.hom} with $a'=a_0$.
				\\
				If $h=2$, then by \eqref{a.mano} of Proposition \ref{prop.chiavetta} (and recalling that $\seuno{\cdot}{\cO_0}{a}\le \sedue{\cdot}{\cO_0}{a}$)
				\[
				\sedue{[[S_0,[S_0,M_V]]}{\cO_0}{a_0} \le  C\ta ^{-{q_1}}\seuno{S_0}{\cO_0}{\frac\ta3}\sedue{[S_0,M_V]}{\cO_0}{\frac\ta3}\le C^3\ta ^{-{q_1}-q_2-q_3}\g^{-2} \|V\|^3_{\ta,\tp}\,.
				\]
				Otherwise if $h\ge 3$, then we set $A= (\ad S_0)^{h-2}[S_0,M_V]$ so that, by \eqref{stima.chiavetta} and possibly taking $C$ larger,
				\[
				\seuno{A}{\cO_0}{\frac\ta3} \le (C \ta^{-q_0} \seuno{S_0}{\cO_0}{\frac\ta3})^{h-2}\sedue{[S_0,M_V]}{\cO_0}{\frac\ta3}\,.
				\]
				Then we apply \eqref{a.mano},
				which reads
				\[
				\sedue{[S_0,A]}{\cO_0}{a_0}\le  C\ta^{-q_1} (C \ta^{-q_0} \seuno{S_0}{\cO_0}{\frac\ta3})^{h-1}\sedue{[S_0,M_V]}{\cO_0}{\frac\ta3}
				\le  \g (C\g^{-1} \ta^{-\frac{q_4}2} \|V\|_{\ta,\tp})^{h+1}\,
				\]
				with $q_4= 2(q_0+q_1+q_2+q_3)$.
				We deduce that
				\[
				\sedue{P_0}{\cO_0}{a_0}\le \g  \sum_{h\ge 1} \frac{h (C\g^{-1} \ta^{-\frac{q_4}2} \|V\|_{\ta,\tp})^{h+1}}{(h+1)!} \le C'\g^{-1} \ta^{-q_4} \|V\|_{\ta,\tp}^2,
				\]
				provided that  $C\g^{-1} \ta^{-q_4} \|V\|_{\ta,\tp}\le \frac12$. {This amounts to fixing $\epsilon_*\le 2C^{-1}\ta^{q_4}$ in \eqref{pic colo0}.}
		\end{proof}}
		
		\paragraph{The  iterative scheme.}
		{After the first step we have conjugated $\fL$ to an operator of the form $-\Delta +P_0$ where $P_0\in \opdue{\cO}{a_0}$ is appropriately small. Now we apply a slight modification of a standard KAM scheme in order to reduce to constant coefficients. 
			As before the main novelty is that through the iteration we are able to control the quasi-T\"oplitz norm.}
		The KAM reduction scheme is based on the iteration of the following lemma.
		\begin{lemma}[KAM step]\label{stepKam}
			Fix $\g>0$ and $K\in \N$.	Consider a Schr\"odinger operator  $D+P$ with $D$ defined in \eqref{diagonale}  and $P\in \opdue{\cO}a$ satisfying
			\begin{equation}
				\label{piccolostep}
			16 \sedue{\tilde\Omega}{\tR}{a} \le \g , \quad 	16\sedue{P}{\cO}a \le \g K^{-3\tau-1}\ \,.
			\end{equation}
			Let $\cO_+$ be the set defined in \eqref{Opiu}.
			There exists 
			$S\in \opdue{\cO_+}{a}$ and a time independent operator $\tilde\Omega^+ \in \opdue{\tR}{a}$ such that, setting $G=e^S$,  for all $\omega\in \cO_+$ one has
			\[
			G_* (D+P) = D_+ +P_+\,,\quad D_+ = \diag(|j|^2 +\tilde\Omega^+_j)_{j\in \Z^2}
			\]
			where $P_+\in \opdue{\cO_+}{a'}$ for all $0<a'<a$ with the estimate
			\begin{equation}
				\label{stimaP}
				\sedue{P_+}{\cO_+}{a'}\le  C(p,\delta)a^{-2/\delta} \g^{-1} K^{3\tau+1}(\sedue{P}{\cO}{a})^2 + e^{-\frac{a-a'}2K} \sedue{P}{\cO}{a} \ , 
			\end{equation}
			and 
			\begin{equation}\label{stimaautov}
				\sedue{\tilde\Omega-\tilde\Omega^+}{\tR}{a}\le \sedue{P}{\cO}{a} \ . 
			\end{equation}
		\end{lemma}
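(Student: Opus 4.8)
The plan is to perform a single, essentially classical KAM step, with the one extra precaution of keeping every newly produced operator quasi-T\"oplitz of order $-2$; all the structural estimates needed for this are already isolated in Proposition~\ref{homolog} (homological equation) and Proposition~\ref{prop.chiavetta} (quasi-T\"oplitz algebra). \emph{Step 1 (homological equation).} I would first split $P=\jap{P}_{\T^d}+\Pi_{0<|\ell|\le K}P+\Pi_{|\ell|>K}P$; since $\Pi_{0<|\ell|\le K}P\in\opdue{\cO}{a}$ with norm $\le\sedue{P}{\cO}{a}$ by Lemma~\ref{proiettotutto}, and $16\,\sedue{\widetilde\Omega}{\tR}{a}\le\gamma$ by hypothesis, Proposition~\ref{homolog} applied to $-\Pi_{0<|\ell|\le K}P$ produces $S\in\opdue{\cO_+}{a}$, with $\cO_+=\cO_{D,K}(\gamma)$ as in \eqref{Opiu}, solving
\[
-\im\dot S+[D,S]=-\Pi_{0<|\ell|\le K}P\,,\qquad \sedue{S}{\cO_+}{a}\le C\,a^{-2/\delta}\,\gamma^{-1}K^{3\tau+1}\,\sedue{P}{\cO}{a}
\]
by \eqref{stimahomol}, with $C$ depending only on $p,\mu,\delta,\tc$; moreover $\im S$ is self-adjoint if $P$ is, and $S$ is Gauge covariant if $P$ is.

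\emph{Step 2 (new normal form and remainder).} Set $G=e^S$ and $\widetilde\Omega^+:=\widetilde\Omega+\jap{P}_{\T^d}$. Being time independent, $\jap{P}_{\T^d}=\Pi_{\ell=0}P$ is diagonal (Remark after Lemma~\ref{lemma.algebra}), quasi-T\"oplitz with $\sedue{\jap{P}_{\T^d}}{\cO}{a}\le\sedue{P}{\cO}{a}$ by Lemma~\ref{proiettotutto}, hence — after a Lipschitz extension of its diagonal symbol from $\cO_+$ to $\tR$ — $\widetilde\Omega^+\in\opdue{\tR}{a}$ with $\sedue{\widetilde\Omega-\widetilde\Omega^+}{\tR}{a}\le\sedue{P}{\cO}{a}$, which is \eqref{stimaautov}; reality of $\widetilde\Omega^+$ when $P$ is self-adjoint is automatic. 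Expanding the conjugation by the Lie series \eqref{coniugio},
\[
G_*(D+P)=D+P+\sum_{k\ge1}\frac{(\ad S)^{k-1}}{k!}\bigl([D+P,S]-\im\dot S\bigr),
\]
and substituting $[D,S]-\im\dot S=-\Pi_{0<|\ell|\le K}P$ from Step~1, one gets $G_*(D+P)=D_++P_+$ with $D_+=\diag(|j|^2+\widetilde\Omega^+_j)$ and
\[
P_+=\Pi_{|\ell|>K}P+R\,,\qquad R=\sum_{k\ge1}\frac{(\ad S)^{k}}{k!}P-\sum_{k\ge1}\frac{(\ad S)^{k}}{(k+1)!}\Pi_{0<|\ell|\le K}P\,.
\]
The point is that every summand of $R$ carries at least one factor $S$ and at least one order-$(-2)$ factor, so no bare $D$ survives and $R$ is quadratically small.

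\emph{Step 3 (estimates and structure).} The smallness hypothesis $16\,\sedue{P}{\cO}{a}\le\gamma K^{-3\tau-1}$, together with the bound on $S$ from Step~1 (and $K$ large along the iteration, after possibly shrinking $\epsilon_*,\gamma_*$), places $\sedue{S}{\cO_+}{a}$ below the threshold $\delta=a^{q_0}/(4C)$ of Corollary~\ref{esponenziale}. Then Proposition~\ref{prop.chiavetta} with $m_1=m_2=2$ — so that the product of two operators of order $-2$ is again of order $-2$, at cost $a^{-q_0}$ — and Corollary~\ref{esponenziale} sum the two series defining $R$ in the quasi-T\"oplitz norm, giving $\sedue{R}{\cO_+}{a}\le C(p,\delta)\,a^{-2/\delta}\gamma^{-1}K^{3\tau+1}(\sedue{P}{\cO}{a})^2$, while Lemma~\ref{proiettotutto} gives $\sedue{\Pi_{|\ell|>K}P}{\cO_+}{a'}\le e^{-\frac{a-a'}{2}K}\sedue{P}{\cO}{a}$ for any $0<a'<a$. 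Adding the two bounds shows $P_+\in\opdue{\cO_+}{a'}$ and yields \eqref{stimaP}. Finally, if $P$ is self-adjoint then $\im S$ is self-adjoint, hence $G$ is symplectic and $G_*(D+P)$ is self-adjoint by Remark~\ref{gruppo}; by the same remark $G$ and $G_*(D+P)$ are Gauge covariant whenever $P$ is.

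\emph{Main obstacle.} The only step that is not a verbatim repetition of the classical KAM iteration is showing that the remainder $R$ — an infinite sum of iterated commutators of $S$ with $P$ and $\Pi_{0<|\ell|\le K}P$ — is again quasi-T\"oplitz of order $-2$ with quadratically small norm. This rests entirely on the quasi-T\"oplitz algebra property and its exponential corollary, i.e.\ on Proposition~\ref{prop.chiavetta} and Corollary~\ref{esponenziale}, whose technical proofs are deferred to the appendix, being robust enough to absorb the commutators with no loss of analyticity; the remaining bookkeeping — the sign in the homological equation, the fact that the analyticity loss is confined to $\Pi_{|\ell|>K}P$, and the use of \eqref{stimahomol} — is routine.
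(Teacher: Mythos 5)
Your proposal is correct and follows essentially the same route as the paper: the same homological equation for $S$ from Proposition~\ref{homolog}, the same Lie-series expansion \eqref{coniugio} yielding the same decomposition $P_+=\Pi_{|\ell|>K}P+R$ (your expression for $R$ is exactly the paper's after reindexing $h=k+1$), the same identification $\widetilde\Omega^+=\widetilde\Omega+\jap{P}_{\T^d}$ together with the Kirszbraun extension to $\tR$, and the same appeal to Lemma~\ref{proiettotutto}, Proposition~\ref{prop.chiavetta} and Corollary~\ref{esponenziale} for the quasi-T\"oplitz estimates. The only point at which you are (like the paper) somewhat informal is the smallness of $\sedue{S}{\cO_+}{a}$ needed to invoke Corollary~\ref{esponenziale}: from \eqref{stimahomol} and \eqref{piccolostep} one gets a bound of the form $C a^{-2/\delta}/16$, which falls below the threshold $a^{q_0}/(4C)$ only because in the iteration $a$ stays bounded below (it never drops under $a_0/2$), not because of \eqref{piccolostep} alone; neither you nor the paper spells this out, so this is not a gap specific to your argument.
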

		\begin{proof}
			We start by defining $S$ as the solution of the homological equation
			\begin{equation}\label{homo equatio}
				-\im \,\dot S +	[D,S] = - \Pi_{0<|\ell|\le K} P\;\iff \; S_j^{j-\pi(\ell)}(\ell)= \begin{cases}
					-\dfrac{P_j^{j-\pi(\ell)}(\ell)}{ \td(\ell,j)} 
					\quad & \mbox{if}\; 0<|\ell|\le K\\
					0 & \mbox{otherwise}
				\end{cases} 
			\end{equation}	
			Note that $S$ is well defined for $\omega\in \cC^{(1)}$ (defined in \eqref{CD1}) and, by Proposition \ref{homolog}, one has $S\in \opdue{\cO_+}{a}$, with the bound \eqref{stimahomol}. Then
			\begin{align}
				\label{Gstar}
				G_* (D+P)&= e^{\ad S } (D+P)-\im \sum_{h= 1}^\infty \frac{(\ad S)^{h-1}}{h!}\dot S \\
				&= (D+P) + \sum_{h= 1}^\infty \frac{(\ad S)^{h-1}}{h!}([D+P,S] -\im \dot S) \nonumber\\
				&  =D + \diag(P_j^j(0)) + \Pi_{>K} P + [P,S] +\sum_{h= 2}^\infty \frac{(\ad S)^{h-1}}{h!}([P,S] -  \Pi_{0<|\ell|\le K} P)\nonumber \ ;
			\end{align}
			so for $\omega\in \cC^{(1)}$ we put 
			\[
			P_+ : =  \Pi_{>K} P + [P,S] +\sum_{h= 2}^\infty \frac{(\ad S)^{h-1}}{h!}([P,S] -  \Pi_{0<|\ell|\le K} P)\,,\quad D_+:=D + \diag(P_j^j(0))_{j\in \Z^2}\,.
			\]
			By construction $P= P^\tT +P^{(1)}+P^{(2)}$ as in \eqref{arco}. Now by Lemmas \ref{proiettotutto} and  \ref{diago} 
			\[
			P_j^j(0) = \mathfrak P(0,v(j),b(j)) +( P^{(1)})^j_j(0) + ( P^{(2)})^j_j(0)\,.
			\]
			So in order to define $D_+$ on all of $\tR$ we just extend each $ \mathfrak P(0,v,b),( P^{(i)})^j_j(0)$ by Kirszbraun theorem preserving its weighted Lipschitz norm $|\cdot |^\gamma$. The bound \eqref{stimaautov} follows.
			The bound \eqref{stimaP} follows  by Corollary \ref{esponenziale} and Lemma \ref{proiettotutto}. 
		\end{proof}
		\begin{proposition}[{Iteration}]\label{ittero}
			Let $\fL_0$ be a Schr\"odinger operator of the form
			\begin{equation}
				\label{scrozero}
				-\Delta +P_0\,,\quad -\Delta=\diag (|j|^2) \,,\quad P_0\in \opdue{\cO_0}{a_0}\,,\quad \e_0:=\g^{-1} \sedue{P_0}{\cO_0}{a_0}<K_0^{-3\tau-2}
			\end{equation}
			where $K_0$ is large enough.
			Set
			\begin{equation}
				\label{costantine}
				a_n= a_0(1-\sum_{k=0}^{n-1} 2^{-k-2})\,,\quad K_n:= K_0 4^n\,,\quad \e_n = \e_0 e^{-(\frac32)^n+1}.
			\end{equation}
			For all $n\ge 1$, there exist
			\begin{enumerate}
				\item[$(i)_n$] A sequence of time-independent operators 
				\begin{equation}\label{stima Om}
					D_n = -\Delta+ \tilde\Omega^{(n)}\,,\quad \mbox{where}\quad
					\tilde\Omega^{(n)}\in \opdue{\tR}{a_n}\quad \mbox{with}\quad \sedue{\tilde\Omega^{(n)}-\tilde\Omega^{(n-1)}}{\tR}{a_n} \le \g \e_{n-1}
				\end{equation}
				(we have set $\Omega^{(0)}:=0$).
				\\
				Following Lemma \ref{diago} we denote the eigenvalues as
				\begin{equation}
					\label{nominomi}
					\tilde\Omega_j^{(n)}= \fa^{(n)}(v(j),b(j))+ r^{(n,1)}_j + r^{(n,2)}_j \, 
				\end{equation} and define the  nested sequence of compact sets (recall \eqref{CD1}, \eqref{CD2})
				\begin{equation}
					\label{On} 
					\cO_n:=\cO_{n-1}\cap \cC^{(1)}_{D_{n-1},K_{n-1}}(\g) \cap \cC^{(2)}_{D_{n-1},K_{n-1}}(\g) \ . 
				\end{equation}
				\item[$(ii)_n$] A sequence of  operators $P_n\in \opdue{\cO_n}{a_n}$, $S_n\in \opdue{\cO_n}{a_{n-1}}$ such that 
				\begin{equation}\label{transfo}
					\left(e^{S_{n}}\right)_*(D_{n-1}+P_{n-1})= D_n +P_n \,,\quad \forall \omega \in \cO_n,
				\end{equation}
				\begin{equation}\label{pic colon}
					\g^{-1}\sedue{P_n}{\cO_n}{a_n}\le \e_n\,,\quad  \sedue{S_n}{\cO_n}{a_{n-1}}\le K_{n-1}^{3\tau+2}\e_{n-1}.
				\end{equation}
			\end{enumerate}
		\end{proposition}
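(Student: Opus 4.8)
The plan is to establish $(i)_n$ and $(ii)_n$ by induction on $n\ge 1$, each inductive step being a single application of the KAM step Lemma~\ref{stepKam} with cut-off $K=K_n$ and analyticity radius $a=a_n$. For the base case $n=1$ I would apply Lemma~\ref{stepKam} to $D_0+P_0$ with $D_0:=-\Delta$ (so $\tilde\Omega^{(0)}:=0$) and $K=K_0$: the smallness hypotheses \eqref{piccolostep} reduce to $0=16\sedue{\tilde\Omega^{(0)}}{\tR}{a_0}\le\g$ and $16\sedue{P_0}{\cO_0}{a_0}=16\g\e_0\le\g K_0^{-3\tau-1}$, the latter being a consequence of $\e_0<K_0^{-3\tau-2}$ once $K_0\ge16$. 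The lemma then produces $S_1\in\opdue{\cO_1}{a_0}$, a time-independent $\tilde\Omega^{(1)}\in\opdue{\tR}{a_0}$ (hence also in $\opdue{\tR}{a_1}$) and $P_1\in\opdue{\cO_1}{a_1}$ with $(e^{S_1})_*(D_0+P_0)=D_1+P_1$, $D_1=-\Delta+\tilde\Omega^{(1)}$, and the output set $\cO_1=\cO_0\cap\cC^{(1)}_{D_0,K_0}(\g)\cap\cC^{(2)}_{D_0,K_0}(\g)$ is precisely the one in \eqref{On}.

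For the inductive step I would assume $(i)_n$, $(ii)_n$ and first check the hypotheses \eqref{piccolostep} for $D_n+P_n$ at $K=K_n$. For the potential, $16\sedue{P_n}{\cO_n}{a_n}\le 16\g\e_n\le\g K_n^{-3\tau-1}$: writing $\e_n=\e_0 e^{-(3/2)^n+1}$ and $K_n=K_0 4^n$, this amounts to $16e\,\e_0 K_0^{3\tau+1}\,4^{n(3\tau+1)}e^{-(3/2)^n}\le1$ for all $n$, which holds once $K_0$ is large because $\sup_n 4^{n(3\tau+1)}e^{-(3/2)^n}<\infty$ and $\e_0<K_0^{-3\tau-2}$. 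For the diagonal part, telescoping \eqref{stima Om} gives $\sedue{\tilde\Omega^{(n)}}{\tR}{a_n}\le\sum_{k=1}^{n}\sedue{\tilde\Omega^{(k)}-\tilde\Omega^{(k-1)}}{\tR}{a_k}\le\g\sum_{k=0}^{n-1}\e_k\le e\,\g\,\e_0\sum_{k\ge 0}e^{-(3/2)^k}\le\g/16$ for $K_0$ large, where one uses that $a_k\ge a_\infty:=a_0/2>0$ so that all quasi-T\"oplitz norms are taken at radii bounded below. Hence Lemma~\ref{stepKam} applies and yields $S_{n+1}\in\opdue{\cO_{n+1}}{a_n}$, a time-independent $\tilde\Omega^{(n+1)}\in\opdue{\tR}{a_n}$ and $P_{n+1}\in\opdue{\cO_{n+1}}{a_{n+1}}$ with $(e^{S_{n+1}})_*(D_n+P_n)=D_{n+1}+P_{n+1}$ and $\cO_{n+1}=\cO_n\cap\cC^{(1)}_{D_n,K_n}(\g)\cap\cC^{(2)}_{D_n,K_n}(\g)$, i.e.\ \eqref{On}.

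It then remains to propagate the three quantitative bounds. The eigenvalue increment $\sedue{\tilde\Omega^{(n+1)}-\tilde\Omega^{(n)}}{\tR}{a_{n+1}}\le\sedue{\tilde\Omega^{(n+1)}-\tilde\Omega^{(n)}}{\tR}{a_n}\le\sedue{P_n}{\cO_n}{a_n}\le\g\e_n$ is exactly \eqref{stimaautov}. The generator bound $\sedue{S_{n+1}}{\cO_{n+1}}{a_n}\le C(p,\mu,\delta,\tc)\,a_n^{-2/\delta}\g^{-1}K_n^{3\tau+1}\sedue{P_n}{\cO_n}{a_n}\le K_n^{3\tau+2}\e_n$ is \eqref{stimahomol}, after using $a_n\ge a_0/2$ and taking $K_0$ large. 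The crucial one is the new potential estimate: dividing \eqref{stimaP} by $\g$,
\[
\g^{-1}\sedue{P_{n+1}}{\cO_{n+1}}{a_{n+1}}\le C(p,\delta)\,a_n^{-2/\delta}K_n^{3\tau+1}\e_n^2+e^{-\frac{a_n-a_{n+1}}{2}K_n}\e_n\,.
\]
Since $a_n-a_{n+1}=a_0 2^{-n-2}$ one has $\tfrac{a_n-a_{n+1}}{2}K_n=\tfrac{a_0K_0}{8}2^n$, while $\e_{n+1}=\e_n e^{-\frac12(3/2)^n}$; as $2^n\ge(3/2)^n$ the tail term is $\le\tfrac12\e_{n+1}$ for $a_0K_0$ large enough. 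For the quadratic term, using $a_n\ge a_0/2$, $K_n=K_0 4^n$ and $\e_n\le e\,\e_0 e^{-(3/2)^n}$, it is bounded by $(C'\,\e_0\,4^{n(3\tau+1)}e^{-\frac12(3/2)^n})\,\e_{n+1}$ with $C'$ depending only on $p,\delta,\ta,K_0$, and $\e_0<K_0^{-3\tau-2}$ makes the prefactor $\le\tfrac12$ for $K_0$ large. Summing the two contributions gives $\g^{-1}\sedue{P_{n+1}}{\cO_{n+1}}{a_{n+1}}\le\e_{n+1}$, which closes $(ii)_{n+1}$. Since Proposition~\ref{homolog} (and hence the KAM step) preserves self-adjointness and, when $V$ is Gauge invariant, Gauge covariance, all $S_n$ are self-adjoint generators, the $e^{S_n}$ are Gauge covariant and the $\tilde\Omega^{(n)}_j$ are real; these properties are inherited along the iteration.

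The main obstacle is purely quantitative: one has to choose the sequences $a_n,K_n,\e_n$ in \eqref{costantine} and the threshold $K_0$ so that simultaneously (a) the super-exponential decay of $\e_n$ absorbs the polynomially growing losses $a_n^{-2/\delta}K_n^{3\tau+1}$ produced by the homological equation (this fixes $K_0$ large in terms of the constant $C(p,\mu,\delta,\tc)$, of $\tau$ and of $\ta$, recalling $a_0=\ta/4$), and (b) the accumulated diagonal correction $\sum_k\e_k$ stays below the fixed fraction of $\g$ demanded by \eqref{piccolostep}. Everything else is a routine iteration of Lemma~\ref{stepKam}, with all estimates uniform in $\omega$ thanks to the Lipschitz quasi-T\"oplitz norms.
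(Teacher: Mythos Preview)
Your proposal is correct and is precisely the ``standard iterative estimates'' the paper alludes to in its one-line proof: you apply Lemma~\ref{stepKam} inductively with $D=D_{n-1}$, $P=P_{n-1}$, $a=a_{n-1}$, $K=K_{n-1}$, verify the smallness conditions~\eqref{piccolostep} via telescoping of~\eqref{stima Om} and the super-exponential decay of $\e_n$, and then read off~\eqref{stimaautov},~\eqref{stimahomol},~\eqref{stimaP} to close the induction. The only cosmetic slip is in your opening sentence, where you write ``cut-off $K=K_n$ and analyticity radius $a=a_n$'' for the step producing level $n$; your actual computations correctly use $K_{n-1}$ and $a_{n-1}$ there.
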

		\begin{proof}
			The lemma is proved by standard iterative estimates.
		\end{proof}
		\begin{corollary}\label{chiudo}
			Given a Schr\"odinger operator $\fL_0$ satisfying \eqref{scrozero} there exist a time independent operator $\tilde\Omega\in \opdue{\tR}{a_0/2}$,   a Cantor-like set $\cO_\infty$ 
			and a  symplectic change of variables  $G$ with $G-\id \in \opdue{\cO_{\infty}}{a_0/2}$ such that
			\begin{equation}
				\label{trova}
				G_* \fL_0=  -\Delta+\tilde{\Omega}\,, \quad \forall \omega\in \cO_\infty.
			\end{equation}
		\end{corollary}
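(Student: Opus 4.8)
The plan is to pass to the limit $n\to\infty$ in the iterative scheme of Proposition~\ref{ittero}, which applies since $\fL_0$ satisfies \eqref{scrozero} by hypothesis. I would set $\cO_\infty:=\bigcap_{n\ge 0}\cO_n$ (a nested intersection of compact sets, hence a compact ``Cantor--like'' set; its measure is not needed here and is postponed to Proposition~\ref{prop:Ofin}) and, for $n\ge 1$, define the composed change of variables $G^{(n)}:=e^{S_1}e^{S_2}\cdots e^{S_n}$, so that iterating \eqref{transfo} gives $(G^{(n)})_*\fL_0=D_n+P_n$ on $\cO_n$. Since $a_n=a_0(\tfrac12+2^{-n-1})\downarrow a_0/2$, each $S_n$ lies in $\opdue{\cO_\infty}{a_0/2}$ with $\sedue{S_n}{\cO_\infty}{a_0/2}\le\sedue{S_n}{\cO_n}{a_{n-1}}\le K_{n-1}^{3\tau+2}\e_{n-1}$ by \eqref{pic colon}, a quantity which is super--exponentially small in $n$ by \eqref{costantine}; in particular the smallness hypothesis \eqref{piccolezza} of Corollary~\ref{esponenziale} holds for every $n$ (automatically for large $n$, and for the finitely many small $n$ after choosing $K_0$ large, i.e.\ $\e_0$ small, as in \eqref{scrozero}).

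First I would show that $(G^{(n)})_n$ is Cauchy in $\id+\opdue{\cO_\infty}{a_0/2}$. Writing $G^{(n)}-G^{(n-1)}=G^{(n-1)}(e^{S_n}-\id)$ and using the product estimate \eqref{stima.chiavetta} of Proposition~\ref{prop.chiavetta} together with $\sedue{e^{S_n}-\id}{\cO_\infty}{a_0/2}\le 2\,\sedue{S_n}{\cO_\infty}{a_0/2}$ from Corollary~\ref{esponenziale}, a simple induction shows that $\sup_n\sedue{G^{(n)}-\id}{\cO_\infty}{a_0/2}<\infty$ (the increments being summable since $\sum_n K_{n-1}^{3\tau+2}\e_{n-1}<\infty$) and hence
\[
\sedue{G^{(n)}-G^{(n-1)}}{\cO_\infty}{a_0/2}\ \lesssim\ \sedue{S_n}{\cO_\infty}{a_0/2}\ \le\ K_{n-1}^{3\tau+2}\e_{n-1}\,,
\]
which is super--exponentially summable; therefore $G^{(n)}\to G$ with $G-\id\in\opdue{\cO_\infty}{a_0/2}$. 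Analogously, \eqref{stima Om} gives $\sedue{\tilde\Omega^{(n)}-\tilde\Omega^{(n-1)}}{\tR}{a_n}\le\g\e_{n-1}$, so $\tilde\Omega^{(n)}\to\tilde\Omega$ in $\opdue{\tR}{a_0/2}$, with $-\Delta+\tilde\Omega$ time independent.

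Then I would pass to the limit in $(G^{(n)})_*\fL_0=D_n+P_n$. Since $D_n=-\Delta+\tilde\Omega^{(n)}\to-\Delta+\tilde\Omega$ and $\sedue{P_n}{\cO_n}{a_n}\le\g\e_n\to 0$, and since the map $G\mapsto G_*\fL_0=-\im G^{-1}\dot G+G^{-1}\fL_0 G$ (cf.\ \eqref{G.star.L}) is continuous for $\opdue{\cO_\infty}{a_0/2}$--convergence (the quasi--periodicity in time with frequency $\omega$ lets one control $\dot G=\omega\cdot\partial_\f G$ on a marginally smaller strip), one obtains $G_*\fL_0=-\Delta+\tilde\Omega$ on $\cO_\infty$, which is \eqref{trova}. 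Finally $P_0$ is self--adjoint (as $\VV$ is real), so by Proposition~\ref{homolog} and Remark~\ref{gruppo} every $P_n$ is self--adjoint and every $\im S_n$ is self--adjoint; since anti--self--adjoint operators form a Lie algebra, every $G^{(n)}$ and its limit $G$ is symplectic, $G=e^{S_\infty}$ with $\im S_\infty$ self--adjoint, and if $V$ is Gauge invariant the identical reasoning preserves Gauge covariance.

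The hard part is really the first step: keeping the infinite composition \emph{inside} the quasi--T\"oplitz class of order $-2$, which is precisely what Proposition~\ref{prop.chiavetta} and Corollary~\ref{esponenziale} were set up to handle, and where the particular choice of the sequences $a_n,K_n,\e_n$ in \eqref{costantine} is used --- the super--exponential decay of $\e_n$ must dominate the polynomial growth of the factors $K_n^{3\tau+2}$ produced by the homological equation. Everything else is routine telescoping and passage to the limit.
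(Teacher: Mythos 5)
Your proposal is correct and coincides with what the paper means by its one-line proof (``standard, follows from the fast convergence of the $\tilde\Omega^{(n)}$''): you set $\cO_\infty=\bigcap_n\cO_n$, compose the transformations, and use the super-exponential decay of $\e_n$ against the geometric growth of $K_n$, exactly as intended. One small imprecision: Corollary~\ref{esponenziale} is stated for sums of $\ad(A)^kM$, not for $e^{S_n}-\id$ directly, but the bound $\sedue{e^{S_n}-\id}{\cO_\infty}{a_0/2}\lesssim\sedue{S_n}{\cO_\infty}{a_0/2}$ you need follows by the same iteration of~\eqref{stima.chiavetta}, so the argument stands.
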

		\begin{proof}
			The proof of this lemma is also standard and follows from the fast convergence of the $\tilde\Omega^{(n)}$ in Proposition \ref{ittero}. We put $\cO_\infty:= \cap_n \cO_n$.
		\end{proof}
		\begin{proof}[Proof of Theorem \ref{riduco}]
			The thesis follows from Lemma \ref{passozero} and Corollary \ref{chiudo}, except from the fact that $\cO_\infty$ has positive measure. In order to show this, we shall prove
			Proposition \ref{prop:Ofin}, which gives a cleaner characterization of the set  $\cO_\infty$. Then, the measure estimates are deferred to the next section.
		\end{proof}
	{\begin{proof}[Proof of Proposition \ref{prop:Ofin}]
				We need to show that $\cO_{\rm fin}\subset \cap_{n=0}^\infty \cO_n$, i.e.\ we have to verify conditions \eqref{CD1}-\eqref{CD2} for $\omega\in\cO_{\rm fin}$.  Proving  \eqref{CD1} is standard (see for instance \cite{BB}). Regarding 
 \eqref{CD2}, by \eqref{CD2fin} one has, for $\omega\in\cO_{\rm fin}$,
			\begin{align*}
		&	\big|\omega\cdot\ell + 2\frac{|\pi(\ell)|}{|v|}b - |\pi(\ell)|^2 + \fa^{(n)}(v,b) - \fa^{(n)}(v,b-v\cdot\pi(\ell))\big| 
			 \geq \\
			& 4\gamma |\ell|^{-\tau}- |\fa^{(n)}(v,b) - \fa(v,b)| - |\fa^{(n)}(v,b-v\cdot\pi(\ell))-\fa(v,b-v\cdot\pi(\ell))|.
			\end{align*}
		The estimate follows by  recalling that $|\ell|<K_n$, using \eqref{stima diago} to bound $|\fa^{(n)}(v,b) - \fa(v,b)|$ in terms of $\sedue{\tilde\Omega^{(n)}-\tilde\Omega}{\tR}{a_n}$ and then exploiting the bounds \eqref{stima Om}
		 (see e.g. the proof of Lemma 7.6 of \cite{MP} for details).
		\end{proof}}}
		\section{Measure estimates}
%
%
%
%
The purpose of this section is to prove that the set of frequencies $\omega$ for which the non-resonance conditions \eqref{N.melnikov} hold up to order $N$  has positive measure.
\\
	Recalling the notations of Theorem \ref{riduco} and denoting
\[
\Omega_j= |j|^2 +\fa(v(j),b(j)) + R(j)\,,\quad R(j)= r^{(1)}_j + r^{(2)}_j \,,
\]
we have
\begin{equation}
\label{mel.decay}
\sup_{v,b} e^{\ta' \tc |v|}\jap{b}^2 |\fa(v,b)|^\tR +\sup_j |R(j)|^\tR \jap{j}^\mu \le 2 \epsilon \gamma\,.
\end{equation}
	Denote by 
	\[\bv := (v_h)_{h =1, \ldots, N} \in \cV^N\,\quad \bb := (b_h)_{h =1, \ldots, N} \in \Z^N\,,\quad \pmb{\eta}:=(\eta_{ab})_{\substack {a=1,2\\b=1,\ldots, N}} \in \{-1,0,1\}^{2N}
	\]	
	and for $(\ell,K,\bv,\bb,\pmb{\eta})\in \fA_N:= \Z^d\times \Z\times \cV^N\times \Z^N\times \{-1,0,1\}^{2N}$, consider an expression of the form
	\begin{equation}\label{3meln}
		\begin{aligned}
			\td(\ell,K,\bv,\bb,\pmb{\eta}):=	\omega & \cdot \ell + K   +\sum_{h = 1}^N \eta_{1h}\, \fa(v_h,b_h)   +\sum_{k=1}^N \eta_{2k}\,  R(j_k)
		\end{aligned}
	\end{equation}
	We shall prove the following result:
	\begin{lemma}\label{lem.N.melnikov}
	Given $N \in \N$, $N \geq 2$, 
		there exist $\gamma_*, \epsilon_*,\tau_N >0$ such that for any $0< \gamma < \gamma_*$ and $\epsilon<\epsilon_*$ the set
		\begin{equation}
			\label{cC_*}
			\cC_*(N,\gamma, \tau_N) := 	\cC_* := \left\{ \omega \in \tR \colon  \ 
			\abs{\td(\ell,K,\bv,\bb,\pmb{\eta})} \geq \frac{\gamma}{\la \ell \ra^{\tau_N}} , \ \ \forall (\ell,K,\bv,\bb,\pmb{\eta})\in \fA_N :\; (\ell,K) \neq (0,0) \right\}
		\end{equation}
		has positive measure. More precisely there exists a constant $c_* >0$ such that
		$$
		\meas (\tR \setminus \cC_*)\leq c_* \gamma  \ .
		$$
	\end{lemma}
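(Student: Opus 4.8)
The scheme is the classical measure–theoretic one: for each admissible label we bound the measure of the resonant set on which the small divisor $\td(\ell,K,\bv,\bb,\pmb\eta)$ is too small, and then sum over labels, using that for fixed $\ell$ only polynomially many (in $\jap{\ell}$) labels actually matter. We use throughout \eqref{mel.decay}: for every $(v,b)$ and every $j$ the maps $\omega\mapsto\fa(v,b)(\omega)$ and $\omega\mapsto R(j)(\omega)$ are Lipschitz on $\tR$ with constants $\le 2\epsilon\,e^{-\ta'\tc|v|}\jap{b}^{-2}\le2\epsilon$ and $\le2\epsilon\jap{j}^{-\mu}\le2\epsilon$, and $\|\fa(v,b)\|_\infty\le2\epsilon\gamma\,e^{-\ta'\tc|v|}\jap{b}^{-2}$, $\|R(j)\|_\infty\le2\epsilon\gamma\jap{j}^{-\mu}$. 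Hence the whole perturbation $\Sigma(\omega):=\sum_h\eta_{1h}\fa(v_h,b_h)+\sum_k\eta_{2k}R(j_k)$ satisfies $\|\Sigma\|_\infty\le4N\epsilon\gamma$ and $\mathrm{Lip}(\Sigma)\le4N\epsilon\le\tfrac12$ once $\epsilon<\epsilon_*(N)$. \emph{Step 1 (per–label estimate).} Fix a label with $(\ell,K)\ne(0,0)$ and call $\cB$ the associated resonant set. If $\ell=0$ then $K\ne0$ and $|\td|=|K+\Sigma|\ge1-4N\epsilon\gamma>\gamma=\gamma\jap{0}^{-\tau_N}$ for $\gamma<\gamma_*$, so $\cB=\emptyset$. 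If $\ell\ne0$, then along any segment of $\tR$ parallel to $\ell/|\ell|$ the Lipschitz function $\td=\omega\cdot\ell+K+\Sigma$ has a.e.\ directional derivative $\ge|\ell|-\tfrac12\ge|\ell|/2$, hence is strictly monotone there and the slice of $\cB$ has length $\le4\gamma\jap{\ell}^{-\tau_N}/|\ell|$; integrating over the transverse $(d-1)$–cube gives $\meas(\cB)\le C_d\,\gamma\,\jap{\ell}^{-\tau_N}/|\ell|$.

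\emph{Step 2 (few effective labels for fixed $\ell$).} The claim is that the union of all the $\cB$'s with a fixed $\ell\ne0$ is covered by $\lesssim_N\jap{\ell}^{D}$ sets of the kind estimated in Step 1, with $D=D(N,d,\delta)$ \emph{independent of $\tau_N$}. First, $\cB\ne\emptyset$ forces $|K|\le\sqrt d\,|\ell|+1$, i.e.\ $O(|\ell|)$ values of $K$. Next one truncates $\Sigma$: any term of modulus $<\tfrac{\gamma}{8N}\jap{\ell}^{-\tau_N}$ may be dropped, changing $\td$ by $<\tfrac{\gamma}{4}\jap{\ell}^{-\tau_N}$, so $\cB$ is contained in the resonant set of the truncated divisor with $\gamma$ replaced by $2\gamma$. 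By the exponential decay in $v$, a $\fa$–term is non–negligible only if $|v_h|\lesssim\log\jap{\ell}$, i.e.\ for $O((\log\jap{\ell})^2)$ directions; moreover, since by the expansion \eqref{nice.expansion} (this is exactly why the line–T\"oplitz form is needed in place of the weaker \eqref{csi.M}) the leading correction depends on $j$ only through $(v(j),b(j))$, when the relevant directions are parallel to $\pi(\ell)$ the integrality of $|\pi(\ell)|/|v|$ together with $|K|\lesssim|\ell|$ confines the corresponding $b$–variables to $O(|\ell|)$ values. Finally the remainders $R(j_k)$ are controlled via the Nekhoroshev–type geometric decomposition of $\Z^2$ from Lemma \ref{traviata}, together with the momentum relations inherited from \eqref{G.N} that tie the $j_k$ to $\pi(\ell)$: this reduces the admissible configurations to $\lesssim_N\jap{\ell}^{D'}$, $D'=D'(N,\delta)$. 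Collecting, the union over all remaining data is a union of $\lesssim_N\jap{\ell}^{D}$ sets each of measure $\le C_d\,\gamma\jap{\ell}^{-\tau_N}/|\ell|$.

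\emph{Step 3 (summation) and the main obstacle.} By Steps 1--2,
\[
\meas(\tR\setminus\cC_*)\le\sum_{\ell\ne0}C_N\,\jap{\ell}^{D}\cdot C_d\,\gamma\,\jap{\ell}^{-\tau_N}/|\ell|\le C_N'\,\gamma\sum_{\ell\ne0}\jap{\ell}^{\,D-\tau_N-1}\le c_*\,\gamma
\]
as soon as $\tau_N>D+d$; fixing $\tau_N$ this way and $\gamma_*,\epsilon_*$ small enough for the Lipschitz bound and the truncations, the lemma follows. The delicate point is Step 2 — bounding the number of effective labels for fixed $\ell$ by a power of $\jap{\ell}$ \emph{independent of $\tau_N$}. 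Since the remainders $R(j)$ decay only polynomially, and merely like $\jap{j}^{-\mu}$ with $\mu=1-2\delta$ close to $1$, a crude truncation would admit $|j_k|$ up to $\sim\jap{\ell}^{\tau_N/\mu}$ and hence $\sim\jap{\ell}^{2N\tau_N/\mu}$ configurations, far too many for $N\ge2$; the $\tau_N$–independent bound can only be extracted by using simultaneously the momentum constraints of \eqref{G.N} and the fine structure of the expansion \eqref{nice.expansion} — the leading correction being a line–T\"oplitz symbol in $(v(j),b(j))$ with exponential decay in $|v(j)|$, so with only $O\!\big((\log\jap{\ell})^2\,|\ell|^{O(1)}\big)$ relevant values — the higher–order remainders being suppressed through the geometric decomposition of Lemma \ref{traviata}.
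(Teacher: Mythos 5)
Your Step 1 is fine, and you correctly identify the key difficulty (Step 2: a $\tau_N$--independent label count), but you do not resolve it, and as written the argument does not close. After truncating $\fa(v_h,b_h)$ at threshold $\tfrac{\gamma}{8N}\jap{\ell}^{-\tau_N}$, the bound $|\fa(v,b)|\le 2\epsilon\gamma\,e^{-\ta'\tc|v|}\jap{b}^{-2}$ forces $|v_h|\lesssim\tau_N\log\jap{\ell}$ (fine) but only $\jap{b_h}\lesssim\jap{\ell}^{\tau_N/2}$, and the remainders give $\jap{j_k}\lesssim\jap{\ell}^{\tau_N/\mu}$; both ranges grow with $\tau_N$, so the effective label count is of order $\jap{\ell}^{cN\tau_N}$ and your Step 3 sum does not converge by simply enlarging $\tau_N$. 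Your appeal to ``integrality of $|\pi(\ell)|/|v|$'' and to the momentum relations in \eqref{G.N} to cut the $b_h$ down to $O(|\ell|)$ values is unfounded here: in $\fA_N$ the tuples $(\bv,\bb,\pmb\eta)$ are \emph{free} labels, not coupled to $\ell$ by any parallelism or momentum constraint (the momentum constraint of $\cG_N$ is only imposed later, in the proof of Theorem \ref{riduco.circa}, when the lemma is \emph{applied}).

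The paper's proof takes a different route that avoids exactly this circularity: a finite induction over $n=0,\dots,2N$, the number of nonzero entries of $\pmb\eta$, producing a rapidly increasing finite sequence $\tau_0<\tau_1<\dots<\tau_{2N}=:\tau_N$. At the step $n\to n+1$ one splits into Case I, where some index is large (there the estimate follows from the step-$n$ bound by subtracting the corresponding tiny contribution), and Case II, where \emph{all} active indices are small --- but small relative to the \emph{previous} exponent $\tau_n$: $|v_h|\lesssim|\ell|$, $|b_h|\le|\ell|^{\tau_n/2}$, $\jap{j_k}\le|\ell|^{\tau_n/\mu}$. The resulting label count is $\jap{\ell}^{O(N\tau_n)}$, and one then picks $\tau_{n+1}\gtrsim N\tau_n+d+1$ to make the excision sum converge. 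Because $n$ ranges only up to $2N$, the sequence terminates and one fixes $\tau_N:=\tau_{2N}$. This two-scale bootstrap is precisely the ingredient missing from your ``single-pass'' scheme; without it, the label count and the Diophantine exponent chase each other and the sum never closes.
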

	\begin{proof}
		We first notice that for all $\tL >0$, we can ensure that
		$$
		\abs{\eqref{3meln}} \geq \frac{\gamma}{\la \ell \ra^{d+1}} , \quad \forall (\ell, K) \neq (0,0), \ \ |\ell| \leq \tL   
		$$
		just by taking $\epsilon_* = \epsilon_*(\tL)$ small enough.
		So we choose $\tL >0$ (to be fixed later).
		Next we note that, for $\ell\ne 0$,  if $|K|> 2|\ell|$ then (for $\epsilon_*$ small enough) one has
		$|\td(\ell,K,\bv,\bb,\pmb{\eta})|\ge \frac12>\g$.

		So from now on we restrict to $|\ell| \geq \tL$ and   $|K| \leq 2|\ell|$.\\
		We prove the claim by finite induction on the number of $\eta_{r_1, r_2}$ different from 0. 
		More precisely for every $0\leq n \leq 2N$ we shall show that for $\gamma$ small enough, there exist a  sequence of increasing $\tau_n$ and a sequence of nested sets $\cC^n = \cC^n(\gamma, \tau_n)$ such that, provided
		$$
		|\eta_{11}| + \cdots + |\eta_{23}| = n \ , 
		$$
		then 
		for any $\omega \in \cC^n$ one has
		\begin{equation}\label{meln.ind}
			\abs{\eqref{3meln}} \geq \frac{\gamma}{\la \ell \ra^{\tau_n}} , \ \ \forall (\ell,K) \neq (0,0) 
		\end{equation}
		and moreover 
		\begin{equation}
			\label{meln.mes}
			\meas (\tR \setminus \cC^0)\leq C \gamma  \ , \qquad
			\meas (\cC^n \setminus \cC^{n+1})\leq C \gamma 
		\end{equation}
		for some $C>0$. Then the lemma follows by taking $\cC_* := \cC^{2N}$ and $c_*:= (2N+1) C$.  Then $\gamma_*$ is fixed so that $(2N+1)C \gamma_* < \meas (\tR) $, and the measure of $\cC_*$ results positive.\\
		
		{\em Case $n=0$.} In this case \eqref{3meln} reduces to $\omega \cdot \ell + K$, so the set $\cC^0$ coincides with the set $\cO_0$ defined in \eqref{diofantino}, with $\tau_0:= d+1$. It is well known that $\meas{(\tR \setminus \cO_0)} \leq C \gamma$ for some $C >0$.\\
		
		{\em Case $n \leadsto n+1$.} Assume that \eqref{meln.ind} holds for any possible choice of $(\eta_{ab})$ s.t. $|\eta_{11}|+\cdots + |\eta_{23}| = n$, $1\leq n \leq 2N-1$, for some $(\tau_a)_{a=1,\ldots, n}$.  We can assume that $|\ell|>\tL$ and $|\eta_{11}|+\cdots + |\eta_{23}| = n+1 $.
		\\
		We further divide two subcases. 
		
		\underline{Case I: } At least one of the following conditions holds:
		
		\begin{itemize}
			\item[I.a] There exists $h=1,\dots, N$ such that $\eta_{1h}\ne 0 $ and either $ {\tc \ta' }|v_h| \geq |\ell|$ or $|b_h|\ge |\ell|^{\tau_n/2}.$
			
			\item[I.b]  There exists $h=1,\dots,N$ such that $\eta_{2h}\ne 0 $ and $\jap{j_h} \ge |\ell|^{\tau_n / \mu}.$
		\end{itemize}
		In Case I.a, assume w.l.o.g. that $\eta_{1N} \neq 0$ then 
		\begin{align*}
			| \eqref{3meln}|& \geq \abs{\omega \cdot \ell + K + \sum_{a=1}^{N-1} \eta_{1a} \fa(v_a,b_a) + \sum_{a=1}^N \eta_{2a} R(j_a) } - |\fa(v_N, b_N)| \\
			& \geq \frac{\gamma}{\la \ell \ra^{\tau_n}} - 2 \g \epsilon \jap{b_N}^{-2}e^{-\tc \ta' |v_N| } \ge \frac{\gamma}{\la \ell \ra^{\tau_n+1}}
		\end{align*}
		which is true provided $\epsilon_*$ is small enough and $\tL$ sufficiently large.
		In Case I.b, assume w.l.o.g. that $\eta_{2N} \neq 0$ then 
		\begin{align*}
			| \eqref{3meln}|& \geq \abs{\omega \cdot \ell + K + \sum_{a=1}^N \eta_{1a} \fa(v_a,b_a) + \sum_{a=1}^{N-1} \eta_{2a} R(j_a) } - |R(j_N)| \\
			& \geq \frac{\gamma}{\la \ell \ra^{\tau_n}} - \g \epsilon \frac{1}{\jap{j_N}^\mu} \ge \frac{\gamma}{\la \ell \ra^{\tau_n+1}}
		\end{align*}
		%
			
			\underline{Case II: } For all $k=1,2$ and $h=1,\dots,N$ such that $\eta_{kh}\ne 0$ one has  ${\tc \ta' }|v_h| < |\ell|$ and $|b_h| < |\ell|^{\tau_n/2}$ if $k=1$    and one has $\jap{j_h} < |\ell|^{\tau_n/\mu}$ if $k=2$. 
			Now for  $|\ell|\ge \tL$, $|K|\le 2|\ell|$ and $\bv,\bb,\pmb{\eta}$ as in Case II we define
			\[
			\cR_{\ell,K,\bv,\bb,\pmb{\eta}}(\tau_{n+1}, \gamma):=\{\omega\in \tR:\quad |\td(\ell,K,\bv,\bb,\pmb{\eta})| \le \gamma |\ell|^{-\tau_{n+1}}\} \ . 
			\]
			Now  (recall $|\ell|\ge \tL$ and the Lischitz estimates on $\fa(v,b)$ and $R(j)$)
			\[
			|\frac{d}{ d|\omega|} \td(\ell,K,\bv,\bb,\pmb{\eta})| \ge |\ell|-2N \epsilon  \ge 1
			\]
			so 
			$
			\mbox{meas}\, (  \cR_{\ell,K,\bv,\bb,\pmb{\eta}}) \le 2\gamma |\ell|^{-{\tau_{n+1}}}
			$
			and
			\[
			\meas \left(\bigcup_{\substack{|\ell| \geq \tL, \ |K|\le 2|\ell|
					\\
					\bv,\bb,\pmb{\eta} \in \mbox{Case II}}}
			\!\!\!\!\!  \cR_{\ell,K,\bv,\bb,\pmb{\eta}}	
			\right) \le
			\sum_{|\ell| \geq \tL} \ \sum_{|K| \leq 2 |\ell| , \ |v_h|\leq |\ell| \atop 
				|b_h| \leq |\ell|^{\tau_n/2}, \ \la j_h \ra \leq |\ell|^{\tau_n/\mu}}  	
			\frac{\gamma }{|\ell|^{\tau_{n+1}}} \leq C
			\gamma \sum_{|\ell| \geq \tL}  |\ell|^{2N+1+(\frac12 + \frac{2}{\mu})N \tau_n - \tau_{n+1} }
			\]
			which is finite provided
			$$
			\tau_{n+1} > (\frac52 + \frac{2}{\mu})N \tau_n+ d +1.
			$$
			Then we put
			$$
			\cC^{n+1}:= \cC^n \setminus \bigcup_{\substack{|\ell| \geq \tL, \ |K|\le 2|\ell|
					\\
					\bv,\bb,\pmb{\eta} \in \mbox{Case II}}}  \cR_{\ell,K,\bv,\bb,\pmb{\eta}}
			$$
			and clearly it fulfills \eqref{meln.mes}. This concludes the proof of the inductive step.
			
		\end{proof}
		
		We are now ready to discuss the measure of the non-resonant $\omega$. Let us start by discussing the set $\cO_{\rm fin}$ of Proposition \ref{prop:Ofin}.
			\begin{proposition}\label{misura1}
There exist $\gamma_*, \epsilon_*,\tau >0$ such that for any $0< \gamma < \gamma_*$ and $\epsilon<\epsilon_*$, the set $\cO_{\rm fin}$ of Proposition \ref{prop:Ofin} has positive measure, in particular
			$$
			\textup{meas}(\tR \setminus \cO_{\rm fin}) \leq C \gamma \ .
			$$
		\end{proposition}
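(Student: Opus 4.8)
The plan is to deduce the bound from Lemma \ref{lem.N.melnikov} with $N=2$, once we recognize that each of the three families of small divisors entering $\cO_{\rm fin}$ — namely $\omega\cdot\ell+k$ (defining $\cO_0$, see \eqref{diofantino}), $\td(\ell,j)$ (defining $\cC^{(1)}$, see \eqref{CD1fin}) and $\mathfrak d(\ell,v,b)$ (defining $\cC^{(2)}$, see \eqref{CD2fin}) — is a particular instance of the master expression \eqref{3meln} with at most two $\fa$-terms and two $R$-terms. First, by the union bound,
\[
\meas(\tR\setminus\cO_{\rm fin})\le \meas(\tR\setminus\cO_0(\gamma))+\meas(\tR\setminus\cC^{(1)}(\gamma,\tau))+\meas(\tR\setminus\cC^{(2)}(\gamma,\tau))\,,
\]
and the first summand is $\le C\gamma$ by the classical Diophantine measure estimate (exponent $d+1>d$); this is exactly the case $n=0$ in the proof of Lemma \ref{lem.N.melnikov}.

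For $\cC^{(1)}$, recall from Theorem \ref{riduco} that $\Omega_j=|j|^2+\fa(v(j),b(j))+R(j)$ with $R(j)=r^{(1)}_j+r^{(2)}_j$, so that for $\ell\ne 0$ and $j\in\Z^2$
\[
\td(\ell,j)=\omega\cdot\ell+K_{\ell,j}+\fa(v(j),b(j))-\fa(v(j-\pi(\ell)),b(j-\pi(\ell)))+R(j)-R(j-\pi(\ell))\,,
\]
with $K_{\ell,j}:=|j|^2-|j-\pi(\ell)|^2=2\,j\cdot\pi(\ell)-|\pi(\ell)|^2\in\Z$. This has exactly the form \eqref{3meln} with $N=2$, $K=K_{\ell,j}$, $\bv=(v(j),v(j-\pi(\ell)))$, $\bb=(b(j),b(j-\pi(\ell)))$, auxiliary indices $(j_1,j_2)=(j,j-\pi(\ell))$ and $\eta_{11}=-\eta_{12}=\eta_{21}=-\eta_{22}=1$, the remaining $\eta$'s vanishing (when $\pi(\ell)=0$ the right-hand side collapses to $\omega\cdot\ell$, and $(\ell,0)\ne(0,0)$, so this case is covered as well). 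Hence, applying Lemma \ref{lem.N.melnikov} with $N=2$ and $\gamma$ replaced by $c\gamma$ for a suitable $c=c(\tau_2)\ge 1$ and with $\tau\ge\tau_2$, one gets $\cC_*(2,c\gamma,\tau_2)\subseteq\cC^{(1)}(\gamma,\tau)$, whence $\meas(\tR\setminus\cC^{(1)}(\gamma,\tau))\le c_*c\,\gamma$. The only property of $\fa$ and $R$ used is their decay \eqref{mel.decay} in $|v|$, $\jap{b}$ and $\jap{j}$, exactly as in the proof of Lemma \ref{lem.N.melnikov}.

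For $\cC^{(2)}$, fix $\ell$ with $\pi(\ell)\ne 0$ and let $v\in\cV$ be the unique primitive vector with $\pi(\ell)\parallel v$; since $\pi(\ell)\in\Z^2$ and $v$ is primitive, $\pi(\ell)=m v$ for some $m\in\Z$, hence $|\pi(\ell)|/|v|=|m|\in\Z$, $|\pi(\ell)|^2=m^2|v|^2\in\Z$ and $v\cdot\pi(\ell)=m|v|^2\in\Z$. Therefore
\[
\mathfrak d(\ell,v,b)=\omega\cdot\ell+K_{\ell,v,b}+\fa(v,b)-\fa(v,b-v\cdot\pi(\ell))\,,\qquad K_{\ell,v,b}:=2|m|b-m^2|v|^2\in\Z\,,
\]
which is \eqref{3meln} with $N=2$, $\bv=(v,v)$, $\bb=(b,b-v\cdot\pi(\ell))$, $\eta_{11}=-\eta_{12}=1$ and all $\eta_{2k}=0$. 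As before, Lemma \ref{lem.N.melnikov} (with $N=2$, $\gamma$ rescaled by a constant depending only on $\tau_2$, and $\tau\ge\tau_2$) gives $\meas(\tR\setminus\cC^{(2)}(\gamma,\tau))\le C\gamma$. Collecting the three contributions with $\tau:=\max(\tau_2,d+1)$ yields $\meas(\tR\setminus\cO_{\rm fin})\le C\gamma$; finally, choosing $\gamma_*$ so small that $C\gamma_*<\meas(\tR)=2^d$ ensures that $\cO_{\rm fin}$ has positive measure for all $\gamma\in(0,\gamma_*)$ and $\epsilon<\epsilon_*$, the smallness of $\epsilon$ being the one required by Lemma \ref{lem.N.melnikov}.

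The only step with genuine content beyond bookkeeping is the verification that $\td(\ell,j)$ and $\mathfrak d(\ell,v,b)$ really do match the template \eqref{3meln}: one must check that the ``integer part'' $K_{\ell,j}$, resp.\ $K_{\ell,v,b}$, is an honest integer — for $\cC^{(2)}$ this is precisely where primitivity of the generators is used — and that at most two $\fa$-terms and two $R$-terms occur, so that $N=2$ suffices. Everything else, in particular the summability of the bad sets over $K,\bv,\bb$ and the auxiliary indices in the range where the lower bound is not already trivial, is handled verbatim by the proof of Lemma \ref{lem.N.melnikov}, the decisive ingredient being the decay \eqref{mel.decay} of $\fa$ and $R$.
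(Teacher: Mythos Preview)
Your proof is correct and follows essentially the same approach as the paper: you recognize that the divisors $\td(\ell,j)$ and $\mathfrak d(\ell,v,b)$ are both instances of the master expression \eqref{3meln} with $N=2$ (with the same parameter choices the paper uses), and then invoke Lemma~\ref{lem.N.melnikov}. Your treatment is in fact slightly more careful than the paper's, since you explicitly handle the constant factors $2\gamma$, $4\gamma$ in \eqref{CD1fin}--\eqref{CD2fin} by rescaling $\gamma$, and you spell out why $K_{\ell,v,b}$ is an integer via primitivity of $v$.
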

		\begin{proof}
			We show that the set $\cC_*$ defined in \eqref{cC_*} with $N=2$ is contained in $\cO_{\rm fin}$.
			Indeed  $\cC^{(1)}$ defined in \eqref{CD1fin} contains  $\cC_*$, just taking
			$j_1=j$, $j_2=j-\pi(\ell)$, $(v_i,b_i)= (v(j_i),b(j_i))$ for $i=1,2$,
			 $K=|j_1|^2-|j_2|^2$, $\eta_{11} = \eta_{21} = 1$, $\eta_{12}=\eta_{22} = -1$.
			\\
			Also $\cC^{(2)}$ defined in \eqref{CD2fin} contains  $\cC_*$, setting $v_1=v_2= v$, $b_1=b$, $b_2= b- v\cdot\pi(\ell)$, 
			$K= 2 \frac{|\pi(\ell)|}{|v|} b -|\pi(\ell)|^2$ (note that $\frac{|\pi(\ell)|}{|v|}b \in \Z$  as $\pi(\ell) \parallel v $) and taking
		$\eta_{11}=-\eta_{12}$,	$\eta_{21} = \eta_{22}= 0 $.
			
		\end{proof} 
%
%

We finally prove Theorem \ref{riduco.circa}:
\begin{proof}[Proof of Theorem \ref{riduco.circa}]
Reducibility is a consequence of Theorem \ref{riduco}. It only remains to prove that $\forall N \in \N$ Melnikov conditions of order $N$ are satisfied, namely that 
 $\cC_* \subseteq \cG_N$, with $\cG_N$ as in \eqref{G.N} and $\cC_*$ as in \eqref{cC_*}. First we observe that, since $L$ as in \eqref{G.N} satisfies $|L| \leq N$, it has at most $N$ non vanishing components $L_{j_1}, \dots, L_{j_N}$. Then it is sufficient to set
\begin{gather}
\nonumber
\bv = (\underbrace{v(j_1), \dots, v(j_1)}_{|L_{j_1}| \textrm{ times}}, \dots, \underbrace{v(j_N), \dots, v(j_N)}_{|L_{j_N}| \textrm{ times}})\,, \quad \bb = (\underbrace{b(j_1), \dots, b(j_1)}_{|L_{j_1}| \textrm{ times}}, \dots, \underbrace{b(j_N), \dots, b(j_N)}_{|L_{j_N}| \textrm{ times}})\,,\\
\label{def.K}
\eta = (\underbrace{\textrm{sign}(L_{j_1}), \dots, \textrm{sign}(L_{j_1})}_{|L_{j_1}| \textrm{ times}}, \dots, \underbrace{\textrm{sign}(L_{j_N}), \dots, \textrm{sign}(L_{j_N})}_{|L_{j_N}| \textrm{ times}})\,, \quad K = \sum_{j \in \Z^2} |j|^2 L_{j}\,.
\end{gather}
Then the result follows by Lemma \ref{lem.N.melnikov}, observing that the set $\cG_N$ is defined in such a way that, with $K$ as in \eqref{def.K}, one always has $(\ell, K) \neq (0,0)$.
\end{proof}
	
	\section{Nonlinear stability results}\label{3birk}
	In order to prove stability we apply a classical Birkhoff normal form procedure, see for instance Theorem 8.1 of \cite{MP}.
	We start by recalling some standard notions on 
	 time dependent Hamiltonians. 
	 \\
	 One works in the {\it extended phase space} $(\f,\cY,u)\in \T^d\times \R^d\times \fH^p$ with the symplectic form 
	$d\cY\wedge d\f +\im d u\wedge d\bar u$.
\\
In this context we shall consider {\em real on real} analytic  functions on  the complex domain
\begin{align*}
D(a,r):=\T^\tk_a\times D(r)\,,\quad 
& D(r)  :=\left\lbrace \yy\in \C^\tk:\; 
|\yy|_1:=\sum_{i=1}^\tk|\yy_i| < r^2 \,,
\quad
u\in \fH^p : \;
\|u\|_p < r \right\rbrace  \,,
\end{align*}
with  $\T^d_a$  defined in \eqref{toro}.
 Given a real  analytic function $F(\f,\cY,u)$ we consider its Taylor-Fourier series
	\[
F(\f, \cY,u)= \sum_{\substack{\al ,\bt\in\N^{\Z^2}\\\ell\in \Z^\tk, l\in \N^\tk}} F_{\al,\bt,l,\ell} \, 
 e^{\im \ell\cdot \f} \, \yy^l \,  u^\alpha \, \bar u^\beta \,,\quad F_{\al,\bt,l,\ell}= \bar F_{\bt,\al,l,-\ell}\,,
\quad	u^\alpha \bar u^\bt= \Pi_{j \in \Z^2} u_j^{\alpha_j}\bar u_j^{\bt_j}
\]
		Correspondingly we expand vector fields in Taylor Fourier series (again  well defined  and pointwise absolutely convergent):
	$$
	X^{(w)}(\f,\yy, u)= \sum_{\al ,\bt\in\N^{\Z^2},\ell\in \Z^\tk, l\in \N^\tk } X_{\al,\bt,l,\ell}^{(w)} \, e^{\im \ell\cdot \f} \, \yy^l \,  u^\alpha \, \bar u^\beta\,,
	$$
	where $w$ denotes the components $\f_i, \yy_i$ or $u_\jj,\bar u_\jj $.
	To a vector field we associate its {\em majorant }
	\begin{equation}\label{seriamente}
	\und{X}_a^{(w)}[\yy,u]:= \sum_{\ell\in \Z^\tk,l\in \N^\tk,\al ,\bt\in\N^{\Z^2} } |X^{(w)}_{\al,\bt, l, \ell}| \, e^{a\, |\ell|} \, \yy^l \,  u^\alpha \, \bar u^\beta  \ .
	\end{equation}
	Then we have the following
	\begin{definition}
		A vector field $X: D(a,r) \to  \C^{\tk}\times \C^\tk\times \fH^p$ will be said to be {\em majorant analytic} in $ D(a,r)$  if $\und{X}_a$ defines an analytic vector field  $D(r)\to \C^\tk\times\C^\tk\times \fH^p$.
	\end{definition}
	Since Hamiltonian functions are defined modulo constants, 
	we give the following definition of {\em regular} Hamiltonian and its   norm:
	\begin{definition}
		A real valued Hamiltonian $H$ will be said to be {\em regular} in $D(a,r)$ if its Hamiltonian vector field $X_H$ is majorant analytic in $D(a,r)$. We define its {\em norm} by
		\begin{equation}\label{musically}
		|H|_{a,r}:=\sup_{(\yy,u)\in D(r)} \bnorm{ \und{(X_H)}_a(\yy, u)}_{r} \,,\quad 
		\bnorm{ X}_r:=|X^{(\f)}|_{\infty}+\frac{|X^{(\yy)}|_1}{r^2}+ \frac{\|X^{(u)}\|}{r}+ \frac{\|X^{(\bar u)}\|}{r} \ .
		\end{equation}
	\end{definition}
	Note that the norm $| F |_{a,r}$ controls the norm $\bnorm{X_F}_r$  in $D(a,r)$.
	
	\smallskip
	
 If $F(\omega; \f, \yy, u) \equiv F(\omega)$ depends on the frequency $\omega \in \cO$, {where $\cO$ is some compact set},  we define the  {\em weighted Lipschitz} norm:
	\begin{equation}
	\label{lip.norm0}
	|F|_{a,r}^\cO:= \sup_{\omega \in \cO}|F(\omega, \cdot)|_{a,r}+  \g\sup_{\omega_1\neq \omega_2\in \cO} \frac{|F(\omega_1, \cdot)-F(\omega_2, \cdot) |_{a,r}}{|\omega_1-\omega_2|} \ . 
	\end{equation}
\begin{definition}
	We say that a regular Hamiltonian $H$ is Gauge and Momentum preserving if
	 \[  \pi(\ell)+ \sum_{j\in \Z^2} j (\al_j-\bt_j)\ne 0 \quad  \mbox{or} \quad   \sum_{i=1}^d \ell_i+ \sum_{j\in \Z^2}  (\al_j-\bt_j)\ne 0  \quad \Rightarrow \quad   H_{\al,\bt, l, \ell} =0
	 \]
	 By convention we define the {\em scaling degree} of a monomial $ e^{\im \ell\cdot \f} \, \yy^l \,  u^\alpha \, \bar u^\beta$ as $
	 \deg(l, \alpha, \beta) := 2| l | + |\alpha| + |\beta| -2\,,$
	 and define the {\em projection} on the homogeneous components of {\em scaling degree} $d$ as
	 $$
	 H^{(d)} := \sum_{\ell\in \Z^\tk, l \in \N^\tk , \al ,\bt\in\N^{\Z^2}  \atop 
	 	2| l |+|\al|+|\bt|=d+2}  \, H_{\al,\bt, l, \ell} \, e^{\im \ell\cdot \f} \yy^l u^\alpha \, \bar u^\beta \ , 
	 $$
	 similarly for $H^{(\leq d)}$ and $H^{(\geq d)}$. One easily verifies that $\{H^{(d_1)}, H^{(d_2)} \}$ has scaling degree $d_1+d_2$.
\end{definition}
	\paragraph{Proof of Theorem \ref{cubic}.}
	A direct computation ensures that the NLS equation \eqref{NLS3} is Hamiltonian 
		\begin{equation}
		\label{NLSf}
		H= \omega\cdot \cY + \sum_j |j|^2 |u_j|^2 + (u, V(\f) u)_{\ell_2(\Z^2)} + P(\f,u) \,,\quad P= \int_{\T^2} P(\f +\bK x, u(x),\bar u(x)) dx\,.
		\end{equation}
	By assumptions 
	$P$ is a Gauge and Momentum preserving regular Hamiltonian of scaling $\ge 1$, with norm 
	\[
	|P|_{a,r} \le C r \,,\quad \forall r\le r_* \ . 
	\]
	We now fix $0<\g < \g_*$ and  assume that $\|V\|_{\ta,\tp}\g^{-1}<\epsilon_*$ so that we may apply Theorem \ref{riduco}. The time dependent symplectic change of variables $G$ defined in Theorem \ref{riduco} gives rise to the simplectic  trasformation $\cG$ defined in \eqref{simplettica} on the extended phase space. By definition,  $\cG$  preserves the scaling degree and maps $D(\ta/8,r) \to D(\ta/8,\rho r)$ for all $r>0$, where $\rho-1 \lesssim \|V\|_{\ta,\tp}\g^{-1}$.
		 Thus   for all regular $F$ of scaling $\ge 1$   one has that $F\circ \cG$ is still regular, of scaling $\ge 1$,  and satisfies
		\[
		|F\circ \cG|_{a,\rho r} \le  2|F|_{a,r} \,,\quad \forall a\le \ta/8.
		\]
		Finally, since $V$ is Gauge invariant  then $\cG$ is Gauge covariant.
		 By construction   $\cG$  reduces the Hamiltonian $H$ to the diagonal form
	\[
H_1=	H\circ \cG = \omega\cdot \cY + \sum_j \Omega_j |u_j|^2 + \cK(\f,u) \,,\quad  |\cK|_{\ta/8,r} \le C r \,,\quad \forall r\le r_*\,,
	\]
	where $\cK$ is a Gauge and Momentum preserving regular Hamiltonian of scaling $\ge 1$.
	We note that, thanks to Theorem \ref{riduco.circa} with $N=3$, we can impose third order Melnikov conditions of the forma \eqref{N.melnikov} for any $(\ell, L) \in \cG_3$.
	Since $\cK$ is a regular  Gauge and momentum preserving Hamiltonian, we can eliminate all of its monomials of scaling degree 1. 
	Precisely, as $H_1$ fits the hypotheses of Theorem 8.1 of  \cite{MP} (with $\e =1$), 
 there exists a symplectic change of variables which cancels the terms of scaling degree one in $\cK$ and conjugates $H_1$ to
\[
H_2= \omega\cdot \cY + \sum_j \Omega_j |u_j|^2 + \widetilde\cK(\f,u) \,,\quad  | \widetilde\cK|_{\ta/8,r} \le C r^2 \,,\quad \forall r\le r_* \ , 
\]
where $\widetilde\cK$ has scaling $\ge 2$. Then the stability times in Theorem \ref{cubic} follow by a standard contraction mapping Lemma argument, see for instance Corollary 5.1  of \cite{BMP:2019}.
	\appendix
{	
	\section{Technical lemmas}\label{sec:technical}
		\begin{lemma}[Bony] \label{Bony}
		Given $M \in \cL_a(\fH^p)$ and setting
		\begin{equation}
			\label{eq:bony}
			(M^{(B)})_j^{j'}(\ell):= \begin{cases}
				0 \quad {\rm if} \quad  |\ell| > \tc \jap{j}^\delta\\
				M_{j}^{j'}(\ell) \quad {\rm otherwise} 
			\end{cases}\,,\quad M^{(R)} = M- M^{(B)}
		\end{equation}
		we have that $ M^{(R)} \in \cL_{a', -n}$ for  any $n>0$ and any $0 \leq a'<a$ with the quantitative estimate 
		\begin{equation}\label{MR.est}
			| M^{(R)}|_{a',-n} \lesssim_{p,\delta}  \frac{| M|_{a}}{(a-a')^{\mu n/\delta}}
		\end{equation}
	\end{lemma}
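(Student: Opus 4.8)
The plan is to use exactly the defining feature of $M^{(R)}$: its matrix elements are supported on $|\ell|>\tc\langle j\rangle^\delta$, i.e.\ where $|\ell|$ is large compared to $\langle j\rangle^\delta$. This is precisely what is needed to absorb the polynomial weight $\langle j\rangle^{\mu n}$ of Definition \ref{def.due.pesi} into a fraction of the analyticity weight $e^{a|\ell|}$, at the cost of a loss $(a-a')^{-\mu n/\delta}$ of analyticity radius.

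First I would note that $M^{(R)}$ is still momentum preserving: the cut-off in \eqref{eq:bony} only restricts, pointwise in $j$, the range of $\ell$, so it does not destroy the implication $j-j'\neq\pi(\ell)\Rightarrow M_j^{j'}(\ell)=0$; hence it makes sense to speak of $|M^{(R)}|_{a';-(n,0)}$. Then, for each pair $j,j'$, by definition
\[
\left(\underline{M^{(R)}}_{a';-(n,0)}\right)_j^{j'} = \!\!\!\sum_{\substack{\ell:\ j-j'=\pi(\ell)\\ |\ell|>\tc\langle j\rangle^\delta}}\!\!\! e^{a'|\ell|}\,|M_j^{j'}(\ell)|\,\langle j\rangle^{\mu n}\,.
\]
On the support of this sum $\langle j\rangle^\delta<|\ell|/\tc$, so $\langle j\rangle^{\mu n}<(|\ell|/\tc)^{\mu n/\delta}$; writing $e^{a'|\ell|}=e^{a|\ell|}e^{-(a-a')|\ell|}$ one bounds each summand by $e^{a|\ell|}|M_j^{j'}(\ell)|$ times $\sup_{t\ge 0}(t/\tc)^{\mu n/\delta}e^{-(a-a')t}$. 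This last supremum is a one–variable calculus fact: with $s=(a-a')t$ and $\beta=\mu n/\delta$ it equals $\tc^{-\beta}(a-a')^{-\beta}\sup_{s\ge 0}s^\beta e^{-s}=\tc^{-\beta}(\beta/e)^{\beta}(a-a')^{-\mu n/\delta}=:C(n,\delta,\tc)\,(a-a')^{-\mu n/\delta}$. Hence, pointwise in $(j,j')$,
\[
\left(\underline{M^{(R)}}_{a';-(n,0)}\right)_j^{j'}\le C(n,\delta,\tc)\,(a-a')^{-\mu n/\delta}\left(\underline{M}_{a}\right)_j^{j'}\,,
\]
and since the majorant norm is ordered (Remark \ref{rem:maj.norm}), this domination of the majorant matrices upgrades directly to $|M^{(R)}|_{a';-(n,0)}\le C(n,\delta,\tc)(a-a')^{-\mu n/\delta}|M|_a$, which is \eqref{MR.est} (the dependence on $n$ being absorbed into the implicit constant, as $n$ is fixed).

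There is no serious obstacle here: the whole content is the trade-off between the weight $\langle j\rangle^{\mu n}$ and the decay $e^{-(a-a')|\ell|}$, and the only point to be careful about is to use the threshold $|\ell|>\tc\langle j\rangle^\delta$ itself (so that $\langle j\rangle^{\mu n}$ becomes polynomial in $|\ell|$ of degree exactly $\mu n/\delta$) and then to invoke the standard ``polynomial times decaying exponential'' supremum, which yields precisely the stated power of $(a-a')^{-1}$. If the Lipschitz-in-$\omega$ version is required — as in the application in the proof of Lemma \ref{lem.hom3} — it follows verbatim, since the estimate above is linear in $M$ and the cut-off in \eqref{eq:bony} commutes with finite differences $\Delta_{\omega,\omega'}$.
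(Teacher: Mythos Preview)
Your proof is correct and follows essentially the same approach as the paper: both arguments use the support constraint $|\ell|>\tc\langle j\rangle^\delta$ to bound $\langle j\rangle^{\mu n}\le \tc^{-\mu n/\delta}|\ell|^{\mu n/\delta}$, absorb this polynomial into the exponential gap $e^{-(a-a')|\ell|}$ via the standard $\sup_t t^{\beta}e^{-\sigma t}\sim\sigma^{-\beta}$ estimate (the paper packages this as the constant $C(\mu n/\delta,a-a')$ of \eqref{def.C}), and then conclude by the ordering of the majorant norm (Remark \ref{rem:maj.norm}).
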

	\begin{proof}
		Exploiting the definition \eqref{due.pesi} we have
		\begin{align*}
			(	\underline M^{(R)}_{a',-n}) _{j}^{j'} &:= \sum_{ \ell:  j-j'= \pi(\ell)\atop  |\ell|>\tc \jap{j}^\delta } e^{a'|\ell|} |M_{j}^{j'}(\ell)|\jap{j}^{\mu n} \le
			\tc^{-\mu n/\delta}\sum_{ \ell : j-j'= \pi(\ell)} e^{a|\ell|} e^{-(a-a')|\ell|}|\ell|^{n\mu/\delta}  |M_{j}^{j'}(\ell) |\\
			&\le   C(\mu n/\delta,a-a') (\underline M_a)_{j}^{j'}\,,
		\end{align*}
	with $C(p, \sigma)$ defined as in \eqref{def.C}
	 for $p, \sigma>0$.
	\end{proof}
	\begin{proof}[Proof of Lemma \ref{lemma.algebra}]
		To prove \eqref{stima1}, we just use the fact that one has
		\begin{align*}
			(\underline{M N}_{a,-\bN_1}) _{j}^{j'} &:= \sum_{ \ell_1, \ell_2:  j-j'= \pi(\ell_1+\ell_2) } e^{a|\ell_1+\ell_2|}\,  |M_{j}^{j-\pi(\ell_1)}(\ell_1)|\, \jap{j}^{\mu n_1} \,  |b(j)|^{m_1} \, |N_{j-\pi(\ell_1)}^{j'}(\ell_2) |\\
			&\le \sum_{j_1} (\underline{M}_{a,-\bN_1}) _{j}^{j_1} (\underline{N}_{a,\vec 0}) _{j_1}^{j'}\,.
		\end{align*}
		For the second statement we divide $M = M^{(B)} +M^{(R)}$ as in \eqref{eq:bony}, then
		\begin{align*}
			& \quad 	(\underline{M^{(B)} N}_{a;-(n_1 + n_2, 0)}) _{j}^{j'} {:= \sum_{ \ell_1, \ell_2:  j-j'= \pi(\ell_1+\ell_2), \atop |\ell_1|< \tc\jap{j}^\delta } e^{a|\ell_1+\ell_2|} \,
			|M_{j}^{j-\pi(\ell_1)}(\ell_1)| \, 
			\jap{j}^{\mu(n_1+ n_2)}\, 
			|N_{j-\pi(\ell_1)}^{j'}(\ell_2)|}\\ 
			& \le 
			\sum_{ \ell_1, \ell_2:  j-j'= \pi(\ell_1+\ell_2),\atop |\ell_1|<\tc \jap{j}^\delta} \Big( e^{a|\ell_1|} |M_{j}^{j-\pi(\ell_1)}(\ell_1)|\jap{j}^{\mu n_1} \,  e^{a|\ell_2|} \jap{j-\pi(\ell_1)}^{\mu n_2} |N_{j-\pi(\ell_1)}^{j'}(\ell_2) | \Big) \left(\frac{\jap{j}}{\jap{j-\pi(\ell_1)}}\right)^{\mu n_2}\\
			& \le 2^{\mu n_2}\sum_{j_1} (\underline{M^{(B)}}_{a;-\bN_1}) _{j}^{j_1} (\underline{N}_{a; -\bN_2})_{j_1}^{j'}\,.
		\end{align*}
		Moreover by \eqref{stima1} with $M\rightsquigarrow M^{(R)}$ and estimate \eqref{MR.est} we have
		\[
		| M^{(R)} N|_{a'; -(n_1 + n_2, 0)} \le  |M^{(R)}|_{a'; -(n_1 + n_2, 0)} |N|_{a'; \vec 0 } \lesssim  \frac{|M|_{a }|N|_{a'}}{(a-a')^\frac{\mu n}{\delta}}\,.
		\]
	\end{proof}}
\section{Proof of Proposition \ref{prop.chiavetta} and Lemma \ref{commuta.bene}}\label{sec:chiavetta}
\label{app:tech.lemma}
Before proving Proposition \ref{prop.chiavetta}, we give the following auxiliary result:
\begin{lemma}\label{tigre.contro.tigre}
	Given $n_1, m_1, m_2 \in \N$, let $\bN_1 = (n_1, m_1)$, $R\in \cL_{\frac{a}{2}; -\bN_1}$ and $T \in \cT_{a,-m_2}$.
	Then $R T \in \cL_{\frac{a}{2}; -\bN_1}$, and 	 $T R\in \cL_{\frac{a}{2}; -(n_1,m_2)}$, with the bounds
	\begin{eqnarray}\label{easy.one}
	&	|R T|_{\frac{a}{2}; -\bN_1} \lesssim a^{-p-2}  |R|_{\frac{a}{2}; -\bN_1} |T|^\tT_{a,0}\,,
\\
\label{easy.two}
	&	|T R|_{\frac{a}{2}; -(n_1,  m_2)} \lesssim_{\mu, n_1} \, a^{-\mu n_1}|R|_{\frac{a}{2}; -(n_1, 0)} |T|^{\mathtt{ T}}_{a, -m_2}\,.
	\end{eqnarray}
\end{lemma}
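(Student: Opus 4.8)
The plan is to work entirely with the majorant matrices of Definition~\ref{def.due.pesi} and to use the ordering property of Remark~\ref{rem:maj.norm}: to estimate $|MN|_{a/2;-\bN}$ it suffices to dominate $(\underline{MN}_{a/2;-\bN})_j^{j'}$, entrywise, by a matrix product $\sum_{j_1}(\underline A)_j^{j_1}(\underline B)_{j_1}^{j'}$ whose factors are majorants of bounded operators, and then invoke submultiplicativity of the operator norm on $\fH^p$. The structural point driving the second estimate is that in $TR$ the left factor has matrix elements $\mathfrak T(\ell_1,v(j),b(j))$ depending on the \emph{output} index $j$ — this is exactly the line-T\"oplitz property — so the weight $\jap{b(j)}^{m_2}$ furnished by $|T|^{\mathtt{ T}}_{a,-m_2}$ is attached to the output index of $TR$ and is carried through the composition untouched; the weight $\jap{j}^{\mu n_1}$, on the contrary, must be reconstructed, either from $R$ or from the exponential weight carried by $\mathfrak T$.

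Estimate \eqref{easy.one} is the easy one: since $R$ is on the left, in $(RT)_j^{j-\pi(\ell)}(\ell)=\sum_{\ell_1+\ell_2=\ell}R_j^{j-\pi(\ell_1)}(\ell_1)\,T_{j-\pi(\ell_1)}^{j-\pi(\ell)}(\ell_2)$ the factor $R_j^{j-\pi(\ell_1)}(\ell_1)$ already carries both weights $\jap{j}^{\mu n_1}\jap{b(j)}^{m_1}$ on the output index, and $T$ need only be bounded. I would therefore apply the algebra estimate \eqref{stima1} of Lemma~\ref{lemma.algebra} at analyticity width $a/2$, getting $|RT|_{a/2;-\bN_1}\le|R|_{a/2;-\bN_1}\,|T|_{a/2}$, and then note that $T\in\cT_{a,-m_2}\subset\cT_{a,0}$, so Lemma~\ref{vaccab} with $m=0$ and $a'=a/2$ yields $|T|_{a/2}\lesssim_{\tc,p}a^{-p-2}\,|T|^{\mathtt{ T}}_{a,0}$; composing the two bounds gives \eqref{easy.one}.

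The substance is in \eqref{easy.two}. From $(TR)_j^{j-\pi(\ell)}(\ell)=\sum_{\ell_1+\ell_2=\ell}\mathfrak T(\ell_1,v(j),b(j))\,R_{j-\pi(\ell_1)}^{j-\pi(\ell)}(\ell_2)$, the splitting $e^{\frac a2|\ell|}\le e^{\frac a2|\ell_1|}e^{\frac a2|\ell_2|}$, and the substitution $j_1:=j-\pi(\ell_1)$, I would bound $(\underline{TR}_{a/2;-(n_1,m_2)})_j^{j'}$ by a double sum in which $\jap{b(j)}^{m_2}$ stays with the $\mathfrak T$-factor, and distribute $\jap{j}^{\mu n_1}$ via $\jap{j}\le\jap{j_1}+|\pi(\ell_1)|\le\jap{j_1}+\tc^{-1}|\ell_1|$, hence $\jap{j}^{\mu n_1}\lesssim_{\mu,n_1}\jap{j_1}^{\mu n_1}+|\ell_1|^{\mu n_1}$. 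On the piece carrying $\jap{j_1}^{\mu n_1}$ this weight is absorbed by $R$, producing the factor $(\underline R_{a/2;-(n_1,0)})_{j_1}^{j'}$, while the residual $\ell_1$-sum $\jap{b(j)}^{m_2}\sum_{\ell_1:\pi(\ell_1)=j-j_1}e^{\frac a2|\ell_1|}|\mathfrak T(\ell_1,v(j),b(j))|$ is precisely $(\underline T_{a/2;-(0,m_2)})_j^{j_1}$, the majorant of a bounded operator of norm $\lesssim_{\tc,p}a^{-p-2}|T|^{\mathtt{ T}}_{a,-m_2}$ by Lemma~\ref{vaccab}. On the piece carrying $|\ell_1|^{\mu n_1}$, I would absorb $|\ell_1|^{\mu n_1}\lesssim_{\mu,n_1}a^{-\mu n_1}e^{\frac a4|\ell_1|}$ (the elementary inequality underlying the constant $C(p,\s)$ of \eqref{def.C}), which produces the announced loss $a^{-\mu n_1}$; since $|\ell_1|\ge\tc|\pi(\ell_1)|=\tc|j-j_1|$, the surviving exponential gives a decaying majorant of the form $a^{-\mu n_1}|T|^{\mathtt{ T}}_{a,-m_2}\,e^{-\frac{a\tc}{4}|j-j_1|}$ once the $e^{a|\ell_1|}$ weight available in $|T|^{\mathtt{ T}}_{a,-m_2}$ is used up, and Lemma~\ref{gigetto} turns this into an operator-norm bound; here $R$ contributes only through $(\underline R_{a/2})_{j_1}^{j'}\le(\underline R_{a/2;-(n_1,0)})_{j_1}^{j'}$. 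Collecting the two pieces and using Remark~\ref{rem:maj.norm} together with submultiplicativity of operator norms gives \eqref{easy.two}; the precise power of $a^{-1}$ is exactly what comes out of combining the absorption step with the two applications of Lemmata~\ref{gigetto} and \ref{vaccab}.

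The main obstacle is the bookkeeping in \eqref{easy.two}: one must track carefully which of the two weights $\jap{j}^{\mu n_1}$ and $\jap{b(j)}^{m_2}$ is carried by which factor after composition, and the regime $|\pi(\ell_1)|\gtrsim\jap{j}$ — in which the $\jap{j}^{\mu n_1}$ weight has to be reconstructed at the cost of a loss of analyticity — is precisely where the factor $a^{-\mu n_1}$ is generated and must be handled with care. Beyond that, everything reduces to the already-proven boundedness statements (Lemmata~\ref{gigetto} and \ref{vaccab}) and the algebra estimate \eqref{stima1}.
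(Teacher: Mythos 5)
Your proof is correct and follows essentially the same route as the paper's, with one small bookkeeping variant in \eqref{easy.two}. For \eqref{easy.one} you do exactly what the paper does (Lemma \ref{lemma.algebra} followed by Lemma \ref{vaccab}). For \eqref{easy.two}, the shared core idea is that the line--T\"oplitz structure of $T$ pins the weight $\jap{b(j)}^{m_2}$ to the output index $j$ of the product, while the weight $\jap{j}^{\mu n_1}$ must be passed onto $R$, at the price of a loss of analyticity in $\ell_1$. The paper implements this with a single \emph{multiplicative} inequality
$\jap{j}^{\mu n_1}\lesssim_{\tc,\mu,n_1} a^{-\mu n_1}\,e^{\frac a4|\ell_1|}\,\jap{j-\pi(\ell_1)}^{\mu n_1}$,
which gives at once $(\underline{TR})\lesssim a^{-\mu n_1}\sum_{j_1}(\underline T_{\frac34 a;-(0,m_2)})_j^{j_1}(\underline R_{\frac a2;-(n_1,0)})_{j_1}^{j'}$ and concludes with Remark \ref{rem:maj.norm} and Lemma \ref{vaccab}. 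You instead split \emph{additively}, $\jap{j}^{\mu n_1}\lesssim_{\mu,n_1}\jap{j_1}^{\mu n_1}+|\ell_1|^{\mu n_1}$, handle the first piece by absorbing $\jap{j_1}^{\mu n_1}$ into $R$ and invoking Lemma \ref{vaccab}, and the second by absorbing $|\ell_1|^{\mu n_1}$ into $e^{\frac a4|\ell_1|}$ and invoking Lemma \ref{gigetto} (which is what the proof of Lemma \ref{vaccab} also does under the hood). This is a valid minor variant; the multiplicative inequality is marginally cleaner because it avoids the case split. One thing worth flagging explicitly, though it affects the paper's own proof equally: both arguments actually produce a factor of order $a^{-\mu n_1-p-2}$ (the $a^{-p-2}$ coming from the boundedness of the line--T\"oplitz factor via Lemma \ref{vaccab} or Lemma \ref{gigetto}), not the bare $a^{-\mu n_1}$ stated in \eqref{easy.two}. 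Your closing remark that the precise power ``is exactly what comes out of combining'' the several lemmas is accurate, and this discrepancy is harmless downstream since Proposition \ref{prop.chiavetta} only needs some fixed negative power of $a$ depending on $m_1,m_2,\mu,\delta,\tc,p$.
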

\begin{proof}
	Bound  \eqref{easy.one} follows directly from Lemma \ref{lemma.algebra} and Lemma \ref{vaccab}.
	Analogously, to prove \eqref{easy.two} one observes that
	\begin{align*}
		\hspace{-3pt}(\underline{T R}_{\frac{a}{2}; -(n_1, m_2)}) _{j}^{j'} &:= \!\!\!\!\!\!\!\!\sum_{ \ell_1, \ell_2: \atop  j-j'= \pi(\ell_1+\ell_2) } \!\!\!\!\!\!\!\! e^{\frac{a}{2}|\ell_1+\ell_2|} |T_{j}^{j-\pi(\ell_1)}(\ell_1)|\, |  R_{j-\pi(\ell_1)}^{j'}(\ell_2) | \, \jap{j}^{\mu n_1} |b(j)|^{m_2} \\
		&\lesssim_{n_1, \mu} a^{-\mu n_1} \!\!\!\!\!\!\!\!\sum_{ \ell_1, \ell_2: \atop  j-j'= \pi(\ell_1+\ell_2) } \!\!\!\!\!\!\!\!e^{\frac 34 a|\ell_1|} |T_{j}^{j-\pi(\ell_1)}(\ell_1)| |b(j)|^{m_2} e^{\frac{a}{2}|\ell_2|} \jap{j-\pi(\ell_1)}^{\mu n_1}  |R_{j-\pi(\ell_1)}^{j'}(\ell_2) |\\
		&\lesssim_{\mu, n_1} a^{-\mu n_1} 
		\sum_{j_1} (\underline{T}_{\frac 34 a, 0, -m_2})_{j}^{j_1} (\underline{R}_{\frac{a}{2}, -n_1, 0})_{j_1}^{j'}\,,
	\end{align*}
	and the result follows  by Lemma \ref{lemma.algebra}, by Remark \ref{rem:maj.norm}, and by Lemma \ref{vaccab}.
\end{proof}

\begin{proof}[Proof of Proposition \ref{prop.chiavetta}]
	We start by proving estimate \eqref{stima.chiavetta}.\\
	As  $M_\iota \in \cA_{a,-m_i}$, $\iota = 1,2$,    there exist operators $M^\tT_{\iota}, M^{(1)}_{\iota}, \dots, M^{(m_\iota)}_{\iota}$ such that 
	$M_\iota = M^\tT_{\iota} + \sum_{i  = 1}^{m_\iota} M_\iota^{(i)}$ and
	\begin{equation}
		\label{Miota.est}
		|M^\tT_\iota|^{\mathtt {T}}_{a,-m_i} + \sum_{i=1}^{m_\iota} \nor{M^{(i)}_{\iota}}{\frac{a}{2}}{-(i, m_\iota- i )}  \le  {2} \sega{M_\iota} {a}{-m_\iota}\,.
	\end{equation}
	To each $M^\tT_{\iota}$ we may associate the symbol $\fT_\iota(\ell,v,b)$ .
	First of all we define the line-T\"oplitz part of $M_1 M_2$  by setting $(M_1M_2)^\tT:= T$ with 
	\[
	(T)_j^{j'}(\ell):=  \fT(\ell,v(j),b(j))\,,
	\quad \fT(\ell,v,b):= \sum_{ \ell_1+\ell_2=\ell} \fT_1(\ell_1,v,b)\  \fT_2 (\ell_2,v,b - v\cdot \pi(\ell_1))\,.
	\]
	We estimate its $|T|^{\mathtt{ T}}_{a, -m}$ norm, given by  \eqref{norm.toplitz}, with $m = \min(m_1, m_2)$:
	\begin{align*}
&	|T|^{\mathtt{ T}}_{a, -m}\leq \sup_{ v\in \cV\,, b\in\Z} \sum_{ \ell\in\Z^d } e^{a({\tc}|v|+|\ell|)}| \fT(\ell,v,b)| \jap{b}^{m}\\
		&	\leq  \sup_{ v\in \cV\,, b\in\Z} \sum_{ \ell_1\in\Z^d } 	\!\!\! e^{a|\ell_1|}| \fT_1(\ell_1,v,b)|\jap{b}^{m_1}
	\!\!\!	\!\!\!	\!	\sup_{ v'\in \cV\,, b'\in\Z} \sum_{ \ell_2\in\Z^d }	\!\!\! e^{a({\tc}|v'|+|\ell_2|)}| \fT_2(\ell_2,v',b')|
		 \le |M^\tT_1|_{a,-m_1}^{\mathtt{ T}} |M^\tT_2|_{a, 0}^{\mathtt{ T}}\,.
	\end{align*}
	Then we set
	\begin{align}\label{decomp.r}
		R :=
		M_1 M_2- T  = 	
		\sum_{i = 1}^{m_1} M^{(i)}_{1}  M^\tT_{2} + \sum_{i= 1}^{m_2}  M^\tT_1 M^{(i)}_{2}  + \sum_{i_1 = 1}^{m_1} \sum_{i_2 = 1}^{m_2} M^{(i_1)}_1 \,  M^{(i_2)}_{2} + (M^\tT_1 \,  M^\tT_{2} - T)\,.
	\end{align}
	We start with estimating the first summand. To each term in the sum we apply Lemma \ref{tigre.contro.tigre} with $R = M_1^{(i)}\in \cL_{\frac{a}{2}; -(i, m_1-i)}$ and $T = M_2^\tT\in \cT_{a, -m_2}$, deducing that 
	$$
	\begin{aligned}
		|M_1^{(i)}\,   M^\tT_2|_{\frac{a}{2}; -(i, m_1-i)} &
		{\lesssim a^{-p-2}}
		|M_1^{(i)}|_{\frac{a}{2}; -(i, {m_1-i})} \, 
		| M^\tT_2|^{\tT}_{a, \vec 0}
		{\stackrel{\eqref{Miota.est}} {\lesssim}} a^{-p-2} \sega{M_1}{a}{-m_1} \sega{M_2}{a}{-m_2}\,.
	\end{aligned}
	$$
	The second summand in \eqref{decomp.r} is estimated analogously using  Lemma \ref{tigre.contro.tigre}   with $R = M_2^{(i)} \in \cL_{\frac{a}{2}; -(i, m_2-i)}$ and $T=M^\tT_1 \in \cT_{a, -m_1}$, getting 
	$$
	\begin{aligned}
		|M^\tT_{1} \, M^{(i)}_{2}|_{\frac{a}{2}; -(i, m_1)} &\lesssim_{j, m_1}
		a^{-\mu i} |M^{(i)}_2|_{\frac{a}{2}, -(i, 0)} \, |M^\tT_{2}|^{\tT}_{a, -m_1}
		\stackrel{\!\!\!\!\!\!\!\!\!\!\!\!\!\eqref{Miota.est}}{\lesssim_{m_1, m_2}} a^{-\mu m_2 } \sega{M_1}{a}{-m_1} \sega{M_2}{a}{-m_2}\,.
	\end{aligned}
	$$
	Furthermore, by Lemma \ref{lemma.algebra} one has that $\forall j_1 = 1, \dots, m_1$ and $\forall j_2 = 1, \dots, m_2$
	$$
	\begin{aligned}
		|M^{(i_1)}_{1}\,  M^{(i_2)}_{2}|_{\frac{a}{2}; -(i_1, m_1-i_1)} &\leq
		|M^{(i_1)}_{1}|_{\frac{a}{2}; -(i_1, m_1-i_1)} \, |M^{(i_2)}_{2}|_{\frac{a}{2}; (0, 0)}
		\stackrel{\eqref{Miota.est}}{\leq} 4 \sega{M_1}{a}{-m_1} \sega{M_2}{a}{-m_2}\,.
	\end{aligned}
	$$
	Remark that, since $m = \min\{m_1, m_2\}$,  in particular $\forall i_1 = 1, \ldots, m_1$, $\forall i_2 = 1, \ldots, m_2$
	\begin{gather*}
		M^{(i_1)}_{1}  \, M^\tT_{2} \in \cL_{\frac{a}{2}; -(i_1, m -i_1)} \ ,  \quad 
		M^\tT_{1} \, M^{(i_2)}_{2} \in \cL_{\frac{a}{2}; -(i_2, m)} , 
		\quad
		M^{(i_1)}_{1}\,  M^{(i_2)}_{2} \in \cL_{\frac{a}{2}; -(i_1, m -i_1)}  \ . 
	\end{gather*}
	Thus it only remains to estimate the last summand in \eqref{decomp.r}. In particular, we are going to prove that $M^\tT_1 \,  M^\tT_{2} - T \in \cL_{\frac{a}{2}; -(m, 0)}$.\\
	Recalling that $j-j'= \pi(\ell)$, we have:
	\[
	(M^\tT_1 \,  M^\tT_{2} - T)_j^{j'}(\ell) =  \sum_{ \ell_1+\ell_2=\ell \atop j_1= j-\pi(\ell_1)} \fT_1(\ell_1,v(j),b(j))\Big(\fT_2(\ell_2,v(j_1),b(j_1))-  \fT_2(\ell_2,v(j),b(j) - v(j)\cdot \pi(\ell_1))\Big)
	\]
	Now if $v(j)= v(j_1)$ then 
	$b(j_1):= v(j_1)\cdot j_1 = v(j)\cdot (j-\pi(\ell_1)) = b(j) - v(j)\cdot \pi(\ell_1)$,
	and thus the term in the parenthesis above disappears. 
	Consequently we can restrict the sum to the indexes $j_1$ fulfilling 
	$v(j_1) \neq v(j)$.
	
	For these remaining terms, using that  $|j-j'|=|\pi(\ell)|\le \tc^{-1} |\ell|$, we set {$m'\leq m_1+m_2$} and estimate 
	\begin{align}
		\notag
		&\left(\und{M^\tT_1 \,  M^\tT_{2} - T}_{\, \frac{a}{2}; -(m', 0)}\right)_j^{j'}\le e^{-\tc \frac{a}{4} |j-j'|} \! \! \! \!
		\sum_{ \substack{\ell_1,\ell_2 \\ j_1= j-\pi(\ell_1),\\ v(j)\ne v(j_1) }} \! \! \! \! e^{\frac34 a|\ell_1+\ell_2|}
		\, |\fT_1(\ell_1,v(j),b(j))| \, |\fT_2(\ell_2,v(j_1),b(j_1))| \, 
		\jap{j}^{\mu m'} \\ &
		+ e^{-\tc \frac{a}{4} |j-j'|}  \sum_{ \substack{\ell_1,\ell_2 \\ j_1= j-\pi(\ell_1),\\ v(j)\ne v(j_1) }} e^{\frac34 a|\ell_1+\ell_2|}|\fT_1(\ell_1,v(j),b(j))| |\fT_2(\ell_2,v(j),b(j) - v(j)\cdot \pi(\ell_1))| \jap{j}^{\mu m'}\,.
		\label{est.chiavetta}
	\end{align}
	If we show that the two sums above are uniformly bounded, then
	Lemma \ref{gigetto} will gives us the needed boundedness.
	Let us start with the first sum. We distinguish two cases.
	
	\noindent  \underline{Case 1:} $\jap{j} \le \tJ_\delta$ (given by Lemma \ref{traviata}).  Then trivially
$$
	\sum_{ \substack{\ell_1,\ell_2 \\ j_1= j-\pi(\ell_1),\\ v(j)\ne v(j_1) }} \!\!\!\!\!\!
	e^{\frac34 a|\ell_1+\ell_2|}\, |\fT_1(\ell_1,v(j),b(j))|\, |\fT_2(\ell_2,v(j_1),b(j_1))| \,  \jap{j}^{\mu m'} \lesssim_\delta |M^\tT_{1}|^\tT_{a,0}
	\, |M^\tT_{2}|^\tT_{a,0}\,.
	$$
	\noindent  \underline{Case 2:}	 $\jap{j} >\tJ_\delta$. We further distinguish 4 subcases:\\
	(A) $|\ell_1|> \tc \jap{j}^\delta$;\ \ \ (B) $|\ell_1|\le  \tc \jap{j}^\delta$ and $|v(j_1)|> \frac12 \jap{j}^\delta$; \ \ \ (C) $|\ell_1|\le  \tc \jap{j}^\delta$,  $|v(j_1)|\le \frac12 \jap{j}^\delta$ and $|b(j)|> \jap{j}^\mu$;\ \ \ (D)  $|\ell_1|\le  \tc \jap{j}^\delta$,  $|v(j_1)|\le \frac12 \jap{j}^\delta$ and $|b(j)| \le  \jap{j}^\mu$.
	\vspace{7pt}\\
	In case (A) we have
	\begin{align*}
		&\sum_{ \substack{\ell_1,\ell_2 \\ j_1= j-\pi(\ell_1),\\ v(j)\ne v(j_1) }}^{(A)} e^{\frac34 {a}|\ell_1+\ell_2|} \, |\fT_1(\ell_1,v(j),b(j))|
		\, |\fT_2(\ell_2,v(j_1),b(j_1))|\, 
		\jap{j}^{\mu m'} \\
		&\lesssim_{\tc, \mu} \!\!\! \sum_{ \substack{\ell_1}}
		e^{\frac34{a}|\ell_1|} |\ell_1|^{\frac{\mu m'}{\delta}} |\fT_1(\ell_1,v(j),b(j))| |M^\tT_2|^\tT_{{a},0}
		 \lesssim_{\tc, \mu, \delta} {a^{-\frac{\mu m'}{\delta}}} |M^\tT_1|^\tT_{a,0} \, |M^\tT_2|^\tT_{{a},0}\,.
	\end{align*}
	In case (B) we get 
	\begin{align*}
		&\sum_{ \substack{\ell_1,\ell_2 \\ j_1= j-\pi(\ell_1),\\ v(j)\ne v(j_1) }}^{(B)} e^{\frac34{a}|\ell_1+\ell_2|}\, 
		|\fT_1(\ell_1,v(j),b(j))|
		\, 
		|\fT_2(\ell_2,v(j_1),b(j_1))| \, \jap{j}^{\mu m'} \\
		&\lesssim_{\mu, \delta}  |M_1^\tT|^\tT_{a,0}  \sum_{ \ell_2} e^{\frac34{a}|\ell_2|}|\fT_2(\ell_2,v(j_1),b(j_1))|
		\, |v(j_1)|^{\frac{\mu m'}{\delta}}\lesssim_{\mu, \delta, \tc} {a^{-\frac{\mu m'}{\delta}}} \, |M_1^\tT|^\tT_{a,0} \, |M_2^\tT|^\tT_{{a},0}\,.
	\end{align*}
	In case (C) we start by remarking that $|b(j_1)| > \frac12 \jap{j}^\mu$ provided $\tJ_\delta$ is large enough. Indeed assume by contradiction that $|b(j_1)| \le \frac12 \jap{j}^\mu$.
	Then, by the definition of $b(j)$, we have that
	\[
	|b(j)| \le 
	|v(j_1)\cdot j|\le |v(j_1)\cdot j_1|+ | v(j_1) \cdot \pi(\ell_1)| \le |b(j_1)| + \jap{j}^{2\delta} \le \frac12 \jap{j}^\mu + \jap{j}^{2\delta}\le \jap{j}^\mu\,,
	\]
	where 	in the last inequality we have used
	$\jap{j} > 2^{\frac1{1-4\delta}}$,   contradicting the hypotheses.
	Now, as $\mu \leq 1$, we bound
	\begin{align*}
		&\sum_{ \substack{\ell_1,\ell_2 \\ j_1= j-\pi(\ell_1),\\ v(j)\ne v(j_1) }}^{(C)} e^{\frac34{a}|\ell_1+\ell_2|}\, |\fT_1(\ell_1,v(j),b(j))|
		\, |\fT_2(\ell_2,v(j_1),b(j_1))| \, \jap{j}^{m_1 \mu}\,  \jap{j}^{m_2 \mu} \\
		&\lesssim \sum_{ \substack{\ell_1}}
		e^{\frac34{a}|\ell_1|}  |\fT_1(\ell_1,v(j),b(j))|\jap{b(j)}^{m_1} \sum_{\ell_2 } e^{\frac{a}{2}|\ell_2|}\,
		|\fT_2(\ell_2,v(j_1),b(j_1))|\, \jap{b(j_1)}^{m_2}\\
		& \lesssim_{\mu, \delta}
		a^{-\frac{\mu {(m_1 + m_2)}}{\delta}}\, 
		|M^\tT_2|^\tT_{a,-m_1}\, |M^\tT_{1}|^\tT_{a,-m_2}.
	\end{align*}
	Concerning  (D), we claim that in this case $v(j)=v(j_1)$, so  there is no constribution to the sum. 
	Indeed if $v(j_1) \neq v(j)$,  Lemma  \ref{traviata} $(ii)$ (which we can apply as  $|j-j_1| \leq \la j \ra^\delta$) would imply   $|v(j_1)|> \frac12 \la j \ra^\delta$, getting a contradiction.
	This concludes the proof of \eqref{stima.chiavetta}.\\ \vspace{7pt} \em Proof of \eqref{a.mano}:
	Let $M^\tT_i\in \cT_{a, -1}$ and $M_i^{(1)} \in \cL_{a; -(1, 0)}$ be quasi-T\"oplitz decompositions of the $M_i$.
		$$
		|M^\tT_i|^\tT_{a, -1} + |M^{(1)}_i|_{a; (-1, 0)} \leq (1 + \varepsilon) \sega{M_i}{a}{-1}\,.
		$$
		Arguing as in the proof of Proposition \ref{prop.chiavetta},
		to each $M_i^\tT$ we associate the symbol $\fT_i(\ell,v,b)$. We set
		\[
		\fT(\ell,v,b):= \sum_{ \ell_1+\ell_2=\ell} \fT_1(\ell_1,v,b)\fT_2(\ell_2,v,b - v\cdot \pi(\ell_1))\,,\quad 	(T)_j^{j'}(\ell):= \fT(\ell,v(j),b(j))\,,
		\]
		and we define the line-Toplitz part of $M_1 M_2$ as  $(M_1 M_2)^\tT=T$.
		Then $T$ is by definition line-T\"oplitz, moreover to estimate $|T|^{\tT}_{a', -2}$ $\forall a'< a$ we only need to control:
		\begin{align*}
			&  \sup_{ v\in \cV\,, b\in\Z}\sum_{ \ell\in\Z^d } e^{a'({\tc}|v|+|\ell|)}\left| \sum_{ \ell_1+\ell_2=\ell} \fT_1(\ell_1,v,b)\fT_2(\ell_2,v,b - v\cdot \pi(\ell_1))\right| \jap{b}^{2}\\
			&	\lesssim \sup_{ v\in \cV\,, b\in\Z} \sum_{ \ell_1\in\Z^d } e^{a' ({\tc}|v| + |\ell_1|)}| \fT_1(\ell_1,v,b)|\jap{b}^{}\jap{v\cdot \pi(\ell_1)}^{}
			\sup_{ v'\in \cV\,, b'\in\Z} \sum_{ \ell_2\in\Z^d }e^{a'({\tc}|v'| + |\ell_2|)}| \fT_2(\ell_2,v',b')|\jap{b'}^{}
			\\
			& \lesssim  \sup_{ v\in \cV\,, b\in\Z} \sum_{ \ell_1\in\Z^d } e^{a(|\ell_1|+{\tc}|v|)}| \fT_1(\ell_1,v,b)|\jap{b}^{}e^{-(a - a'){\tc}|v| -(a-a')|\ell_1|}\jap{v} \jap{\ell_1}^{}
			|M^\tT_2|_{a',-1}^{\mathtt{ T}}
			\\ & \lesssim_{{\tc}} (a-a')^{-2}|M^\tT_1|_{a,-1}^{\mathtt{ T}} |M^\tT_2|_{a',-1}^{\mathtt{ T}}\,.
		\end{align*}
		We are left with estimating
		\begin{equation}\label{resti}
			M_1 M_2 - T = M^\tT_1 M^{(1)}_2 + M^{(1)}_1 M^\tT_2 + M^{(1)}_1 M^{(1)}_2 + (M^\tT_1 M^\tT_2  - T)\,.
		\end{equation}
		We will actually prove the following:
		$$
		M^\tT_1 M^{(1)}_2  \in \cL_{\frac{a'}2; -(1,1)}\,, \quad M^{(1)}_1 M^\tT_2 \in \cL_{\frac{a'}2; -(1,1)} + \cL_{\frac{a'}2; -(2,0)}\,, \quad M^{(1)}_1 M^{(1)}_2 + (M^\tT_1 M^\tT_2  - T) \in \cL_{\frac{a'}2; -(2,0)}\,.
		$$
		The estimate of the last two terms in \eqref{resti} is straightforward: by Formula \eqref{stima2} in Lemma \ref{lemma.algebra} one has $M^{(1)}_1 M^{(1)}_2 \in \cL_{\frac{a'}2; -(2, 0)},$ with
		$$
		|M^{(1)}_1 M^{(1)}_2|_{\frac{a'}2; {-(2, 0)}} \lesssim_{\mu, \delta}(a - a')^{-\frac{2\mu}{\delta}} |M^{(1)}_1|_{\frac{a}2;{-(1,0)}} |M^{(1)}_2|_{\frac{a}2;{-(1,0)}}\,, 
		$$
		while arguing as in the proof of Proposition \ref{prop.chiavetta}, formula \eqref{decomp.r} with $m'=2$, one obtains that there exists $\gamma = \gamma(\mu, \delta, p)>0$ such that $T_1 T_2 - T \in \cL_{\frac{a'}2; -(2, 0)},$ with 
		$$
		|M^\tT_1 M^\tT_2 - T|_{\frac{a'}2; (-2, 0)} \lesssim_{\mu, \delta, \tc, p} a^{-\gamma} |M^\tT_1|^\tT_{a, -1} |M^\tT_2|^\tT_{a, -1}\,.
		$$
		Concerning $M^\tT_1 M^{(1)}_2 $, one has:
		\begin{align*}
			\hspace{-3pt}&(\underline{M^\tT_1 M^{(1)}_2 }_{\frac{a'}2; -(1, 1)})_{j}^{j'}
			\lesssim_{\mu} \hspace{-15pt}\sum_{ \ell_1, \ell_2: \atop  j-j'= \pi(\ell_1+\ell_2) } \hspace{-12pt} e^{\frac{a'}2|\ell_1+\ell_2|} |(M^\tT_1)_{j}^{j-\pi(\ell_1)}(\ell_1)|\jap{j-\pi(\ell_1)}^{\mu} \jap{\pi(\ell_1)}^{\mu} \langle b(j)\rangle |(M^{(1)}_2)_{j-\pi(\ell_1)}^{j'}(\ell_2) |\\
			&\lesssim_{\mu} a^{-\mu} 
			\sum_{j_1} ((\underline{M^\tT_1})_{\frac 34 a;-(0,1) })_{j}^{j_1} ((\underline{M^{(1)}_2})_{\frac{a'}2; -(1, 0)})_{j_1}^{j'}\,,
		\end{align*}
		which  implies
		$$
		|M^\tT_1 M^{(1)}_2|_{\frac{a'}2; -(1,1)}\lesssim_{\mu} a^{-\mu} |M^\tT_1|^\tT_{a, -1} |M^{(1)}_2|_{\frac{a'}2; -(1, 0)}\,,
		$$
		{by Lemma \ref{lemma.algebra} and by Lemma \ref{vaccab}}.\\
		In order to conclude the proof, we now estimate $M^{(1)}_1 M^\tT_2$. Recalling that one has
		\begin{equation}\label{a.sum}
			(M^{(1)}_1 M^\tT_2)_{j}^{j'}(\ell) =  \sum_{ \ell_1, \ell_2: \ell_1 + \ell_2 = \ell} (M^{(1)}_1)_{j}^{j-\pi(\ell_1)}(\ell_1) (M^\tT_2)_{j-\pi(\ell_1)}^{j'}(\ell_2) \,,
		\end{equation}
		with $(M^{(1)}_1 M^\tT_2)_j^{j'} \neq 0$ only if $j - j' = \pi(\ell)$, we decompose $M^{(1)}_1 M^\tT_2$ as follows:
		\begin{equation}\label{another.sum}
			M^{(1)}_1 M^\tT_2 = (M^{(1)}_1 M^\tT_2)^{(A)} + (M^{(1)}_1 M^\tT_2)^{(B)} + (M^{(1)}_1 M^\tT_2)^{(C)} + (M^{(1)}_1 M^\tT_2)^{(D)} + (M^{(1)}_1 M^\tT_2)^{(E)}\,,  
		\end{equation}
		where the summands in \eqref{another.sum} respectively corresponds to the cases:
		\begin{enumerate}
			\item[(A)] $\jap{j - \pi(\ell_1)} > \tJ_\delta$, with $\tJ_\delta$ given by Lemma \ref{traviata}, and $|\ell_1| > \tc \jap{j}^\delta$;
			\item[(B)] $\jap{j-\pi(\ell_1)} > \tJ_\delta$, $|\ell_1| \leq \tc \jap{j}^\delta$ and $|v(j - \pi(\ell_1))| > \frac{1}{2} \jap{j - \pi(\ell_1)}^\delta$;
			\item[(C)] $\jap{j-\pi(\ell_1)} > \tJ_\delta$, $|\ell_1| \leq \tc \jap{j}^\delta$, $|v(j - \pi(\ell_1))| \leq \frac{1}{2} \jap{j - \pi(\ell_1)}^\delta$, and $|b(j - \pi(\ell_1))| > \jap{j- \pi(\ell_1)}^\mu$;
			\item[(D)] $\jap{j-\pi(\ell_1)} > \tJ_\delta$, $|\ell_1| \leq \tc \jap{j}^\delta$, $|v(j -\pi(\ell_1))| \leq \frac{1}{2} \jap{j - \pi(\ell_1)}^\delta$ and $|b(j - \pi(\ell_1))| \leq \jap{j - \pi(\ell_1)}^\mu$;
			\item[(E)] $\jap{j-\pi(\ell_1)} \leq \tJ_\delta$.
		\end{enumerate}
		Concerning $(M^{(1)}_1 M^\tT_2)^{(A)}$,
			using $|\ell_1| > \tc \jap{j}^\delta$ one obtains
			$$
			((\underline{M^{(1)}_1 M^\tT_2})^{(A)}_{\frac{a'}2; -(2, 0)})_{j}^{j'} \lesssim_{\tc, \mu, \delta} (a-a')^{-\frac{2\mu}{\delta}} \sum_{j_1}((\underline{M^{(1)}_1})_{\frac{a}2, 0, 0})_{j}^{j_1} ((\underline{M^\tT_2})_{\frac{a}2, 0, 0})_{j_1}^{j'}\,.
			$$
		For $(M^{(1)}_1 M^\tT_2)^{(B)}$, we define $\theta := \tc^{-1} (a - a')/2$ and we 
		observe that
		$$
		\jap{j}^\delta \lesssim_\delta \jap{j - \pi(\ell_1)}^\delta + \jap{\pi(\ell_1)}^\delta \lesssim_\delta \jap{j - \pi(\ell_1)}^\delta \lesssim_\delta |v(j -\pi(\ell_1))|\,,
		$$
		from which one obtains
		$$
		((\underline{M^{(1)}_1 M^\tT_2})^{(B)}_{\frac{a'}2; -(2, 0)})_{j}^{j'} \lesssim_{\tc, \mu, \delta} {a^{-\frac{2\mu}{\delta}}}e^{-\theta |j - j'|} |M^{(1)}_1|_{\frac{a}2; (0, 0)} |M^\tT_2|^\tT_{\frac{{a}}2, 0}\,.
		$$
		Then the estimate on $(M^{(1)}_1 M^\tT_2)^{(B)}$ follows by Lemma \ref{gigetto} .
		The estimate of $(M^{(1)}_1 M^\tT_2)^{(C)}$ simply follows observing that, since
		$$
		\jap{j}^\mu \lesssim_\mu \jap{j - \pi(\ell_1)}^\mu + \jap{\pi(\ell_1)}^\mu \lesssim_{\mu} \jap{j -\pi(\ell_1)}^\mu \lesssim_{\mu} |b(j - \pi(\ell_1))|\,,
		$$
		one has
		\begin{align*}
			((\underline{M^{(1)}_1 M^\tT_2})^{(C)}_{\frac{a'}2; -(2, 0)})_{j}^{j'} &\leq \sum_{ \ell_1, \ell_2:   j-j'= \pi(\ell_1+\ell_2)\,, \atop |b(j - \pi(\ell_1))| \geq \jap{j}^\mu } e^{a'|\ell_1+\ell_2|} \jap{j}^{2\mu} |(M^{(1)}_1)_{j}^{j-\pi(\ell_1)}(\ell_1) (M^\tT_2)_{j-\pi(\ell_1)}^{j'}(\ell_2) |\\
			& \leq \sum_{ \ell_1, \ell_2:   j-j'= \pi(\ell_1+\ell_2) } e^{\frac{a'}2|\ell_1+\ell_2|} \jap{j}^{\mu} \jap{b(j - \pi(\ell_1))} |(M^{(1)}_1)_{j}^{j-\pi(\ell_1)}(\ell_1) (M^\tT_2)_{j-\pi(\ell_1)}^{j'}(\ell_2) |\\
			& \leq \sum_{j_1} ((\underline{M^{(1)}_1})_{\frac{a'}2, -(1, 0)})_{j}^{j_1} ((\underline{M^\tT_2})_{\frac{a'}2, -(0, 1)})_{j}^{j_1}\,,
		\end{align*}
		{and applying Lemma \ref{lemma.algebra} and Lemma \ref{vaccab}}.
		In order to estimate $(M^{(1)}_1 M^\tT_2)^{(D)}$, we start with recalling that, by Item 2 of Lemma \ref{traviata} with $j$ replaced by $j - \pi(\ell_1)$ and $h$ replaced by $j$, one has $v(j - \pi(\ell_1)) = v(j)$. Then one has
		\begin{align*}
			{|b(j)|} &= {|j \cdot v(j - \pi(\ell_1))|}
			 \leq {|(j - \pi(\ell_1)) \cdot v(j - \pi(\ell))| + |\pi(\ell_1)| |v(j - \pi(\ell_1)|} \\
			&\leq \jap{b(j - \pi(\ell_1))} + \tc^{-1} |\ell_1| |v(j-\pi(\ell_1))|\\
			&\lesssim_{\tc} \jap{b(j - \pi(\ell_1))} +\jap{\ell_1} \jap{v(j-\pi(\ell_1))} \,.
		\end{align*}
		This in turn, setting $\sigma :=  (a - a')/2$ and $\theta := \tc^{-1} \s$, enables to deduce
		\begin{align*}
		&((\underline{M^{(1)}_1 M^\tT_2})^{(D)}_{\frac{a'}2; -(1, 1)})_{j}^{j'}\\
		&\lesssim_{\tc} \sum_{ \ell_1, \ell_2: \atop  j-j'= \pi(\ell_1+\ell_2) } e^{\frac{a'}2|\ell_1+\ell_2|} |(M^{(1)}_1)_{j}^{j-\pi(\ell_1)}(\ell_1)|\jap{j}^{\mu} \jap{b(j - \pi(\ell_1))} |\ell_1| |v(j - \pi(\ell_1))| (M^\tT_2)_{j-\pi(\ell_1)}^{j'}(\ell_2) |\\
		&\lesssim_{\tc} (a \sigma)^{-1} e^{-\frac{\theta}2 |j - j'|} |M^{(1)}_1|_{\frac a2; -(1, 0)} |M^\tT_2|^\tT_{a, -1}\,.
		\end{align*}
		Finally, using that $\jap{j}^{2\mu} \lesssim_\mu \jap{j - \pi(\ell_1)}^{2\mu} + \jap{\pi(\ell_1)}^{2\mu} \lesssim{\mu, \delta,\tc} \tJ_\delta^{2\mu}|\ell_1|^{2\mu}\,,$
		one obtains
		$$
		((\underline{M^{(1)}_1 M^\tT_2})^{(E)}_{\frac{a'}2; -(2, 0)})_{j}^{j'} \lesssim_{\delta, \tc, \mu} (a - a')^{-2\mu} \sum_{j_1} ((\underline{M^{(1)}_1})_{\frac{a}2; (0, 0)})_{j}^{j_1} ((\underline{M^\tT_2})_{\frac{a'}2; (0, 0)})_{j_1}^{j'}\,.
		$$
	\end{proof}
\begin{proof}[Proof of Lemma \ref{commuta.bene}]
	We set (recall that $V(\ell)=0$ if $\pi(\ell)=0$) $(S_0^{\tT})_{j}^{j'}(\ell)= \fS(\ell, v(j),b(j))$ with
\begin{equation}
\label{porcheria}
\fS(\ell, v,b)= \begin{cases}
\dfrac{- V(\ell)}{ \omega\cdot \ell + 2\frac{|\pi(\ell)|}{|v|} b - |\pi(\ell)|^2}\,,\quad {\rm if} \quad \pi(\ell)\ne 0\,,\quad  v \parallel \pi(\ell)\\0 \qquad {\rm otherwise}
\end{cases}\,.
\end{equation}
One has
\[ 
\begin{aligned}
|S_0^{\tT}|^{\mathtt{T}}_{a',-1} &{\leq} \sup_{v\in \cV\,, b\in\Z}\sum_{ \ell \in \Z^d} {e^{a'\tc |v| + {a'}|\ell|}}  |\mathfrak S(\ell,v,b) | \jap{b}^{1}\\
&= \sup_{v\in \cV\,, b\in\Z}\sum_{ \ell \in \Z^d}  {e^{a'\tc |v| + {a'}|\ell|}} \frac{|V(\ell)|}{| \omega\cdot \ell + 2\frac{|\pi(\ell)|}{|v|} b - |\pi(\ell)|^2|} \jap{b}^{1}\,.
\end{aligned}
\]
We estimate $*=\dfrac{\jap{b}}{ |\omega\cdot \ell  - |\pi(\ell)|^2 + 2 \frac{|\pi(\ell)|}{|v|} b|}$  by distinguishing 
two cases.
If $|b|> |\omega\cdot \ell  - |\pi(\ell)|^2|$  then (recalling that $\frac{|\pi(\ell)|}{|v|}$ is a non-zero integer) one has $*<2$.
Otherwise if $|b|\leq |\omega\cdot \ell  - |\pi(\ell)|^2|$ then, {by \eqref{diofantino} and recalling that $|\pi(\ell)|\le \tc^{-1}|\ell|$},
\[
* \le  (|\omega\cdot \ell  - |\pi(\ell)|^2|+1) |\ell|^\tau \gamma^{-1} \lesssim \g^{-1}|\ell|^{\tau+2}\,.
\]
In conclusion, 
\begin{equation}
\begin{aligned}\label{top}
|S_0^{\tT}|^{\mathtt{T}}_{a',-1} 	&\lesssim \g^{-1}\sum_{ \ell: j-j'= \pi(\ell)\ne 0} {e^{a' \tc |\pi(\ell)| +{a'}|\ell|}} |V(\ell)| |\ell|^{(\tau+2)}\\
&\lesssim \g^{-1}\sum_{ \ell: j-j'= \pi(\ell)\ne 0} {e^{{2 a'}|\ell|}} |V(\ell)| |\ell|^{(\tau+2)}\lesssim_{\tc, \gamma} {(\ta - 2 a')^{- {(\tau+2)}}} \|V\|_{\ta, \tp}\,.
\end{aligned}
\end{equation}
Recalling that, if  $\pi(\ell)\parallel v(j)$, $ j \cdot \pi(\ell) = \frac{| \pi(\ell)|}{|v(j)|}b(j),$ we have 
\[
(S_0-S_0^{\tT})_j^{j'}(\ell)=
\begin{cases}
\dfrac{- V(\ell)}{ \omega\cdot \ell + 2 j \cdot \pi(\ell) - |\pi(\ell)|^2}\,,\quad &j'= j-\pi(\ell)\,,\;\pi(\ell)\ne 0\,,\; \pi(\ell)\not\parallel v(j)\,,\\
0 \quad &\textnormal{otherwise}\,.
\end{cases}
\]
Accordingly, we divide the  indexes $j,\ell$ which contribute to $S_0-S_0^\tT$   following the two conditions:
\\
i)\ One either has $|\ell|\ge \tc \jap{j}^\delta$ or $
\jap{j} \le \tj_{\rm min}:= 2^{\frac1{2\delta}}\,
$; \ ii) One has $|\ell| < \tc \jap{j}^\delta$, $\jap{j} > \tj_{\rm min}$  and $\pi(\ell)\not\parallel v(j)$.
We decompose accordingly $S_0-S_0^{\tT}= R + P$.
\\
First, by applying Lemma \ref{Bony} to $S_0-S_0^{\tT}$ and setting $R_1= (S_0-S_0^{\tT})^{(R)}$, we deduce
\[
{|R_1|_{\frac{a'}{2};-(2, 0)} \lesssim_{\delta} \ta^{-\frac{2 \mu}{\delta}} \|V\|_{\ta, \tp} \,.}
\]
To deal with the case $|j| \leq j_{\rm min}$, we set
$
(R_2)_{j}^{j'}(\ell):= (S_0)_{j}^{j'}(\ell)$ if $ j \leq\tj_{\rm min}\,.
$
\\
Since if $|j| \leq j_{\rm min}$ we have
\begin{equation}
\frac{\jap{j}^{2\mu }}{|\omega\cdot \ell + 2 j \cdot \pi(\ell) - |\pi(\ell)|^2|} \le   \g^{-1}\jap{\tj_{\rm min}}^{2\mu }|\ell|^{\tau}\,,
\end{equation}
defining $R := R_1 + R_2,$ we get that
\begin{equation}\label{smoo}
{|R|_{\frac{a'}{2}; -(2, 0)} \lesssim_{\delta, \tau} \ta^{-\max\{\tau, \frac{2 \mu}{\delta}\}} \|V\|_{\ta, \tp} \,.}
\end{equation}
To deal with case ii), we define 
\[
{P }_j^{j'}(\ell):= \begin{cases}
\dfrac{- V(\ell)}{ \omega\cdot \ell + 2 j \cdot \pi(\ell) - |\pi(\ell)|^2} & \mbox{if }\; j>\tj_{\rm min}, \quad |\ell|< \tc\jap{j}^\delta \quad \mbox{and }\; \pi(\ell)\not\parallel v(j)\,,\\
0 & \mbox{otherwise}
\end{cases}    \,.
\]
Now we claim that if $\jap{j}\ge 2^{\frac1{2\delta}}$, $|\ell| < \tc \jap{j}^\delta$  and $\pi(\ell)\not\parallel v(j)$, then
$|j\cdot \pi(\ell)|>\jap{j}^{\mu}$.
\\
Indeed, $|\ell| < \tc \jap{j}^\delta$ implies that $|\pi(\ell)|< \jap{j}^\delta$.  Fixing $w\in \cV$ to be the  direction parallel to $\pi(\ell)$ one has $|w|<  \jap{j}^\delta$ hence
\[
|\pi(\ell)\cdot j|\ge |w\cdot j|\ge |b(j)|:= \min_{v\in\cV: |v|< \jap{j}^\delta }|j\cdot v|
\] Then, if $|b(j)|>\jap{j}^{\mu} $ our claim follows. Otherwise, we prove the claim by contradiction. Recall that $v(j)$ attains the minimum above and by ii) $v(j)\not\parallel w$.
If $|w\cdot j|\le \jap{j}^{\mu}$ then, by Lemma \ref{lemma:cramer} 
$
\jap{j}< \sqrt{2}\jap{j}^{1-\delta}
$,
which contradicts $\jap{j}\ge 2^{\frac1{2\delta}}$.
\\
As a consequence if $\jap{j}\ge 2^{\frac1{2\delta}}$, $|\ell| < \tc \jap{j}^\delta$  and $\pi(\ell)\not\parallel v(j)$, then
\begin{align*}
&\frac{\jap{j}^{\mu}}{|\omega\cdot \ell + 2 j \cdot \pi(\ell) - |\pi(\ell)|^2|} \le  \frac{\jap{j}^{\mu}}{2 |j \cdot \pi(\ell)|-  |\omega\cdot \ell  - |\pi(\ell)|^2|} 
\le \frac{\jap{j}^{\mu}}{2 \jap{j}^{\mu}-  (|\omega|+\mathtt c^{-2}) \tc^2\jap{j}^{2\delta}}\le 1
\end{align*}
provided that  $\jap{j} \ge \tj_{\rm min}$.
Therefore we have
\begin{equation}\label{nonpseudo}
{|P|_{\frac{a'}{2}; -(1, 0)} \lesssim \|V\|_{\ta, \tp}\,.}
\end{equation}
Recalling that $S_0 = S_0^{\tT} + P+R$,  estimates \eqref{top}, \eqref{smoo}, \eqref{nonpseudo} then imply that $S$ is quasi- T\"oplitz of order $-1$.
We now prove estimate \eqref{comm.hom}.
We start with explicitly computing $[S_0, M_V]$. Setting $h = j - \pi(\ell)$, one has
\begin{align*}
&([S_0,M_V])_j^{h}(\ell) = \sum_{ \ell_1+\ell_2=\ell} V(\ell_1) \big((S_0)_{j}^{j-\pi(\ell_2)}(\ell_2) -(S_0)_{j-\pi(\ell_1)}^{j-\pi(\ell)}(\ell_2) \big)\\
&= \sum_{ \ell_1+\ell_2=\ell} V(\ell_1) V(\ell_2)\left( \frac{1}{\omega\cdot \ell_2 + 2 (j-\pi(\ell_1)) \cdot \pi(\ell_2)  - |\pi(\ell_2)|^2} - \frac{1}{\omega\cdot \ell_2 + 2 j \cdot \pi(\ell_2) - |\pi(\ell_2)|^2}\right)\\
&=  \sum_{ \ell_1+\ell_2=\ell} \frac{-2\left(\pi(\ell_1)\cdot\pi(\ell_2)\right)V(\ell_1) V(\ell_2)}{(\omega\cdot \ell_2 + 2 j \cdot \pi(\ell_2) - |\pi(\ell_2)|^2)(\omega\cdot \ell_2 + 2 (j-\pi(\ell_1)) \cdot \pi(\ell_2)  - |\pi(\ell_2)|^2)}\,.
\end{align*}
We then split the above sum into two terms, according to whether $\pi(\ell_2) \parallel v(j)$ or not. In the first case, we set
\begin{equation}\label{top.part}
\begin{aligned}
&\hspace{-7pt}([S_0, M_V]^\tT)_{j}^{j - \pi(\ell)}(\ell)  := \!\!\!\!\sum_{ \ell_1+\ell_2=\ell \atop \pi(\ell_2) \parallel v(j)}  \frac{-\left(2\pi(\ell_1)\cdot\pi(\ell_2)\right) V(\ell_1) V(\ell_2)}{(\omega\cdot \ell_2 + 2 j \cdot \pi(\ell_2) - |\pi(\ell_2)|^2)(\omega\cdot \ell_2 + 2 (j-\pi(\ell_1)) \cdot \pi(\ell_2)  - |\pi(\ell_2)|^2)}\\
&= \sum_{ \ell_1+\ell_2=\ell \atop \pi(\ell_2) \parallel v(j)} \frac{-\left(2\pi(\ell_1)\cdot\pi(\ell_2)\right)  V(\ell_1) V(\ell_2)}{(\omega\cdot \ell_2 + 2 b(j)\frac{|\pi(\ell_2)|}{|v(j)|} - |\pi(\ell_2)|^2)(\omega\cdot \ell_2 + 2 b(j)\frac{|\pi(\ell_2)|}{|v(j)|} - 2 \pi(\ell_1)\cdot \pi(\ell_2) - |\pi(\ell_2)|^2)}\,,
\end{aligned}
\end{equation}
and we are going to prove that its symbol
$$
\fT(\ell, b, v):= \sum_{ \ell_1+\ell_2=\ell \atop \pi(\ell_2) \parallel v} \frac{-\left(2\pi(\ell_1)\cdot\pi(\ell_2)\right)  V(\ell_1) V(\ell_2)}{(\omega\cdot \ell_2 + 2 b(j)\frac{|\pi(\ell_2)|}{|v|} - |\pi(\ell_2)|^2)(\omega\cdot \ell_2 + 2 b\frac{|\pi(\ell_2)|}{|v|} - 2 \pi(\ell_1)\cdot \pi(\ell_2) - |\pi(\ell_2)|^2)} 
$$
is of order $-2$. In the second case, we set
\begin{equation}\label{rem.part}
R_j^h(\ell):=  \sum_{ \ell_1+\ell_2=\ell \atop \pi(\ell_2) \nparallel v(j)}  \frac{-\left(2\pi(\ell_1)\cdot\pi(\ell_2)\right) V(\ell_1) V(\ell_2)}{(\omega\cdot \ell_2 + 2 j \cdot \pi(\ell_2) - |\pi(\ell_2)|^2)(\omega\cdot \ell_2 + 2 (j-\pi(\ell_1)) \cdot \pi(\ell_2)  - |\pi(\ell_2)|^2)}\,,
\end{equation}
with $h = j - \pi(\ell),$ and we shall prove that $R \in {\cL}_{\frac{a'}{2}; -(2, 0)}\,.$\\
We start with {exhibiting estimates on $|[S_0, M_V]^\tT|^\tT_{a', -2}$.}
Let
\begin{equation}\label{fnonconta}
f^\epsilon(\ell_1, \ell_2):= \omega \cdot \ell_2 - |\pi(\ell_2)|^2 - 2\epsilon \pi(\ell_1)\cdot \pi(\ell_2)\,, \quad \epsilon \in \{0, 1\}\,,
\end{equation}
{so that the denominators appearing in \eqref{rem.part} have the form
	$$
	(2 j \cdot \pi(\ell_2) + f^0(\ell_1, \ell_2)) (2 j \cdot \pi(\ell_2) + f^1(\ell_1, \ell_2))\,;
	$$
}
arguing as to obtain estimates \eqref{top}, we observe that $\forall \epsilon \in \{0, 1\}$, if $\langle b\rangle \geq |f^\epsilon(\ell_1, \ell_2)|\,,$ one has
\begin{equation}\label{small.den.1}
\left|\frac{2 \langle b \rangle }{(\omega\cdot \ell_2 + 2 b(j)\frac{|\pi(\ell_2)|}{|v(j)|} - 2 \epsilon \pi(\ell_1)\cdot \pi(\ell_2) - |\pi(\ell_2)|^2)}\right| \leq 2\,,
\end{equation}
while if $\langle b \rangle < |f^\epsilon(\ell_1, \ell_2)|\,,$ one has
\begin{equation}\label{small.den.2}
\begin{aligned}
&\left|\frac{2 \langle b \rangle }{(\omega\cdot \ell_2 + 2 b(j)\frac{|\pi(\ell_2)|}{|v(j)|} - 2 \epsilon \pi(\ell_1)\cdot \pi(\ell_2) - |\pi(\ell_2)|^2)}\right| \leq \\ &\frac{2 |f^\epsilon(\ell_1, \ell_2)| }{\left|(\omega\cdot \ell_2 + 2 b(j)\frac{|\pi(\ell_2)|}{|v(j)|} - 2 \epsilon \pi(\ell_1)\cdot \pi(\ell_2) - |\pi(\ell_2)|^2)\right|}
 \lesssim \frac{|\ell_2|^2 |\ell_1|}{\gamma |\ell_2|^{-\tau}} \lesssim \gamma^{-1}|\ell_2|^{\tau + 2} |\ell_1|\,.
\end{aligned}
\end{equation}
{Let now $\theta := (\ta - 2 a')/2$.} Combining \eqref{small.den.1} and \eqref{small.den.2} {and recalling that, since $\pi(\ell_2)\parallel v$ and $v$ is a generator, one has $\tc |v| \leq \tc |\pi(\ell_2)| \leq |\ell_2|$}, one obtains
$$
\begin{aligned}
&{|[S_0, M_V]^\tT|}^\tT_{a', -2} \lesssim \sum_{\ell}  e^{{a'\tc |v| + {a'}|\ell|}} \sum_{ \ell_1+\ell_2=\ell \atop \pi(\ell_2) \parallel v} |V(\ell_1) V(\ell_2)| |\ell_2|^{2 (\tau + 2) +1} |\ell_1|^3\\
& \lesssim \sum_{\ell} {e^{{a'}|\ell|}} \sum_{ \ell_1+\ell_2=\ell \atop \pi(\ell_2) \parallel v}  {e^{a' |\ell_2|}} |V(\ell_1) V(\ell_2)| |\ell_2|^{2 (\tau + 2) +1}  |\ell_1|^3 \\ & \lesssim \sum_{\ell_1, \ell_2} {e^{{a'} |\ell_1| +{2 a'}|\ell_2|} }
|\ell_2|^{2 (\tau + 2) +1}  |\ell_1|^3 |V(\ell_1)||V(\ell_2)| \lesssim_{\tc, \tau} {(\ta - 2a')}^{-2(\tau + 2) - 4} \|V\|_{\ta, \tp}^2\,,
\end{aligned}
$$
provided $2a' \leq \ta$.\\
We now turn to prove that $R$ defined as in \eqref{rem.part} belongs to $\cL_{\frac{a'}{2}, -(2,0)}$. Letting $j_{\rm min} := 2^{\frac{1}{2\delta}}$, we decompose $R = R_1 + R_2 + R_3$ as follows:
\begin{gather*}
(R_1)_{j}^{h}(\ell) = R_{j}^{h}(\ell) \quad \textrm{ if } \max\{|\ell|, |\ell_1|, |\ell_2|\} > \tc \langle j \rangle^\delta\,, \quad (R_1)_{j}^{h}(\ell) = 0 \quad \textrm{otherwise}\,;\\
(R_2)_{j}^{h}(\ell) = R_{j}^{h}(\ell) \quad \textrm{ if }\jap{j} \leq j_{\min}\,, \quad (R_2)_{j}^{h}(\ell) = 0 \quad \textrm{ otherwise}\,;\\
(R_3)_{j}^{h}(\ell) = R_{j}^{h}(\ell) \quad \textrm{ if } \max\{|\ell|, |\ell_1|, |\ell_2|\} \leq \tc \langle j \rangle^\delta \quad \textrm{ and } \jap{j} \geq j_{\min}\,, \quad (R_3)_{j}^{h}(\ell) = 0 \quad \textrm{otherwise}\,.
\end{gather*}
Arguing as to prove estimate \eqref{smoo}, one easily sees that $R_1, R_2 \in \cL_{\frac{a'}{2}, -(N, 0)}$ for any $N \in \N$. We now prove that $R_3 \in \cL_{\frac{a'}{2},-(2, 0)}$. \\
First of all, we claim the following:
\begin{equation}\label{isbig}
(R_3)_{j}^h(\ell) \neq 0 \quad \Longrightarrow \quad |j\cdot \pi(\ell_2)|>\jap{j}^{\mu}\,. 
\end{equation}
Indeed, if $(R_3)_{j}^h(\ell) \neq 0,$ then by definition of $R_3$ and $R$ one has
$\jap{j} \ge j_{\min},$ $|\ell_2| < \tc \jap{j}^\delta$ and $\pi(\ell_2)\not\parallel v(j)$.
But then $|\ell_2| < \tc \jap{j}^\delta$ implies that $|\pi(\ell_2)|< \jap{j}^\delta$.  Fixing $w\in \cV$ to be the  direction parallel to $\pi(\ell_2)$ one has $|w|<  \jap{j}^\delta$, hence
\[
|\pi(\ell_2)\cdot j|\ge |w\cdot j|\ge |b(j)|:= \min_{v\in\cV: |v|< \jap{j}^\delta }|j\cdot v|
\] Then, if $|b(j)|>\jap{j}^{\mu} $ our claim follows. Otherwise, we prove the claim by contradiction. Recall that $v(j)$ attains the minimum above and, since $\pi(\ell_2)\nparallel v(j)$, $v(j)\not\parallel w$.
If $|w\cdot j|\le \jap{j}^{\mu}$ then, by Lemma \ref{lemma:cramer} 
\[
\jap{j}< \sqrt{2}\jap{j}^{1-\delta}
\]
which contradicts $\jap{j}\ge 2^{\frac1{2\delta}}$. This proves that \eqref{isbig} holds.
\\
As a consequence, taking $f^\epsilon(\ell_1, \ell_2)$ as in \eqref{fnonconta}, one has that, if $(R_3)_{j}^j(\ell) \neq 0$, then $\forall \epsilon \in \{0, 1\}$
\begin{align*}
&\frac{\jap{j}^{\mu}}{|\omega\cdot \ell_2 + 2 j \cdot \pi(\ell_2) - \epsilon 2 \pi(\ell_1)\cdot \pi(\ell_2) - |\pi(\ell_2)|^2|} = 	\frac{\jap{j}^{\mu}}{| 2 j \cdot \pi(\ell_2) + f^\epsilon(\ell_1, \ell_2)|} \\
& \le \frac{\jap{j}^{\mu}}{2 \jap{j}^{\mu}-  |\omega||\ell_2|-\mathtt c^{-2}|\ell_2|^2 - 2\mathtt{c}^{-2} |\ell_1| |\ell_2|} \le \frac{\jap{j}^{\mu}}{2 \jap{j}^{\mu}-  (|\omega|+2 \mathtt c^{-2}) \tc^2\jap{j}^{2\delta}}\le 1\,,
\end{align*}
since $\langle j \rangle \geq j_{\rm min}.$
This proves that
$$
\begin{aligned}
&\left|((\underline{R_3})_{\frac{a'}{2}; -(2,0)})_{j}^{h}\right|  := \sum_{ \ell: j-h= \pi(\ell)} e^{\frac{a'}{2}|\ell|} |(R_3)_{j}^{j'}(\ell)|\jap{j}^{2\mu} \\
& \lesssim_\tc \sum_{ \ell: j-h= \pi(\ell)} e^{\frac{a'}{2}|\ell|} \sum_{ \ell_1+\ell_2=\ell \atop \pi(\ell_2) \nparallel v(j)} |V(\ell_1) V(\ell_2)| |\pi(\ell_1) | |\pi(\ell_2)| \lesssim_\tc \ta^{-2} e^{-(\ta -\frac{a'}{2})|j-h|} \|V\|^2_{\ta, \tp}\,,
\end{aligned}
$$
and thus estimate \eqref{comm.hom} follows.
\end{proof}

\section{Covariant properties}
\label{section:cov}
Assume that $H$ is an Hamiltonian function which is invariant by translation and gauge:
\begin{equation}
\label{cov1}
H \circ \tau_\zeta = H , \qquad H \circ e^{\im t} = H \ , \quad \forall \varsigma \in \R , \ \ \forall t \in \R \ .  
\end{equation}
Its  Hamiltonian vector field $X_H$  thus fulfill
\begin{equation}
\label{cov2}
\tau_\zeta X_H(u) = X_H(\tau_\zeta u) , \qquad
e^{\im t } X_H(u) = e^{\im t} X_H(u) \ , \quad \forall \varsigma \in \R,  \ \forall t \in \R  \ .
\end{equation}
Consider now a quasi-periodic traveling wave $q(\f,x)$ fulfilling \eqref{cov00}, \eqref{cov01}. 
Denote by
$$
A(\f) u := d X_H\big(q(\f,\cdot) \big) u 
$$
and
$$
\cN(\f,  u) :=  X_H\big(q(\f,\cdot) + u \big)- X_H\big( q(\f,\cdot)\big)- 
d X_H\big(q(\f,\cdot) \big) u 
$$
Following \cite[Section 3.4]{BFM, BFM2}, now we prove the following lemma.
\begin{lemma}
For any $(\f, \zeta, t) \in \T^d \times \R^2 \times \R$ one has
\begin{align}
&	A(\f +  \bK \zeta) \circ \tau_\zeta = \tau_\zeta \circ A(\f) , \quad A(\f + t \vec{1}) \circ e^{\im t } = e^{\im t } \circ A(\f)
 , \label{covA.1}
\\
&	\cN(\f +  \bK \zeta, \tau_\zeta u ) = \tau_\zeta \circ \cN(\f,u) , \qquad \cN(\f + t \vec{1},  e^{\im t} u)  = e^{\im t } \circ \cN(\f,u)
  \, .  \label{covN.1}
\end{align}
\end{lemma}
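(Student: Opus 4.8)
The plan is to derive the covariance relations \eqref{covA.1}, \eqref{covN.1} directly from the invariance properties \eqref{cov2} of the Hamiltonian vector field $X_H$ together with the covariance properties \eqref{cov00}, \eqref{cov01} of the traveling wave $q$. The key observation is that $A(\f)$ and $\cN(\f,u)$ are built purely out of $X_H$ evaluated at translated/gauge-rotated arguments of the form $q(\f,\cdot)+u$, so the relations for $A$ and $\cN$ are simultaneous consequences of a single computation. Concretely, I would first record the two ``base identities'' we are allowed to use: from \eqref{cov00}--\eqref{cov01}, for all $\zeta\in\R^2$ and $t\in\R$,
\[
q(\f+\bK\zeta,\cdot)=\tau_\zeta q(\f,\cdot),\qquad q(\f+t\vec 1,\cdot)=e^{\im t}q(\f,\cdot),
\]
and from \eqref{cov2} the equivariance of $X_H$ under $\tau_\zeta$ and under multiplication by $e^{\im t}$.

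The main step is then a substitution argument. Fix $\zeta\in\R^2$. For the nonlinearity, write
\[
\cN(\f+\bK\zeta,\tau_\zeta u)=X_H\big(q(\f+\bK\zeta,\cdot)+\tau_\zeta u\big)-X_H\big(q(\f+\bK\zeta,\cdot)\big)-dX_H\big(q(\f+\bK\zeta,\cdot)\big)[\tau_\zeta u].
\]
Using the base identity $q(\f+\bK\zeta,\cdot)=\tau_\zeta q(\f,\cdot)$ and linearity of $\tau_\zeta$, every argument of $X_H$ (resp.\ of $dX_H$) becomes $\tau_\zeta$ applied to the corresponding argument at $\f$, i.e.\ $\tau_\zeta q(\f,\cdot)+\tau_\zeta u=\tau_\zeta(q(\f,\cdot)+u)$, and the differential $dX_H(\tau_\zeta w)[\tau_\zeta v]$ is obtained by differentiating $X_H(\tau_\zeta(\,\cdot\,))$. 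Applying \eqref{cov2} in the form $\tau_\zeta X_H(w)=X_H(\tau_\zeta w)$ and its differentiated version $\tau_\zeta\, dX_H(w)[v]=dX_H(\tau_\zeta w)[\tau_\zeta v]$ (obtained by differentiating $w\mapsto X_H(\tau_\zeta w)=\tau_\zeta X_H(w)$ in direction $v$), each of the three terms factors as $\tau_\zeta$ applied to the corresponding term of $\cN(\f,u)$, giving $\cN(\f+\bK\zeta,\tau_\zeta u)=\tau_\zeta\,\cN(\f,u)$. Taking $u=0$ in the derivative term alone (or simply reading off the linear-in-$u$ part) yields $A(\f+\bK\zeta)\tau_\zeta=\tau_\zeta A(\f)$. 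The gauge relations in \eqref{covA.1}, \eqref{covN.1} follow by the identical computation with $\tau_\zeta$ replaced by the operator $u\mapsto e^{\im t}u$, the base identity $q(\f+t\vec 1,\cdot)=e^{\im t}q(\f,\cdot)$, and the equivariance $e^{\im t}X_H(u)=X_H(e^{\im t}u)$ from \eqref{cov2}; note that $e^{\im t}$ is complex-linear, so differentiating commutes with it in the same way.

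I do not expect a genuine obstacle here: the statement is essentially a formal chain-rule computation, and the only things to be careful about are (i) making explicit the differentiated form of the equivariance of $X_H$, namely $dX_H(Tw)[Tv]=T\,dX_H(w)[v]$ for $T\in\{\tau_\zeta,\;e^{\im t}\,\mathrm{Id}\}$, which is immediate since $T$ is linear and time-independent, and (ii) checking that the decomposition of $\cN$ into its three pieces is respected termwise — which it is, because each piece is separately a composition of $X_H$ (or $dX_H$) with these linear operators. The one mild subtlety worth a sentence is that \eqref{cov1} is what guarantees \eqref{cov2} in the first place (invariance of $H$ passes to equivariance of $X_H$ because $\tau_\zeta$ and $e^{\im t}$ are symplectic), and this is exactly the input cited from \cite[Section 3.4]{BFM, BFM2}; I would state it as a preliminary remark and then run the substitution argument above.
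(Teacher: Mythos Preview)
Your proposal is correct and follows essentially the same approach as the paper: both arguments differentiate the equivariance relation \eqref{cov2} to obtain $\tau_\zeta\, dX_H(w)[v]=dX_H(\tau_\zeta w)[\tau_\zeta v]$, combine it with the traveling-wave covariance \eqref{cov00}--\eqref{cov01}, and then read off the identities for $A$ and $\cN$ term by term. The only cosmetic difference is that the paper computes $\tau_\zeta\,\cN(\f,u)$ and identifies it with $\cN(\f+\bK\zeta,\tau_\zeta u)$, whereas you start from the latter; this is immaterial.
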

\begin{proof}
	We prove just the first identities in both line, the second one being similar.
	We start with the first identity of \eqref{covA.1}.
	Differentiating  the first of \eqref{cov2} at 
	$q(\f,x)$ in direction $w$ we get
	\begin{equation}\label{cov5}
	\tau_\zeta \left[ d X_H(q(\f,\cdot) ) w \right]= d X_H(\tau_\zeta q(\f, \cdot)) \tau_\zeta w \ .
	\end{equation}
	Then  the covariance property \eqref{cov01} implies immediately \begin{equation}
	\label{cov3}
	A(\f + \bK \zeta) \circ \tau_\zeta = \tau_\zeta \circ A(\f)
	\end{equation}
	To prove \eqref{covN.1} just use again that, by \eqref{cov2},  \eqref{cov5} and the covariance properties \eqref{cov01} of the quasi-periodic traveling wave,
	\begin{align*}
	\tau_\zeta \circ \cN(\f, u) & = 
	X_H(\tau_\zeta q(\f, \cdot) + \tau_\zeta u) - X_H(\tau_\zeta q(\f, \cdot)) - d X_H(\tau_\zeta q(\f, \cdot)) \tau_\zeta w  \\
	& =
	X_H( q(\f + \bK \zeta, \cdot) + \tau_\zeta u) - X_H(q(\f+\bK \zeta, \cdot)) - d X_H(q(\f+\bK \zeta, \cdot)) \tau_\zeta w\\
	& =
	\cN(\f+ \bK \zeta,  \tau_\zeta u) \,.
	\end{align*}
\end{proof}

\newcommand{\etalchar}[1]{$^{#1}$}
\def\cprime{$'$}


\begin{thebibliography}{99}

\bibitem{BB}
M.~Berti and L.~Biasco.
\newblock Branching of {C}antor manifolds of elliptic tori and applications to
  {PDE}s.
\newblock {\em Comm. Math. Phys.}, 305(3):741--796, 2011.



 \bibitem{BFM}
 		M. Berti, L. Franzoi  and A. Maspero.
 		\emph{Traveling quasi-periodic water waves with constant vorticity},  Archive for Rational Mechanics, 240: 99--202, 2021. 
 	
		\bibitem{BFM2}
 		M. Berti, L. Franzoi  and A.  Maspero.
 		\emph{Pure gravity traveling quasi-periodic water waves with constant vorticity}, 
		\texttt{arXiv:2101.12006}, 2021, to appear on Communications in Pure and Applied Mathematics.


\bibitem{BerGre}
J.~Bernier, E.~Faou, and B.~Gr\'ebert.
\newblock Rational normal forms and stability of small solutions to nonlinear
  Schr\"odinger equations.
\newblock {\em Annals of PDE}, 6(14), 2020.

\bibitem{BG}
D.~Bambusi and B.~Gr\'ebert.
\newblock {B}irkhoff normal form for partial differential equations with tame
  modulus.
\newblock {\em Duke Math. J.}, 135(3):507--567, 2006.

\bibitem{BerGre2}
J.~Bernier and B.~Gr\'ebert.
\newblock Birkhoff normal forms for hamiltonian pdes in their energy space.
\newblock {\em Journal de l'Ecole Polytechnique - Mathematiques}, 9:681--745,
  2022.

\bibitem{BLMspec20}
D. Bambusi, B. Langella, and R. Montalto.
\newblock On the spectrum of the {S}chr\"{o}dinger operator on {$\Bbb T^d$}: a
  normal form approach.
\newblock {\em Comm. Partial Differential Equations}, 45(4):303--320, 2020.

\bibitem{BLMspec22}
D. Bambusi, B. Langella, and R. Montalto.
\newblock Spectral asymptotics of all the eigenvalues of {S}chr\"{o}dinger
  operators on flat tori.
\newblock {\em Nonlinear Anal.}, 216:Paper No. 112679, 37, 2022.

\bibitem{BMP:2019}
L.~Biasco, J.~E. Massetti, and M.~Procesi.
\newblock An {A}bstract {B}irkhoff {N}ormal {F}orm {T}heorem and {E}xponential
  {T}ype {S}tability of the 1d {NLS}.
\newblock {\em Comm. Math. Phys.}, 375(3):2089--2153, 2020.

\bibitem{BMP:almost}
L.~Biasco, J.E. Massetti, and M.~Procesi.
\newblock Almost-periodic invariant tori for the {NLS} on the circle.
\newblock {\em Ann. Inst. H. Poincar\'{e} Anal. Non Lin\'{e}aire},
  38(3):711--758, 2021.

\bibitem{Bourgain93}
J.~Bourgain.
\newblock Fourier transform restriction phenomena for certain lattice subsets
  and application to nonlinear evolution equations. i. Schr\"odinger equations.
\newblock {\em Geom. Funct. Anal.}, 3(2):107--156, 1993.

\bibitem{Bourgain96}
J.~Bourgain.
\newblock On the growth in time of higher {S}obolev norms of smooth solutions
  of {H}amiltonian {PDE}.
\newblock {\em Internat. Math. Res. Notices}, 6:277--304, 1996.

\bibitem{Bou}
J.~Bourgain.
\newblock On invariant tori of full dimension for 1{D} periodic {NLS}.
\newblock {\em J. Funct. Anal.}, 229(1):62--94, 2005.

\bibitem{CKSTT}
J.~Colliander, M.~Keel, G.~Staffilani, H.~Takaoka, and T.~Tao.
\newblock Transfer of energy to high frequencies in the cubic defocusing
  nonlinear {S}chr{\"o}dinger equation.
\newblock {\em Invent. Math.}, 181(1):39--113, 2010.

\bibitem{EK1}
H.~L. Eliasson and S.~B. Kuksin.
\newblock {O}n reducibility of {S}chr\"odinger equations with quasiperiodic in
  time potentials.
\newblock {\em Comm. Math. Phys.}, 286(1):125--135, 2009.

\bibitem{EK10}
L.~H. Eliasson and S.~B. Kuksin.
\newblock K{AM} for the nonlinear {S}chr\"odinger equation.
\newblock {\em Ann. of Math. (2)}, 172(1):371--435, 2010.

\bibitem{Faou-Grebert:2013}
E.~Faou and B.~Gr\'ebert.
\newblock A {N}ekhoroshev-type theorem for the nonlinear {S}chr\"odinger
  equation on the torus.
\newblock {\em Anal. PDE}, 6(6):1243--1262, 2013.

\bibitem{Faou-etc}
E.~Faou, L.~Gauckler, and C.~Lubich.
\newblock Sobolev stability of plane wave solutions to the cubic nonlinear
  {S}chr\"odinger equation on a torus.
\newblock {\em Comm. Partial Differential Equations}, 38(7):1123--1140, 2013.

\bibitem{KFTst}
J. Feldman, H. Kn\"{o}rrer, and E. Trubowitz.
\newblock The perturbatively stable spectrum of a periodic {S}chr\"{o}dinger
  operator.
\newblock {\em Invent. Math.}, 100(2):259--300, 1990.

\bibitem{KFTunst}
J. Feldman, H. Kn\"{o}rrer, and E. Trubowitz.
\newblock Perturbatively unstable eigenvalues of a periodic {S}chr\"{o}dinger
  operator.
\newblock {\em Comment. Math. Helv.}, 66(4):557--579, 1991.

\bibitem{GHHMP22}
M.~Guardia, Z.~Hani, E.~Haus, A.~Maspero, and M.Procesi.
\newblock Strong nonlinear instability and growth of Sobolev norms near
  quasiperiodic finite gap tori for the 2d cubic NLS equation.
\newblock {\em J. Eur. Math. Soc., published online first}, 2022.

\bibitem{GuardiaHP16}
M.~Guardia, E.~Haus, and M.~Procesi.
\newblock Growth of {S}obolev norms for the analytic {NLS} on {$\Bbb{T}^2$}.
\newblock {\em Adv. Math.}, 301:615--692, 2016.

\bibitem{Giorgilli:2003}
A.~Giorgilli.
\newblock Exponential stability of {H}amiltonian systems.
\newblock In {\em Dynamical systems. {P}art {I}}, Pubbl. Cent. Ric. Mat. Ennio
  Giorgi, pages 87--198. Scuola Norm. Sup., Pisa, 2003.

\bibitem{GuardiaK12}
M.~Guardia and V.~Kaloshin.
\newblock Growth of {S}obolev norms in the cubic defocusing nonlinear
  {S}chr\"{o}dinger equation.
\newblock {\em J. Eur. Math. Soc. (JEMS)}, 17(1):71--149, 2015.

\bibitem{GP3}
G.~Gentile and M.~Procesi.
\newblock Periodic solutions for a class of nonlinear partial differential
  equations in higher dimension.
\newblock {\em Comm. Math. Phys.}, 289(3):863--906, 2009.

\bibitem{GYX}
J.~Geng, X.~Xu, and J.~You.
\newblock An infinite dimensional {KAM} theorem and its application to the two
  dimensional cubic {S}chr\"odinger equation.
\newblock {\em Adv. Math.}, 226(6):5361--5402, 2011.

\bibitem{Hani12}
Z.~Hani.
\newblock Long-time instability and unbounded {S}obolev orbits for some
  periodic nonlinear {S}chr\"odinger equations.
\newblock {\em Arch. Ration. Mech. Anal.}, 211(3):929--964, 2014.

\bibitem{HP}
E.~Haus and M.~Procesi.
\newblock K{AM} for beating solutions of the quintic {NLS}.
\newblock {\em Comm. Math. Phys.}, 354(3):1101--1132, 2017.

\bibitem{Kar96}
Yu.~E. Karpeshina.
\newblock Perturbation series for the {S}chr\"{o}dinger operator with a
  periodic potential near planes of diffraction.
\newblock {\em Comm. Anal. Geom.}, 4(3):339--413, 1996.

\bibitem{Kar97}
Yulia~E. Karpeshina.
\newblock {\em Perturbation theory for the {S}chr\"{o}dinger operator with a
  periodic potential}, volume 1663 of {\em Lecture Notes in Mathematics}.
\newblock Springer-Verlag, Berlin, 1997.

\bibitem{Kuksin96}
S.B. Kuksin.
\newblock Growth and oscillations of solutions of nonlinear {S}chr{\"o}dinger
  equation.
\newblock {\em Comm. Math. Phys.}, 178(2):265--280, 1996.

\bibitem{Kuksin97}
S.~B. Kuksin.
\newblock On turbulence in nonlinear {S}chr{\"o}dinger equations.
\newblock {\em Geom. Funct. Anal.}, 7(4):783--822, 1997.

\bibitem{Kuksin97b}
S.B. Kuksin.
\newblock Oscillations in space-periodic nonlinear {S}chr{\"o}dinger equations.
\newblock {\em Geom. Funct. Anal.}, 7(2):338--363, 1997.

\bibitem{MP}
A.~Maspero and M.~Procesi.
\newblock Long time stability of small finite gap solutions of the cubic
  nonlinear {S}chr\"odinger equation on {$\Bbb T^2$}.
\newblock {\em J. Differential Equations}, 265(7):3212--3309, 2018.

\bibitem{Nekhoroshev77}
N.~N. Nekhoro{\v{s}}ev.
\newblock An exponential estimate of the time of stability of nearly integrable
  {H}amiltonian systems.
\newblock {\em Uspehi Mat. Nauk}, 32(6(198)):5--66, 287, 1977.

\bibitem{PP}
M.~Procesi and C.~Procesi.
\newblock A normal form for the {S}chr\"odinger equation with analytic
  non-linearities.
\newblock {\em Comm. Math. Phys.}, 312(2):501--557, 2012.

\bibitem{PP13}
C.~Procesi and M.~Procesi.
\newblock A {KAM} algorithm for the resonant non-linear {S}chr\"odinger
  equation.
\newblock {\em Adv. Math.}, 272:399--470, 2015.

\bibitem{PS10}
L. Parnovski and A.~V. Sobolev.
\newblock Bethe-{S}ommerfeld conjecture for periodic operators with strong
  perturbations.
\newblock {\em Invent. Math.}, 181(3):467--540, 2010.

\bibitem{PS12}
L. Parnovski and R. Shterenberg.
\newblock Complete asymptotic expansion of the integrated density of states of
  multidimensional almost-periodic {S}chr\"{o}dinger operators.
\newblock {\em Ann. of Math. (2)}, 176(2):1039--1096, 2012.

\bibitem{PX}
M.~Procesi and X.~Xu.
\newblock Quasi-{T}\"oplitz {F}unctions in {KAM} {T}heorem.
\newblock {\em SIAM J. of Math. Anal.}, 45(4):2148 -- 2181, 2013.

\bibitem{Veliev}
O. Veliev.
\newblock {\em Multidimensional periodic {S}chr\"{o}dinger operator}, volume
  263 of {\em Springer Tracts in Modern Physics}.
\newblock Springer, Cham, 2015.
\newblock Perturbation theory and applications.

\bibitem{W11}
W.-M. Wang.
\newblock {S}upercritical {N}onlinear {S}chr\"odinger equations {I}:
  {Q}uasi-{P}eriodic {S}olutions.
\newblock arXiv:1007.0156, 2010.

\end{thebibliography}
	\end{document}